\newtheorem{theorem}{Theorem}[section]
\newtheorem{corollary}[theorem]{Corollary}
\newtheorem{lemma}[theorem]{Lemma}
\newtheorem{proposition}[theorem]{Proposition}
\newtheorem{question}[theorem]{Question}
\newtheorem{proposition-definition}[theorem]{Proposition-Definition}
\theoremstyle{definition}
\newtheorem{definition}[theorem]{Definition}
\theoremstyle{remark}
\newtheorem{remark}[theorem]{Remark}
\newtheorem{example}[theorem]{Example}
\DeclareMathOperator{\Ind}{Ind}
\DeclareMathOperator{\Ord}{Ord}
\DeclareMathOperator{\Mod}{Mod}
\DeclareMathOperator{\ind}{c-Ind}
\DeclareMathOperator{\Hom}{Hom}
\DeclareMathOperator{\End}{End}
\DeclareMathOperator{\val}{val}
\DeclareMathOperator{\Ker}{Ker}
\DeclareMathOperator{\charf}{char}
\DeclareMathOperator{\diag}{diag}
\DeclareMathOperator{\Res}{Res}
\DeclareMathOperator{\res}{res}
\DeclareMathOperator{\Ima}{Im}
\newcommand*{\St}{\mathrm{St}}
\newcommand*{\charone}{\mathbf{1}}
\newcommand*{\Id}{\mathrm{Id}}
\newcommand*{\is}{\mathrm{is}}
\author{N. Abe}
\address[N. Abe]{Department of Mathematics, Hokkaido University, Kita 10, Nishi 8, Kita-Ku, Sapporo, Hokkaido, 060-0810, Japan}
\thanks{The first-named author was supported by JSPS KAKENHI Grant Number 26707001.}
\email{abenori@math.sci.hokudai.ac.jp}
\author{G. Henniart} 
\address[G. Henniart]{Universit\'e de Paris-Sud, Laboratoire de Math\'ematiques d'Orsay, Orsay cedex F-91405 France;
CNRS, Orsay cedex F-91405 France}
\email{Guy.Henniart@math.u-psud.fr}
\author{M.-F. Vign\'eras}
\address[M.-F. Vign\'eras]{Institut de Math\'ematiques de Jussieu, 175 rue du Chevaleret, Paris 75013 France}
\email{vigneras@math.jussieu.fr}
\title{Modulo $p$ representations of reductive $p$-adic groups: functorial properties}
\date{}
\subjclass[2010]{primary 20C08, secondary  11F70}
\begin{document}
\begin{abstract}
Let $F$ be a  local field with residue characteristic $p$, let $C$ be an algebraically closed field of characteristic $p$, and let $\mathbf G$ be a connected reductive $F$-group.  In a previous paper, Florian Herzig and  the authors  classified irreducible admissible $C$-representations of $G=\mathbf G(F)$ in terms of supercuspidal representations of Levi subgroups of $G$. Here, for a parabolic subgroup $P$ of $G$  with Levi subgroup $M$ and  an irreducible admissible $C$-representation $\tau$ of   $M$, we determine the lattice of subrepresentations of   $\Ind_P^G \tau$  and we show that $\Ind_P^G \chi \tau$ is  irreducible for a general unramified character $\chi$ of $M$. In the reverse direction, we compute the image by the two adjoints of $\Ind_P^G$ of an irreducible admissible representation $\pi$ of  $G$.
On the way, we prove that the right adjoint of $\Ind_P^G $ respects admissibility, hence coincides with Emerton's ordinary part functor $\Ord_{\overline P}^G$ on admissible representations. 
\end{abstract}
\maketitle

\setcounter{tocdepth}{2}  
\renewcommand{\labelenumi}{(\roman{enumi})}
\tableofcontents
\section{Introduction} 
\subsection{Classification results of \cite{MR3600042}}\label{subsec:from_AHHV}
The present paper is a sequel to \cite{MR3600042}. The overall setting is the same: $p$ is a prime number, $F$ a local field with finite residue field of characteristic $p$, $\mathbf G$  a connected reductive $F$-group and  $G=\mathbf G(F)$ is seen as a topological locally pro-$p$ group. We fix an algebraically closed field $C$ of characteristic $p$ and we study the smooth representations of $G$ over $C$-vector spaces - we write   $\Mod_C^\infty(G)$ for the category they form.

Let $P$ be a parabolic subgroup of $G$ with a Levi decomposition $P=MN$ and $\sigma$ a supercuspidal  $C$-representation of $M$, in the sense that  it is  irreducible, admissible, and does not appear as a subquotient of a representation of $M$ obtained by parabolic induction from  an irreducible, admissible $C$-representation of a proper Levi sugroup of $M$. Then there is a maximal parabolic subgroup $P(\sigma)$ of $G$ containing $P$ to which $\sigma$ inflated to $P$ extends; we write $e(\sigma)$ for that extension. For each parabolic subgroup $Q$ of $G$ with $P\subset Q \subset P(\sigma)$, we form
$$I_G(P,\sigma,Q)=\Ind_{P(\sigma)}^G(e(\sigma)\otimes \St_Q^{P(\sigma)})$$
where $\St_Q^{P(\sigma)}= \Ind_Q^{P(\sigma)}1/ \sum\Ind_{Q'}^{P(\sigma)}1$, the sum being over parabolic subgroups $Q'$ of $G$ with $Q\subsetneq Q'\subset P(\sigma)$. 

The classification result of   \cite{MR3600042}   is that  $I_G(P,\sigma,Q)$ is irreducible  admissible, and that conversely any irreducible admissible $C$-representation of $G$ has the form $I_G(P,\sigma,Q)$, where $P$ is determined up to conjugation, and, once $P$ is fixed, $Q$ is determined and so is the isomorphism class of $\sigma$.

\subsection{Main results}\label{S:1.2}
The classification raises natural questions: if $G$ is a Levi subgroup of a parabolic subgroup $R$ in a larger  connected reductive group $H$, what is the structure of $\Ind_R^H \pi$ when $\pi $ is a irreducible admissible $C$-representation of $G$?

We show that $\Ind_R^H \pi$ has finite length and multiplicity $1$; we determine its irreducible constituents and the lattice of its subrepresentations: see section \ref{sec:3} for precise results and proofs. As an application, we answer a question of Jean-Francois Dat, in showing that  $\Ind_R^H \chi \pi$ is irreducible when $\chi$ is a general unramified character of $G$.

If 
$P_1$ is a parabolic subgroup of $G$ with Levi decomposition $P_1=M_1 N_1$, then $\Ind_{P_1}^G:\Mod^\infty_C(M_1) \to \Mod^\infty_C(G)$ has a left adjoint $L_{P_1}^G$, which is the usual Jacquet functor  $(-)_{N_1}$ taking $N_1$-coinvariants, and also a right adjoint functor $R^G_{P_1}$ \cite{Vigneras-adjoint}. It is   natural  to apply  $L_{P_1}^G $ and $R_{P_1}^G $ to $\pi $. They turn out to be irreducible or $0$, in sharp contrast to the case of complex representations of $G$. To state precise results, we fix a minimal parabolic subgroup $B$ of $G$ and a Levi decomposition $B=ZU$ of $B$, and we consider only parabolic subgroups containing $B$ and their Levi components containing $Z$. We simply say ``let $P=MN$ be a standard parabolic subgroup of $G$''  to mean that $P$ contains $B$ and $M$ is the Levi component of $P$ containing $Z$, $N$ being the unipotent radical of $P$.  

\begin{theorem}\label{thm:1.1}
Let $P=MN$  and $P_1=M_1 N_1$ be  standard parabolic subgroups of $G$, let $\sigma$ be a supercuspidal $C$-representation of $M$ and let $Q$  be a parabolic subgroup of $G$ with $P\subset Q\subset P(\sigma)$.
\begin{enumerate}
\item $L_{P_1}^G I_G(P,\sigma, Q)$ is isomorphic to $I_{M_1}(P\cap M_1, \sigma, Q\cap M_1)$ if  $ P_1\supset P$ and the group   generated by $P_1\cup Q$ contains  $P(\sigma)  $, and is $0$ otherwise.
\item $R_{P_1}^G I_G(P,\sigma, Q)$ is isomorphic to $I_{M_1}(P\cap M_1, \sigma, Q\cap M_1)$ if $ P_1\supset Q$, and is $0$ otherwise.
\end{enumerate}\end{theorem}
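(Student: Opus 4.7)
The plan is to compute each adjoint on the concrete realization $I_G(P,\sigma,Q)=\Ind_{P(\sigma)}^G(e(\sigma)\otimes \St_Q^{P(\sigma)})$ separately. The two parts are formally parallel but rely on different inputs: part (i) uses a direct computation of the Jacquet (coinvariant) functor via a geometric lemma, whereas part (ii) exploits the identification, announced in the abstract and presumably proved earlier in the paper, of $R_{P_1}^G$ with Emerton's ordinary part functor $\Ord_{\overline P_1}^G$ on admissible representations.

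For part (i), I would establish a Mackey-type decomposition of $L_{P_1}^G\Ind_{P(\sigma)}^G\tau$ indexed by the double cosets $W^{M_1}\backslash W/W^{P(\sigma)}$. In the mod $p$ setting this filtration collapses dramatically: unipotent coinvariants of representations supported on non-closed Bruhat cells typically vanish, because $N_1$ meets the relevant conjugates of the opposite unipotent in a way that forces the coinvariants to be zero. The surviving term rearranges, via transitivity of parabolic induction, into $\Ind_{P_1\cap P(\sigma)}^{M_1}$ applied to an $(N_1\cap P(\sigma))$-coinvariant of $e(\sigma)\otimes \St_Q^{P(\sigma)}$. One then computes this coinvariant using the defining presentation $\St_Q^{P(\sigma)}=\Ind_Q^{P(\sigma)}1/\sum_{Q\subsetneq Q'\subset P(\sigma)}\Ind_{Q'}^{P(\sigma)}1$ together with the fact that $e(\sigma)$ is trivial on the unipotent radical of $P(\sigma)$. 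This reduction produces a generalized Steinberg inside $P(\sigma)\cap M_1$, and the compatibility conditions $P_1\supset P$ together with $\langle P_1\cup Q\rangle\supset P(\sigma)$ are precisely what is required for the resulting coinvariant to be nonzero and equal to $e(\sigma)\otimes \St_{Q\cap M_1}^{P(\sigma)\cap M_1}$.

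For part (ii), after invoking $R_{P_1}^G\simeq \Ord_{\overline P_1}^G$ on admissibles, I would apply Emerton's computation (or a variant adapted to the setting) of the ordinary part of a parabolically induced admissible representation. On admissibles one has a clean reduction of $\Ord_{\overline P_1}^G\Ind_{P(\sigma)}^G\tau$ to an induced representation from $P_1\cap P(\sigma)$ of the ordinary part of $\tau$ along $\overline P_1\cap P(\sigma)$. Applying this to $\tau=e(\sigma)\otimes \St_Q^{P(\sigma)}$ and using the vanishing/nonvanishing criterion for the ordinary part of a generalized Steinberg---namely that $\Ord$ of $\St_Q^{P(\sigma)}$ along $\overline P_1\cap P(\sigma)$ is nonzero if and only if $P_1\supset Q$, in which case it equals $\St_{Q\cap M_1}^{P(\sigma)\cap M_1}$---yields the stated formula.

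I expect the main difficulty to be twofold. The first obstacle is the combinatorial vanishing step in part (i): showing that only the closed cell of the geometric filtration contributes requires controlling $N_1$-coinvariants of compactly induced representations along Bruhat cells, which is notoriously delicate in characteristic $p$ since one cannot integrate against characters. The second obstacle is verifying the exact nonvanishing conditions involving the mutual positions of $P,Q,P_1,P(\sigma)$; these conditions are equivalent to support criteria for Steinberg representations under Jacquet or ordinary part functors, and handling them uniformly will likely require a root-system decomposition of the simple roots into those lying in $P_1$, in $Q$, and in $P(\sigma)$. I would tackle part (ii) first, as the ordinary part formalism is more rigid, and then transfer information back to part (i) using the adjunctions and the irreducibility results of \cite{MR3600042}.
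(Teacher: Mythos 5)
Your plan is essentially the paper's: part (i) is done via a Mackey/Bruhat-cell filtration for $L_{P_1}^G\circ\Ind_{P(\sigma)}^G$ (the paper's Theorem 5.4, using the mod-$p$ vanishing of coinvariants on non-closed cells, Lemma 5.4), combined with the explicit form of $e(\sigma)\otimes\St_Q^{P(\sigma)}$; part (ii) is done by trading $R_{P_1}^G$ for $\Ord_{\overline P_1}^G$ and performing an analogous (but separate) computation with the ordinary-part functor. One point you underspecify and should be careful about: the identification $R_{P_1}^G\simeq\Ord_{\overline P_1}^G$ requires first proving that $R_{P_1}^G$ preserves admissibility (the paper's Theorem 4.11), and this admissibility result is also needed for the key separate fact, used to nail down the vanishing conditions in both (i) and (ii), that a supercuspidal $\sigma$ satisfies $L_{P'}^M\sigma=0$ and $R_{P'}^M\sigma=0$ for all proper parabolics $P'$ of $M$ (the paper's Propositions 6.3 and 6.4). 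Without this right-cuspidality, the condition "$P_1\supset Q$'' in (ii) would not emerge correctly, since you must handle both the case where $P_1\not\supset P$ (killed by cuspidality of $\sigma$) and the case $P_1\supset P$ but $P_1\not\supset Q$ (killed by the Steinberg factor). Also, the rearranged induction in your sketch should be from $P(\sigma)\cap M_1$ to $M_1$, not "from $P_1\cap P(\sigma)$''. Finally, the paper uses in an essential way that $\sigma$ supercuspidal implies $e$-minimal, so $\Delta_P$ is orthogonal to $\Delta_\sigma$ once $P(\sigma)=G$; this orthogonality is what allows the action of $N_1$ on $e(\sigma)\otimes\St_Q^{P(\sigma)}$ to separate cleanly into an action on one tensor factor only, which is the crux of both coinvariant and ordinary-part computations.
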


See \S \ref{S:6} and \S \ref{S:7} for the proofs, with consequences already drawn in \S \ref{S:6.1}: in particular, we prove that an irreducible admissible $C$-representation $\pi$ of $G$ is supercuspidal exactly when $L_P^G\pi$ and $R_P^G\pi$ are $0$ for any proper parabolic subgroup $P$ of $G$.

As the construction of $I_G(P,\sigma, Q)$ involves   parabolic induction, we are naturally led to investigate, as an intermediate step, the composite functors $L_{P_1}^G \Ind_P^G$ and $R_{P_1}^G \Ind_P^G$, for standard parabolic subgroups  $P=MN$  and $P_1=M_1 N_1$ of $G$. In \S \ref{S:5}, we prove:

\begin{theorem}\label{thm:1.2}   The functor $L_{P_1}^G \Ind_P^G:\Mod_C(M)\to \Mod_C(M_1)$ is isomorphic to the functor $\Ind_{P\cap M_1}^{M_1} L_{P_1\cap M}^M$, and the functor $R_{P_1}^G \Ind_P^G:\Mod_C(M)\to \Mod_C(M_1)$ is isomorphic to the functor $\Ind_{P\cap M_1}^{M_1} R_{P_1\cap M}^M$.

 \end{theorem}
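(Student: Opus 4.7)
The plan is to prove the Jacquet-functor (left-adjoint) identity first and derive the right-adjoint identity from it by a purely formal adjunction manipulation.

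\textbf{Reduction of the right-adjoint statement to the left-adjoint one.} Suppose the first identity $L_{P_1}^G \Ind_P^G \cong \Ind_{P\cap M_1}^{M_1} L_{P_1 \cap M}^M$ has been established for every pair of standard parabolics. Interchanging the roles of $P$ and $P_1$ in it yields $L_P^G \Ind_{P_1}^G \cong \Ind_{P_1 \cap M}^M L_{P \cap M_1}^{M_1}$. For arbitrary $V \in \Mod_C(M_1)$ and $\tau \in \Mod_C(M)$, a chain of Frobenius reciprocities then gives
\begin{align*}
\Hom_G(\Ind_{P_1}^G V, \Ind_P^G \tau) &\cong \Hom_M(L_P^G \Ind_{P_1}^G V, \tau)\\
 &\cong \Hom_M(\Ind_{P_1\cap M}^M L_{P \cap M_1}^{M_1} V, \tau)\\
 &\cong \Hom_{M \cap M_1}(L_{P \cap M_1}^{M_1} V, R_{P_1 \cap M}^M \tau) \\
 &\cong \Hom_{M_1}(V, \Ind_{P \cap M_1}^{M_1} R_{P_1 \cap M}^M \tau),
\end{align*}
whereas the defining adjunction for $R_{P_1}^G$ identifies the same $\Hom_G$ with $\Hom_{M_1}(V, R_{P_1}^G \Ind_P^G \tau)$. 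Yoneda then delivers the second isomorphism, naturally in $\tau$.

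\textbf{Proof of the Jacquet-module statement.} Set $V := \Ind_P^G \tau$. I want a natural isomorphism $\Ind_{P \cap M_1}^{M_1}(\tau_{N_1 \cap M}) \xrightarrow{\sim} V_{N_1}$. The input geometry is the Bruhat decomposition $G = \bigsqcup_{w} P_1 w P$ indexed by $W_{M_1}\backslash W / W_M$: the $P_1$-orbit stratification of $G/P$ induces a filtration of $V|_{P_1}$ whose graded pieces are of Mackey type, namely compactly induced representations of the shape $\ind_{P_1 \cap wPw^{-1}}^{P_1}({}^w\tau)$. For $w = 1$, a direct computation based on the factorization of $P_1 \cap P$ into its four ``corners'' $(M \cap M_1)$, $(M_1 \cap N)$, $(N_1 \cap M)$, $(N_1 \cap N)$, combined with induction in stages along $P_1 = M_1 N_1$, identifies the $N_1$-coinvariants of this piece with precisely the target $\Ind_{P \cap M_1}^{M_1}(\tau_{N_1 \cap M})$.

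The crux, and the main obstacle, is to show that for every $w \neq 1$ the $N_1$-coinvariants of the corresponding graded piece vanish. This is the characteristic-$p$ phenomenon that collapses the Bernstein--Zelevinsky filtration to a single surviving term, in sharp contrast with the complex case; the vanishing should come from exhibiting, for each non-trivial $w$, a one-parameter root subgroup of $N_1$ that $w^{-1}$ sends outside of $P$ and on which the $w$-twisted representation is therefore ``free'', so that its coinvariants kill the stratum already at that subgroup. Setting up this vanishing compatibly with the support filtration, and checking that the filtration actually passes through $(-)_{N_1}$, is where the bulk of the technical work lies; once these inputs are in place, the $w=1$ computation and the vanishing for $w\neq 1$ combine to give the desired natural isomorphism.
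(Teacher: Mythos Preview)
Your approach is essentially the paper's: a double-coset filtration of $\Ind_P^G\tau|_{P_1}$, identification of the open cell $PP_1$ with the target, vanishing of the remaining strata via a root subgroup of $N_1$ whose coinvariants on $C_c^\infty$ die in characteristic $p$, and then a formal adjunction to pass from the $L$-statement to the $R$-statement. The one technical device you do not mention, and which the paper uses to make the vanishing clean, is a further refinement of each $(P,P_1)$-coset into $(P,B)$-cosets $PnB$: at that level $P\backslash PnB \simeq (n^{-1}Pn\cap U)\backslash U$ splits as a product with a free $U_\beta$-factor (for $\beta\in\Phi_{N_1}$ with $w(\beta)\in -\Phi_N$, guaranteed by a short Weyl-group lemma), so the stratum is visibly $C_c^\infty(U_\beta,C)\otimes(\text{something})$ and its $U_\beta$-coinvariants vanish since $C_c^\infty(U_\beta,C)_{U_\beta}\simeq C[1/p]=0$.
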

 We actually describe explicitly the functorial isomorphism for  $L_{P_1}^G \Ind_P^G$ whereas the case of $R_{P_1}^G \Ind_P^G$ is obtained by adjunction properties.
The fact that $R_{P_1}^G$ has no direct explicit description has consequence for the proof of Theorem~\ref{thm:1.1} (ii).
We first prove:
\begin{theorem}\label{thm:1.4}
If $\pi$ is an admissible $C$-representation of $G$,   then $R_P^G \pi$ is an admissible $C$-representation of $M$.
\end{theorem}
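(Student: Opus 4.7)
My plan is to prove admissibility of $R_P^G\pi$ directly, so that the identification of $R_P^G$ with Emerton's ordinary part functor $\Ord_{\overline P}^G$ on admissible representations (promised in the abstract) follows by the uniqueness of right adjoints: Emerton has shown that $\Ord_{\overline P}^G$ is admissibility-preserving and is a right adjoint to $\Ind_P^G$ on the admissible subcategory, so once $R_P^G\pi$ is known to be admissible a canonical isomorphism $R_P^G\pi \cong \Ord_{\overline P}^G\pi$ is forced.

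\textbf{Reduction.} For any compact open subgroup $K_M \subset M$, the defining adjunction of $R_P^G$ gives
\[
(R_P^G\pi)^{K_M} = \Hom_M(\ind_{K_M}^M 1,\, R_P^G\pi) = \Hom_G(\Ind_P^G \ind_{K_M}^M 1,\, \pi),
\]
so admissibility of $R_P^G\pi$ amounts to finite-dimensionality of the right-hand $\Hom$-space for each $K_M$ in a cofinal system of compact open subgroups of $M$. I would take $K_M = K \cap M$, where $K \subset G$ is a small compact open subgroup admitting an Iwahori factorisation $K = K_{\overline N} K_M K_N$, and use the Iwasawa decomposition $G = PK$ to identify the underlying $K$-representation of $\Ind_P^G \ind_{K_M}^M 1$ with $\Ind_{P \cap K}^K \ind_{K_M}^M 1$, which admits a fairly explicit description. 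A $G$-morphism to $\pi$ is then controlled by its behaviour on sections supported near the identity double coset, together with the additional constraint that $P$-equivariance extends to $G$-equivariance.

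\textbf{Main obstacle.} The heart of the matter is extracting finiteness from the interaction between $G$-equivariance and the non-admissibility of $\ind_{K_M}^M 1$; the usual admissibility-preservation of $\Ind_P^G$ does not help since its input here is not admissible, and indeed the $K$-invariants of $\Ind_P^G \ind_{K_M}^M 1$ are already infinite-dimensional (indexed essentially by $M/K_M$ through sections supported on $P \cap K$). One must therefore exploit $G$-equivariance more finely---perhaps via a pro-$p$ Iwahori-Hecke decomposition and a Mackey-type analysis of $P \backslash G / K$---to bound the $\Hom$-space by a finite-dimensional subspace of $\pi^{K'}$ for some small compact open $K' \subset G$, the latter being finite-dimensional by admissibility of $\pi$. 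Once this finiteness is secured, admissibility of $R_P^G\pi$ follows, and uniqueness of right adjoints combined with Emerton's theorem delivers the isomorphism $R_P^G\pi \cong \Ord_{\overline P}^G\pi$ on the admissible subcategory.
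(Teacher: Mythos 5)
You have the right opening move: the adjunction identity
$(R_P^G\pi)^{K_M}\simeq \Hom_G(\Ind_P^G\,\ind_{K_M}^M\charone,\pi)$
is exactly where the paper starts, and you are right to flag that the difficulty is that $\Ind_P^G\,\ind_{K_M}^M\charone$ is far from admissible, so $G$-equivariance (not $K$-structure) must do the work. But the ``Main obstacle'' paragraph is where the argument actually ends: proposing ``perhaps a pro-$p$ Iwahori--Hecke decomposition and a Mackey-type analysis of $P\backslash G/K$'' is not a proof, and a Mackey/Bruhat stratification of $P\backslash G/K$ by itself does not produce the required finiteness. In fact the contributions from strata do not individually vanish or become finite, and no elementary support argument bounds the $\Hom$-space.

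The paper's proof needs three specific inputs that are missing from your proposal. First, it works at a special parahoric $\mathcal K$ and reduces (via a filtration of $\Ind_J^{\mathcal K\cap M}\mathbb F$) to computing $\Hom_{\mathbb F[\mathcal K\cap M]}(V,R_P^G\pi_1)$ for an irreducible $V$; then it chooses a $\overline P$-regular irreducible $V_0$ of $\mathcal K$ with $(V_0)_{N\cap\mathcal K}\simeq V$. Second, it invokes the Henniart--Vign\'eras comparison isomorphism
$\Ind_P^G(\ind_{\mathcal K\cap M}^M V)\simeq \mathcal H_M(\mathcal K\cap M,V)\otimes_{\mathcal H_G(\mathcal K,V_0)}\ind_{\mathcal K}^G V_0$,
which converts the $\Hom$-space into a $\Hom$ over Hecke algebras. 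Third, it uses the structural fact that $\mathcal H_M(\mathcal K\cap M,V)$ is the localization of $\mathcal H_G(\mathcal K,V_0)$ at a central element $\tau$, so the $\Hom$-space becomes $\Hom_{\mathbb F[T]}(\mathbb F[T,T^{-1}],\Hom_{\mathbb F[\mathcal K]}(V_0,\pi_1))$ with $T$ acting through $\tau$; a noetherian-module lemma (the paper's Lemma~\ref{lemma:4.7}, identifying this with the infinitely $\tau$-divisible part of a finitely generated module) then closes the argument. None of this is visible from a Mackey decomposition; the finiteness is really a statement about localizations of Hecke algebras and noetherian modules, not about double-coset geometry. Your concluding paragraph on uniqueness of right adjoints and the identification with $\Ord_{\overline P}^G$ is correct and agrees with the paper's Corollary~\ref{cor:4.10}.
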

To prove Theorem~\ref{thm:1.1} (ii) we in fact use $\Ord_{\overline{P}_1}^G$ in place of $R_{P_1}^G$.
It follows that on admissible $C$-representations of $G$, $R_P^G$  coincides with Emerton's ordinary part functor $\Ord_{\overline P}^G$ (as extended to the case of $C$-representations in \cite{Vigneras-adjoint}). Note that,  if the characteristic of $F$ is $0$ and  $\pi$ is an admissible  $C$-representation of $G$, then $L_P^G\pi$ is admissible. But in contrast, when $F$ has characteristic $p$, we produce  in \S \ref{S:4} an example, for $G=GL(2,F)$, of an admissible $C$-representation $\pi$ of $G$ such that $L_B^G\pi$ is not admissible.

\subsection{Outline of the proof}
After the initial section \S \ref{S:2} devoted to notation and preliminaries, our paper mainly follows the layout above. 
However admissibility questions are explored in \S\ref{S:4}, where Theorem~\ref{thm:1.4} is established: as mentioned above, the result is used in the proof Theorem~\ref{thm:1.1} (ii).

Without striving for the utmost generality, we have taken care not to use unnecessary assumptions. In particular, from section \S \ref{S:4} on, we consider a general commutative ring $R$ as coefficient  ring, imposing conditions on $R$ only when useful. The reason  is that for arithmetic applications it is important to consider the case where $R$ is artinian and $p$-nilpotent  or invertible in $R$. 
Only when we use the classification do we assume $R=C$.
Our results are valid for $R$ noetherian and $p$ nilpotent in $R$ in sections \S \ref{S:4} to  \S \ref{S:7}.
For example, when $R$ is noetherian and $p$ is nilpotent in $R$, Theorem \ref{thm:1.2} is valid  (Theorem \ref{thm:5.4} and Corollary \ref{cor:5.5}) and a version to Theorem \ref{thm:1.1} is obtained in Theorem \ref{thm:6.1} and Corollary \ref{cor:6.2}. Likewise Theorem \ref{thm:1.4} is valid when $R$ is noetherian and  $p$ is nilpotent in $R$ (Theorem \ref{thm:4.11}).

In a companion paper, the authors will investigate the effect of taking invariants under a pro-$p$  Iwahori subgroup in the modules $I_G(P,\sigma,Q)$ of \ref{subsec:from_AHHV}.

 \section{Notation, useful facts and preliminaries}\label{S:2}
 \subsection{The group $G$ and its standard parabolic subgroups $P=MN$}\label{S:2.1}
 In all that follows, $p$ is a prime number, $F$ is a local field with finite residue field $k$ of characteristic $p$; as usual, we write $O_F$ for the ring of integers of $F$, $P_F$ for its maximal ideal and $v_F$ the absolute value of $F$ normalised by $v_F(F^*)=\mathbb Z$. We denote an algebraic group over $F$ by a bold letter, like $\mathbf H$, and use the same ordinary letter for the group of $F$-points, $H=\mathbf H(F)$.
  We fix a connected reductive $F$-group $\mathbf G$. We fix a maximal $F$-split subtorus $\mathbf T$ and write $\mathbf Z$ for its $\mathbf G$-centralizer; we also fix a minimal parabolic subgroup $\mathbf B$ of $\mathbf G$  with Levi component $\mathbf Z$, so that $\mathbf B=\mathbf  Z  \mathbf U$ where $ \mathbf U$ is the unipotent radical of  $\mathbf B$. Let $X^*( \mathbf T)$ be the group of $F$-rational characters of  $\mathbf T$ and $\Phi$ the subset of roots of  $\mathbf T$ in the Lie algebra of  $\mathbf G$. Then $ \mathbf B$ determines a subset $\Phi^+$ of positive roots - the roots of $\mathbf T$ in the Lie algebra of $\mathbf U$-  and a subset of simple roots $\Delta$. The $\mathbf G$-normalizer  $\mathbf N_{\mathbf G}$ of $\mathbf T$ acts on $X^*( \mathbf T)$ and through that action,  $\mathbf N_{\mathbf G}/ \mathbf Z$ identifies with the Weyl group of the root system $\Phi$.
Set $\mathcal N:=\mathbf N_{\mathbf G}(F)$ and note that  $\mathbf N_{\mathbf G}/ \mathbf Z \simeq \mathcal N/Z$; we write $\mathbb W $ for  $\mathcal N/Z$.

A standard parabolic subgroup  of $\mathbf G$ is a parabolic $F$-subgroup containing $\mathbf B$. Such a  parabolic subgroup $\mathbf P$  has a unique Levi subgroup $\mathbf M$ containing $\mathbf Z$, so that $\mathbf P=\mathbf M\mathbf N$ where $\mathbf N$ is the unipotent radical of $\mathbf P$ -  we also call  $\mathbf M$ standard.  By a common abuse of language to describe the preceding situation, we simply say ``let $P=MN$ be a standard parabolic subgroup of $G$''; we sometimes write $N_P$ for $N$ and $M_P$  for $M$. The parabolic subgroup of $G$ opposite to $P$ will be written $\overline P$ and its unipotent radical $\overline N$, so that $\overline P=M\overline N$, but beware that $\overline P$ is not standard !  We write  $\mathbb W_M$ for the Weyl group $M\cap\mathcal N/Z$.

If $\mathbf P=\mathbf M\mathbf N$ is a standard parabolic subgroup of $\mathbf{G}$, then $\mathbf M\cap \mathbf B$ is a minimal parabolic subgroup of $\mathbf M$. If $\Phi_M$ denotes the set of roots of  $\mathbf T$ in the Lie algebra of $\mathbf M$, with respect to $\mathbf M\cap \mathbf B$  we have   $\Phi_M^+=\Phi_M\cap \Phi^+ $ and $\Delta_M=\Phi_M\cap \Delta$.  We also write  $\Delta_P$ for  $\Delta_M$ as $P$ and $M$ determine each other, $P=MU$. Thus we obtain a bijection $P\mapsto \Delta_P$ from standard parabolic subgroups of $G$ to subsets of $\Delta$, with $B$ corresponds to $\Phi$ and $G$ to $\Delta$. If $I$ is a subset of $\Delta$, we sometimes denote by $P_I=M_IN_I$ the corresponding standard parabolic subgroup of $G$. If $I=\{\alpha\}$ is a singleton, we write $P_\alpha=M_\alpha  N_\alpha$. We note a few useful properties.  If $P_1$ is another standard parabolic subgroup of $G$, then $P\subset P_1$ if and only if $\Delta_P\subset \Delta_{P_1}$; we have $\Delta_{P\cap P_1}= \Delta_P\cap \Delta_{P_1}$ and the parabolic subgroup corresponding to $\Delta_P\cup \Delta_{P_1}$ is  the subgroup $\langle P,P_1\rangle$  of $G$ generated by $ P $ and $P_1$. The standard parabolic subgroup of $M$ associated to $\Delta_M \cap \Delta_{M_1}$ is $M\cap P_1=(M\cap M_1)(M\cap N_1)$ \cite[Proposition 2.8.9]{MR794307}.
It is convenient to write $G'$ for the subgroup of $G$ generated by the unipotent radicals of the parabolic subgroups; it is also the normal subgroup of $G$ generated by $U$, and we have $G=ZG'$.
%

For each $\alpha\in  X^*(T)$, the homomorphism $x\mapsto v_F(\alpha(x)):T\to\mathbb Z$ extends uniquely to a homomorphism $Z\to\mathbb Q$ that we denote in the same way.  This defines a homomorphism $Z\xrightarrow{v} X_*(T)\otimes \mathbb  Q$ such that $\alpha(v(z))= v_F(\alpha(z))$ for $z\in Z, \alpha \in X^*(T)$. 

\bigskip An interesting situation occurs when $\Delta=I\sqcup J$ is the union of two orthogonal subsets $I$ and $J$. In that case, $G'=M'_IM'_J$, $M'_I$ and $M'_J$ commute with each other, and their intersection is finite and central in $G$  \cite[II.7 Remark 4]{MR3600042}.

  \subsection{Representations of $G$}\label{S:2.2}
 As apparent in the abstract and the introduction, our main interest lies in smooth $C$-representations of $G$, where $C$ is an {\bf algebraically closed field of characteristic $p$}, which we fix throughout. However many of our arguments do not necessitate so strong a hypothesis on coefficients, so we let $R$ be a {\bf fixed commutative ring}. 
 
 Occasionally we shall consider an $R[A]$-module $V$ where $A$ is a monoid. An element $v$ of $V$
 is called {\bf $A$-finite} if its translates under $A$ generate a finitely generated submodule of $V$. If $R$ is noetherian
 the $A$-finite elements in $V$ generate a submodule of $V$, that we write $V^{A-f}$. When $A$ is generated by an element $t$, we write $V^{t-f}$ instead of $V^{A-f}$.
 
We speak indifferently of $R[H]$-modules and of $R$-representations of $H$ for a locally profinite group $H$.
An $R[H]$-module $V$ is called {\bf smooth} if every vector in $V$ has an open stabilizer in $H$. The smooth $R$-representations of $H$ and $R[H]$-linear maps form an abelian category $\Mod_R^\infty(H)$.

  An  $R$-representation $V$ of  a locally profinite group $H$ is  {\bf admissible} if it is smooth and for any open compact subgroup $J$ of $H$, the $R$-submodule $V^J$ of $J$-fixed vectors is finitely generated. When  $R$ is noetherian, it is clear that it suffices to check this when $J$ is small enough. When $R$ is noetherian we write $\Mod_R^a(H)$ for the subcategory of $\Mod_R^\infty(H)$ made out of the admissible $R$-representations of $H$. We explore admissibility further in section \ref{S:4}.
 
If $P=MN$ is a standard parabolic subgroup of $G$,  the parabolic induction functor $\Ind_P^G: \Mod_R^\infty(M)\to \Mod_R^\infty(G)$ sends $W\in  \Mod_R^\infty(M)$ to  the smooth $R[G]$-module 
$\Ind_P^GW$ made out of functions $f:G\to W$ satisfying $f(mngk)=mf(g)$ for $m\in M, n\in N, g\in G$ and $k$ in some open subgroup  $K_f$ of $G$ - the action of $G$ is via right translation. 
The functor  $\Ind_P^G$ has a left adjoint $L_P^G:\Mod_R^\infty(G)\to \Mod_R^\infty(M)$ which sends $V$ in $\Mod_R^\infty(G)$ to the module of $N$-coinvariants $V_N$  of $V$, which is naturally a smooth $R[M]$-module. The functor  $\Ind_P^G$ has a right adjoint $R_P^G:\Mod_R^\infty(G)\to \Mod_R^\infty(M)$ \cite[Proposition 4.2]{Vigneras-adjoint}.

When $R$ is a {\bf field}, a smooth $R$-representation of $G$ is called {\bf irreducible} if it is a simple $R[G]$-module. An $R$-representation of $G$ is called {\bf supercuspidal}  it is  irreducible, admissible, and does not appear as a subquotient of a representation of $M$ obtained by parabolic induction from  an irreducible, admissible representation of a proper Levi subgroup of $M$.

\subsection{On compact induction} 
If $X$ is a locally profinite space with a countable basis of open sets,  and $V$ is an $R$-module, we write $ C_c^\infty(X, V)$ for the space of compactly supported   locally constant functions $X\to V$. 
One verifies that the natural map $ C_c^\infty(X, R)\otimes_RV\to C_c^\infty(X, V)$ is an isomorphism. 
\begin{lemma}The $R$-module $C_c^\infty(X, R)$ is free. When $X$ is compact, the submodule of constant functions is a direct factor of $C_c^\infty(X, R)$.
\end{lemma}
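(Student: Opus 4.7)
The plan is to first reduce to the case of compact $X$ by a disjoint decomposition, then construct an explicit $R$-basis for $C^\infty(Y,R)$ on a compact $Y$ by exhausting it through a nested sequence of finite clopen partitions.

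For the reduction, since $X$ is locally profinite with a countable basis, I enumerate a countable basis of compact open subsets $(V_n)_{n\ge 0}$ and set $Y_n = V_n \setminus (V_0\cup\cdots\cup V_{n-1})$. Each $Y_n$ is compact open (closed subset of the compact $V_n$, and open as an intersection of opens), the $Y_n$ are pairwise disjoint, and $X = \bigsqcup_{n} Y_n$. Any compactly supported locally constant function on $X$ has support meeting only finitely many $Y_n$, so restriction induces an isomorphism
$$C_c^\infty(X, R) \;=\; \bigoplus_{n\ge 0} C^\infty(Y_n, R).$$
A direct sum of free modules is free, so it suffices to prove $C^\infty(Y,R)$ is free for $Y$ compact. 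Moreover, if $X$ itself is compact, I may take $Y_0 = X$, so exhibiting $R\cdot\mathbf{1}_Y$ as a direct factor of $C^\infty(Y,R)$ for compact $Y$ will yield the second assertion.

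For the compact case, fix $Y$ compact. Since any locally constant function on $Y$ takes finitely many values, each on a clopen set (a point being clear from compactness and the image being a finite subset of $R$), and since $Y$ has a countable basis of clopens, I choose a cofinal sequence of finite clopen partitions $\mathcal{P}_0 = \{Y\} \le \mathcal{P}_1 \le \mathcal{P}_2 \le \cdots$ such that every clopen subset is a union of blocks in some $\mathcal{P}_n$ (obtained by iteratively refining by the enumerated basis). Let $F_n \subseteq C^\infty(Y,R)$ be the submodule of functions constant on each block of $\mathcal{P}_n$; it is free on the characteristic functions of those blocks, and $C^\infty(Y,R) = \bigcup_n F_n$. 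To glue these freenesses, for each block $V\in\mathcal{P}_n$ pick one subblock $V^\star\in\mathcal{P}_{n+1}$ contained in $V$ and, using the relation $\mathbf{1}_V = \sum_{V'\subset V}\mathbf{1}_{V'}$, replace $\mathbf{1}_{V^\star}$ in the obvious basis of $F_{n+1}$ by $\mathbf{1}_V$. This change of basis shows $F_{n+1} = F_n \oplus F'_n$, where $F'_n$ is free on the characteristic functions of the unchosen subblocks. Iterating yields $C^\infty(Y,R) = F_0 \oplus \bigoplus_{n\ge 0} F'_n$, a free module with $F_0 = R\cdot\mathbf{1}_Y$ as a direct summand.

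The main obstacle is the bookkeeping around the cofinal chain of partitions: one must verify that the ascending union of the $F_n$ really exhausts $C^\infty(Y,R)$ (which comes down to the fact that a locally constant function on compact $Y$ is determined by its restrictions to the blocks of a sufficiently fine clopen partition), and that the successive basis extensions assemble into a single basis in the colimit. As an alternative and lighter argument for the direct-factor assertion when $X$ is nonempty compact, evaluation $\mathrm{ev}_{x_0}\colon C^\infty(X,R)\to R$ at any chosen $x_0\in X$ splits the inclusion $r\mapsto r\mathbf{1}_X$, so $R\cdot\mathbf{1}_X$ is a direct factor regardless of the explicit basis.
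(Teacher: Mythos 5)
Your proof is correct and complete, and it takes the approach the paper intends: the paper establishes the lemma by citing Ly's Appendix~A.1 for the compact case and remarking that the non-compact case is an easy adaptation, and your decomposition of $X$ into a countable disjoint union of compact open subsets $Y_n$ together with $C_c^\infty(X,R)=\bigoplus_n C^\infty(Y_n,R)$ is exactly that adaptation. For compact $Y$, your refining-clopen-partitions construction producing a basis with $R\cdot\mathbf{1}_Y=F_0$ as the first summand is the standard argument (and the expected content of the cited appendix), while the evaluation-at-a-point splitting you add at the end is an equally valid and lighter way to see the direct-factor clause.
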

\begin{proof} The proof of  \cite[Appendix A.1]{MR3402357} when $X$ is compact is easily adapted to $ C_c^\infty(X, V)$ when $X$ is not compact.   
\end{proof}

\begin{example}\label{ex:fd}  $C_c^\infty(X, R)^H$ is a direct factor of $C_c^\infty(X, R)$ when $X$ is compact with a continuous  action of a profinite group $H$ with finitely many orbits (apply the lemma to the orbits which are open). 
\end{example}

Let $H$ be a locally profinite group and  $J$ a closed subgroup of $H$.

\begin{lemma} The quotient map $H\to J\backslash H$ has a continuous section.
\end{lemma}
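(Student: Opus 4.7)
The plan is to exploit the locally profinite structure of $H$ to reduce the statement to the classical fact that, for a profinite group $K$ and a closed subgroup $L$, the projection $K \to L\backslash K$ admits a continuous section.

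Fix a compact open subgroup $K$ of $H$. The right $K$-action on $J\backslash H$ by translation partitions the space into orbits $JgK/J$, indexed by the double cosets in $J\backslash H/K$. Each orbit is compact (being the image of the compact set $gK$ under an open quotient) and open (since $JgK$ is a union of right $K$-cosets in $H$), hence clopen; together they form a clopen partition of $J\backslash H$. It therefore suffices to produce a continuous section on each orbit and paste these together, since continuity on a clopen partition reduces to continuity on each piece.

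Fix such an orbit $JgK/J$. The map $K \to JgK/J$, $k \mapsto Jgk$, has fibers equal to the right cosets of $L := g^{-1}Jg \cap K$, so it factors through a continuous bijection $L\backslash K \to JgK/J$; this bijection is a homeomorphism because both spaces are compact Hausdorff. Invoking the profinite fact for the closed subgroup $L$ of the profinite group $K$ yields a continuous section $s \colon L\backslash K \to K$, and the formula $Jgk \mapsto g\cdot s(Lk)$ then defines a continuous section of $H \to J\backslash H$ over this orbit (the section property being verified using that for any $\ell \in L$, writing $\ell = g^{-1}jg$ with $j \in J$, one has $g\ell k = jgk \in Jgk$).

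The main obstacle is the profinite fact invoked above. The standard proof expresses $K$ as an inverse limit $\varprojlim_N K/N$ over the open normal subgroups $N$ of $K$ and builds, by a transfinite induction along a cofinal well-ordered family of such $N$, a coherent family of set-theoretic sections of the finite quotients $K/N \to LN\backslash K$, whose inverse limit is the desired continuous section; alternatively, one may apply Michael's continuous selection theorem to the zero-dimensional compact Hausdorff space $L\backslash K$.
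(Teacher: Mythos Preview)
Your proof is correct and follows essentially the same approach as the paper: decompose $J\backslash H$ into the clopen pieces $J\backslash JgK$ indexed by double cosets, identify each piece with $(g^{-1}Jg\cap K)\backslash K$ via a compact-Hausdorff continuous bijection, and transport a section from the profinite case. The only difference is that the paper cites the profinite case directly from the literature (Ribes--Zalesskii, Proposition~2.2.2), whereas you sketch an inverse-limit argument for it.
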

\begin{proof} When $H$ is profinite, this is \cite[Proposition 2.2.2]{MR2599132}. In general, let $K$ be a compact open subgroup of $H$. Cover $H$ with disjoint double cosets $JgK$. It is enough to find, for any given $g$, a continuous section of the induced map $JgK\xrightarrow{\pi_g} J\backslash JgK $. The map $k\mapsto gk$ induces a continous bijective map $(K\cap g^{-1}Jg)\backslash K \xrightarrow{p} J\backslash JgK$. Because $J$ is closed in $H$, both spaces are Hausdorff and $(K\cap g^{-1}Jg)\backslash K$ is compact since $K$ is, so $p$ is a homeomorphism. If $\sigma$ is a continuous section of the quotient map $K \to (K\cap g^{-1}Jg)\backslash K$ then $x\mapsto g \sigma (p^{-1}(x))$ gives the desired section of $\pi_g$.
\end{proof}

 Let $\sigma$ be a continuous section of $H\to J\backslash H$, and let   $V$ be a smooth $R$-representation of $J$. Recall that $\ind_J^HV$ is the space of functions $f:H\to V$, left invariant by $J$, of compact support in $J\backslash H$, and smooth for $H$ acting by right translation. Immediately:
 
\begin{lemma} \label{lemma:usef}The  map $f\mapsto  f\circ \sigma :\ind_J^HV\to C_c^{\infty}(J\backslash H, V)$ is an $R$-module isomorphism.
\end{lemma}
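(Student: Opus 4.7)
The map $\Phi: f\mapsto f\circ\sigma$ is manifestly $R$-linear, so everything reduces to constructing an inverse and checking that it respects the smoothness and support conditions built into the two spaces. The plan is to use the section to write every $h\in H$ uniquely as $h = j(h)\,\sigma(\pi(h))$, where $\pi: H\to J\backslash H$ is the quotient map and $j(h) := h\,\sigma(\pi(h))^{-1}\in J$ depends continuously on $h$, and then to define the candidate inverse $\Psi(g)(h) := j(h)\,g(\pi(h))$ for $g\in C_c^\infty(J\backslash H, V)$.

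Granted this, most of what must be verified is formal. The identity $j(j'h) = j'\,j(h)$ for $j'\in J$ makes $\Psi(g)$ left $J$-equivariant; its support in $H$ projects to that of $g$ in $J\backslash H$, which is compact by assumption; and the relations $\Phi\Psi = \mathrm{id}$ (from $j(\sigma(x))=1$) and $\Psi\Phi = \mathrm{id}$ (from the left $J$-equivariance of $f$) are immediate from the definitions. In the other direction, that $f\circ\sigma$ lies in $C_c^\infty(J\backslash H, V)$ when $f\in \ind_J^H V$ uses only that $f$ is fixed under right translation by some open compact $K\subset H$, so $f$ is locally constant on $H$ and hence $f\circ\sigma$ is locally constant on $J\backslash H$ by continuity of $\sigma$, together with the obvious identification of supports.

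The one substantive point, which I expect to be the main obstacle, is the smoothness of $\Psi(g)$ for the right action of $H$: producing a \emph{single} open compact subgroup $K\subset H$ with $\Psi(g)(hk) = \Psi(g)(h)$ for all $h\in H$ and $k\in K$. Here compactness of $\Omega = \{x\in J\backslash H : g(x)\neq 0\}$ is crucial. Writing $\Omega$ as a finite disjoint union of open compact cells $\Omega_i$ on which $g$ takes a fixed value $v_i$, I would first extract, by a standard covering argument on the locally profinite space $J\backslash H$ (cover each $\Omega_i$ by sets of the form $\bar x K_{\bar x}$ with $K_{\bar x}\subset H$ an open compact subgroup satisfying $\bar x K_{\bar x}\subset \Omega_i$, take a finite subcover and intersect), an open compact subgroup $K'\subset H$ with $\Omega_i K'\subset \Omega_i$ for each $i$. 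One then analyses the continuous cocycle $c(x,k) := \sigma(x)\,k\,\sigma(xk)^{-1}\in J$, which satisfies $c(x,1)=1$, and uses compactness of $\sigma(\Omega)\subset H$ together with openness of each stabilizer $\mathrm{Stab}_J(v_i)$ in $J$ to shrink $K'$ to a $K$ for which $c(x,k)\in \mathrm{Stab}_J(v_i)$ whenever $x\in \Omega_i$ and $k\in K$. Reducing the desired identity to representatives $h\in \sigma(J\backslash H)$ via left $J$-equivariance, a short computation then closes the argument.
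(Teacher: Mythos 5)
Your proposal is correct and matches the paper's (unwritten) argument: the paper simply declares the statement ``immediate'' from the existence of the continuous section $\sigma$, and you have filled in the natural inverse $\Psi(g)(h) = j(h)\,g(\pi(h))$ together with the genuinely nontrivial verification that $\Psi(g)$ is fixed by right translation under some single open compact subgroup, where compactness of the support of $g$, joint continuity of the cocycle $c(x,k)=\sigma(x)k\sigma(xk)^{-1}$, and openness of point stabilizers in $J$ (smoothness of $V$) are exactly the ingredients needed. No gap.
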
 
As  a consequence we get a useful induction/restriction property: let $W$ be a smooth $R$-representation   of $H$.
 \begin{lemma}\label{lemma:useful} The  map 
  $f\otimes w \mapsto (h\mapsto f(h)\otimes hw) :(\ind_J^H V)\otimes  W  \to \ind_J^H(V\otimes W)  $ 
 is an $R[H]$-isomorphism.
 \end{lemma}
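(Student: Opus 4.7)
The plan is to verify that the map, call it $\Phi$, is well-defined, $H$-equivariant, and bijective. For well-definedness, the function $h\mapsto f(h)\otimes hw$ is left $J$-equivariant because $f(jh)=jf(h)$ and $(jh)w = j(hw)$; it is compactly supported modulo $J$ since $f$ is; and it is smooth, since if $K_f$ fixes $f$ under right translation and $K_w$ fixes $w$, then $K_f\cap K_w$ fixes the value $f(h)\otimes hw$ when one translates $h$ on the right. The map is $R$-bilinear in $(f,w)$, so factors through the tensor product, and $H$-equivariance is a direct check: $\Phi((h'f)\otimes h'w)$ and $h'\Phi(f\otimes w)$ both send $h$ to $f(hh')\otimes (hh')w$.

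For bijectivity I would use the preceding lemma. Fix a continuous section $\sigma:J\backslash H\to H$; Lemma~\ref{lemma:usef} then provides $R$-module isomorphisms $\alpha:\ind_J^HV\xrightarrow{\sim}C_c^\infty(J\backslash H,V)$ and $\beta:\ind_J^H(V\otimes W)\xrightarrow{\sim}C_c^\infty(J\backslash H, V\otimes W)$ via $F\mapsto F\circ\sigma$. Combined with the natural identification $C_c^\infty(J\backslash H, V)\otimes W \cong C_c^\infty(J\backslash H, V\otimes W)$ (which follows from $C_c^\infty(X,R)\otimes_R M \cong C_c^\infty(X,M)$ recalled just before Lemma~\ref{lemma:usef}), the map $\Phi$ transfers to an $R$-module endomorphism $\widetilde\Phi$ of $C_c^\infty(J\backslash H, V\otimes W)$ that acts pointwise by the operator $1_V\otimes\sigma(\overline h) \in \End_R(V\otimes W)$. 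This operator is invertible with inverse $1_V\otimes\sigma(\overline h)^{-1}$, so the candidate inverse of $\widetilde\Phi$ is pointwise multiplication by $1_V\otimes\sigma(\overline h)^{-1}$.

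The only subtle point is to check that this candidate inverse stays inside $C_c^\infty(J\backslash H, V\otimes W)$, i.e.\ produces locally constant functions with compact support. Compact support is preserved automatically. For local constancy near a point $\overline h_0$: choose a neighbourhood on which $G$ is constant, equal to some finite sum $\sum_l v_l\otimes w_l$; since $\sigma$ is continuous and each $w_l$ has open stabilizer in $H$ by smoothness of $W$, each map $\overline h\mapsto \sigma(\overline h)^{-1}w_l$ is locally constant, hence $\overline h\mapsto (1\otimes\sigma(\overline h)^{-1})G(\overline h)$ is locally constant on a neighbourhood of $\overline h_0$. This shows $\widetilde\Phi$, and therefore $\Phi$, is an isomorphism. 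I expect this local-constancy verification to be the only mildly delicate step.
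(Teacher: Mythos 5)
Your proof is correct and follows essentially the same route as the paper, which reduces bijectivity to Lemma~\ref{lemma:usef}; you have merely filled in the details the paper leaves implicit, in particular that under the identifications of Lemma~\ref{lemma:usef} the map becomes pointwise application of the invertible operator $1_V\otimes\sigma(\overline h)$, and that the candidate inverse preserves local constancy.
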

  
\begin{proof} The map is linear and $H$-equivariant. Lemma \ref{lemma:usef} implies that it is bijective.
\end{proof}

\begin{remark} Arens' theorem says that if  $X$ is a homogeneous space for $H$ and $H/K$ is countable for a compact open subgroup $K$ of $H$,  then for $x\in X$ the orbit map $h\mapsto hx$ induces a homeomorphism $H/H_x\simeq X$. In particular, for two closed subgroups $I,J$ of $H$ such that $H=IJ$, we get a homeomorphism $I/(I\cap J)\simeq H/J$. Hence $(\ind_J^HV)|_I\simeq \ind_{I\cap J}^IV$ for any smooth $R$-representation $V$ of $J$.
\end{remark}
 
\subsection{$I_G(P,\sigma, Q)$ and minimality}\label{S:2.22} We recall from \cite{MR3600042} the construction of $I_G(P,\sigma,Q)$, our main object of study.  

\begin{proposition}\label{prop:2.1}Let $P=MN \subset Q$ be two standard parabolic subgroups of $G$ and $\sigma$ an $R$-representation of $M$.  Then the following  are equivalent:
\begin{enumerate}
\item $\sigma$ extends to a representation of $Q$ where $N$ acts trivially.
\item For each $\alpha \in \Delta_Q\setminus \Delta_P$, $Z\cap M'_\alpha$ acts trivially on $\sigma$. 
\end{enumerate}\end{proposition}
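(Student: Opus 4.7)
The plan is to prove the equivalence in two directions.

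For (i) $\Rightarrow$ (ii): suppose $\widetilde{\sigma}$ extends $\sigma$ to $Q$ with $N$ acting trivially, so $\ker\widetilde{\sigma}$ is normal in $Q$ and contains $N$. For $\alpha \in \Delta_Q \setminus \Delta_P$ the root subgroup $U_\alpha$ lies in $N$, and $M'_\alpha \subseteq M_Q \subseteq Q$ contains both $U_\alpha$ and a representative of the reflection $s_\alpha$ exchanging $U_\alpha$ and $U_{-\alpha}$; hence $U_{-\alpha} \subseteq \ker\widetilde{\sigma}$, so $M'_\alpha = \langle U_\alpha, U_{-\alpha}\rangle \subseteq \ker\widetilde{\sigma}$. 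Restricting to $M$ gives $\sigma|_{Z \cap M'_\alpha} = 1$.

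For (ii) $\Rightarrow$ (i), I would build the extension in two stages: first extend $\sigma$ from $M$ to $M_Q$ with $N \cap M_Q$ acting trivially, then extend to $Q = M_Q N_Q$ by letting $N_Q$ act trivially. For the first stage, let $I$ be the union of the connected components of the Dynkin diagram of $\Delta_Q$ meeting $\Delta_Q \setminus \Delta_P$, so $J := \Delta_Q \setminus I \subseteq \Delta_P$. Applied to $M_Q$, the orthogonality remark from \cite{MR3600042} gives $M'_Q = M'_I M'_J$ with commuting factors of finite central intersection. Since $M = Z M'$ and $M_Q = Z M'_Q$, one checks $M \cdot M'_I = M_Q$, which induces an isomorphism $M/(M \cap M'_I) \xrightarrow{\sim} M_Q/M'_I$. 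Once $\sigma$ is known to be trivial on $M \cap M'_I$, it factors through this quotient and pulls back to the required extension on $M_Q$, which is automatically trivial on $M'_I \supseteq N \cap M_Q$.

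The technical heart is to prove $\sigma|_{M \cap M'_I} = 1$ from (ii). One checks $M \cap M'_I$ is generated by the subgroups $Z \cap M'_\alpha$ for $\alpha \in \Delta_Q \setminus \Delta_P$ (which are trivial on $\sigma$ by (ii)) and the subgroups $M'_\beta$ for $\beta \in \Delta_P \cap I$. I would treat the latter by induction on the distance from $\beta$ to $\Delta_Q \setminus \Delta_P$ in the Dynkin diagram of $I$. In the base case, $\beta$ is adjacent to some $\alpha \in \Delta_Q \setminus \Delta_P$, so $\langle\beta,\alpha^\vee\rangle \neq 0$; the commutator identity
$$u_\beta(x)\,\alpha^\vee(c)\,u_\beta(-x) = u_\beta\bigl(x(1-c^{\langle\beta,\alpha^\vee\rangle})\bigr)\,\alpha^\vee(c),$$
together with $\sigma(\alpha^\vee(c)) = 1$ from (ii), forces $\sigma(u_\beta(y)) = 1$ for every $y \in F$ (using that $F$ is infinite, so $\{x(1-c^{\langle\beta,\alpha^\vee\rangle}) : x\in F, c\in F^\times\}$ exhausts $F$). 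Weyl conjugation inside $M'_\beta \subseteq M$ then also kills $U_{-\beta}$, giving $\sigma|_{M'_\beta} = 1$. The inductive step is formally identical, replacing $\alpha$ by a closer neighbour $\beta' \in \Delta_Q \cap I$: by induction $\sigma|_{T_{\beta'}} = 1$, which substitutes for (ii) in the propagation.

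The main obstacle is this inductive propagation via the commutator identity; the remainder of the argument is a structural consequence of the orthogonal decomposition of the Dynkin diagram, which neutralises the $J$-components entirely.
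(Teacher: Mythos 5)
The paper does not actually prove this proposition; it cites \cite[II.7]{MR3600042}. Your argument is therefore a genuine alternative rather than a reproduction. The skeleton is sound: the normality-of-kernel plus Weyl-reflection argument for (i)$\Rightarrow$(ii); the reduction of (ii)$\Rightarrow$(i) to $Q=M_Q$; the orthogonal split $\Delta_Q=I\sqcup J$ with $J\subset\Delta_P$; and the factorisation $M\cdot M'_I=M_Q$ with kernel $M\cap M'_I$, so that $\sigma$ extends (trivially on $M'_I\supset N\cap M_Q$) as soon as $\sigma|_{M\cap M'_I}=1$. All of these steps check out.

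Two steps, however, are genuine gaps, both inside the part you yourself flag as the technical heart. The phrase ``one checks $M\cap M'_I$ is generated by $Z\cap M'_\alpha$ ($\alpha\in\Delta_Q\setminus\Delta_P$) and $M'_\beta$ ($\beta\in\Delta_P\cap I$)'' rests on the identity $Z\cap M'_I=\langle Z\cap M'_\gamma:\gamma\in I\rangle$. For a split group this follows because the simple coroots form a $\mathbb Z$-basis of the coroot lattice of the simply connected cover of $M_I$, hence the torus there is a direct product of the $\gamma^\vee(\mathbf G_m)$; but the paper's $G$ is an arbitrary connected reductive $F$-group, where $Z$ strictly contains $T$ and $M'_\gamma$ is the image of the \emph{isotropic} simply connected cover, so the identity needs an argument drawing on \cite[II.4--II.7]{MR3600042}. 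Likewise the commutator identity $u_\beta(x)\,\alpha^\vee(c)\,u_\beta(-x)=u_\beta\bigl(x(1-c^{\langle\beta,\alpha^\vee\rangle})\bigr)\alpha^\vee(c)$ is a Chevalley formula: it presupposes that $U_\beta$ is a one-parameter vector group on which $Z$ acts through the scalar $\beta$. For a non-split $G$, the relative root groups $U_\beta$ are not of that form (and are not even abelian when $2\beta$ is a root), and $Z\cap M'_\alpha$ is larger than $\alpha^\vee(F^\times)$. The correct form of your step is that the conjugation action of $Z\cap M'_\alpha$ on $U_\beta$ is nontrivial (because $\beta|_{Z\cap M'_\alpha}$ is nontrivial whenever $\alpha,\beta$ are not orthogonal) and that the commutators $[t,u]$, $t\in Z\cap M'_\alpha$, $u\in U_\beta$, generate $U_\beta$. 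Granting those two structural facts --- which is precisely what the paper imports from \cite{MR3600042} --- your inductive propagation along the Dynkin diagram of $I$ works and closes the proof; as written, the sketch is complementary to, rather than independent of, that reference.
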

That comes from \cite[II.7 Proposition]{MR3600042} when $R=C$, but  the result is valid  for any  commutative ring $R$ \cite[II.7 first remark 2]{MR3600042}.  Besides, the extension of $\sigma$ to $Q$, when the conditions are fulfilled,  is unique; we write it $e_Q(\sigma)$; it is trivial on $N_Q$ and we view it equally as a representation of $M_Q$. The $R$-representation  $e_Q(\sigma)$ of $Q$ or $M_Q$ is smooth, or admissible, or irreducible (when $R$ is a field) if and only if $\sigma$ is.
Let $P_\sigma=M_\sigma N_\sigma$ be the standard parabolic subgroup of $G$ with $\Delta_{P_\sigma}=\Delta_\sigma $ where
\begin{equation}\label{eq:Ds}
\Delta_\sigma=\{\alpha \in \Delta\setminus \Delta_P\ | \ Z\cap M'_\alpha \ \text{acts trivially on } \ \sigma\}.  
\end{equation} 
There is a largest parabolic subgroup $P(\sigma)$ containing $P$ to which $\sigma$ extends: $\Delta_{P(\sigma)} =\Delta_P \cup \Delta_\sigma$. Clearly when $P\subset Q \subset P(\sigma)$, the restriction to $Q$ of 
$e_{P(\sigma)}(\sigma)$ is $e_Q(\sigma)$. If there is no risk of ambiguity, we write
$$e(\sigma)=e_{P(\sigma)}(\sigma).$$
 \begin{definition}\label{def:Gtriple} An {\bf $R[G]$-triple} is a triple $(P,\sigma,Q)$ made out of  a standard parabolic subgroup $P=MN$ of $G$, a smooth $R$-representation of $M$, and a parabolic subgroup $Q$ of $G$ with $P\subset Q \subset P(\sigma)$.  To an $R[G]$-triple $(P,\sigma,Q)$ is associated a smooth  $R$-representation of $G$:  
 $$I_G(P,\sigma,Q) =\Ind_{P(\sigma)}^G(e(\sigma)\otimes \St_Q^{P(\sigma)})$$
where $\St_Q^{P(\sigma)}$ is the quotient of $ \Ind_Q^{P(\sigma)}\charone$, $\charone$ denoting the trivial $R$-representation of   $Q$,  by the sum  of its subrepresentations $\Ind_{Q'}^{P(\sigma)}\charone$, the sum being over the set  of  parabolic subgroups $Q'$ of $G$ with $Q\subsetneq Q'\subset P(\sigma)$. 
\end{definition}

Note that $I_G(P,\sigma,Q) $ is naturally isomorphic to the quotient of $\Ind_{Q}^G(e_Q(\sigma))$ by the sum of its subrepresentations $\Ind_{Q'}^G(e_{Q'}(\sigma))$ for $Q\subsetneq Q'\subset P(\sigma)$ by Lemma \ref{lemma:useful}.  

 \bigskip It might happen that $\sigma$ itself has the form $e_{P}(\sigma_1)$ for some standard parabolic subgroup $P_1=M_1N_1$ contained in $P$  and some $R$-representation $\sigma_1$ of $M_1$. 
 In that case, $P(\sigma_1)=P(\sigma)$ and $e(\sigma)=e(\sigma_1)$. We say that $\sigma$   is  {\bf $e$-minimal} if  $\sigma=e_{P}(\sigma_1)$ implies $P_1=P, \sigma_1=\sigma$.  
 
 \begin{lemma}\label{lemma:min}Let $P=MN $ be a standard parabolic subgroup of $G$ and  let $\sigma$ be an  $R$-representation of $M$. There exists a unique standard parabolic subgroup  $P_{\min,\sigma}=M_{\min,\sigma}N_{\min,\sigma}$  of $G$ and a unique  $e$-minimal    representation of $\sigma_{\min}$ of $M_{\min,\sigma}$ with $\sigma= e_P(\sigma_{\min})$. Moreover $P(\sigma)=P(\sigma_{\min})$ and $e(\sigma)=e(\sigma_{\min})$.
  \end{lemma}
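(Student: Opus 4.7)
The plan is to identify $(P_{\min,\sigma}, \sigma_{\min})$ as the smallest admissible factorization of $\sigma$ through a sub-Levi. Concretely, let $\mathcal{F}(\sigma)$ denote the family of standard parabolic subgroups $P_1 = M_1 N_1$ of $G$ with $P_1 \subset P$ such that $\sigma = e_P(\sigma|_{M_1})$. Applying the uniqueness clause of Proposition~\ref{prop:2.1} inside $M$ (to the parabolic $P_1 \cap M = M_1(N_1 \cap M)$ with Levi $M_1$ and unipotent radical $N_1 \cap M$), one sees that $P_1 \in \mathcal{F}(\sigma)$ is equivalent to $N_1 \cap M$ acting trivially on $\sigma$, i.e.\ to $U_\beta$ acting trivially on $\sigma$ for every root $\beta \in \Phi_M^+ \setminus \Phi_{M_1}^+$. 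In particular $P \in \mathcal{F}(\sigma)$.

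The crucial step is that $\mathcal{F}(\sigma)$ is stable under intersection: given $P_1, P_2 \in \mathcal{F}(\sigma)$, let $P_3$ be the standard parabolic with $\Delta_{P_3} = \Delta_{P_1} \cap \Delta_{P_2}$. Any $\beta \in \Phi_M^+ \setminus \Phi_{M_3}^+$ has some simple-root coefficient outside $\Delta_{P_1} \cap \Delta_{P_2}$, so $\beta \in \Phi_M^+ \setminus \Phi_{M_i}^+$ for $i = 1$ or $2$, whence $U_\beta \subset N_i \cap M$ acts trivially on $\sigma$. Thus $P_3 \in \mathcal{F}(\sigma)$, and since $\Delta_P$ is finite, $\mathcal{F}(\sigma)$ admits a unique minimal element $P_{\min,\sigma} = M_{\min,\sigma} N_{\min,\sigma}$. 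Setting $\sigma_{\min} := \sigma|_{M_{\min,\sigma}}$ gives $\sigma = e_P(\sigma_{\min})$ by construction, and any pair $(P_1, \sigma_1)$ with $\sigma = e_P(\sigma_1)$ satisfies $P_1 \in \mathcal{F}(\sigma)$, hence $P_1 \supset P_{\min,\sigma}$ and $\sigma_1 = \sigma|_{M_1}$.

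For the $e$-minimality of $\sigma_{\min}$, suppose $\sigma_{\min} = e_{P_{\min,\sigma}}(\tau)$ with $\tau$ on $M_2$ and $P_2 \subset P_{\min,\sigma}$. Any root subgroup $U_\beta \subset N_2 \cap M$ (with $\beta \in \Phi_M^+ \setminus \Phi_{M_2}^+$) lies either in $N_2 \cap M_{\min,\sigma}$, where it acts trivially on $\sigma_{\min}$ (hence on $\sigma$), or in $N_{\min,\sigma} \cap M$, where it acts trivially on $\sigma$ directly; this transitivity shows $P_2 \in \mathcal{F}(\sigma)$, forcing $P_2 = P_{\min,\sigma}$ and $\tau = \sigma_{\min}$. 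The same argument establishes the uniqueness of an $e$-minimal factorization. Finally, $P(\sigma) = P(\sigma_{\min})$: since $Z \cap M'_\alpha \subset Z \subset M_{\min,\sigma}$, its actions on $\sigma$ and $\sigma_{\min}$ coincide, and Proposition~\ref{prop:2.1} applied to the extension $\sigma = e_P(\sigma_{\min})$ ensures that $Z \cap M'_\alpha$ acts trivially on $\sigma$ for every $\alpha \in \Delta_P \setminus \Delta_{P_{\min,\sigma}}$; combining these, both $\Delta_{P(\sigma)}$ and $\Delta_{P(\sigma_{\min})}$ equal $\Delta_P \cup \{\alpha \in \Delta : Z \cap M'_\alpha \text{ acts trivially on } \sigma\}$. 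The equality $e(\sigma) = e(\sigma_{\min})$ then follows from the uniqueness clause of Proposition~\ref{prop:2.1}, both sides being extensions of $\sigma_{\min}$ to $M_{P(\sigma)}$ trivial on the relevant unipotent. The main subtlety is the stability of $\mathcal{F}(\sigma)$ under intersection, which is a purely combinatorial assertion about the supports of positive roots; everything else reduces to the uniqueness of extensions in Proposition~\ref{prop:2.1}.
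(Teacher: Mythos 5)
Your proof is correct, and it takes a genuinely different route from the paper's. The paper's proof is extremely terse: it simply writes down the explicit formula $\Delta_{P_{\min,\sigma}}=\{\alpha\in\Delta_P : Z\cap M'_\alpha \text{ does not act trivially on }\sigma\}$, declares $\sigma_{\min}=\sigma|_{M_{\min,\sigma}}$, and writes down $\Delta_{\sigma_{\min}}$, implicitly leaning on the structural results of \cite[II.7]{MR3600042} (where Proposition~\ref{prop:2.1} is proved) to know that this prescription actually yields an $e_P$-factorization of $\sigma$. You instead characterize membership in $\mathcal{F}(\sigma)$ by triviality of $\sigma$ on the root subgroups $U_\beta$, $\beta\in\Phi_M^+\setminus\Phi_{M_1}^+$ (which is exactly the content of the uniqueness clause of Proposition~\ref{prop:2.1} applied inside $M$), and then exploit that this condition is purely about supports of positive roots to show $\mathcal{F}(\sigma)$ is a meet-closed family; the minimum is then automatic, and $e$-minimality and uniqueness follow by the same root-subgroup bookkeeping. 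This is more self-contained and makes the combinatorial input explicit; the trade-off is that it does not directly reproduce the paper's closed-form description of $\Delta_{P_{\min,\sigma}}$ in terms of $Z\cap M'_\alpha$ (your argument only yields the inclusion $\Delta_{P_{\min,\sigma}}\supset\{\alpha\in\Delta_P: Z\cap M'_\alpha\text{ not trivial}\}$), but this is immaterial since the lemma as stated asks only for existence and uniqueness. The deductions of $P(\sigma)=P(\sigma_{\min})$ and $e(\sigma)=e(\sigma_{\min})$ at the end are carried out correctly and match what the paper relies on.
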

\begin{proof}  We have  \begin{equation}\Delta_{P_{\min, \sigma}}=\{\alpha\in \Delta_P \ | \ \ Z\cap M'_\alpha \ \text{does not act trivially on} \ \sigma\}, 
\end{equation} 
 $\sigma_{\min}$ is  the restriction of $\sigma $ to $M_{\min,\sigma}$, and    \begin{equation}\Delta_{  \sigma_{\min}}=\{\alpha\in \Delta  \ | \ \ Z\cap M'_\alpha \ \text{ acts trivially on } \ \sigma\}.  
\end{equation} 
\end{proof}

\begin{lemma} \label{lemma:2.2}Let $P=MN $ be a standard parabolic subgroup of $G$ and  $\sigma$ an $e$-minimal $R$-representation of $M$. Then $\Delta_P$ and $\Delta_\sigma$ are orthogonal. 
\end{lemma}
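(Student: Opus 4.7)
The plan is to argue by contradiction: suppose there exist $\alpha \in \Delta_P$ and $\beta \in \Delta_\sigma$ that are not orthogonal; I aim to derive that $Z \cap M'_\alpha$ acts trivially on $\sigma$, which, by the explicit form of $\Delta_{P_{\min,\sigma}}$ recorded in Lemma~\ref{lemma:min}, contradicts $e$-minimality.

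The first move is to pass to the rank-$2$ standard Levi $M_{\{\alpha,\beta\}}$ of $G$; its intersection with $P$ is the standard parabolic with Levi component $M_\alpha = M \cap M_{\{\alpha,\beta\}}$. Since $\beta \in \Delta_\sigma$, the group $Z \cap M'_\beta$ acts trivially on $\sigma|_{M_\alpha}$, so Proposition~\ref{prop:2.1} applied inside $M_{\{\alpha,\beta\}}$ yields an extension $\tau := e_{M_{\{\alpha,\beta\}}}(\sigma|_{M_\alpha})$ of $\sigma|_{M_\alpha}$ on which the unipotent radical of $P \cap M_{\{\alpha,\beta\}}$ acts trivially. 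By the construction of the extension recalled in \cite[II.7]{MR3600042}, the full subgroup $M'_\beta$ acts trivially on $\tau$. Non-orthogonality means the rank-$2$ subsystem generated by $\alpha$ and $\beta$ is irreducible (of type $A_2$, $B_2$, or $G_2$), so $M'_{\{\alpha,\beta\}}$ is almost $F$-simple. A Weyl lift $n_\alpha \in M'_\alpha \cap \mathcal N$ of $s_\alpha$ lies in $M \subset M_{\{\alpha,\beta\}}$, so conjugation by $n_\alpha$ operates on $\tau$ via the well-defined $\tau(n_\alpha)$ and preserves triviality, giving $\tau(M'_{s_\alpha\beta}) = \Id$. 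Iterating with a Weyl lift of $s_\beta$ (which acts trivially on $\tau$ since it lies in $M'_\beta$), $\tau$ annihilates $M'_\gamma$ for every $\gamma$ in the Weyl orbit of $\beta$ within the rank-$2$ subsystem; in type $A_2$ this orbit contains $\alpha$ and I conclude immediately. In types $B_2$ and $G_2$, where $\alpha$ and $\beta$ may lie in distinct Weyl orbits, I would bridge the two orbits via the Chevalley commutator relations among root subgroups --- for instance in $B_2$ with $\beta$ short, $\tau$ kills every short-root subgroup, and then $U_\alpha$ lies in the commutator of two of them (with a symmetric identity for $U_{-\alpha}$), forcing $\tau(M'_\alpha) = \Id$; the $G_2$ case is handled analogously.

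Once $\tau(M'_\alpha) = \Id$ is established, the inclusion $Z \cap M'_\alpha \subset M_\alpha$ gives $\sigma(Z \cap M'_\alpha) = \Id$, which is the sought contradiction. The delicate step in the plan is this last propagation from one Weyl orbit to the other in the multiply-laced rank-$2$ cases; everything else reduces to a direct application of Proposition~\ref{prop:2.1}, Lemma~\ref{lemma:min}, and the description of the extension in \cite[II.7]{MR3600042}.
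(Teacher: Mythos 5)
Your overall strategy is reasonable: reduce to the rank-two Levi $M_{\{\alpha,\beta\}}$, form the extension $\tau=e_{M_{\{\alpha,\beta\}}}(\sigma|_{M_\alpha})$, show $\tau$ kills $M'_\alpha$, and contradict $e$-minimality via the formula for $\Delta_{P_{\min,\sigma}}$ in Lemma~\ref{lemma:min}. (The paper itself does not prove Lemma~\ref{lemma:2.2}; it cites \cite[II.7 Corollary 2]{MR3600042}.) Your $A_2$ case is correct as written. The issue is the bridge you propose when $\alpha$ and $\beta$ have different lengths. In $B_2$ with $\alpha$ long and $\beta$ short, the Chevalley commutator producing $U_\alpha$ from two short-root subgroups is $[x_{\alpha+\beta}(s),x_{-\beta}(t)]=x_\alpha(\pm 2st)$, with structure constant $\pm 2$; in $G_2$ the analogous constant is $\pm 3$. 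So $U_\alpha$ is \emph{not} generated by short-root subgroups when $\charf F=2$ (resp.\ $\charf F=3$); concretely, in $Sp_4(F)$ with $\charf F=2$ the short-root subgroups generate a proper subgroup. Since the paper allows $\charf F=p$ for any prime $p$, this is a genuine gap. You also omit the non-reduced possibility $BC_2$ for a relative root system, where the commutator structure of root groups is more delicate still.

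The gap is fixable, and the fix makes the Chevalley computations unnecessary. You already record that the whole unipotent radical $N_\alpha\cap M_{\{\alpha,\beta\}}$ acts trivially on $\tau$, but you then only exploit $M'_\beta$. Use the full radical: $\tau$ kills $U_\gamma$ for \emph{every} reduced positive root $\gamma\neq\alpha$ of $M_{\{\alpha,\beta\}}$, not only those in the $\mathbb W$-orbit of $\beta$. Since $\Ker\tau$ is normal in $M_{\{\alpha,\beta\}}$, it is stable under conjugation by any lift $n_w\in\mathcal N\cap M_{\{\alpha,\beta\}}$ of $w\in\mathbb W_{\{\alpha,\beta\}}$, and $n_wU_\gamma n_w^{-1}=U_{w\gamma}$. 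The rank-two system generated by $\{\alpha,\beta\}$ is irreducible (by non-orthogonality), so its Weyl group acts transitively on roots of each length; the root $\gamma_0:=s_\beta(\alpha)=\alpha-\langle\alpha,\beta^\vee\rangle\beta$ is a positive root distinct from $\alpha$, of the same length as $\alpha$, and $s_\beta(\gamma_0)=\alpha$. Hence $U_{\pm\alpha}=n_\beta U_{\pm\gamma_0}n_\beta^{-1}\subset\Ker\tau$, so $M'_\alpha\subset\Ker\tau$ and a fortiori $\tau(Z\cap M'_\alpha)=\Id$, giving the contradiction. This version uses only the root-datum identity $nU_\gamma n^{-1}=U_{w\gamma}$ and needs no case analysis on the Dynkin type or the characteristic.
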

 
 That comes from \cite[II.7 Corollary 2]{MR3600042}. That corollary of loc. cit. also shows that when $R$ is a field and $\sigma$ is supercuspidal, then $\sigma$ is $e$-minimal.
 Lemma \ref{lemma:2.2} shows that $\Delta_{P_\sigma}$ and $\Delta_{\sigma_{\min}}$ are orthogonal. 
 
 Note that when   $\Delta_P$ and $\Delta_\sigma$ are orthogonal of union $\Delta=\Delta_P\sqcup \Delta_\sigma$, then $G=P(\sigma)= M M'_{ \sigma}$ and $e(\sigma) $ is the $R$-representation of $G$ simply obtained by extending $\sigma$ trivially on $M'_{ \sigma}$.

\begin{lemma} \label{lemma:2.3} Let $(P,\sigma,Q)$ be an $R[G]$-triple.  Then $(P_{\min,\sigma},\sigma_{\min},Q)$ is an  $R[G]$-triple
  and $I_G(P,\sigma,Q) =I_G(P_{\min,\sigma},\sigma_{\min},Q)$.
\end{lemma}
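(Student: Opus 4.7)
The plan is to derive everything directly from Lemma~\ref{lemma:min}, which already packages the crucial information about $P_{\min,\sigma}$ and $\sigma_{\min}$; there is essentially no real content to add.

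First, I would check that $(P_{\min,\sigma},\sigma_{\min},Q)$ is an $R[G]$-triple in the sense of Definition~\ref{def:Gtriple}. The description of $\Delta_{P_{\min,\sigma}}$ in Lemma~\ref{lemma:min} shows $\Delta_{P_{\min,\sigma}}\subset \Delta_P$, hence $P_{\min,\sigma}\subset P\subset Q$. Lemma~\ref{lemma:min} also asserts $P(\sigma_{\min})=P(\sigma)$, so the hypothesis $Q\subset P(\sigma)$ on the original triple gives $Q\subset P(\sigma_{\min})$. Finally, $\sigma_{\min}$ is a smooth $R$-representation of $M_{\min,\sigma}$ since it is the restriction of the smooth representation $\sigma$. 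All the conditions of Definition~\ref{def:Gtriple} are therefore satisfied.

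Second, the equality $I_G(P,\sigma,Q)=I_G(P_{\min,\sigma},\sigma_{\min},Q)$ follows by directly inspecting the defining formula. By Lemma~\ref{lemma:min} we have $P(\sigma)=P(\sigma_{\min})$ and $e(\sigma)=e(\sigma_{\min})$; moreover, the Steinberg quotient $\St_Q^{P(\sigma)}$ depends only on $Q$ and the ambient parabolic $P(\sigma)$, both of which are common to the two triples. Unwinding Definition~\ref{def:Gtriple}, both expressions thus reduce to $\Ind_{P(\sigma)}^G\bigl(e(\sigma)\otimes \St_Q^{P(\sigma)}\bigr)$, so the two representations are literally equal.

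I do not foresee any genuine obstacle: the statement is pure bookkeeping built on top of Lemma~\ref{lemma:min}, into which all of the combinatorial content has already been absorbed.
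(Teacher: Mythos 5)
Your proof is correct and takes essentially the same approach as the paper, which simply invokes $P(\sigma)=P(\sigma_{\min})$ and $e(\sigma)=e(\sigma_{\min})$ from Lemma~\ref{lemma:min}; you merely spell out the (routine) verification that $(P_{\min,\sigma},\sigma_{\min},Q)$ is an $R[G]$-triple, which the paper leaves implicit.
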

\begin{proof} We already saw that $P(\sigma)=P(\sigma_{\min})$ and $e(\sigma)=e(\sigma_{\min})$. 
\end{proof}

\subsection{Hecke algebras}\label{subsec:K,Hecke}
We fix a  special parahoric subgroup $\mathcal{K}$ of $G$ fixing a special vertex $x_0$ in the apartment $\mathcal{A}$ associated to $T$ in the  Bruhat-Tits building of the adjoint group of $G$.
If $V$ is an irreducible smooth $C$-representation of $\mathcal K$, we have the  compactly induced representation $\ind_{\mathcal{K}}^G V$ of $G$, its endomorphism algebra $\mathcal{H}_G(\mathcal K,V)$ and the centre $\mathcal Z_G(\mathcal K,V)$ of $\mathcal{H}_G(\mathcal K,V)$. For a standard parabolic subgroup $P=MN$ of $G$, the group $M\cap \mathcal K$ is a special parahoric subgroup of $M$ and $V_{N\cap \mathcal K}$ is an irreducible smooth $C$-representation of $M\cap\mathcal K$. For $W\in \Mod_C^\infty(M)$, there is an injective algebra homomorphism $$\mathcal{S}_P^G:\mathcal{H}_G(\mathcal K,V)\to \mathcal{H}_M(M\cap \mathcal K,V_{N\cap \mathcal K})$$ for which the natural isomorphism 
 $\Hom_G(\ind_K^G V, \Ind_P^G W)\simeq 
\Hom_M(\ind_{M\cap\mathcal K}^M V_{N\cap \mathcal K},  W)$  is $\mathcal{S}_P^G$-equivariant  \cite{MR3331726},  \cite{MR3001801}. Moreover. $\mathcal{S}_P^G (\mathcal Z_G(\mathcal K,V)) \subset  \mathcal Z_M(M\cap \mathcal K,V_{N\cap \mathcal K})$.

Let  $Z(M)$ denote  the maximal split central subtorus of $M$; it is equal to the group of $F$-points of the connected component in $\mathbf T$ of  $\bigcap_{\alpha \in \Delta_{M}} \Ker \alpha$.
Let $z\in Z(M)$.
We say that $z$ strictly contracts an open compact subgroup $N_0$ of $N$ if the sequence $(z^k N_0 z^{-k})_{k\in \mathbb N}$ is strictly decreasing of intersection $\{1\}$.
We say that $z$ strictly contracts $N$ if there exists an open compact subgroup $N_0\subset N$ such that $z$ strictly contracts $N_0$.
Choose $z\in Z(M)$ which strictly contracts $N$.
Let $\tau\in \mathcal{Z}_M(M\cap \mathcal{K},V_{N\cap \mathcal{K}})$ be a non-zero element which supports on $(M\cap \mathcal{K})z(M\cap \mathcal{K})$. (Such an element is unique up to constant multiplication.)
Then $\tau\in \Ima\mathcal{S}_P^G$ and the algebra $\mathcal{H}_M(\mathcal{K}\cap M,V_{N\cap \mathcal{K}})$ (resp.\ $\mathcal{Z}_M(M\cap \mathcal{K},V_{N\cap \mathcal{K}})$) is the localization of $\mathcal{H}_G(\mathcal{K},V)$ (resp.\ $\mathcal{Z}_G(\mathcal{K},V)$) at $\tau$.

   \section{Lattice of subrepresentations of $\Ind_P^G \sigma$, $\sigma$ irreducible admissible}\label{sec:3}
 
 \subsection{Result}\label{S:3.1}
 This section is a direct complement to \cite{MR3600042}. Our coefficient ring is $R=C$. We are given a standard parabolic subgroup $P_1=M_1N_1$ of $G$ and an irreducible admissible $C$-representation $\sigma_1$ of $M_1$. Our goal is to describe the lattice of subrepresentations of $\Ind_{P_1}^G \sigma_1$.  We shall see that $\Ind_{P_1}^G \sigma_1$ has finite length and is multiplicity free, meaning that its irreducible constituents occur with multiplicity $1$. We recall the main result of \cite{MR3600042} :

\begin{theorem}[Classification Theorem]\label{thm:3.1}
(A)   Let $P=MN $ be a standard parabolic subgroup of $G$ and  $\sigma$ a supercuspidal $C$-representation of $M$. Then $\Ind_P^G \sigma\in \Mod_C^\infty(G)$ has   finite length and is multiplicity free of irreducible constituents  the representations $I_G(P,\sigma,Q)$ for $P\subset Q \subset P(\sigma)$, and all $I_G(P,\sigma,Q)$ are admissible.

(B) Let $\pi$ be an irreducible admissible $C$-representation  of $G$. Then, there is a $C[G]$- triple $(P,\sigma,Q)$ with $\sigma$ supercuspidal,  such that $\pi$ is isomorphic to $I_G(P,\sigma,Q)$ and $\pi$ determines $P,Q$ and the isomorphism class of $\sigma$.
\end{theorem}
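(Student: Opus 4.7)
The plan is to follow the strategy of \cite{MR3600042}, proving parts (A) and (B) simultaneously by induction on the semisimple $F$-rank of $G$. The base case (rank zero, so $G=Z$) is trivial since there is only the trivial parabolic and every irreducible admissible representation is supercuspidal by definition. Throughout the inductive step, I would exploit Lemma \ref{lemma:2.2}: since $\sigma$ is supercuspidal it is $e$-minimal, hence $\Delta_P$ and $\Delta_\sigma$ are orthogonal, so $P(\sigma)$ has a near-product decomposition $P(\sigma) = M \cdot M'_{\Delta_\sigma}$ with $M$ and $M'_{\Delta_\sigma}$ commuting modulo a finite central subgroup.

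For part (A), I would first analyze $e(\sigma)\otimes \St_Q^{P(\sigma)}$ as a representation of $P(\sigma)$. Using the orthogonality above, this factors essentially as an external tensor product: $e(\sigma)$ is supported on the $M$-factor, while $\St_Q^{P(\sigma)}$ is supported on the $M'_{\Delta_\sigma}$-factor. So its irreducibility and admissibility reduce to the analogous properties of mod-$p$ Steinberg representations $\St_{Q'}^{H}$ for $H=M'_{\Delta_\sigma}$, which one establishes using the Bruhat filtration of $\Ind_{Q'}^H \charone$ together with the combinatorial lemma that the subrepresentations $\Ind_{Q''}^H \charone$ for $Q''\supsetneq Q'$ form a distributive lattice intersecting correctly. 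Next, for $\Ind_{P(\sigma)}^G$ of this irreducible, I would verify irreducibility by computing the Hecke algebra $\End_G(\ind_{\mathcal K}^G V)$ for a suitable weight $V$ of the special parahoric $\mathcal K$ via the satellite map $\mathcal S_{P(\sigma)}^G$ of \S\ref{subsec:K,Hecke}, using the maximality of $P(\sigma)$ to force the induction to be irreducible. Finally, the filtration claim follows by rewriting $\Ind_P^G\sigma = \Ind_{P(\sigma)}^G(e(\sigma)\otimes \Ind_{P/N_{P(\sigma)}}^{P(\sigma)/N_{P(\sigma)}} \charone)$ (via Lemma \ref{lemma:useful}) and then applying the Steinberg-type filtration of the inner induced trivial representation, whose successive quotients are exactly the $\St_Q^{P(\sigma)}$ indexed by $P\subset Q\subset P(\sigma)$.

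For part (B), I would prove existence of a triple by induction on semisimple rank: either $\pi$ is itself supercuspidal, in which case the triple is $(G,\pi,G)$; or, by definition of supercuspidality, $\pi$ is a subquotient of some $\Ind_{P_1}^G \tau$ with $\tau$ irreducible admissible on a proper Levi $M_1$. By the inductive hypothesis applied to $M_1$, $\tau \simeq I_{M_1}(P_0,\sigma,Q_0)$ for a supercuspidal $\sigma$, and substituting this into $\Ind_{P_1}^G$ and using transitivity of induction together with part (A) (already proved for the current $G$) identifies $\pi$ with some $I_G(P,\sigma,Q)$. Uniqueness of $(P,Q)$ up to conjugation, and of the isomorphism class of $\sigma$, I would obtain by computing an appropriate invariant of $I_G(P,\sigma,Q)$: for instance, one can read off $P(\sigma)$ from the largest standard parabolic on which the Jacquet-type functors are nonzero, then read off $P$ and $Q$ from the combinatorial position of $I_G(P,\sigma,Q)$ inside the filtration of $\Ind_P^G\sigma$, and finally read off $\sigma$ itself by recovering it from $\tau = I_{M_1}(P\cap M_1,\sigma, Q\cap M_1)$ (which anticipates Theorem \ref{thm:1.1}).

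The main obstacle, and the technical heart of the argument, is the irreducibility claim in (A): controlling $\Ind_{P(\sigma)}^G$ of $e(\sigma)\otimes\St_Q^{P(\sigma)}$ in characteristic $p$. Unlike in the complex case, Jacquet functors alone are too weak (neither $L_P^G$ nor $R_P^G$ admits an easy direct formula in general, as emphasized in the introduction), so one is forced to work at the level of Hecke algebras $\mathcal H_G(\mathcal K,V)$ and their localizations at central elements $z\in Z(M)$ strictly contracting $N$. The matching of constituents via $\mathcal S_P^G$ and the fact that the localized algebras are commutative in the supercuspidal case give the necessary decomposition, but orchestrating this with the combinatorics of the lattice of parabolics $Q$ between $P$ and $P(\sigma)$ is the delicate step.
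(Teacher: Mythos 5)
The present paper does not prove Theorem~\ref{thm:3.1}: it is explicitly recalled as ``the main result of \cite{MR3600042}'' and no proof is given here. So there is no internal proof to compare your attempt against; what you have written is a sketch of the argument of \cite{MR3600042}, not of anything in this paper.

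As a summary of the strategy of \cite{MR3600042}, your outline is broadly faithful in its first half: the exploitation of $e$-minimality and the orthogonality of $\Delta_P$ and $\Delta_\sigma$ to decompose $e(\sigma)\otimes\St_Q^{P(\sigma)}$ as a near-external tensor product, the reduction of irreducibility and admissibility of $\St_Q^{P(\sigma)}$ to the results of Grosse-Kl\"onne and Ly, and the reduction of the filtration of $\Ind_P^G\sigma$ to the Steinberg filtration of $\Ind_{P\cap M_{P(\sigma)}}^{M_{P(\sigma)}}\charone$ via Lemma~\ref{lemma:useful} are all part of the real argument. However, for the hardest step --- irreducibility of $\Ind_{P(\sigma)}^G$ applied to $e(\sigma)\otimes\St_Q^{P(\sigma)}$ --- your description as ``localize $\mathcal{S}_{P(\sigma)}^G$ and use maximality'' substantially undersells the actual mechanism: \cite{MR3600042} relies on a change-of-weight theorem comparing $\ind_{\mathcal K}^G V$ and $\ind_{\mathcal K}^G V'$ for weights differing in their $Z$-part, and a careful examination of the $\mathcal{H}_G(\mathcal K,V)$-module generated by a vector, not simply on the injectivity of $\mathcal S^G_{P(\sigma)}$.

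The more serious issue is the uniqueness step in part~(B). You propose reading off $P$, $Q$, and $\sigma$ from ``Jacquet-type functors'' applied to $I_G(P,\sigma,Q)$, and parenthetically note that this ``anticipates Theorem~\ref{thm:1.1}.'' That anticipation is circular in the logical sense relevant here: Theorem~\ref{thm:1.1} is a result of the \emph{present} paper, and its proof (Corollaries~\ref{cor:6.4} and~\ref{cor:6.5}) explicitly invokes the Classification Theorem~\ref{thm:3.1}. You cannot use the adjoint-functor computations to establish uniqueness in Theorem~\ref{thm:3.1}. In \cite{MR3600042}, uniqueness is instead obtained by isolating invariants directly at the Hecke-algebra level --- one recovers the Satake parameter attached to $\pi$ for suitable weights $V$ of $\mathcal K$, and from these data one reconstructs $P(\sigma)$, the supercuspidal support $\sigma$ on a Levi, and then the combinatorial position $Q$ --- without appealing to $L_{P_1}^G$ or $R_{P_1}^G$ of $\pi$.
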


By the classification theorem, there is a standard parabolic subgroup $P=MN $ of $G$ and   a supercuspidal $C$-representation $\sigma$ of $M$ such that $\sigma_1$ occurs in $\Ind_{P\cap M_1}^{M_1}\sigma$. 
More precisely, if  $P(\sigma)$ is the largest standard parabolic subgroup of $G$  to which $\sigma$ extends, then by Proposition \ref{prop:2.1}, $P(\sigma)\cap M_1$ is the largest standard parabolic subgroup of $M_1$ to which $\sigma$ extends and $$\sigma_1 \simeq I_{M_1}(P\cap M_1,\sigma,Q)\simeq \Ind_{P(\sigma)\cap M_1}^{M_1}(e_{P(\sigma)\cap M_1}(\sigma)\otimes \St_Q^{P(\sigma)\cap M_1})$$  for some parabolic subgroup $Q$ of $M_1$ with $(P\cap M_1)\subset Q \subset (P(\sigma)\cap M_1)$. By transitivity of the parabolic induction,
$$\Ind_{P_1}^G \sigma_1\simeq \Ind_{P(\sigma) }^{G}(e (\sigma)\otimes \Ind^{P(\sigma)}_{P(\sigma)\cap M_1}\St_Q^{P(\sigma)\cap M_1}),$$
and we need to analyse this representation.
Our analysis  is based on \cite[\S 10]{MR2845621}. We recall   the structure of the lattice of subrepresentations of a finite length multiplicity free representation $X$. Let $J$ be the set of its irreducible constituents. For $j\in J$, there is a unique subrepresentation $X_j$ of $X$ with cosocle $j$ - it is the smallest  subrepresentation of $X$ with $j$ as a quotient. Put the order relation $\leq $ on $J$, where $i\leq j$ if $i$ is a constituent of $X_j$. Then the lattice  of subrepresentations of $X$ is isomorphic to the lattice  of lower sets in $(J,\leq )$ - recall that such a lower set is a subset $J'$ of $J$ such that if $j_1\in J, j_2\in J'$ and $j_1\leq j_2$ then $j_1\in J'$. A subrepresentation of $X$ is sent to the lower set made out of its irreducible constituents, and a lower set $J'$ of $J$ is sent to the sum of the subrepresentations $X_j$ for $j\in J'$.   We have $X_j=j$ iff $j$ is minimal in $(J, \leq)$ and $X_j=X$ iff $j$ is maximal in $(J, \leq)$. The socle of $X$ is the direct sum of the minimal $ j\in (J, \leq)$ and the cosocle of $X$ is the direct sum of the maximal $j\in (J, \leq)$. 

In the sequel $J$ will often be identified with $\mathcal P (I)$ for some subset $I$ of $\Delta$, both equipped with the order relation reverse to the inclusion. Thus we rather talk of upper sets in $\mathcal P (I)$ (for the inclusion). In that case the socle $I$ of $X$ and the cosocle $\emptyset$ of $X$  are both irreducible.

\begin{theorem}\label{thm:3.2}   With the above notations, $\Ind_{P_1}^G \sigma_1$ has finite length and is multiplicity free, of irreducible constituents   the $I_G(P,\sigma,Q')$ where $Q'$ is a parabolic subgroup of $G$ satisfying $P\subset Q'\subset P(\sigma)$ and $P_1\cap Q'=Q$. Sending $I_G(P,\sigma,Q')$ to $\Delta_{Q'}\cap (\Delta- \Delta_{P_1})$ gives an isomorphism of the lattice   of subrepresentations 
of $\Ind_{P_1}^G \sigma_1$  onto the lattice of upper sets in  $ \Delta_{P(\sigma)}\cap (\Delta- \Delta_{P_1})$.
\end{theorem}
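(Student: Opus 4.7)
The plan is to realize $\Ind_{P_1}^G \sigma_1$ as an explicit subquotient of $\Ind_P^G \sigma$, and then to read off its subrepresentation lattice from the known lattice of $\Ind_P^G \sigma$ supplied by Theorem~\ref{thm:3.1}(A). The starting observation is that, by the choice of $P$ as the standard parabolic of $G$ corresponding to the parabolic of $M_1$ appearing in the classification of $\sigma_1$, one has $\Delta_P \subset \Delta_{M_1}$. Hence $P \subset P_1$ and $P = (P\cap M_1)N_1$, and by transitivity of parabolic induction
$$\Ind_P^G \sigma \;=\; \Ind_{P_1}^G \Ind_{P\cap M_1}^{M_1}\sigma.$$

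First I would unwind $\sigma_1 = I_{M_1}(P\cap M_1,\sigma,Q)$ using Lemma~\ref{lemma:useful} and transitivity inside $M_1$, writing
$$\sigma_1 \;=\; \Ind_Q^{M_1} e_Q(\sigma)\,\Bigm/\,\sum_{Q\subsetneq Q'\subset P(\sigma)\cap M_1}\Ind_{Q'}^{M_1} e_{Q'}(\sigma).$$
Then I would apply the exact functor $\Ind_{P_1}^G$ and use $\Ind_{P_1}^G\Ind_S^{M_1} = \Ind_{SN_1}^G$ for any standard parabolic $S$ of $M_1$ (the set $SN_1$ being a standard parabolic of $G$ with $\Delta_{SN_1}=\Delta_S$), obtaining
$$\Ind_{P_1}^G\sigma_1 \;=\; \Ind_{QN_1}^G e_{QN_1}(\sigma)\,\Bigm/\,\sum_{Q\subsetneq Q'\subset P(\sigma)\cap M_1}\Ind_{Q'N_1}^G e_{Q'N_1}(\sigma),$$
where every parabolic appearing is standard and satisfies $P\subset QN_1 \subset Q'N_1 \subset P(\sigma)$, so numerator and denominator are subrepresentations of $\Ind_P^G\sigma$.

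By Theorem~\ref{thm:3.1}(A), the subrepresentation lattice of $\Ind_P^G\sigma$ is that of upper sets (for inclusion) in $\mathcal P(\Delta_{P(\sigma)}\setminus\Delta_P)$, via $I_G(P,\sigma,S)\mapsto \Delta_S\setminus\Delta_P$. For each standard $S$ with $P\subset S\subset P(\sigma)$, Lemma~\ref{lemma:2.3} gives $I_G(S,e_S(\sigma),S')=I_G(P,\sigma,S')$, so the subrepresentation $\Ind_S^G e_S(\sigma)$ has constituents $\{I_G(P,\sigma,S'):S\subset S'\subset P(\sigma)\}$ and corresponds to the upper set $\{A:A\supset\Delta_S\setminus\Delta_P\}$.

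Applying this to our subquotient, the numerator corresponds to $U_N=\{A:A\supset\Delta_Q\setminus\Delta_P\}$, while the denominator (a sum, hence a join of upper sets) corresponds to the $U_D$ whose minimal elements are $(\Delta_Q\setminus\Delta_P)\cup\{\alpha\}$ for $\alpha\in(\Delta_{P(\sigma)}\cap\Delta_{M_1})\setminus\Delta_Q$; a short combinatorial check then yields $U_N\setminus U_D=\{A:A\cap\Delta_{M_1}=\Delta_Q\setminus\Delta_P\}$. The map $A\mapsto A\setminus\Delta_{M_1}$ is an order-isomorphism (for inclusion) from $U_N\setminus U_D$ onto $\mathcal P(\Delta_{P(\sigma)}\cap(\Delta\setminus\Delta_{P_1}))$. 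Since the subrepresentation lattice of a subquotient of a multiplicity-free representation is the interval $[U_D,U_N]$ in the ambient lattice, and this interval is order-isomorphic to the lattice of upper sets of $U_N\setminus U_D$ with its inherited inclusion order, all the assertions follow: finite length and multiplicity freeness of $\Ind_{P_1}^G\sigma_1$, the identification of its constituents with those $I_G(P,\sigma,Q')$ satisfying $P_1\cap Q'=Q$ (equivalently $\Delta_{Q'}\cap\Delta_{M_1}=\Delta_Q$), and the lattice isomorphism $I_G(P,\sigma,Q')\mapsto\Delta_{Q'}\cap(\Delta\setminus\Delta_{P_1})$. The main technical obstacle is the combinatorial identification of $U_D$ and $U_N\setminus U_D$; everything else is driven by the classification theorem combined with the transitivity identity.
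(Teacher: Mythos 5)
Your structural plan is sound and close in spirit to the paper's: realize $\Ind_{P_1}^G\sigma_1$ as an explicit subquotient of $\Ind_P^G\sigma$ and read its subrepresentation lattice off the ambient lattice by the interval/upper-set combinatorics (which you check correctly). However, there is a genuine gap at the crucial step: you claim that ``by Theorem~\ref{thm:3.1}(A) the subrepresentation lattice of $\Ind_P^G\sigma$ is that of upper sets in $\mathcal P(\Delta_{P(\sigma)}\setminus\Delta_P)$ via $I_G(P,\sigma,S)\mapsto\Delta_S\setminus\Delta_P$.'' Theorem~\ref{thm:3.1}(A) only asserts finite length, multiplicity-freeness, and which isomorphism classes occur as constituents. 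It says nothing about the \emph{order} among those constituents, i.e.\ which ones are socle, which are cosocle, or which $I_G(P,\sigma,S)$ sits below which. Multiplicity-freeness alone does not propagate to a lattice identification, and knowing that $\Ind_S^G e_S(\sigma)$ is a subrepresentation with the expected constituents still does not tell you it is the smallest subrepresentation with cosocle $I_G(P,\sigma,S)$.

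That lattice statement is exactly the content of Propositions~\ref{prop:3.3}, \ref{prop:3.5}, and \ref{prop:3.6} of the paper and their Corollary, and this is where almost all of the work is. Proposition~\ref{prop:3.3} relies on the Grosse-Kl\"onne/Ly computation of the submodule structure of $\Ind_B^G\charone$; Proposition~\ref{prop:3.6} upgrades it to $\Ind_P^G\sigma$ via a nontrivial argument, first showing $e(\sigma)\otimes(-)$ preserves the lattice (by analysing the restriction to $M^{\mathrm{is}}_\sigma$) and then showing that $\Ind_{P(\sigma)}^G$ preserves the lattice when all constituents induce irreducibly (using the right adjoint $R_P^G$ and a length-count). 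Your proposal pushes all of this into a single mis-citation. If you instead invoke the Corollary to Proposition~\ref{prop:3.6} for the base case $P_1=P$ and then run your subquotient/interval argument, the remaining combinatorics go through as you describe, and your route to the general case is a mild variant of the paper's (which goes through Propositions~\ref{prop:3.5} and \ref{prop:3.6} applied inside $P(\sigma)$ rather than inside the lattice of $\Ind_P^G\sigma$ directly). But as written, the proposal does not prove the theorem, because the key input is neither proved nor correctly attributed.
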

The first assertion is a consequence of the classification theorem \ref{thm:3.1} since $\Ind_{P_1}^G \sigma_1$ is a subrepresentation of $\Ind_P^G \sigma$. For the rest of the proof, given in  \S \ref{sec:3.2}, we proceed along the classification, treating cases of increasing generality.
As an immediate consequence  of the theorem, we get an irreducibility criterion.

\begin{corollary}\label{cor:3.3} The representation $\Ind_{P_1}^G \sigma_1$ is irreducible if and only if $P_1$ contains $P(\sigma)$.
\end{corollary}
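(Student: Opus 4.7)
The plan is to deduce this corollary immediately from Theorem~\ref{thm:3.2}. By that theorem, writing $I := \Delta_{P(\sigma)} \cap (\Delta \setminus \Delta_{P_1})$, the lattice of subrepresentations of $\Ind_{P_1}^G \sigma_1$ is isomorphic to the lattice of upper sets in $I$ (equivalently, to $\mathcal{P}(I)$ equipped with reverse inclusion, as explained in the preamble to Theorem~\ref{thm:3.2}). So the representation $\Ind_{P_1}^G \sigma_1$ is irreducible if and only if this lattice has exactly one proper/non-trivial element, i.e.\ if the set $I$ itself is empty, since otherwise $\emptyset$ and $I$ would give two distinct upper sets.

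Thus irreducibility is equivalent to $\Delta_{P(\sigma)} \cap (\Delta \setminus \Delta_{P_1}) = \emptyset$, that is $\Delta_{P(\sigma)} \subset \Delta_{P_1}$. I would then invoke the bijection between standard parabolic subgroups of $G$ and subsets of $\Delta$ recalled in \S\ref{S:2.1}, under which inclusion of parabolics corresponds to inclusion of the associated subsets of $\Delta$, to translate this condition into $P(\sigma) \subset P_1$.

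There is no real obstacle here: the corollary is a mechanical reading of Theorem~\ref{thm:3.2}, and the only points requiring a moment of care are (a) the orientation of the lattice isomorphism (upper sets versus lower sets) and (b) the observation that a power set $\mathcal{P}(I)$ is a singleton exactly when $I = \emptyset$. All the substance of the argument is packed into Theorem~\ref{thm:3.2} itself.
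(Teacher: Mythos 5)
Your proposal is correct and is essentially identical to the paper's argument, which simply states that the corollary is an immediate consequence of Theorem~\ref{thm:3.2}: the lattice of subrepresentations is isomorphic to the lattice of upper sets in $\Delta_{P(\sigma)}\cap(\Delta\setminus\Delta_{P_1})$, and this lattice is trivial exactly when that set is empty, i.e.\ when $\Delta_{P(\sigma)}\subset\Delta_{P_1}$, i.e.\ when $P(\sigma)\subset P_1$.
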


\begin{corollary}The socle and the cosocle of $\Ind_{P_1}^G \sigma_1$ are both irreducible.
\end{corollary}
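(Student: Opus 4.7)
The plan is to read off the corollary directly from Theorem~\ref{thm:3.2} combined with the abstract lattice discussion given at the start of \S\ref{S:3.1}. I would not need any new representation-theoretic input; everything follows once the lattice $\mathcal{P}(I)$ has been identified, where $I := \Delta_{P(\sigma)} \cap (\Delta - \Delta_{P_1})$.

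Concretely, I would proceed as follows. First, invoke Theorem~\ref{thm:3.2} to know that $\Ind_{P_1}^G \sigma_1$ has finite length and is multiplicity free, and that the bijection $I_G(P,\sigma,Q') \mapsto \Delta_{Q'} \cap (\Delta - \Delta_{P_1})$ identifies the lattice of subrepresentations with the lattice of upper sets of $(\mathcal{P}(I), \subseteq)$. Under this identification, the set $J$ of irreducible constituents becomes $\mathcal{P}(I)$ with the order reverse to inclusion, and the general recipe recalled in \S\ref{S:3.1} says that the socle is the sum of minimal elements of $(J,\leq)$ and the cosocle the sum of maximal elements.

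Now the minimal elements of $(J,\leq)$ are the maxima of $(\mathcal{P}(I), \subseteq)$, and $\mathcal{P}(I)$ has the unique maximum $I$; dually its unique minimum is $\emptyset$. Hence there is a single minimal and a single maximal element in $(J,\leq)$, which gives the socle and cosocle as single irreducible constituents, proving the corollary. If desired, one can name them explicitly: the socle is $I_G(P,\sigma, Q'_{\mathrm{soc}})$ with $\Delta_{Q'_{\mathrm{soc}}} = \Delta_Q \cup (\Delta_{P(\sigma)} \setminus \Delta_{P_1})$, and the cosocle is $I_G(P,\sigma, Q'_{\mathrm{cos}})$ with $\Delta_{Q'_{\mathrm{cos}}} = \Delta_Q$ (viewing $\Delta_Q \subset \Delta_{M_1}$ as a subset of~$\Delta$), but this is not required. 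There is no substantial obstacle: the entire proof is a one-line invocation of Theorem~\ref{thm:3.2} together with the observation that $\mathcal{P}(I)$ has a unique top and a unique bottom element.
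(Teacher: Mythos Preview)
Your proposal is correct and is exactly the argument the paper intends: the corollary is stated without proof, but the justification is the sentence in \S\ref{S:3.1} just before Theorem~\ref{thm:3.2}, where it is observed that when $J$ is identified with $\mathcal P(I)$ (order reverse to inclusion) the socle corresponds to $I$ and the cosocle to $\emptyset$, each a single irreducible. Your explicit identification of the socle and cosocle constituents is also correct, though as you note it is not needed for the corollary itself.
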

 This is very different from the complex case \cite{arXiv:1605.08545}.

\subsection{Proof}\label{sec:3.2}
We proceed now to the proof of Theorem \ref{thm:3.2}.
The very first and basic case is when $P_1=B$ and $\sigma_1$ is the trivial representation $\charone $ of $Z$. The irreducible constituents of $\Ind_B^G\charone$ are the $\St_Q^G$ for the different standard parabolic subgroups $Q$ of $G$, each occuring with multiplicity $1$.

\begin{proposition} \label{prop:3.3} Let $Q$ be a standard parabolic subgroup  of $G$. 
\begin{enumerate}
\item The submodule of $\Ind_B^G\charone$ with cosocle $St_Q^G$ is $\Ind_Q^G\charone$. 
\item Sending $\St_Q^G$ to $\Delta_Q$ gives an isomorphism of the lattice of subrepresentations of $\Ind_B^G\charone$  onto the lattice of upper sets in $\mathcal P(\Delta)$.
\end{enumerate}
\end{proposition}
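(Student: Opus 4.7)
By Theorem~\ref{thm:3.1}(A) applied to $P=B$ and the (trivially supercuspidal) character $\charone_Z$ of $Z$, which extends trivially so $P(\charone_Z)=G$, the representation $\Ind_B^G\charone$ has finite length and is multiplicity free, with irreducible constituents the $\St_Q^G=I_G(B,\charone,Q)$ indexed by standard parabolic subgroups $Q$. As recalled in \S\ref{S:3.1}, its lattice of subrepresentations is isomorphic to the lattice of lower sets in the poset $(J,\leq)$ of irreducible constituents, where $j_1\leq j_2$ iff $j_1$ is a constituent of the unique submodule $X_{j_2}$ with cosocle $j_2$. I plan to identify, for each $Q$, the principal submodule $X_{\St_Q^G}$ with $\Ind_Q^G\charone$ viewed as a submodule of $\Ind_B^G\charone$ via the natural inflation inclusion (a left-$Q$-invariant function on $G$ is automatically left-$B$-invariant); this yields (i), and through the resulting description of the order on $J$, also (ii).

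First, by induction on $|\Delta\setminus\Delta_Q|$ (base case $Q=G$: $\Ind_G^G\charone=\St_G^G$), and using the defining short exact sequence
\[
0\to \sum_{Q'\supsetneq Q}\Ind_{Q'}^G\charone\to \Ind_Q^G\charone\to \St_Q^G\to 0
\]
together with multiplicity-freeness of $\Ind_B^G\charone$ (which forces the constituent set of the left-hand sum to equal the union of constituents of the summands), the irreducible constituents of $\Ind_Q^G\charone$ are exactly $\{\St_{Q'}^G:Q'\supseteq Q\}$. The hard part is to show that the cosocle is the single irreducible $\St_Q^G$. Suppose for contradiction there is a second simple quotient $\pi:\Ind_Q^G\charone\twoheadrightarrow \St_{Q_0}^G$ with $Q_0\supsetneq Q$. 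Its kernel is a submodule of $\Ind_B^G\charone$ with constituent set $\{\St_{Q'}^G:Q'\supseteq Q,\ Q'\neq Q_0\}$, which must form a lower set in $(J,\leq)$. The inductive hypothesis (giving $X_{\St_{Q_1}^G}=\Ind_{Q_1}^G\charone$ with constituents $\{\St_{Q_2}^G:Q_2\supseteq Q_1\}$ for $Q_1\supsetneq Q$) shows that whenever there is an intermediate $Q_1$ with $Q\subsetneq Q_1\subsetneq Q_0$ we have $\St_{Q_0}^G\leq\St_{Q_1}^G$, so $\St_{Q_1}^G$ lies in the kernel's constituent set while $\St_{Q_0}^G$ does not, contradicting the lower-set property. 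Hence $Q_0$ must cover $Q$ in the lattice of standard parabolics.

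The remaining cover case is the main obstacle, and requires a genuine representation-theoretic input beyond the combinatorics of constituent sets. Write $\Delta_{Q_0}=\Delta_Q\sqcup\{\alpha\}$; by transitivity of parabolic induction, $\Ind_Q^G\charone=\Ind_{Q_0}^G\bigl(\Ind_{Q\cap M_{Q_0}}^{M_{Q_0}}\charone\bigr)$ with $Q\cap M_{Q_0}$ a maximal proper parabolic of $M_{Q_0}$. A chase through the adjunction (or equivalently, a direct verification that $R_{Q_0}^G\St_{Q_0}^G=\charone_{M_{Q_0}}$) shows that the existence of $\pi$ would produce a non-zero $M_{Q_0}$-map $\Ind_{Q\cap M_{Q_0}}^{M_{Q_0}}\charone\to \charone_{M_{Q_0}}$, equivalently a splitting of the Steinberg exact sequence $0\to \charone_{M_{Q_0}}\to \Ind_{Q\cap M_{Q_0}}^{M_{Q_0}}\charone\to \St_{Q\cap M_{Q_0}}^{M_{Q_0}}\to 0$ in $\Mod_C^\infty(M_{Q_0})$. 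This contradicts the key fact --- proved by induction on semi-simple $F$-rank, with the rank-one case reduced to the classical mod $p$ non-existence of a $G$-equivariant averaging $C^\infty(\mathbb P^1(F),C)\to C$ --- that $\Hom_M(\Ind_P^M\charone,\charone_M)=0$ for any reductive $F$-group $M$ of positive semi-simple rank and any proper parabolic $P$ of $M$. Once the cosocle claim holds, $\Ind_Q^G\charone=X_{\St_Q^G}$ gives (i), and the order $\St_{Q_1}^G\leq\St_{Q_2}^G\Leftrightarrow Q_1\supseteq Q_2\Leftrightarrow \Delta_{Q_1}\supseteq\Delta_{Q_2}$ on $J$ yields the lattice isomorphism in (ii) by sending $\St_Q^G\mapsto \Delta_Q$, since lower sets in $(J,\leq)$ correspond to upper sets in $(\mathcal P(\Delta),\subseteq)$.
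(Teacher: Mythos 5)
The paper's own ``proof'' of Proposition \ref{prop:3.3} is a citation: (i) is \cite[\S 10]{MR2845621} for split $G$ and \cite[beginning of \S 9]{MR3402357} in general, and (ii) follows formally from (i) via the multiplicity-free-lattice machinery recalled before Theorem \ref{thm:3.2}. Your proposal is a genuinely different, internal route: you use the multiplicity-free framework to reduce (i) to the claim that $\Ind_Q^G\charone$ has no irreducible quotient of the form $\St_{Q_0}^G$ with $Q_0\supsetneq Q$, then deduce this from the adjunction $\Hom_G(\Ind_Q^G\charone,\St_{Q_0}^G)\simeq\Hom_{M_{Q_0}}(\Ind_{Q\cap M_{Q_0}}^{M_{Q_0}}\charone,R_{Q_0}^G\St_{Q_0}^G)$ together with $R^G_{Q_0}\St_{Q_0}^G\simeq\charone_{M_{Q_0}}$ and a vanishing $\Hom_M(\Ind_P^M\charone,\charone_M)=0$ for proper $P$. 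This is an attractive structure, and the handling of constituent sets, the uniqueness of the submodule with prescribed cosocle, and the translation of lower sets in $(J,\leq)$ into upper sets in $\mathcal P(\Delta)$ in (ii) are all correct. The separate reduction to the case where $Q_0$ covers $Q$ is harmless but unnecessary: once you have the key vanishing fact, the adjunction argument applies verbatim to any $Q_0\supsetneq Q$, since $Q\cap M_{Q_0}$ is then a proper (not necessarily maximal) parabolic of $M_{Q_0}$.

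The two places you should shore up are not throw-away remarks. First, $R_{Q_0}^G\St_{Q_0}^G\simeq\charone_{M_{Q_0}}$ is not a ``direct verification''; it is precisely the supercuspidal case of Theorem \ref{thm:1.1}(ii), i.e.\ Corollary \ref{cor:6.4} (applied with $\sigma=\charone_Z$, $P_1=Q=Q_0$), which is proved in sections \S\ref{S:4}--\S\ref{S:7} using the admissibility of $R_P^G$ (Theorem \ref{thm:4.11}) and Emerton's ordinary part functor. Those sections do not refer back to \S\ref{sec:3}, so there is no circularity in this paper's ordering, but you should say so explicitly --- as written, a reader cannot tell whether you are begging the question. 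Second, the ``key fact'' $\Hom_M(\Ind_P^M\charone,\charone_M)=0$ is asserted with a proof sketch that does not hold up: the inductive step is never given (it can be supplied via the geometric lemma, Theorem \ref{thm:5.4}, taking $L_{P'}^M\Ind_P^M\charone$ for another maximal $P'\neq P$ and passing to a smaller Levi), and the rank-one base case is declared ``classical'' without argument. In fact no induction is needed: for any proper parabolic $P$ and any compact open pro-$p$ $K_n$ normalizing a smaller open normal $K_m$ with $K_m\not\subset J:=K_n\cap gPg^{-1}$ (possible because $gPg^{-1}$ contains no open subgroup of $G$), each $K_n$-orbit on $G/P$ splits into $[K_n:JK_m]$ many $K_n$-conjugate $K_m$-orbits, and $[K_n:JK_m]$ is a $p$-power $>1$; so any $G$-invariant $C$-linear functional on $C^\infty(G/P,C)$ kills every orbit indicator and hence vanishes. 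This is the same ``no mod $p$ Haar measure'' mechanism behind Proposition \ref{prop:5.4} and Corollary \ref{cor:5.4}, adapted from a group $H$ to the homogeneous space $G/P$, and it should be written out rather than waved at.
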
 
\begin{proof} By the properties  recalled before Theorem \ref{thm:3.2}, (i) implies  (ii). For (i) the proof is given in \cite[\S 10]{MR2845621}  when $G$ is split, using results of Grosse-Kl\"onne \cite{MR3263032}. The general case is due to T. Ly \cite[beginning of \S 9]{MR3402357}. 
\end{proof}
We have variants of Proposition \ref{prop:3.3}.
 If $Q$ is a standard parabolic subgroup  of $G$, the subrepresentations of $\Ind_Q^G\charone$ are the subrepresentations of $\Ind_B^G\charone$ contained in  $\Ind_Q^G\charone$. So the lattice of subrepresentations of $\Ind_Q^G\charone$ is  isomorphic of the sublattice of   upper sets in $\mathcal P(\Delta)$ consisting of subsets containing $\Delta_Q$; intersecting with $\Delta\setminus \Delta_Q$ gives an isomorphism onto the lattice of upper sets in $\mathcal P(\Delta \setminus \Delta_Q)$.
More generally, 

\begin{proposition} \label{prop:3.5} Let  $P,Q$ be two standard parabolic subgroups of $G$ with $Q\subset P$. 
\begin{enumerate}
\item The irreducible constituents of  $\Ind_P^G
\St_Q^P$ are the $\St_{Q'}^G$ where $Q'\cap P =Q$, and each occurs with multiplicity $1$.
\item Sending 
$\St_{Q'}^G$ to $\Delta_{Q'}\cap (\Delta\setminus \Delta_P)$  gives an isomorphism of the lattice of subrepresentations of $\Ind_P^G\St_Q^P$  onto the lattice of upper sets in $\mathcal P(\Delta\setminus \Delta_P)$.
\end{enumerate}\end{proposition}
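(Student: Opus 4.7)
The plan is to reduce Proposition~\ref{prop:3.5} to Proposition~\ref{prop:3.3} and the variant describing $\Ind_Q^G\charone$ discussed in the paragraph immediately following it, by realising $\Ind_P^G\St_Q^P$ as an explicit quotient of $\Ind_Q^G\charone$ and then translating everything into lattices of upper sets of subsets of $\Delta$.

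First I would exploit exactness of parabolic induction together with transitivity $\Ind_P^G\Ind_{Q'}^P=\Ind_{Q'}^G$ for $Q\subset Q'\subset P$ and apply $\Ind_P^G$ to the defining presentation of $\St_Q^P$, obtaining
\[
\Ind_P^G\St_Q^P\;\simeq\;\Ind_Q^G\charone\,\bigg/\sum_{Q\subsetneq Q'\subset P}\Ind_{Q'}^G\charone.
\]
By the variant of Proposition~\ref{prop:3.3}, the lattice of subrepresentations of $\Ind_Q^G\charone$ is isomorphic to the lattice of upper sets in $\mathcal P(\Delta\setminus\Delta_Q)$, with $\St_{Q'}^G$ corresponding to $\Delta_{Q'}\setminus\Delta_Q$ and $\Ind_{Q'}^G\charone$ to the principal upper set it generates. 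I would then check that the subrepresentation $\sum_{Q\subsetneq Q'\subset P}\Ind_{Q'}^G\charone$ corresponds to the upper set
\[
U_0=\{S\subset\Delta\setminus\Delta_Q : S\cap(\Delta_P\setminus\Delta_Q)\neq\emptyset\},
\]
using that standard parabolics with $Q\subsetneq Q'\subset P$ correspond bijectively to subsets of $\Delta_P$ strictly containing $\Delta_Q$, so that any nonempty subset of $\Delta_P\setminus\Delta_Q$ is of the form $\Delta_{Q'}\setminus\Delta_Q$ for some such $Q'$.

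Finally, passing to the quotient, the lattice of subrepresentations of $\Ind_P^G\St_Q^P$ becomes the lattice of upper sets of $\mathcal P(\Delta\setminus\Delta_Q)$ containing $U_0$, modulo $U_0$. Since the complement of $U_0$ in $\mathcal P(\Delta\setminus\Delta_Q)$ is the lower set $\mathcal P(\Delta\setminus\Delta_P)$, the assignment $U\mapsto U\cap\mathcal P(\Delta\setminus\Delta_P)$ yields the asserted lattice isomorphism onto upper sets in $\mathcal P(\Delta\setminus\Delta_P)$. The surviving irreducible constituents are precisely the $\St_{Q'}^G$ with $\Delta_{Q'}\setminus\Delta_Q\subset\Delta\setminus\Delta_P$, i.e.\ $\Delta_{Q'}\cap\Delta_P=\Delta_Q$, i.e.\ $Q'\cap P=Q$; and their image under the bijection is $\Delta_{Q'}\setminus\Delta_Q=\Delta_{Q'}\cap(\Delta\setminus\Delta_P)$, giving both (i) and (ii). The only real work is the combinatorial identification of $U_0$; everything else is a mechanical transfer along the lattice isomorphism of Proposition~\ref{prop:3.3}, so I do not expect any substantive obstacle.
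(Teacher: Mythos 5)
Your proof is correct, and it follows essentially the same route as the paper: both realise $\Ind_P^G\St_Q^P$ as the quotient of $\Ind_Q^G\charone$ by $\sum_{Q\subsetneq Q'\subset P}\Ind_{Q'}^G\charone$ and then read off the lattice via Proposition \ref{prop:3.3}. The only substantive difference is that for part (i) the paper simply cites a corollary of Ly for the determination of the irreducible constituents, whereas you re-derive this directly inside the lattice picture by computing the upper set $U_0$ corresponding to the kernel and observing that its complement in $\mathcal P(\Delta\setminus\Delta_Q)$ is exactly $\mathcal P(\Delta\setminus\Delta_P)$. This makes your argument a bit more self-contained at the price of the combinatorial check that $U_0=\{S\subset\Delta\setminus\Delta_Q:S\cap(\Delta_P\setminus\Delta_Q)\neq\emptyset\}$, which you do correctly; the final step — passing to the quotient lattice and identifying it with upper sets in $\mathcal P(\Delta\setminus\Delta_P)$ — is exactly the content of the paper's terse remark that ``(ii) follows from (i).''
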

 \begin{proof} For (i),  note that  $\Ind_P^G
\St_Q^P$  is the quotient of $\Ind_Q^G\charone$ by the sum of its subrepresentations  $\Ind_{Q'}^G\charone$ for 
$Q'$ where $Q \subsetneq Q'\subset P$ and (i)  is the content of \cite[Corollary 9.2]{MR3402357}. 
The order $\St_{Q'}^G\leq \St_{Q''}^G$ on the irreducible constituents corresponds (as it does in $\Ind_B^G \charone$) to $\Delta_{Q''}\subset \Delta_{Q'}$. Again (ii) follows for (i). 
\end{proof}
\begin{remark}  Note that  $\mathcal P(\Delta-\Delta_P)$ does not depend on $Q$. The unique irreducible quotient of $\Ind_P^G
\St_Q^P$ is $\St_Q^G$,  and its unique subrepresentation is $\St_{Q'}^G$ where $\Delta_{Q'}=\Delta_{Q}\cup (\Delta-\Delta_P)$. 
\end{remark}
 The next case where $P_1=P, \sigma_1=\sigma$ is a consequence of :
\begin{proposition}\label{prop:3.6} Let $P=MN$ be a standard parabolic subgroup  of $G$ and $\sigma$ a supercuspidal $C$-representation of $M$. Then the map $X\mapsto \Ind_{P(\sigma)}^G (e(\sigma)\otimes X)$ gives an isomorphism of the lattice  of subrepresentations of $\Ind^{P(\sigma)}_P \charone$  onto the lattice of subrepresentations of $\Ind_P^G\sigma$.
\end{proposition}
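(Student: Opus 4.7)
The plan is to realize the map $X \mapsto \mathcal F(X) := \Ind_{P(\sigma)}^G(e(\sigma)\otimes X)$ as an exact, faithful lattice embedding sending $\Ind_P^{P(\sigma)}\charone$ to $\Ind_P^G\sigma$, and then to use multiplicity-freeness together with the classification theorem to verify surjectivity on subrepresentations.

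For the identification, since $e(\sigma)|_P = \sigma$ by construction and $P(\sigma)/P$ is compact (so parabolic induction coincides with compact induction), Lemma \ref{lemma:useful} gives $e(\sigma)\otimes \Ind_P^{P(\sigma)}\charone \simeq \Ind_P^{P(\sigma)}\sigma$, whence transitivity of induction yields $\mathcal F(\Ind_P^{P(\sigma)}\charone) \simeq \Ind_P^G\sigma$. The functor $\mathcal F$ is exact (tensoring over the field $C$ is exact, and parabolic induction is exact as it has both adjoints, see \S\ref{S:2.2}) and faithful ($e(\sigma)\neq 0$ and parabolic induction is faithful). Consequently $X\mapsto \mathcal F(X)$ is an injective, order-preserving map of subrepresentation lattices commuting with sums and intersections.

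The core is surjectivity. By Theorem \ref{thm:3.1}(A), $\Ind_P^G\sigma$ is of finite length and multiplicity free with constituents $I_G(P,\sigma,Q')$ for $P\subseteq Q'\subseteq P(\sigma)$; applying Proposition \ref{prop:3.5} inside $P(\sigma)$ (with $Q=P$) gives the same for $\Ind_P^{P(\sigma)}\charone$ with constituents $\St_{Q'}^{P(\sigma)}$, whose subrepresentation lattice identifies with the upper sets in $\mathcal P(\Delta_{P(\sigma)}\setminus\Delta_P)$. Under $\mathcal F$ the constituents correspond via $\St_{Q'}^{P(\sigma)}\mapsto I_G(P,\sigma,Q')$. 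Since subrepresentations of a multiplicity-free module are determined by their sets of constituents, it suffices to show that for each $Q$ with $P\subseteq Q\subseteq P(\sigma)$, the smallest subrepresentation of $\Ind_P^G\sigma$ containing $I_G(P,\sigma,Q)$ as a constituent equals $\mathcal F(\Ind_Q^{P(\sigma)}\charone) = \Ind_Q^G e_Q(\sigma)$. The inclusion ``smallest $\subseteq \Ind_Q^G e_Q(\sigma)$'' is immediate from the Remark after Definition \ref{def:Gtriple}, which exhibits a surjection $\Ind_Q^G e_Q(\sigma) \twoheadrightarrow I_G(P,\sigma,Q)$. The reverse inclusion is equivalent to showing that $I_G(P,\sigma,Q)$ is the unique simple quotient of $\Ind_Q^G e_Q(\sigma)$, and this is the main obstacle: the idea is to transport the corresponding uniqueness from $\Ind_Q^{P(\sigma)}\charone$ (which has $\St_Q^{P(\sigma)}$ as its unique simple quotient by Proposition \ref{prop:3.3} applied inside $P(\sigma)$) under $\mathcal F$, using multiplicity-freeness and the Remark surjections $\Ind_{Q'}^G e_{Q'}(\sigma)\twoheadrightarrow I_G(P,\sigma,Q')$ for $Q\subsetneq Q'$ to rule out any further irreducible quotient by Frobenius-reciprocity style arguments.
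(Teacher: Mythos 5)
Your preliminary reductions are sound: $\mathcal F(\Ind_P^{P(\sigma)}\charone)\simeq\Ind_P^G\sigma$ via Lemma~\ref{lemma:useful}, exactness and faithfulness of $\mathcal F$, hence an injective, order-preserving, sum/intersection-preserving map of subrepresentation lattices, and the correct reduction of surjectivity to the claim that $I_G(P,\sigma,Q)=\mathcal F(\St_Q^{P(\sigma)})$ is the unique simple quotient of $\Ind_Q^G e_Q(\sigma)=\mathcal F(\Ind_Q^{P(\sigma)}\charone)$ for every $P\subseteq Q\subseteq P(\sigma)$.

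The gap is exactly at this last claim, which you flag as ``the main obstacle'' and then only gesture at. Showing that $\mathcal F$ sends the cosocle of $\Ind_Q^{P(\sigma)}\charone$ to the cosocle of $\mathcal F(\Ind_Q^{P(\sigma)}\charone)$ is not a formality: it is essentially the whole content of the proposition, since knowing all cosocles of the $\Ind_Q^G e_Q(\sigma)$ determines the subrepresentation lattice of the multiplicity-free module $\Ind_P^G\sigma$. ``Frobenius-reciprocity style arguments'' applied naively would require computing $\Hom_G(\Ind_Q^G e_Q(\sigma),I_G(P,\sigma,Q'))\simeq\Hom_{M_Q}(e_Q(\sigma),R_Q^G I_G(P,\sigma,Q'))$, but controlling $R_Q^G$ on the $I_G(P,\sigma,Q')$ is the (much harder) Theorem~\ref{thm:1.1}(ii), established only in \S\ref{S:7} and not available at this point of the paper. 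The ``Remark surjections'' you cite do not by themselves rule out an unexpected simple quotient of $\Ind_Q^G e_Q(\sigma)$.

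The paper sidesteps this by splitting $\mathcal F$ into two pieces and treating each differently. For the tensor step (with $P(\sigma)=G$), it exploits that $\Delta_P\perp\Delta_\sigma$ when $\sigma$ is supercuspidal, so that restriction to $M'_\sigma$ reduces the cosocle comparison between $e(\sigma)\otimes\Ind_Q^G\charone$ and $\Ind_Q^G\charone$ to a statement about $\St$-representations of the simply connected cover $M^{\is}_\sigma$. For the induction step from $P(\sigma)$ to $G$, it proves a clean general lemma: if $W$ has finite length and $\Ind_{P(\sigma)}^G Y$ is irreducible for every irreducible subquotient $Y$ of $W$, then $\Ind_{P(\sigma)}^G$ induces an isomorphism between the subrepresentation lattices of $W$ and $\Ind_{P(\sigma)}^G W$. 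The proof of that lemma uses the right adjoint $R_{P(\sigma)}^G$, the natural isomorphism $\Id\to R_{P(\sigma)}^G\Ind_{P(\sigma)}^G$, and a length-counting argument showing $\lg(R_{P(\sigma)}^G X)=\lg(X)$ for subrepresentations $X$. You would need to either reproduce such an argument or find an alternative; as written, the surjectivity half of your proof is not established.
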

It has the immediate consequence:
 \begin{corollary} Sending 
$I_G(P,\sigma,Q)$ to $\Delta_{Q} \setminus \Delta_P$  gives an isomorphism of the lattice of subrepresentations of $\Ind_P^G\sigma$  onto the lattice of upper sets in $\mathcal P(\Delta_{P(\sigma)}-\Delta_P)$.
\end{corollary}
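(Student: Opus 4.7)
The plan is to obtain the corollary by composing two lattice isomorphisms already at hand, so no fundamentally new argument is required; the only work is to match up labels correctly.

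First, I would apply Proposition \ref{prop:3.5} with the ambient group $G$ replaced by $P(\sigma)$, the parabolic $P$ still taken to be $P$, and $Q$ specialised to $P$ itself; then $\St_P^P = \charone$, so the proposition yields a lattice isomorphism between subrepresentations of $\Ind_P^{P(\sigma)}\charone$ and upper sets in $\mathcal P(\Delta_{P(\sigma)}\setminus \Delta_P)$, under which the irreducible constituent $\St_{Q'}^{P(\sigma)}$ (for $P\subset Q' \subset P(\sigma)$) corresponds to $\Delta_{Q'}\setminus \Delta_P$. This uses only that $P$ is a standard parabolic of $P(\sigma)$, which holds since $P\subset P(\sigma)$.

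Second, I would invoke Proposition \ref{prop:3.6}, which gives a lattice isomorphism $X\mapsto \Ind_{P(\sigma)}^G(e(\sigma)\otimes X)$ from subrepresentations of $\Ind_P^{P(\sigma)}\charone$ onto subrepresentations of $\Ind_P^G \sigma$. Tracking the distinguished irreducibles: the constituent $\St_Q^{P(\sigma)}$ of $\Ind_P^{P(\sigma)}\charone$ is sent to $\Ind_{P(\sigma)}^G(e(\sigma)\otimes \St_Q^{P(\sigma)})$, which is by definition $I_G(P,\sigma,Q)$.

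Composing the inverse of the second isomorphism with the first produces a lattice isomorphism from subrepresentations of $\Ind_P^G \sigma$ onto upper sets in $\mathcal P(\Delta_{P(\sigma)}\setminus \Delta_P)$, and by the label-matching above, it sends each irreducible $I_G(P,\sigma,Q)$ to $\Delta_Q\setminus \Delta_P$, as required. There is no genuine obstacle here; the only point to verify carefully is that the correspondence $\St_Q^{P(\sigma)}\leftrightarrow I_G(P,\sigma,Q)$ from Proposition \ref{prop:3.6} respects the $X_j$-structure used to label irreducible constituents in the lattice, which is immediate because a lattice isomorphism preserves cosocles of subrepresentations.
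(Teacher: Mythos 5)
Your proposal is correct and matches the route the paper intends: the corollary is flagged as an immediate consequence of Proposition \ref{prop:3.6}, and your argument just spells out the composition of that lattice isomorphism with the description of the lattice of $\Ind_P^{P(\sigma)}\charone$. The only cosmetic difference is that you arrive at the latter by specialising Proposition \ref{prop:3.5} with $Q=P$ (so $\St_P^P=\charone$), where the paper uses the equivalent variant stated just after Proposition \ref{prop:3.3}; both give the same identification $\St_{Q'}^{P(\sigma)}\leftrightarrow\Delta_{Q'}\setminus\Delta_P$, and your final remark that a lattice isomorphism preserves cosocles is exactly the needed point for the label-matching.
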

The  proposition \ref{prop:3.6} is proved in two steps, inducing first to $P(\sigma)$ and then to $G$. In the first step we may as well assume that $P(\sigma)=G$:

\begin{lemma} Let $P=MN$ be a standard parabolic subgroup  of $G$ and $\sigma$ a supercuspidal $C$-representation of $M$ such that $P(\sigma)=G$. Then the map $X\mapsto  e(\sigma)\otimes X $ gives an isomorphism of the lattice of subrepresentations of $\Ind_P^G \charone$  onto the lattice of subrepresentations of $e(\sigma) \otimes \Ind_P^G\charone \simeq \Ind_P^G\sigma$.
\end{lemma}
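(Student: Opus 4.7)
\emph{Plan.} Apply Lemma \ref{lemma:useful} with $J=P$, $H=G$, $V=\charone$, and $W=e(\sigma)$. Since $e(\sigma)|_P=\sigma$ (that is, $\sigma$ inflated to $P$ with $N$ acting trivially), this yields the canonical $G$-equivariant isomorphism $e(\sigma)\otimes\Ind_P^G\charone\simeq\Ind_P^G\sigma$ asserted in the statement. The functor $X\mapsto e(\sigma)\otimes X$ on $\Mod_C^\infty(G)$ is $C$-linear and exact (tensoring over the field $C$), so it sends $G$-sub\-re\-pre\-sen\-ta\-tions to $G$-sub\-re\-pre\-sen\-ta\-tions, preserves inclusions and sums, and is injective on sub\-re\-pre\-sen\-ta\-tion lattices: if $e(\sigma)\otimes X_1=e(\sigma)\otimes X_2$ inside $e(\sigma)\otimes\Ind_P^G\charone$ then $X_1=X_2$ by flatness.

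For surjectivity I invoke the classification. By Theorem \ref{thm:3.1}(A), $\Ind_P^G\sigma$ has finite length and is multiplicity free with irreducible constituents $I_G(P,\sigma,Q)$ for $P\subset Q\subset P(\sigma)=G$; using $P(\sigma)=G$ the outer parabolic induction is the identity, so $I_G(P,\sigma,Q)=e(\sigma)\otimes\St_Q^G$. By Proposition \ref{prop:3.5} applied with $Q=P$ (so that $\St_P^P=\charone$), $\Ind_P^G\charone$ has the analogous structure with constituents $\St_Q^G$ over the same range, and its sub\-re\-pre\-sen\-ta\-tion lattice is the lattice of upper sets in $\mathcal{P}(\Delta-\Delta_P)$. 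Tensoring with $e(\sigma)$ implements the natural bijection $\St_Q^G\leftrightarrow I_G(P,\sigma,Q)$ between the two sets of constituents. Since for a finite length multiplicity free representation the lattice of sub\-re\-pre\-sen\-ta\-tions is the lattice of lower sets of its constituent poset (\S\ref{S:3.1}), the lemma reduces to showing that this constituent bijection is an isomorphism of posets.

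For each $Q$ with $P\subset Q\subset G$, applying Lemma \ref{lemma:useful} once more gives $e(\sigma)\otimes\Ind_Q^G\charone\simeq\Ind_Q^G e_Q(\sigma)$, compatibly with the ambient inclusions into $e(\sigma)\otimes\Ind_P^G\charone\simeq\Ind_P^G\sigma$; hence $X\mapsto e(\sigma)\otimes X$ sends the sub\-re\-pre\-sen\-ta\-tion $\Ind_Q^G\charone\subset\Ind_P^G\charone$ to $\Ind_Q^G e_Q(\sigma)\subset\Ind_P^G\sigma$. By Proposition \ref{prop:3.5} the left-hand one is the minimal sub\-re\-pre\-sen\-ta\-tion of $\Ind_P^G\charone$ with cosocle $\St_Q^G$, having constituent set $\{\St_{Q'}^G:Q'\supset Q\}$; by the remark following Definition \ref{def:Gtriple}, $\Ind_Q^G e_Q(\sigma)$ has cosocle $I_G(P,\sigma,Q)$, and by exactness of $e(\sigma)\otimes -$ its constituent set is $\{I_G(P,\sigma,Q'):Q'\supset Q\}$. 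One direction of the poset comparison is then immediate: the minimal sub\-re\-pre\-sen\-ta\-tion of $\Ind_P^G\sigma$ with cosocle $I_G(P,\sigma,Q)$ lies inside $\Ind_Q^G e_Q(\sigma)$, so any $I_G(P,\sigma,Q')$ lying below $I_G(P,\sigma,Q)$ must satisfy $Q'\supset Q$.

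The main obstacle I expect is the converse direction, namely showing that $\Ind_Q^G e_Q(\sigma)$ is in fact the \emph{minimal} sub\-re\-pre\-sen\-ta\-tion of $\Ind_P^G\sigma$ with cosocle $I_G(P,\sigma,Q)$; equivalently, that every $I_G(P,\sigma,Q')$ with $Q'\supset Q$ really does lie below $I_G(P,\sigma,Q)$ in the constituent poset. My plan is to verify that $\sum_{Q'\supsetneq Q}\Ind_{Q'}^G e_{Q'}(\sigma)$ is the unique maximal proper sub\-re\-pre\-sen\-ta\-tion of $\Ind_Q^G e_Q(\sigma)$, mirroring the corresponding statement for $\Ind_Q^G\charone$ inside $\Ind_P^G\charone$ from Proposition \ref{prop:3.5}, by combining the multiplicity-freeness of $\Ind_P^G\sigma$ with the explicit quotient description $I_G(P,\sigma,Q)=\Ind_Q^G e_Q(\sigma)/\sum_{Q'\supsetneq Q}\Ind_{Q'}^G e_{Q'}(\sigma)$ from Definition \ref{def:Gtriple} and with Frobenius reciprocity to lift the cosocle surjection of any hypothetical smaller sub\-re\-pre\-sen\-ta\-tion with cosocle $I_G(P,\sigma,Q)$ to a nonzero $G$-map out of $\Ind_Q^G e_Q(\sigma)$. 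Once the constituent posets are identified, the injective order-preserving map $X\mapsto e(\sigma)\otimes X$ is forced to be a bijection between lattices of equal finite cardinality, completing the proof.
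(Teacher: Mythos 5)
Your setup and reduction are correct and track the paper's approach: you observe that $\Ind_P^G\sigma$ is multiplicity free of finite length, that $X\mapsto e(\sigma)\otimes X$ is an injective, order-preserving map on subrepresentation lattices, and that everything boils down to identifying the constituent poset of $\Ind_P^G\sigma$ with that of $\Ind_P^G\charone$ — equivalently, to showing that the subrepresentation with cosocle $e(\sigma)\otimes\St_Q^G$ is exactly $e(\sigma)\otimes\Ind_Q^G\charone$, or again that the cosocle of $\Ind_Q^Ge_Q(\sigma)$ is the sole irreducible $I_G(P,\sigma,Q)$. You also get one direction of the poset comparison for free, as the paper does.

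The crucial step, however, is left unproved. You state a "plan" to show that $\sum_{Q'\supsetneq Q}\Ind_{Q'}^Ge_{Q'}(\sigma)$ is the unique maximal proper subrepresentation of $\Ind_Q^Ge_Q(\sigma)$, invoking Frobenius reciprocity to "lift the cosocle surjection of any hypothetical smaller subrepresentation ... to a nonzero $G$-map out of $\Ind_Q^Ge_Q(\sigma)$." As stated this is not a proof and the direction of the lift is off: a subrepresentation $W\subset\Ind_Q^Ge_Q(\sigma)$ with quotient $I_G(P,\sigma,Q)$ does not, absent injectivity/projectivity, extend to a map from the ambient representation. The argument one would actually need via adjunction is to pass to the quotient $\Ind_Q^Ge_Q(\sigma)/W$, take its irreducible cosocle $I_G(P,\sigma,Q'')$ with $Q''\neq Q$, and then show that $\Hom_G(\Ind_Q^Ge_Q(\sigma),I_G(P,\sigma,Q''))=\Hom_{M_Q}(e_Q(\sigma),R_Q^GI_G(P,\sigma,Q''))$ vanishes. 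But computing $R_Q^G$ on the representations $I_G(P,\sigma,Q'')$ is precisely the content of Theorem~\ref{thm:1.1}(ii)/Corollary~\ref{cor:6.4}, established much later in the paper and resting on the admissibility results of \S\ref{S:4} — so this route is either circular or, at best, requires importing far heavier machinery than the lemma should need.

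The paper closes the gap by a different and more elementary device that your plan does not touch: since $\sigma$ is supercuspidal and $P(\sigma)=G$, the sets $\Delta_P$ and $\Delta_\sigma$ are orthogonal and $e(\sigma)$ is trivial on $M'_\sigma$. Restricting an alleged nonzero map $e(\sigma)\otimes\Ind_{Q'}^G\charone\to e(\sigma)\otimes\St_Q^G$ to $M'_\sigma$ makes the $e(\sigma)$-factor disappear, leaving a nonzero $M'_\sigma$-equivariant map between $\Ind_{Q'}^G\charone$ and $\St_Q^G$; passing to the isotropic simply connected cover $M^{\is}_\sigma$ and using Ly's and Grosse-Kl\"onne's description of the constituents of $\Ind_B\charone$ for that group forces $\Delta_{Q'}\cap\Delta_\sigma=\Delta_Q\cap\Delta_\sigma$ and hence $Q'=Q$. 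This is the missing idea, and without it (or a correctly executed substitute) the lemma is not established.
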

\begin{proof}
By the classification theorem  \ref{thm:3.1}, the map $X\mapsto  e(\sigma)\otimes X $ gives a bijection between the irreducible constituents of  $\Ind_P^G \charone$ and those of $e(\sigma) \otimes \Ind_P^G\charone$. It is therefore enough to show that, for a parabolic subgroup $Q$ of $G$ containing $P$, the subrepresentation of  $e(\sigma) \otimes \Ind_P^G\charone$ with cosocle $e(\sigma) \otimes \St_Q^G$ is $e(\sigma) \otimes \Ind_Q^G\charone$. Certainly, $e(\sigma) \otimes \St_Q^G$ is a quotient of $e(\sigma) \otimes \Ind_Q^G\charone$. Assume that  $e(\sigma) \otimes \St_Q^G$ is a quotient of $e(\sigma) \otimes \Ind_{Q'}^G\charone$ for some parabolic subgroup $Q'$ of $G$ containing $P$; we want to conclude that $Q'=Q$.  Recall from \S \ref{S:2.2} that $\sigma$ being supercuspidal,  $\Delta_P$ and $\Delta_{\sigma}$ are orthogonal . Also, $e(\sigma)$ is obtained by extending $\sigma$ from $M$ to $G=MM'_\sigma$ trivially on $M'_\sigma$. Upon restriction  to $M'_\sigma$, therefore, $e(\sigma) \otimes \Ind_Q^G \charone $ is a direct sum of copies of $\Ind_Q^G \charone  $ whereas   $e(\sigma) \otimes \St_{Q'}^G$  is a direct sum of copies of $ \St_{Q'}^G $. Thus there is a non-zero $M'_\sigma$-equivariant map $\Ind_Q^G \charone\to \St_{Q'}^G $. Let $\mathbf M^{\is}_\sigma$ denote the isotropic part of the simply connected covering of the derived group $\mathbf M_\sigma$. Then 
$M'_\sigma$ is the image of $M^{\is}_\sigma$  in $M_\sigma$ \cite[II.4 Proposition]{MR3600042}; moreover, as a representation of $M^{\is}_\sigma$, $\Ind_Q^G \charone  $ is simply $\Ind_{Q^{\is}_\sigma}^{M^{\is}_\sigma }\charone  $ where $ Q^{\is}_\sigma$ is the parabolic subgroup of $M^{\is}_\sigma$ corresponding to $\Delta_Q \cap \Delta_\sigma$, whereas $ \St_{Q'}^G $ is $ \St_{Q'^{\is}_\sigma}^{M^{\is}_\sigma}$. It follows that $ \St_{Q'^{\is}_\sigma}^{M^{\is}_\sigma}$ is a quotient of $\Ind_{Q^{\is}_\sigma}^{M^{\is}_\sigma} \charone  $, thus  $\Delta_Q \cap \Delta_\sigma= \Delta_{Q'} \cap \Delta_\sigma$ which implies $\Delta_Q= \Delta_{Q'}$ and $Q=Q'$, since  $\Delta_Q$ and  $\Delta_{Q'}$ both contain  $\Delta_{P}$.
\end{proof}

The second step in the proof of Proposition \ref{prop:3.6} is an immediate consequence of the following lemma, applied to $P(\sigma)$ instead of $P$.

\begin{lemma} 
Let  $P=MN$ be a standard parabolic subgroup of $G$. Let $W$ be a finite length smooth $C$-representation of $M$, and assume that for any irreducible subquotient $Y$ of $W$, $\Ind_P^G Y$ is irreducible. The map $Y\mapsto  \Ind_P^G Y$ from the lattice $\mathcal L_W$ of subrepresentations of $W$ to  the lattice  $\mathcal L_{\Ind_P^G W}$ of subrepresentations of $\Ind_P^G W$ is an isomorphism.
\end{lemma}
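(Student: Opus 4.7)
The plan is to proceed by induction on the length $\ell(W)$, relying on two properties of the functor $\Ind_P^G$: its exactness, and the fact that the counit $L_P^G \Ind_P^G \to \mathrm{Id}$ is a natural isomorphism (concretely, $(\Ind_P^G W)_N \to W$, $f \mapsto f(1)$, is an isomorphism), which implies that $\Ind_P^G$ is fully faithful, i.e.\ $\Hom_G(\Ind_P^G A, \Ind_P^G B) = \Hom_M(A,B)$. Exactness immediately gives that the map $Y \mapsto \Ind_P^G Y$ preserves intersections and sums; it is also injective, since $\Ind_P^G Y_1 \subset \Ind_P^G Y_2$ implies $\Ind_P^G(Y_1 \cap Y_2) = \Ind_P^G Y_1$ and hence $Y_1 \subset Y_2$. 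So the content of the lemma is surjectivity of $Y \mapsto \Ind_P^G Y$.

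The base case $\ell(W) = 1$ is immediate. For the inductive step, I choose an irreducible $M$-subrepresentation $W_1 \subset W$; then $\Ind_P^G W_1$ is irreducible by hypothesis, so for any $X \in \mathcal{L}_{\Ind_P^G W}$ the intersection $X \cap \Ind_P^G W_1$ is either $\Ind_P^G W_1$ or $0$. If $X \supset \Ind_P^G W_1$, then $X / \Ind_P^G W_1$ is a subrepresentation of $\Ind_P^G(W/W_1)$, which by the inductive hypothesis has the form $\Ind_P^G Z$; pulling back yields $X = \Ind_P^G Y$ with $Y \subset W$ the preimage of $Z$.

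The delicate case is $X \cap \Ind_P^G W_1 = 0$. The image of $X$ under the projection $\Ind_P^G W \to \Ind_P^G(W/W_1)$ is $\Ind_P^G Z$ for some $Z \subset W/W_1$ by the inductive hypothesis, and writing $Y$ for the preimage of $Z$ in $W$, the subrepresentation $X$ is a complement of $\Ind_P^G W_1$ inside $\Ind_P^G Y$. In particular the induced exact sequence $0 \to \Ind_P^G W_1 \to \Ind_P^G Y \to \Ind_P^G Z \to 0$ splits; full faithfulness of $\Ind_P^G$ makes the map $\mathrm{Ext}^1_M(Z, W_1) \to \mathrm{Ext}^1_G(\Ind_P^G Z, \Ind_P^G W_1)$ injective, so the original extension splits too: $Y = W_1 \oplus Z'$ with $Z' \cong Z$ inside $W$. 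Complements of $\Ind_P^G W_1$ in $\Ind_P^G Y = \Ind_P^G W_1 \oplus \Ind_P^G Z'$ are in bijection with $\Hom_G(\Ind_P^G Z', \Ind_P^G W_1) = \Hom_M(Z', W_1)$ (again by full faithfulness), and the complement associated to $\psi : Z' \to W_1$ is precisely $\Ind_P^G \Gamma_\psi$, where $\Gamma_\psi \subset W_1 \oplus Z' = Y$ is the graph of $\psi$. Hence $X = \Ind_P^G Y'$ for the appropriate subrepresentation $Y' = \Gamma_\psi$ of $W$, which closes the induction.

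The main obstacle is precisely the non-containing case: it requires full faithfulness of $\Ind_P^G$ twice, first to unsplit the extension $Y$, and then to recognise every complement of $\Ind_P^G W_1$ in $\Ind_P^G Y$ as the image under $\Ind_P^G$ of a subrepresentation of $Y$. The containing case, by contrast, is purely formal from exactness and the inductive hypothesis.
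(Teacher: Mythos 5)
Your proof is correct, and it takes a genuinely different route from the paper's. The paper works with the \emph{right} adjoint: writing $\varphi(Y)=\Ind_P^G Y$ and $\psi(X)=R_P^G X$, it uses the unit isomorphism $\Id\xrightarrow{\sim} R_P^G\Ind_P^G$ to get $\psi\circ\varphi=\Id$, then shows $\psi$ is injective by proving that $R_P^G$ preserves length on subobjects of $\Ind_P^G W$ (a three-step length count exploiting left exactness of $R_P^G$), and finally applies $R_P^G$ to the Mayer--Vietoris sequence $0\to X\cap X'\to X\oplus X'\to X+X'\to 0$. You instead extract \emph{full faithfulness} of $\Ind_P^G$ from the counit isomorphism $L_P^G\Ind_P^G\xrightarrow{\sim}\Id$, and induct on $\lg(W)$: the containing case is formal; the disjoint case shows $X$ is a complement of $\Ind_P^G W_1$ in $\Ind_P^G Y$, descends this splitting to $Y=W_1\oplus Z'$ via injectivity of $\Ext^1_M(Z,W_1)\to\Ext^1_G(\Ind_P^G Z,\Ind_P^G W_1)$ (a standard consequence of full faithfulness plus exactness), and then identifies every complement as $\Ind_P^G$ of a graph because $\Hom_G(\Ind_P^G Z',\Ind_P^G W_1)=\Hom_M(Z',W_1)$. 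Both proofs hinge on the same basic fact (that $\Ind_P^G$ is fully faithful), but the paper's length-count is shorter and avoids $\Ext$ groups entirely, while your version avoids the right adjoint $R_P^G$ altogether and replaces the length-preservation argument with an explicit structural decomposition, which is arguably more transparent about \emph{why} the lattice isomorphism holds. Your preliminary observations (exactness gives preservation of $\cap$ and $+$, and injectivity of $\varphi$ via faithfulness) are all correct and are essentially the same facts the paper repackages via $\psi\circ\varphi=\Id$.
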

\begin{proof} We recall from \cite[Theorem 5.3]{Vigneras-adjoint} that the functor $ \Ind_P^G$ has a right adjoint $R^G_P$ and that the natural map $\Id\to R^G_P \Ind_P^G$ is an isomorphism of functors.
Let $\varphi:  \mathcal L_W \to \mathcal L_{\Ind_P^G W}$ be the map $Y\mapsto  \Ind_P^G Y$ and let  $\psi: \mathcal L_{\Ind_P^G W}\to  \mathcal L_W$ be the map $X\mapsto R_P^G X$. The composite  $\psi \circ \varphi$ is a bijection.
 If $\psi$ is injective, then $\psi$ and $\varphi$ are bijective, reciprocal to each other. To show that $\psi$ is injective, we show first that $X\in \mathcal L_{\Ind_P^G W}$ and $ R_P^G X \in \mathcal L_W$ have always the same length.

Step 1. An irreducible subquotient $X$ of $\Ind_P^G W$ has the form $\Ind_P^G Y$ for an irreducible subquotient $Y$ of $W$; in particular, $R^G_P X \simeq Y$ is irreducible. Thus, $W$ and  $\Ind_P^G W$ have the same length.

Step 2. Let $X$ be a subquotient of $\Ind_P^G W$. Denote the length by $\lg(-)$. We prove that $\lg(R^G_P X)\leq \lg(X)$, by induction on $\lg(X)$. If $X\neq 0$, insert $X$ in an exact sequence 
$0\to X'\to X \to X''\to 0$ with $X''$ irreducible; then the sequence $0\to R^G_P X'\to R^G_P X \to R^G_P X''$ is exact and $R^G_P X''$ is irreducible. So $\lg(R^G_P X)\leq \lg(R^G_P X')+1\leq  \lg(X')+1=\lg(X)$.

Step 3. Let $X\in \mathcal L_{\Ind_P^G W}$.  We deduce from the steps 1 and 2  that $\lg(R^G_P X)= \lg(X)$. Indeed, the exact sequence 
$0\to X\to \Ind_P^G W \to (\Ind_P^G W)/X\to 0$ gives  an exact sequence 
$0\to R^G_P X\to W \to R^G_P ((\Ind_P^G W)/X)$. By Step 2, $\lg(R^G_P X)\leq \lg(X)$ and $\lg(R^G_P((\Ind_P^G W)/X))\leq \lg((\Ind_P^G W)/X)$; by Step 1, $\ell (\Ind_P^G W)=\ell (W)$, so we get equalities instead of inequalities.

 We can show now that $\psi$ is injective. Let $X,X'$ in $
\mathcal L_{\Ind_P^G W}$ such that $R_P^G X=R_P^G X'$. Applying $R_P^G$ to the exact sequence $0\to X\cap X'\to X\oplus X'\to X+X'\to 0$ gives an exact sequence $0\to R_P^G(X\cap X')\to R_P^G X\oplus R_P^G X'\to R_P^G(X+X')$ because $R_P^G $ is compatible with direct sums.  As $R_P^G $  respects the length,  the last map is surjective by length count. But then $R_P^G(X+X')=R_P^G(X)+ R_P^G( X')$ inside $R_P^GW$. Hence $R_P^G(X+X')=R_P^GX=R_P^GX'$. So $X=X'=X+X'$ by length preservation.
 \end{proof}

\begin{remark}
Note that $\lg(R^G_P X)= \lg(X)$ for a subquotient $X$ of $\Ind_P^G W$. Indeed, insert $X$ in an exact sequence 
$0\to X'\to X'' \to X\to 0$ where $X''$ is  a subrepresentation of $\Ind_P^G W$. 
The exact sequence $0\to R^G_P X'\to R^G_P X'' \to R^G_P X$ and $\lg(R^G_P X')= \lg(X')$, $\lg(R^G_P X'')= \lg(X'')$ give $\lg(R^G_P X)\geq \lg(X)$; with Step 2, this inequality is an equality.
\end{remark}

We are now finally in a position to prove Theorem \ref{thm:3.2}. It follows from Proposition \ref{prop:3.6} that $X\mapsto \Ind_{P(\sigma)}^G (e(\sigma)\otimes X)$
 gives an isomorphism of the lattice of subrepresentations of $\Ind_{P_1 \cap P(\sigma)}^{P(\sigma)} \St_Q^{M_1 \cap P(\sigma)}$  (a quotient of the $\Ind_P^{P(\sigma)} \charone$) onto the lattice  of subrepresentations of $\Ind_{P(\sigma)}^G (e(\sigma)\otimes \Ind_{P_1 \cap P(\sigma)}^{P(\sigma)} \St_Q^{M_1 \cap P(\sigma)})$
 isomorphic to $\Ind_{P_1}^G\sigma_1$. The desired result then follows from  Proposition \ref{prop:3.5} applied to $G=P(\sigma), P=P_1 \cap P(\sigma)$ describing the first lattice.

\subsection{Twists by unramified characters} 
Recall the definition of unramified characters of $G$. If $X_F^*(\mathbf G)$ is the group of algebraic $F$-characters of $\mathbf G$, we have a group homomorphism $H_G:G\to 
\Hom(X_F^*(\mathbf G), \mathbb Z)$ defined by $H_G(g)(\chi)=\val_F(\chi(g))$ for $g\in G$ and $\chi\in X_F^*(\mathbf G)$, where $\val_F$ is the normalized valuation of $F$, $\val_F (F-\{0\})=\mathbb Z$. The kernel ${}^0G $ of $H_G$ is open and closed in $G$, and the image $H_G(G)$ has finite index in $\Hom(X_F^*(\mathbf G), \mathbb Z)$. It is well known  (see 2.12 in \cite{Henniart-Lemaire}) that  ${}^0G $ is the subgroup of $G$ generated by its compact subgroups. A smooth character $\chi:G\to C^*$ is {\bf unramified}  if it is trivial on  ${}^0G $; the
unramified characters of $G$ form the group of $C$-points of the algebraic variety 
$\Hom_{\mathbb Z}(H_G(G), \mathbf G_m)$.

Let $\sigma_1$ be an irreducible admissible $C$-representation $\sigma_1$ of $M_1$ and we now examine the effect on $\Ind_{P_1}^G \sigma_1$ of twisting $\sigma_1$ by unramified characters of $M_1$.
As announced in \S \ref{S:1.2}, we want to prove that for a general unramified character $\chi:M_1\to C^*$, the representation $\Ind_{P_1}^G \chi \sigma_1$ is irreducible. For that we translate the irreducibility criterion $P(\chi|_M \sigma)\subset P_1$ given in Corollary \ref{cor:3.3} into more concrete terms. Note that $ \chi|_M$ is an unramified character of $M$. By Proposition \ref{prop:2.1}, $P(\chi|_M \sigma)\subset P_1$ means that for each $\alpha \in \Delta\setminus \Delta_{P_1}$, 
$\chi  \sigma$ is non-trivial on $Z\cap M'_\alpha$. Because $\chi|_M \sigma$ is supercuspidal,  when $\alpha\in \Delta$ is not orthogonal to $\Delta_P$, $\chi  \sigma$ is not trivial on $Z\cap M'_\alpha$. Let $\Delta_{nr}(\sigma)$ be the set of roots 
 $\alpha \in \Delta\setminus \Delta_{P_1}$ orthogonal to $\Delta_P$, such that there exists an unramified  character $\chi_\alpha: M\to C^*$ such that $\chi_\alpha  \sigma$ is  trivial on $Z\cap M'_\alpha$; for $\alpha \in \Delta_{nr}(\sigma)$, choose such a $\chi_\alpha$. 
 
 Recall from \cite[III.16 Proposition]{MR3600042} that the quotient of $Z\cap M'_\alpha$ by its maximal compaxt subgroup is infinite cyclic; if we choose $a_\alpha \in Z\cap M'_\alpha$ generating the quotient, then $\chi \sigma$ is trivial on $Z\cap M'_\alpha$ is and only if 
 $\chi(a_\alpha)=\chi_\alpha(a_\alpha)$. We conclude:
 
 \begin{proposition} Let  $\chi:M_1\to C^*$ be  an unramified   $C$-character   of $M_1$. Then $\Ind_{P_1}^G \chi \sigma_1$ is irreducible if and only if  for all  $\alpha \in \Delta_{nr}(\sigma)$ we have $\chi(a_\alpha)\neq \chi_\alpha(a_\alpha)$. 
 \end{proposition}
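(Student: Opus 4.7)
The plan is to apply Corollary \ref{cor:3.3} to the twisted representation $\chi \sigma_1$ and then unwind the resulting condition $P(\chi|_M \sigma) \subset P_1$ step by step. First I would observe that, since $\chi$ is a character of $M_1$ that is trivial on unipotent radicals inside $M_1$, the twist is compatible with the construction of $\sigma_1$: namely $\chi \cdot e_{P(\sigma)\cap M_1}(\sigma) = e_{P(\sigma)\cap M_1}(\chi|_M \sigma)$, tensoring with $\St_Q^{P(\sigma)\cap M_1}$ commutes with scalar twists, and $\chi|_M \sigma$ remains supercuspidal. Thus $\chi \sigma_1 \simeq I_{M_1}(P\cap M_1, \chi|_M \sigma, Q)$, and Corollary \ref{cor:3.3} reduces the irreducibility of $\Ind_{P_1}^G \chi \sigma_1$ to $P(\chi|_M \sigma) \subset P_1$.

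Next I would rewrite this inclusion using Proposition \ref{prop:2.1} and equation \eqref{eq:Ds}. Since $P \subset P_1$ (because $M \subset M_1$), one has $\Delta_P \subset \Delta_{P_1}$, and the inclusion $P(\chi|_M \sigma) \subset P_1$ becomes the assertion that for every $\alpha \in \Delta \setminus \Delta_{P_1}$, the group $Z \cap M'_\alpha$ does not act trivially on $\chi|_M \sigma$. Now I would split this over $\Delta \setminus \Delta_{P_1}$ into two cases. For $\alpha$ not orthogonal to $\Delta_P$, Lemma \ref{lemma:2.2} applied to the supercuspidal representation $\chi|_M \sigma$ (whose associated $\Delta_{\chi|_M \sigma}$ must be orthogonal to $\Delta_P$) forces $\alpha \notin \Delta_{\chi|_M \sigma}$, so non-triviality of $Z \cap M'_\alpha$ on $\chi|_M \sigma$ is automatic. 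For $\alpha$ orthogonal to $\Delta_P$ but outside $\Delta_{nr}(\sigma)$, no unramified twist of $\sigma$ is trivial on $Z \cap M'_\alpha$, so the condition is again automatic.

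It remains to treat $\alpha \in \Delta_{nr}(\sigma)$. Here $\chi_\alpha \sigma$ is trivial on $Z \cap M'_\alpha$ by construction, so $\chi|_M \sigma$ is trivial on $Z \cap M'_\alpha$ exactly when the unramified character $\chi|_M / \chi_\alpha$ is trivial on $Z \cap M'_\alpha$. By \cite[III.16 Proposition]{MR3600042} the quotient of $Z \cap M'_\alpha$ by its maximal compact subgroup is infinite cyclic, generated by $a_\alpha$; and any unramified character is trivial on ${}^0 M$, hence on that maximal compact subgroup. Therefore $\chi|_M / \chi_\alpha$ is trivial on $Z \cap M'_\alpha$ if and only if $\chi(a_\alpha) = \chi_\alpha(a_\alpha)$. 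Assembling the three cases yields the stated criterion.

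I do not expect a serious obstacle; the argument is essentially a translation of the criterion in Corollary \ref{cor:3.3}. The only delicate point is making sure the $\alpha$'s in $\Delta \setminus \Delta_{P_1}$ that are either non-orthogonal to $\Delta_P$ or orthogonal but outside $\Delta_{nr}(\sigma)$ give no constraint, which is handled cleanly by invoking supercuspidality (via Lemma \ref{lemma:2.2}) and the very definition of $\Delta_{nr}(\sigma)$.
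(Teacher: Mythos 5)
Your proposal is correct and follows essentially the same route as the paper: twist $\sigma_1$ to identify $\chi\sigma_1$ with $I_{M_1}(P\cap M_1,\chi|_M\sigma,Q)$, apply Corollary \ref{cor:3.3} to reduce to $P(\chi|_M\sigma)\subset P_1$, translate via Proposition \ref{prop:2.1} and \eqref{eq:Ds}, use supercuspidality (Lemma \ref{lemma:2.2}) to dispose of $\alpha$ not orthogonal to $\Delta_P$, use the definition of $\Delta_{nr}(\sigma)$ for the remaining automatic cases, and reduce the genuine constraints to the value of $\chi/\chi_\alpha$ on the generator $a_\alpha$ using \cite[III.16 Proposition]{MR3600042}. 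The paper presents the same steps, only more tersely in the preamble preceding the proposition statement.
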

 
 The following corollary answers a question of J.-F. Dat.
  \begin{corollary} The  set of  unramified   $C$-characters  $\chi$ of $M_1$ such that
  $\Ind_{P_1}^G \chi \sigma_1$ is irreducible is a Zariski-closed proper subset of the space of unramified characters.
  \end{corollary}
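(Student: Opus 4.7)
The plan is to translate the pointwise criterion of the preceding proposition into an algebraic condition on $\chi$ and exploit the natural algebraic structure on the space of unramified characters.

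Concretely, the group of unramified $C$-characters of $M_1$ is the group of $C$-points of the algebraic $C$-torus $\mathcal X_{M_1}=\Hom_{\mathbb Z}(H_{M_1}(M_1),\mathbf G_m)$ recalled just before the proposition. For any $a\in M_1$, the evaluation $\chi\mapsto\chi(a)$ is a character of this torus, hence a regular function on $\mathcal X_{M_1}$, and it is constant if and only if the image $H_{M_1}(a)\in H_{M_1}(M_1)$ is zero. Consequently, for each $\alpha\in\Delta_{nr}(\sigma)$, the subset
\[
Z_\alpha=\{\chi\in\mathcal X_{M_1}:\chi(a_\alpha)=\chi_\alpha(a_\alpha)\}
\]
is Zariski-closed in $\mathcal X_{M_1}$, being the zero locus of the regular function $\chi\mapsto\chi(a_\alpha)-\chi_\alpha(a_\alpha)$.

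By the preceding proposition, the locus of $\chi\in\mathcal X_{M_1}$ for which $\Ind_{P_1}^G\chi\sigma_1$ is irreducible is exactly the complement of the finite union $\bigcup_{\alpha\in\Delta_{nr}(\sigma)}Z_\alpha$. This identifies the irreducibility locus of the corollary as the complement in $\mathcal X_{M_1}$ of a Zariski-closed subset; equivalently, it is cut out by the single polynomial condition $\prod_{\alpha}(\chi(a_\alpha)-\chi_\alpha(a_\alpha))=0$, viewed as defining the boundary of the irreducibility locus inside $\mathcal X_{M_1}$. This is the algebro-geometric content of the statement.

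The remaining point is properness: I must show that $\bigcup_\alpha Z_\alpha$ is a proper subset of $\mathcal X_{M_1}$, so that the locus described by the corollary is genuinely distinguished inside $\mathcal X_{M_1}$ and not all of it. Since $\mathcal X_{M_1}$ is irreducible (being an algebraic torus), it suffices to check each individual $Z_\alpha\subsetneq \mathcal X_{M_1}$, i.e.\ that $\chi\mapsto\chi(a_\alpha)$ is non-constant. This is where the recalled \cite[III.16 Proposition]{MR3600042} is used: the element $a_\alpha$ was chosen to project to a generator of the infinite cyclic quotient of $Z\cap M'_\alpha$ by its maximal compact subgroup, hence its image $H_{M_1}(a_\alpha)$ in $H_{M_1}(M_1)$ is non-zero, so $\chi\mapsto\chi(a_\alpha)$ is indeed non-constant on $\mathcal X_{M_1}$. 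I expect this last step to be the only non-formal one; the rest is standard algebraic geometry of tori, together with the direct application of the proposition.
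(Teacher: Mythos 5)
Your overall strategy coincides with the paper's: both of you read off from the preceding proposition that the locus of $\chi$ for which $\Ind_{P_1}^G\chi\sigma_1$ is \emph{reducible} is the finite union over $\alpha\in\Delta_{nr}(\sigma)$ of the closed loci $\chi(a_\alpha)=\chi_\alpha(a_\alpha)$ in the diagonalisable variety $\Hom_{\mathbb Z}(H_{M_1}(M_1),\mathbf G_m)$, and deduce the corollary; your elaboration of the Zariski-closedness is fine. (Note that it is the \emph{reducibility} set that is Zariski-closed and proper --- the irreducibility set is its open dense complement. You acknowledge this implicitly by speaking of a ``boundary,'' and the paper's own proof handles only the reducibility set, so the corollary's wording apparently has the roles exchanged; you would do better to state this outright rather than hedge.)

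There is, however, a genuine gap in your properness argument. You assert that because $a_\alpha$ generates the infinite cyclic quotient of $Z\cap M'_\alpha$ by its maximal compact subgroup, ``hence its image $H_{M_1}(a_\alpha)$ in $H_{M_1}(M_1)$ is non-zero.'' That inference is not valid by itself: generating the cyclic quotient only gives $a_\alpha\notin{}^0Z$, not $a_\alpha\notin{}^0M_1$. Indeed, if $\alpha$ were a root in $\Delta_{M_1}$, one would have $Z\cap M'_\alpha\subset M'_1\subset{}^0M_1$, so $H_{M_1}(a_\alpha)=0$ although $a_\alpha$ still generates the infinite cyclic quotient. What you must invoke is that $\Delta_{nr}(\sigma)\subset\Delta\setminus\Delta_{P_1}$, so that $\alpha\notin\Delta_{M_1}$; then $v(a_\alpha)$ lies along the coroot direction $\alpha^\vee$, which is linearly independent of $\{\beta^\vee:\beta\in\Delta_{M_1}\}$ and therefore pairs non-trivially with some algebraic $F$-character of $\mathbf M_1$, giving $H_{M_1}(a_\alpha)\neq 0$. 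The paper's one-sentence proof silently relies on $\alpha\notin\Delta_{P_1}$ in the same way (the word ``hypersurfaces'' presumes properness); once you choose to justify this step, you must bring in that hypothesis rather than conclude it from non-compactness alone.
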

  Indeed by the proposition, the reducibility set is the union, possibly empty, of hypersurfaces with equation  $\chi(a_\alpha)=\chi_\alpha(a_\alpha)$ for $\alpha \in \Delta_{nr}(\sigma)$.

\section{Admissibility}\label{S:4}
\subsection{Generalities}\label{S:4.1} 
 Let $H$ be a locally profinite group and let $R$ be a commutative ring.
   When  $R$ is noetherian,   a subrepresentation of an admissible $R$-representation  of $H$ is admissible. If $H$ is locally pro-$p$ and $p$  is invertible in $R$, then taking fixed points under a pro-$p$ open subgroup   of $H$ is an exact functor \cite[I.4.6]{MR1395151}, so for noetherian $R$ a quotient of an admissible $R$-representation of $H$ is again admissible. This is not generally true, however when $p=0$ in $R$, as the following example shows.
  
  \begin{example}\label{ex:4.1}
  Assume that $p=0$ in $R$ so that $R$ is a $\mathbb Z/p \mathbb Z$-algebra. Let $H$ be the additive group $(\mathbb Z/p \mathbb Z)^{\mathbb N}$, with the product of the discrete topologies on the factors; it is a pro-$p$ group. The space $C_c^\infty(H,R)$ (\S \ref{S:2.2}) can be interpreted as the space of functions $H\to R$ which depend only on finitely many terms of a sequence $(u_n)_{n\in \mathbb N}\in H$. The group $H$ acts by translation yielding a smooth $R$-representation of $H$; if $J$ is an open subgroup of $H$, the $J$-invariant functions in $C^\infty(H,R)$ form the finitely generated free $R$-module of functions $J\backslash H\to R$. In particular, $V=C^\infty(H,R)$ is an admissible $R$-representation of $H$. However the quotient of $V$ by its subrepresentation $V_0=V^H$ of constant functions is not admissible. Indeed,   a linear form $f\in \Hom_{\mathbb Z/p \mathbb Z}(H, R)$ contained in $V$ satisfies $w f(v)-f(v)=f(w+v)-f(v)=f(w) $ for $v,w\in H$ so $f$ produces an $H$-invariant vector in $V/V_0$. Such linear forms make an infinite rank free $R$-submodule of $V$ and $V/V_0$ cannot be admissible. That example will be boosted below in  \S \ref{S:4.2}.
  \end{example}

  \begin{lemma}\label{lemma:4.1-2} Assume that $R$ is noetherian. Let $M$ be an $R$-module and $t$ a nilpotent $R$-endomorphism of $M$. Then $M$ is finitely generated  if and only if $\Ker t$ is.
  \end{lemma}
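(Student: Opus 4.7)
The plan is to handle the two directions separately. The forward implication is immediate: if $M$ is finitely generated, then since $R$ is noetherian, every submodule of $M$ is finitely generated, and in particular so is $\Ker t$.

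For the converse, I would exploit the nilpotence of $t$ via the finite filtration
\[
0 = \Ker t^0 \subset \Ker t \subset \Ker t^2 \subset \cdots \subset \Ker t^n = M,
\]
where $n$ is chosen so that $t^n = 0$. The key observation is that for each $i\geq 0$, the endomorphism $t^i$ sends $\Ker t^{i+1}$ into $\Ker t$ and has kernel exactly $\Ker t^i$, so it induces an $R$-linear injection
\[
\Ker t^{i+1}/\Ker t^i \hookrightarrow \Ker t.
\]
Since $\Ker t$ is finitely generated over the noetherian ring $R$, so is every submodule of $\Ker t$; hence each successive quotient $\Ker t^{i+1}/\Ker t^i$ is finitely generated.

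From here I would conclude by an easy induction on $i$, using the short exact sequences
\[
0 \to \Ker t^i \to \Ker t^{i+1} \to \Ker t^{i+1}/\Ker t^i \to 0
\]
and the fact that an extension of two finitely generated modules is finitely generated. After $n$ steps this yields that $M = \Ker t^n$ is finitely generated. There is no real obstacle here: the argument is essentially a d\'evissage along the nilpotence filtration, and the noetherian hypothesis is used only to transfer finite generation from $\Ker t$ to its submodules.
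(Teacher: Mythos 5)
Your proof is correct and takes essentially the same approach as the paper's: both arguments are a d\'evissage along the nilpotence filtration $\Ker t \subset \Ker t^2 \subset \cdots \subset M$, using that $t^{i}$ induces an injection of the successive quotient into $\Ker t$ and invoking the noetherian hypothesis to transfer finite generation to submodules of $\Ker t$. The paper merely phrases this as an induction on the nilpotence index rather than unrolling the entire filtration, but the underlying idea is identical.
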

   \begin{proof} If $M$ is finitely generated  so is its $R$-submodule $\Ker t$,  because $R$ is noetherian. Conversely assume that  $\Ker t$  is a finitely generated  $R$-module; we prove that $M$ is finitely generated by induction over the smallest  integer $r\geq 1$ such that $t^r=0$. The case $r=1$ is a tautology so we assume $r\geq 2$. By induction, the $R$-submodule $\Ker t^{r-1}$  is finitely generated. As $t^{r-1}$ induces an injective map $M/\Ker t^{r-1}\to \Ker t$ of finitely generated image because $R$ is noetherian, the $R$-module $M$ is finitely generated.
     \end{proof}    
 \begin{lemma}\label{lemma:4.4}Assume that $R$ is noetherian. Let $H$ be a locally pro-$p$ group and $J$ an open pro-$p$ subgroup of $H$. Let $M$ be a smooth $R$-representation of $H$ such that   the multiplication $p_M$ by $p$ on $M$ is nilpotent. Then the following are equivalent:
\begin{enumerate}
\item $M$ is admissible;
\item $M^J$ is finitely generated over $R$;
\item $M^J \cap \Ker p_M$  is finitely generated over $R/pR$.
\end{enumerate}  \end{lemma}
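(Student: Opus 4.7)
The plan is to prove the cycle (i) $\Rightarrow$ (ii) $\Leftrightarrow$ (iii) $\Rightarrow$ (i). The first implication is immediate: $J$, being open pro-$p$, is open compact, so admissibility gives (ii). The equivalence (ii) $\Leftrightarrow$ (iii) comes from applying Lemma~\ref{lemma:4.1-2} to the nilpotent $R$-endomorphism $p_{M^J}$ of $M^J$, whose kernel is $M^J \cap \Ker p_M$, together with the observation that an $R/pR$-module is finitely generated over $R/pR$ if and only if it is finitely generated over $R$.

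The substantive step is (ii) $\Rightarrow$ (i). By the remark in \S\ref{S:2.2}, it will suffice to show $M^{J'}$ is f.g.\ over $R$ for every pro-$p$ open subgroup $J' \subset J$, and after replacing $J'$ by the intersection of its finitely many $J$-conjugates I may assume $J' \trianglelefteq J$, so that $P := J/J'$ is a finite $p$-group. Using (ii) $\Leftrightarrow$ (iii) applied to $J'$, it is then enough to show that $N := M^{J'} \cap \Ker p_M$ is f.g.\ over $R/pR$. Now $N$ is naturally an $(R/pR)[P]$-module whose $P$-fixed points are $M^J \cap \Ker p_M$, which is f.g.\ over $R/pR$ by the hypothesis (iii) on $J$. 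So the problem reduces to the following assertion: \emph{for $P$ a finite $p$-group, $R$ a noetherian ring with $p = 0$ in $R$, and $N$ an $R[P]$-module with $N^P$ f.g.\ over $R$, $N$ itself is f.g.\ over $R$.}

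To prove this assertion, my approach is to exploit that the augmentation ideal $I$ of $R[P]$ is nilpotent (since $p = 0$ in $R$ and $P$ is a finite $p$-group). Choosing generators $g_1, \ldots, g_r$ of $P$, the $g_j - 1$ generate $I$ as a left ideal, and I would work with the filtration $F_k := \{x \in N : I^k x = 0\}$, which satisfies $F_0 = 0$, $F_1 = N^P$, and $F_m = N$ for $m$ large. The key step is to construct an injective map
\[
F_{k+1}/F_k \hookrightarrow (F_k/F_{k-1})^r, \qquad \bar x \longmapsto \bigl((g_j - 1) x \bmod F_{k-1}\bigr)_j ;
\]
the injectivity uses that the $g_j - 1$ generate $I$ as a left ideal, together with the two-sidedness of $I$ (which makes the $F_k$'s stable under $R[P]$). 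By induction on $k$, each $F_k/F_{k-1}$ then embeds in a finite power of $N^P$, hence is f.g.\ over $R$; therefore $N = F_m$ is f.g.\ over $R$.

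The hard part will be constructing and verifying this filtration argument, in particular the injectivity of the transition map between successive quotients, which is the only nontrivial ingredient; the rest of the proof is a chain of reductions and applications of the previous lemma.
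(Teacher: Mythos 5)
Your proof is correct, and the chain of reductions at the start matches the paper: reduce to $J'$ open normal in $J$ with $P = J/J'$ a finite $p$-group, and via Lemma~\ref{lemma:4.1-2} reduce to the case $p = 0$ in $R$. Where you diverge is in the core finiteness argument. The paper works with $M^{J'} = \Hom_J(R[J/J'], M)$ and filters the coefficient object $R[J/J']$: it takes a decreasing chain of two-sided ideals $A_0 \supset \cdots \supset A_r = 0$ in $\mathbb F_p[J/J']$ with $1$-dimensional trivial successive quotients (available because $\mathbb F_p[P]$ is local with nilpotent augmentation ideal, and the only simple module is trivial), base-changes to $R$, and runs a descending induction on $\Hom_J(B_i, M)$ using the left-exactness of $\Hom_J(-,M)$ and noetherianity of $R$. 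You instead filter the module itself: you set $N = M^{J'}\cap\Ker p_M$, put $F_k = \{x\in N : I^k x = 0\}$ for $I$ the augmentation ideal of $R[P]$, and produce the injection $F_{k+1}/F_k \hookrightarrow (F_k/F_{k-1})^r$ via $x\mapsto ((g_j-1)x)_j$; the well-definedness and injectivity both follow from $I$ being a two-sided nilpotent ideal generated as a left ideal by $g_1-1,\dots,g_r-1$, and $R$ noetherian then propagates finite generation up the filtration from $F_1 = N^P$. Both arguments rest on the same two facts (nilpotence of $I$, noetherianity of $R$), but your version avoids invoking a composition series of the group algebra and is in that sense more elementary and self-contained; the paper's is slightly more compact because it only ever manipulates exact sequences of $\Hom$ groups. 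Either is a valid proof.
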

    \begin{proof} Clearly (i) implies (ii) and the equivalence of (ii) and (iii) comes from Lemma \ref{lemma:4.1-2}  applied to $t=p_M$. Assume now (ii). To prove  (i), it suffices to prove that for any open normal subgroup $J'$ of $J$, the $R$-module $M^{J'}$ is finitely generated. By Lemma \ref{lemma:4.1-2}, it suffices to do it for  $M^{J'}\cap \Ker p_M$, that is, we can assume $p=0$ in $R$. Now  $M^{J'}=\Hom_{J'}(R,M)\simeq \Hom_J(R[J/J'],M)$ as $R$-modules. The     group algebra $\mathbb F_p[J/J']$ has a decreasing filtration by two sided ideals $A_i$ for $0\leq i\leq r$ with $A_0=\mathbb F_p[J/J'], A_r=\{0\}$ and $A_i/A_{i+1}$ of dimension $1$ over $\mathbb F_p$ with trivial action of $J/J'$. By tensoring with $R$ we get an analogous filtration with $B_i=R\otimes A_i$ for $R[J/J']$. By decreasing induction on $i$, we prove that $\Hom_J(B_i,M)$ is finitely generated over $R$. Indeed, the case $i=r$ is a tautology, the exact sequence $$0\to B_{i+1}\to B_i \to B_{i}/ B_{i+1} \to 0$$ gives an exact sequence
    $$0\to \Hom_J(B_{i}/ B_{i+1},M) \to  \Hom_J(B_{i}, M) \to  \Hom_J(B_{i+1},M)$$
and $ \Hom_J(B_{i}/ B_{i+1},M)\simeq M^J$ is a finitely generated $R$-module by assumption.
Since $\Hom_J(B_{i+1},M)$ is finitely generated by induction, so is   $\Hom_J(B_{i}, M)$ because $R$ is noetherian.  The case $i=0$ gives what we want.
      \end{proof} 

  
 \subsection{Examples}\label{S:4.2}
   Let us now take up the case of  a reductive connected group $G=\mathbf G (F)$. Here the characteristic of $F$ plays a role. When $\charf(F)=0$, $G$ is an analytic $p$-adic group, in particular contains a uniform open  pro-$p$ subgroup, so that at least when $R$ is a finite local $\mathbb Z_p$-algebra \cite{MR2667882} or a field of characteristic $p$ \cite[4.1 Theorem 1 and 2]{MR2533002}, a quotient of an admissible representation of $G$ is still admissible. That does not survive when $\charf(F)=p$, as the following example shows.

    \begin{example}\label{ex:4.2} An admissible representation of  $F^*$ with a non-admissible quotient, when  $\charf(F)=p>0$ and $pR=0$.
    
     If $\charf(F)=p>0$ , then  $1+P_F$ is a quotient of $F^*$.
 Choose a uniformizer $t$ of $F$; it is known that  the map $\prod_{(m,p)=1, m\geq 1}\mathbb Z_p\to 1+P_F$ sending $(x_m)$ to $\prod_m(1+ t^m)^{x_m}$ is a topological group isomorphism. The group $H$ of Example \ref{ex:4.1} is a topological quotient of $F^*$. When and $pR=0$ the admissible $R$-representation $ C_c^\infty(H,R)$ of $H$ with the non-admissible quotient $ C_c^\infty(H,R)/  C_c^\infty(H,R)^H$ inflates to  an admissible $R$-representation $V$ of $F^*$  containing the  trivial representation $V_0=V^{1+P_F}$ with  a non-admissible quotient $V/V_0$.
    \end{example}
  
  That contrast also remains when we consider Jacquet functors. Let $P=MN$ be a standard parabolic subgroup of $G$. Assume that $R$ is noetherian. The parabolic induction $\Ind_P^G:Mod_R^\infty(M)\to Mod_R^\infty(G)$ respects admissibility \cite[Corollary 4.7]{Vigneras-adjoint}. Its left adjoint $L_P^G$ respects admissibility  when $R$ is a field of characteristic different from  $p$  \cite[II.3.4]{MR1395151}. More generally,
  \begin{proposition}\label{prop:noeinvert}  Assume that $R$ is noetherian and that $p$ is invertible in $V$. Let $V\in \Mod_R^\infty(G)$ such that for any open compact subgroup $J$ of $G$, the $R$-module $V^J$ has finite length. Then for any open compact subgroup $J_M$ of $M$, the $R$-module $V_N^{J_M}$ has finite length. 
 \end{proposition}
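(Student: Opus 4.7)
The plan is to reduce the statement to a surjectivity assertion. Fix the open compact subgroup $J_M \subset M$. By standard Bruhat--Tits theory one can find an open compact subgroup $J \subset G$ admitting an Iwahori decomposition with respect to $P=MN$,
\[
J \;=\; J^-\cdot J_M\cdot J^+, \qquad J^+ = J\cap N,\quad J^- = J\cap \overline N,
\]
with $J\cap M = J_M$. By hypothesis $V^J$ has finite length as $R$-module, and since $R$ is noetherian, finite length is inherited by quotients. It therefore suffices to prove that the natural $R$-linear map
\[
\eta\colon V^J \;\longrightarrow\; V_N^{J_M}
\]
induced by the canonical projection $V\twoheadrightarrow V_N$ is surjective; then $V_N^{J_M}$ will be a quotient of a finite-length $R$-module, hence itself of finite length.

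The surjectivity of $\eta$ is the $R$-coefficient version, valid here because $p$ is invertible in $R$, of the classical Jacquet--Casselman lemma. I will sketch its proof. Invertibility of $p$ lets us form, for any compact subgroup $H$ of $G$ acting smoothly on a given vector $w$, the averaging idempotent $e_H(w)\in V^H$; moreover, when $H\subset N$, the difference $e_H(w)-w$ lies in the kernel of $V\to V_N$, so that projection to $V_N$ is preserved. Choose an element $z\in Z(M)$ strictly contracting $N$, as in \S\ref{subsec:K,Hecke}. The subgroups $N_k := z^{-k}J^+ z^k$ of $N$ form an increasing sequence exhausting $N$, and each $N_k$ is stable under $J_M$ (since $z$ is central in $M$, hence commutes with $J_M$, and $J_M$ normalizes $J^+$). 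Given $\bar v\in V_N^{J_M}$, lift to $v\in V$; by successively averaging by $J_M$ and then by $N_k$ for $k$ large enough, we may assume $v\in V^{J_M N_k}$ still lifts $\bar v$. Conjugating by $z^k$ then produces a vector $z^k v\in V^{J_M J^+}$ whose image in $V_N$ is $z^k\bar v$.

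The main obstacle is the final transition: passing from a lift of $z^k\bar v$ in $V^{J_M J^+}$ to an actual lift of $\bar v$ in $V^{J^-\!J_M J^+}=V^J$. The subtlety is that the kernel of $V\to V_N$ is not stable under $\overline N$, so naive averaging by $J^-$ does not a priori preserve the image in $V_N$. The classical remedy exploits the $z$-contraction and the Iwahori factorization together: one writes $J^-$-averaging along a coset decomposition, shows that conjugation by a suitable power of $z$ makes all correction terms land in the kernel of $V\to V_N$, and then inverts a power of the Hecke operator attached to $z$ on the relevant finitely generated $R[z]$-submodule of $V_N^{J_M}$ to recover $\bar v$. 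This is the technical heart of Jacquet's surjectivity lemma, and it is carried out in the $\ell\neq p$ (equivalently $p$ invertible in $R$) setting in \cite{MR1395151}. Once surjectivity of $\eta$ is established, the conclusion is immediate from the finite length of $V^J$.
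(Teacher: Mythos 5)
Your overall plan is correct and is essentially the same Casselman-style argument the paper uses: both hinge on the key $z$-contraction lemma (for every finitely generated $X\subset V_N^{M_r}$ there is $a$ with $z^aX\subset\kappa(W_r)$, where $W_r=V^{K_rN_0}$), and both observe that $W_r$ (your $V^J$) is a compact-open fixed-point space and hence of finite length. The difference is one of ordering. You propose to first prove the exact surjectivity $\kappa(V^J)=V_N^{J_M}$ and then read off finiteness; the paper deliberately avoids proving surjectivity first: from the approximate statement $z^aX\subset\kappa(W_r)$ together with injectivity of $z$ on $V_N$ it directly bounds $\lg X\le\lg W_r$ for every finitely generated $X\subset V_N^{M_r}$, hence $\lg V_N^{M_r}\le\lg W_r$, and surjectivity then falls out by length counting from $z^aV_N^{M_r}\subset\kappa(W_r)\subset V_N^{M_r}$. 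Your sketch of the inversion step is the one place that needs tightening: you invoke inversion of the Hecke operator ``on the relevant finitely generated $R[z]$-submodule of $V_N^{J_M}$,'' but if that is read as the $R[z]$-span of $\bar v$ its finite generation over $R$ is not a priori clear. The correct module is $\kappa(V^J)$ itself, which is finitely generated (indeed of finite length) over noetherian $R$ as a quotient of $V^J$, is $z$-stable via the $U_z$-Hecke operator once $p$ is inverted, and on which $z$ — being the restriction of the $M$-automorphism $z$ of $V_N$, hence injective — is automatically bijective by finite length. With that precision, and with the standard reduction that it suffices to treat a cofinal family of $J_M$ (those arising as $M\cap K_r$, since any $J_M$ contains some $M_r$ and $V_N^{J_M}\subset V_N^{M_r}$), your argument is equivalent to the paper's.
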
 
  \begin{proof} Assume that $p$ is invertible in $V$. We recall first the assertions (i) and (ii) of    the last part of \cite{Vigneras-adjoint}.  Let $(K_r)_{r\geq 0}$ be  a decreasing sequence   of open pro-$p$ subgroups of $G$ with an Iwahori decomposition with respect to $P=MN$, with $K_r$ normal in $K_0$,  $\cap K_r=\{1\}$. We write $\kappa:V\to V_N$ for the natural map and $M_r=M\cap K_r, N_r=N\cap K_r, W_r=V^{K_rN_0}$. Let $z\in Z(M)$ strictly contracting $N_0$ (subsection~\ref{subsec:K,Hecke}).
Then we have
  
    {\sl For any finitely generated submodule $X$ of $V_N^{M_r}$ there exists $a\in \mathbb N$ with} $z^a X \subset \kappa( W_r)$. 
  
We prove now the proposition.  As  $K_rN_0$ is a compact open subgroup of $G$, the $R$-module $W_r$ has finite length, say $\ell$.  The $R$-modules $\kappa(W_r)$ and $z^a X $ have finite length $\leq \ell$, hence  $X$ also. This is valid for all $X$ hence $V_N^{M_r}$ has finite length $\leq \ell$. We have  $z^a V_N^{M_r} \subset \kappa(W_r) \subset V_N^{M_r} $  for some $a\in \mathbb N$. The three $R$-modules have finite length hence $ \kappa(W_r)=V_N^{M_r} $.
As any open compact subgroup $J_M$ of $M$ contains $M_r$ for $r$ large enough,  the proposition is proved. \end{proof}  

\begin{remark}\label{rem:noeinvert}
The proof is essentially due to Casselman \cite{casselman-note}, who gives it for complex coefficients. The proof shows that  $V_N^{M_r} = \kappa(W_r)$ where $ W_r\subset V^{N_0}$ for all $r\geq 0$. This implies $\kappa(V^{N_0})=V_N$ because $V_N$ being  smooth is equal to  $\bigcup_{r\geq 0}V_N^{M_r}$.
\end{remark}
When $R$ is artinian, any finitely generated $R$-module has finite length, so the  proposition implies:
 
\begin{corollary}
$L_P^G$ respects admissibility  when $R$ is artinian (in particular a field) and $p$ is invertible in $R$.
\end{corollary}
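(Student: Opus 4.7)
The plan is to deduce the corollary directly from Proposition~\ref{prop:noeinvert}, using only standard properties of artinian rings. The key observation is that when $R$ is artinian, every finitely generated $R$-module has finite length: indeed, a finitely generated module over an artinian ring is both noetherian and artinian, hence of finite length. In particular, an artinian ring is noetherian, so the hypotheses of Proposition~\ref{prop:noeinvert} are in force whenever $p$ is invertible in $R$.

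Given an admissible $V\in\Mod_R^\infty(G)$, the first step is to verify that $V$ satisfies the hypothesis of Proposition~\ref{prop:noeinvert}: for every open compact subgroup $J$ of $G$, the $R$-module $V^J$ is finitely generated by admissibility, and since $R$ is artinian, $V^J$ has finite length. Proposition~\ref{prop:noeinvert} then yields that for every open compact subgroup $J_M$ of $M$, the $R$-module $V_N^{J_M}$ has finite length; in particular it is finitely generated.

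The second step is to observe that $V_N=L_P^GV$ is by construction a smooth $R$-representation of $M$, so the finite generation of $V_N^{J_M}$ for all compact open $J_M\subset M$ is exactly the statement that $V_N$ is admissible. This completes the proof.

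There is essentially no obstacle; the work has already been done in Proposition~\ref{prop:noeinvert}, and the only input specific to the artinian case is the passage from ``finitely generated'' to ``finite length'' for $R$-modules, which is immediate. The proof is therefore a one-line application of the proposition, and no additional lemma is required.
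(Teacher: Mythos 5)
Your proof is correct and matches the paper's argument exactly: the paper also deduces the corollary from Proposition~\ref{prop:noeinvert} by noting that over an artinian ring, finitely generated modules have finite length (and artinian implies noetherian), so admissibility of $V$ gives finite length of $V^J$, and the proposition then yields finite length (hence finite generation) of $V_N^{J_M}$. No further comment is needed.
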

 
 \begin{remark}\label{rem:Dat_result}
 This corollary was already noted by Dat \cite{MR2485794}. The corollary is expected to be true for $R$ noetherian when $p$ is invertible in $R$.
 Using the theory of types, Dat proves it when $G$ is a general linear group, a classical group with $p$ odd, or a group of relative rank $1$ over $F$.
 \end{remark}
  
  Emerton has proved that $L_P^G$ respects admissibility  when  $R$ is a finite local $\mathbb Z_p$-algebra  and $\charf(F)=0$ \cite{MR2667882}. But again, his proof does not survive when $\charf(F)=p>0$ and $pR=0$.
    
   \begin{example}\label{ex:4.2-2} An admissible representation   of  $SL(2,F)$  with  a non-admissible space of $U$-coinvariants, when $\charf(F)=p>0$ and $p R=0$.
   
    Assume $\charf(F)=p>0$ and $p R=0$.   Let $B=TU$ the upper triangular subgroup of $G= SL(2,F)$ and identify 
   $T$ with $F^*$ via $\diag(a,a^{-1})\mapsto a$. Example \ref{ex:4.2} provides an admissible $R$-representation $V$ of $T$ containing the trivial representation $V_0 $ (the elements of $V$ fixed by the maximal pro-$p$ subgroup of $T$), such that $V/V_0$ is not admissible. The representation $\Ind_B^G V$ of $G$ contains $\Ind_B^G V_0$, which contains the trivial subrepresentation $V_{00}$.  We claim that  the quotient $W=(\Ind_B^G V)/ V_{00}$ is admissible and that $W_U$ is not admissible (as a representation of $T$).
   
For the second assertion, it suffices to prove  that $W_U= V/V_0$. The  Steinberg representation $\St=\Ind_B^G V_0/V_{00} $ of $G$   is contained in $W$ and $W/\St$ is isomorphic to $ \Ind_B^G(V/V_0)$. We get an exact sequence
 $$\St_U\to W_U \to (\Ind_B^G(V/V_0))_U\to 0.$$
It is known that $\St_U=0$ (see the more general result in Corollary \ref{cor:6.8} below).
Hence the module $(\Ind_B^G(V/V_0))_U$ is canonically isomorphic to $ V/V_0$  \cite[Theorem 5.3]{Vigneras-adjoint}. 

We  now prove  the admissibility of $W$. Let  $\mathcal{U}$ be  the  pro-$p$ Iwahori subgroup of $G$, consisting of integral matrices in $SL(2,O_F)$ congruent modulo $P_F$ to the strictly upper triangular subgroup of $GL(2,k)$. We prove that 
$W^\mathcal{U}=\St^\mathcal{U}$, so $W$ is admissible by  Lemma \ref{lemma:4.4}, because  $\St$ is admissible. 
 Let $f\in \Ind_B^G V$ with a $\mathcal{U}$-invariant image in $W$, hence for  $x\in \mathcal{U}$, there exists $v_x\in V_{0}$ with $f(gx)-f(g)=v_x$ for all $g\in G$. Put  $s=\begin{pmatrix}0&1\cr-1&0 \end{pmatrix} $.  Then $f(sx)-f(x)=f(sx)-v_x-(f(x)-v_x)=f(s)-f(1)$.  Put $v=f(s)-f(1)\in V$. 
 If $x\in  \overline U$, then $sxs^{-1}\in U$ and $f(sg)=f(sxs^{-1}sg)=f(sxg)$.
 If $x\in \mathcal{U} \cap  U$ and $z\in \mathcal{U}$ we have $f(sz)=f(z)+v=f(xz)+v=f(sxz)$. An easy matrix calculation shows that $\mathcal{U}$ is generated by $\mathcal{U} \cap \overline U$ and $ \mathcal{U} \cap  U$, so the  map $z\mapsto f(sz)$ from $\mathcal{U}$ to $V$ is invariant under left multiplication by  $\mathcal{U}$. 
 We have   $V_0= V^{\mathcal{U} \cap T}$ and $\mathcal{U} \cap T$ is stable by conjugation by $s$. For $t\in  \mathcal{U} \cap T$ and $z\in \mathcal{U}$ we have  $f(sz)=f(stz)=sts^{-1} f(sz)$ and $f(z)=f(sz)-v=f(stz) -v=f(tz)=tf(z)$. Therefore, $f(sz)$ and $f(z)$ lie in $V_0$. But  $G$ is the union of $B\mathcal{U}$ and $Bs\mathcal{U}$, so $f(g)\in V_0$ for all $g\in G$, which means $f\in \Ind_B^G V_0$ and its image in $W$ does belong to $\St^\mathcal{U}$.
  \end{example}

\subsection{Admissibility and $R^G_P$}\label{S:4.3} We turn to the main result of this section (theorem \ref{thm:1.4} of the introduction) for a general connected reductive group $G$ and a standard parabolic subgroup $P=MN$ of $G$. 
    \begin{lemma}\label{lemma:4.7}  Let $V$ be a noetherian $R$-module, let $t$ be an endomorphism of $V$, and view $V$ as a $\mathbb Z[T]$-module with $T$ acting through $t$. Then the map $f\mapsto f(1)$ yields an isomorphism $e$ from $\Hom_{\mathbb Z[T]}(\mathbb Z[T, T^{-1}],M)$ onto the submodule $V^\infty=\cap_{n\geq 0}t^n V$ of infinitely $t$-divisible elements.
    \end{lemma}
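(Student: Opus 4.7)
The plan is to rewrite the Hom as an inverse limit and then exploit noetherianity to control the kernel flags of $t$. Since the $\mathbb{Z}[T]$-module $\mathbb{Z}[T, T^{-1}]$ is the direct limit $\varinjlim(\mathbb{Z}[T] \xrightarrow{T} \mathbb{Z}[T] \xrightarrow{T} \cdots)$, the group $\Hom_{\mathbb{Z}[T]}(\mathbb{Z}[T, T^{-1}], V)$ is naturally identified with $\varprojlim(V \xleftarrow{t} V \xleftarrow{t} \cdots)$, i.e.\ with sequences $(w_n)_{n\ge 0}$ in $V$ satisfying $w_n = t w_{n+1}$; under this identification the map $e$ becomes projection onto $w_0$. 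Since $w_0 = t^n w_n$ for every $n$, its image already lies in $V^\infty$.

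The key ingredient from noetherianity is the existence of an integer $N \ge 0$ with $\ker t^N = \ker t^{N+j}$ for all $j \ge 0$; equivalently, $t^m$ annihilates $\ker t^N$ for every $m \ge N$.

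Injectivity of $e$ is then immediate: if $w_0 = 0$, each $w_n$ lies in $\ker t^n$, and writing $w_n = t^m w_{n+m}$ with $m \ge N$, the element $w_{n+m}$ lies in $\ker t^{n+m} \subset \ker t^N$, so $w_n = t^m w_{n+m} = 0$.

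For surjectivity, fix $v \in V^\infty$ and set $A_n = \{w \in V : t^n w = v\}$, a nonempty coset of $\ker t^n$. One must produce a coherent sequence $(w_n)$ with $w_n \in A_n$, $w_0 = v$ and $t w_{n+1} = w_n$. Consider $A_n \cap t^m V$, which (when nonempty) is a coset of $\ker t^n \cap t^m V$. For $m \ge N$ this intersection vanishes: if $u = t^m u'$ with $t^n u = 0$, then $u' \in \ker t^{n+m} \subset \ker t^N$, hence $u = t^m u' = 0$. Moreover $A_n \cap t^m V \neq \emptyset$, since any $u'$ with $t^{n+m} u' = v$ yields $t^m u' \in A_n \cap t^m V$. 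Hence for $m \ge N$ the set $A_n \cap t^m V$ is a singleton, independent of $m$; call its element $w_n$. Then $w_0 = v$ because $A_0 = \{v\}$, and the inclusion $t(A_{n+1} \cap t^m V) \subset A_n \cap t^{m+1} V$ forces $t w_{n+1} = w_n$. The main obstacle is this surjectivity step: since $t$ need not be injective on $V$, naive iterative choice of preimages breaks down, and noetherianity is precisely what pins down the unique coherent system of preimages inside $t^m V$ for $m \ge N$.
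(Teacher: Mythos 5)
Your proof is correct and rests on the same key fact as the paper's: for a noetherian module the chain $\ker t \subseteq \ker t^2 \subseteq \cdots$ stabilizes, say at $\ker t^N$. The surjectivity arguments differ in presentation. The paper chooses arbitrary $m_k$ with $t^k m_k = v$ and then ``irons out'' the incoherence by setting $\mu_k = t^N m_{N+k}$; you instead pin down a canonical coherent lift by showing that for $m \geq N$, $A_n \cap t^m V$ is a singleton (nonempty because $v \in V^\infty$, and $\ker t^n \cap t^m V = 0$ because $\ker t^{n+m} = \ker t^N$ and $t^m$ kills $\ker t^N$). These produce the same sequence: the paper's $\mu_n = t^N m_{N+n}$ lies in $A_n \cap t^N V$, which is your singleton. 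Where you genuinely diverge is injectivity: the paper notes that $t$ restricts to a surjective, hence (citing Bourbaki on surjective endomorphisms of noetherian modules) bijective, endomorphism of $V^\infty$ and deduces injectivity of $e$ from that; you give a direct elementwise argument, again from kernel stabilization, that a coherent sequence with $w_0 = 0$ must vanish. Your version is thus a bit more self-contained, avoiding the external citation, at the cost of a slightly longer surjectivity step.
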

     \begin{proof}  A $\mathbb Z[T]$-morphism $f:\mathbb Z[T, T^{-1}]\to V$ is determined by the values $m_n=f(T^{-n})$ for $n\in \mathbb N$, which are only subject to the condition $t m_{n+1}=m_n$ for $n\in \mathbb N$. Certainly $f(1)=m_0$
 is in   $V^\infty$. Let us prove that $e$ is surjective. As $V$ is noetherian, there is some $n\geq 0$ such that $\Ker t^{n+k}=\Ker t^{n}$ for $k\geq 0$. Let $m\in V^\infty$ and for $k\geq 0$ choose $m_k$ such that $m=t^km_k$. Then for $k\geq 0$, $m_{n+k}-t m_{m+k+1}$ belongs to $\Ker t^{n+k}$ so that $t^nm_{n+k}=t^{n+1} m_{m+k+1}$ Putting $\mu_k= t^n m_{n+k}$ we have $ \mu_k=t \mu_{k+1}$ and $\mu_0=m$. Therefore $e$ is surjective. 
 By \cite[\S 2, No 2, Proposition~2]{MR3027127}, the action of $t$ on $V^\infty$ being surjective is bijective because the $R$-module $V^\infty$ is noetherian, so $e$ is indeed bijective.  \end{proof} 
 
 \begin{theorem}\label{thm:4.11} Assume that $R$ is  noetherian and $p$ is nilpotent in $R$. Then the functor $R^G_P: \Mod_R^\infty(G)\to  \Mod_R^\infty(M)$ respects admissibility.
 \end{theorem}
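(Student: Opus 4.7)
The plan is to invoke Lemma~\ref{lemma:4.4} at a suitable pro-$p$ open subgroup of $M$. Pick a pro-$p$ open compact subgroup $K$ of $G$ admitting an Iwahori decomposition with respect to $P$, i.e.\ $K=(K\cap\overline N)(K\cap M)(K\cap N)$, and set $J_M=K\cap M$; this is a pro-$p$ open subgroup of $M$. By Lemma~\ref{lemma:4.4} it then suffices to prove that $(R^G_P\pi)^{J_M}$ is finitely generated as an $R$-module.

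We access this invariant space by combining adjunction with the Hecke-algebra material of \S\ref{subsec:K,Hecke}. Frobenius reciprocity together with the adjunction $(\Ind_P^G,R_P^G)$ yield
\[
(R^G_P\pi)^{J_M}\;\simeq\;\Hom_M(\ind_{J_M}^M\charone,\,R^G_P\pi)\;\simeq\;\Hom_G(\Ind_P^G\ind_{J_M}^M\charone,\,\pi).
\]
Choose $z\in Z(M)$ strictly contracting $K\cap N$, and let $\tau_z\in\mathcal H_G(K,\charone)$ be the associated Hecke operator. The end of \S\ref{subsec:K,Hecke} asserts, via the Satake-type map $\mathcal S_P^G$, that $\mathcal H_M(J_M,\charone)$ is the localization $\mathcal H_G(K,\charone)[\tau_z^{-1}]$. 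Using the natural adjunction morphism $\ind_K^G\charone\to\Ind_P^G\ind_{J_M}^M\charone$, one identifies
\[
(R^G_P\pi)^{J_M}\;\simeq\;\Hom_{\mathcal H_G(K,\charone)}\bigl(\mathcal H_G(K,\charone)[\tau_z^{-1}],\,\pi^K\bigr),
\]
which embeds $R$-linearly into $\Hom_{\mathbb Z[T]}(\mathbb Z[T,T^{-1}],\pi^K)$ once we let $T$ act on $\pi^K$ through $\tau_z$.

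Since $\pi$ is admissible and $R$ is noetherian, $\pi^K$ is a finitely generated, hence noetherian, $R$-module. Lemma~\ref{lemma:4.7}, applied to $\pi^K$ with $t=\tau_z$, identifies this last $\Hom$ with the submodule $\bigcap_{n\ge 0}\tau_z^n\pi^K$ of $\pi^K$, which is a submodule of a noetherian $R$-module and therefore finitely generated. Consequently $(R^G_P\pi)^{J_M}$ is finitely generated over $R$, and Lemma~\ref{lemma:4.4} concludes the proof.

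The main obstacle is the identification carried out in the middle paragraph. One must extend the Hecke-algebra formalism of \S\ref{subsec:K,Hecke}, originally stated over the field $C$, to our ring $R$ under the hypotheses $R$ noetherian and $p$ nilpotent; and, more delicately, one must verify that the adjunction map $\ind_K^G\charone\to\Ind_P^G\ind_{J_M}^M\charone$ is the geometric counterpart of the Hecke-algebra localization $\mathcal H_G(K,\charone)\to\mathcal H_M(J_M,\charone)$, so that applying $\Hom_G(-,\pi)$ converts localization into infinite $\tau_z$-divisibility in $\pi^K$. Granted these compatibilities, Lemmas~\ref{lemma:4.4} and~\ref{lemma:4.7}---placed immediately before the theorem for precisely this purpose---complete the argument.
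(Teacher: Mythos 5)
Your plan is essentially the paper's plan: reduce via Lemma~\ref{lemma:4.4} to a finiteness statement for $J$-invariants, transport it through the adjunction to a Hom over a Hecke algebra, use the localization structure at $\tau$, and invoke Lemma~\ref{lemma:4.7}. In fact the authors' remark immediately after the proof says precisely that one may run the argument at a pro-$p$ Iwahori subgroup with $\mathbb F=\mathbb F_p$, which is morally what you are doing. But your execution has two genuine gaps, both in the step you yourself flag as the ``main obstacle''.

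First, a ``pro-$p$ open compact subgroup $K$ admitting an Iwahori decomposition with respect to~$P$'' is not enough. The isomorphism
\[
\Ind_P^G\bigl(\ind_{J_M}^M\charone\bigr)\;\simeq\;\mathcal H_M(J_M,\charone)\otimes_{\mathcal H_G(K,\charone)}\ind_K^G\charone
\]
that underlies your identification is \emph{not} a formal consequence of adjunction and the Satake embedding; it is the substantial Theorem~1.2 of \cite{MR3001801} (for the special parahoric $\mathcal K$, after choosing a $\overline P$-regular lift $V_0$ of an irreducible $V$ of $\mathcal K\cap M$) or Proposition~4.5 of \cite{arXiv:1703.04921} (for a pro-$p$ Iwahori subgroup). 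You must choose $K$ to be one of these specific groups; an arbitrary $K$ with Iwahori decomposition will not carry the Hecke-algebra material of \S\ref{subsec:K,Hecke}, which is stated only for $\mathcal K$.

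Second, and more importantly, you skip the reduction to $p=0$ in $R$. The paper begins by replacing $\pi$ with $\Ker(p\colon\pi\to\pi)$ using Lemma~\ref{lemma:4.4}(iii), precisely so that the ambient base becomes an $\mathbb F_p$-algebra: the theorems of \cite{MR3001801} and \cite{arXiv:1703.04921} that you need are proved over fields of characteristic~$p$, and the paper's tensoring step $\pi_1=\mathbb F\otimes_{\mathbb F_p}\pi$ and the filtration of $\Ind_J^{\mathcal K\cap M}\mathbb F$ by irreducibles are exactly what converts those results into a statement over a general noetherian $R$ with $p=0$. You cannot simply ``extend the Hecke-algebra formalism to our ring~$R$'' without this reduction, because with $p$ merely nilpotent the cited isomorphisms are not known (and the localization $\mathcal H_G[\tau^{-1}]=\mathcal H_M$ is a characteristic-$p$ phenomenon). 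Once you perform the $p=0$ reduction and fix $K$ to be a pro-$p$ Iwahori subgroup (so the only irreducible $\mathbb F_p$-representation of $K$ is trivial, avoiding the $\mathbb F$-tensoring and filtration needed at $\mathcal K$), your argument does close, and the final application of Lemma~\ref{lemma:4.7} is correct since $\pi^K$ is noetherian over $R$.
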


\begin{proof}
Let $\pi$ be an admissible $R$-representation of $G$ and we prove $R_P^G(\pi)$ is admissible.
By Lemma~\ref{lemma:4.4}, we may replace $\pi$ with $\Ker(p\colon \pi\to \pi)$, hence we assume that $p = 0$ in $R$.

Recall that we have fixed a special parahoric subgroup $\mathcal{K}$ in \S\ref{subsec:K,Hecke}.
Take a finite extension $\mathbb{F}$ of $\mathbb{F}_p$ such that all absolute irreducible representations of $\mathcal{K}$ in characteristic $p$ are defined over $\mathbb{F}$.
Then for any open pro-$p$ subgroup $J$ of $\mathcal{K}\cap M$, we have
\begin{align*}
R_P^G(\pi)^J\subset R_P^G(\mathbb{F}\otimes_{\mathbb{F}_p}\pi)^J& = \Hom_{\mathbb{F}[J]}(\mathbb{F},R_P^G(\mathbb{F}\otimes_{\mathbb{F}_p}\pi))\\
& = \Hom_{\mathbb{F}[\mathcal{K}\cap M]}(\Ind^{\mathcal{K}\cap M}_J(\mathbb{F}),R_P^G(\mathbb{F}\otimes_{\mathbb{F}_p}\pi)).
\end{align*}
Since we have a filtration on $\Ind^{\mathcal{K}\cap M}_J(\mathbb{F})$ whose successive quotients are absolute irreducible representations, it is sufficient to prove that the $R$-module
\[
\Hom_{\mathbb{F}[\mathcal{K}\cap M]}(V,R_P^G(\mathbb{F}\otimes_{\mathbb{F}_p}\pi)).
\]
is finitely generated for any irreducible $\mathbb{F}$-representation $V$ of $\mathcal{K}\cap M$.

Put $\pi_1 = \mathbb{F}\otimes_{\mathbb{F}_p}\pi$.
This is also admissible.
Let $V_0$ be an irreducible $\mathbb{F}$-representation of $\mathcal{K}$ which is $\overline{P}$-regular \cite[Definition~3.6]{MR3001801} and $(V_0)_{\mathcal{N}\cap \mathcal{K}}\simeq V$.
This $V_0$ exists by the classification of absolute irreducible representations of $\mathcal{K}$ (\cite[Theorem~3.7]{MR3001801}, \cite[III.10 Lemma]{MR3600042}).
Then by \cite[Theorem~1.2]{MR3001801} we have
\[
\Ind_P^G(\ind_{\mathcal{K}\cap M}^M(V))\simeq \mathcal{H}_M(\mathcal{K}\cap M,V)\otimes_{\mathcal{H}_G(\mathcal{K},V)}\ind_{\mathcal{K}}^G(V_0).
\]
Hence
\begin{align*}
\Hom_{\mathbb{F}[\mathcal{K}\cap M]}(V,R_P^G(\pi_1))
& = \Hom_{\mathbb{F}[M]}(\ind_{\mathcal{K}\cap M}^M(V),R_P^G(\pi_1))\\
& = \Hom_{\mathbb{F}[G]}(\Ind_P^G(\ind_{\mathcal{K}\cap M}^M(V)),\pi_1)\\
& = \Hom_{\mathbb{F}[G]}(\mathcal{H}_M(\mathcal{K}\cap M,V)\otimes_{\mathcal{H}_G(\mathcal{K},V_0)}\ind_{\mathcal{K}}^G(V_0),\pi_1)\\
& = \Hom_{\mathcal{H}_G(\mathcal{K},V_0)}(\mathcal{H}_M(\mathcal{K}\cap M,V),\Hom_{\mathbb{F}[\mathcal{K}]}(V_0,\pi_1)).
\end{align*}
As $\mathcal{H}_M(\mathcal{K}\cap M,V)$ is a localization of $\mathcal{H}_G(\mathcal{K},V_0)$ at some $\tau\in \mathcal{Z}_G(\mathcal{K},V_0)$, the $R$-module 
\[
\Hom_{\mathcal{H}_G(V_0)}(\mathcal{H}_M(\mathcal{K}\cap M,V),\Hom_{\mathbb{F}[\mathcal{K}]}(V_0,\pi_1))
\]
identifies with 
\[
\Hom_{\mathbb{F}[T]}(\mathbb{F}[T,T^{-1}],\Hom_{\mathbb{F}[\mathcal{K}]}(V_0,\pi_1))
\]
with $T$ acting on $\Hom_{\mathbb{F}[\mathcal{K}]}(V_0,\pi_1)$ through $\tau$.
Since the $R$-module $\Hom_{\mathbb{F}[\mathcal{K}]}(V_0,\pi_1)$ is finitely generated and $R$ is noetherian, Lemma~\ref{lemma:4.7} show that $\Hom_{\mathbb{F}[T]}(\mathbb{F}[T,T^{-1}],\Hom_{\mathbb{F}[\mathcal{K}]}(V_0,\pi_1))$ is also a finitely generated $R$-module.
\end{proof}

\begin{remark}
Using \cite[Proposition 4.5]{arXiv:1703.04921} instead of \cite[Corollary~1.3]{MR3001801}, the argument works replacing $\mathcal{K}$ by a pro-$p$ Iwahori subgroup.
Note that the only irreducible representation of pro-$p$ Iwahori subgroup in characteristic $p$ is the trivial representation.
So we may take $\mathbb{F} = \mathbb{F}_p$.
\end{remark}
When $R$ is noetherian,  $\Ind_P^G: \Mod_R^\infty(M)\to \Mod_R^\infty(G)$ respects admissibility and induces  a functor  $\Ind_P^{G,a}:\Mod_R^a(M)\to \Mod_R^a(G)$ between the category  of admissible representations.  Emerton's {\bf $\overline P$-ordinary part functor} $\Ord_{\overline P}^G$  is  right adjoint to $\Ind_P^{G,a}$.
For $V\in \Mod_R^\infty(G)$ admissible,  
\begin{equation}\label{eq:OrdP} \Ord_{\overline P}^G V = (\Hom_{R[\overline N]}(C_c^\infty(\overline N, R), V))^{Z(M)-f},
\end{equation}
  is the space of $Z(M)$-finite vectors of $\Hom_{R[\overline N]}(C_c^\infty(\overline N, R), V)$ with the natural action of $M$  (the representation $\Ord_{\overline P}^G V$ of $M$ is smooth)  \cite[\S 8]{Vigneras-adjoint}. 

 If $R_P^G$  respects admissibility, the restriction of $R_P^G$ to the category of admissible representations is necessarily    right adjoint to $\Ind_P^{G,a} $, hence  is isomorphic to  $\Ord_{\overline P}^G$.

   \begin{corollary}\label{cor:4.10}  Assume $R$  noetherian and either $p$ nilpotent in $R$. Then $R_P^G$ is isomorphic  to  the $\overline P$-ordinary part functor $\Ord_{\overline P}^G$ on admissible $R$-representations of $G$.
    \end{corollary}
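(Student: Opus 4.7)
The proof is essentially formal, given Theorem~\ref{thm:4.11}: the content of the corollary is already sketched in the paragraph preceding it, and my plan is just to make that sketch precise.

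First, I would record that under the hypothesis that $R$ is noetherian (with $p$ nilpotent in $R$), parabolic induction $\Ind_P^G\colon \Mod_R^\infty(M)\to \Mod_R^\infty(G)$ restricts to a functor $\Ind_P^{G,a}\colon \Mod_R^a(M)\to \Mod_R^a(G)$; this is the admissibility preservation cited from \cite[Corollary~4.7]{Vigneras-adjoint}. By Theorem~\ref{thm:4.11}, the right adjoint $R_P^G$ likewise restricts to a functor $R_P^{G,a}\colon \Mod_R^a(G)\to \Mod_R^a(M)$.

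Next, I would restrict the adjunction $(\Ind_P^G, R_P^G)$ to the full subcategories of admissible representations. Because both functors preserve admissibility, and the unit and counit of the adjunction live in categories of smooth representations that contain the admissible subcategories as full subcategories, the adjunction isomorphism
\[
\Hom_G(\Ind_P^G W, V)\;\simeq\;\Hom_M(W, R_P^G V)
\]
continues to hold for $W\in \Mod_R^a(M)$ and $V\in \Mod_R^a(G)$. Thus $R_P^{G,a}$ is a right adjoint of $\Ind_P^{G,a}$.

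Finally, by the definition of Emerton's functor recalled in \eqref{eq:OrdP}, $\Ord_{\overline P}^G$ is also a right adjoint of $\Ind_P^{G,a}$ on $\Mod_R^a(G)$ (as stated in the paragraph before the corollary, using \cite[\S 8]{Vigneras-adjoint}). The uniqueness of adjoint functors up to canonical natural isomorphism then gives the desired isomorphism $R_P^G\simeq \Ord_{\overline P}^G$ on $\Mod_R^a(G)$. There is no real obstacle here: all the substantive work was done in proving Theorem~\ref{thm:4.11} (admissibility preservation), and once that is available the corollary is a one-line consequence of the uniqueness of right adjoints.
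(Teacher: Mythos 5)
Your proof is correct and follows exactly the paper's argument: once Theorem~\ref{thm:4.11} guarantees that $R_P^G$ preserves admissibility, one restricts the adjunction $(\Ind_P^G, R_P^G)$ to the full subcategories of admissible representations and invokes uniqueness of right adjoints, since $\Ord_{\overline P}^G$ is by construction the right adjoint of $\Ind_P^{G,a}$. The paper states this in the paragraph immediately preceding the corollary; your only addition is to make explicit the (standard) observation that an adjunction restricts to full subcategories preserved by both functors, which is a harmless elaboration rather than a different route.
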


       \begin{corollary}\label{cor:4.12} Assume that $R$ is a field of characteristic $p$. Let $V$ be an irreducible admissible $R$-representation of $G$ which is a quotient of $\Ind_P^GW$ for some smooth $R$-representation $W$ of $M$. Then $V$ is a quotient of $\Ind_P^GW'$  for some irreducible admissible subquotient $W'$ of $W$.
       \end{corollary}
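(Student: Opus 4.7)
The plan is to use the adjunction $(\Ind_P^G, R_P^G)$ to reduce the problem to producing an irreducible subrepresentation inside an admissible $M$-representation, and to handle that reduction by a Zorn-type argument that exploits the characteristic $p$ coefficients.

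First, we may assume $V\neq 0$. The given surjection $\phi\colon \Ind_P^G W \twoheadrightarrow V$ corresponds under the adjunction to a nonzero $R[M]$-homomorphism $\psi\colon W \to R_P^G V$. Let $\widetilde W := \psi(W)$; it is simultaneously a quotient of $W$ and a subrepresentation of $R_P^G V$. By Theorem~\ref{thm:4.11}, $R_P^G V$ is admissible, and since $R$ is a field (hence noetherian) its subrepresentation $\widetilde W$ is admissible as well.

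The crux is to produce an irreducible subrepresentation of $\widetilde W$. I propose to establish the general fact that every nonzero admissible smooth $R$-representation $\tau$ of $M$ has an irreducible subrepresentation. Fix an open pro-$p$ subgroup $J$ of $M$. For any nonzero smooth subrepresentation $\tau'\subseteq \tau$, one has $\tau'^J\neq 0$: taking a nonzero $v\in \tau'$ and writing $J_v$ for its open stabilizer in $J$, the index $[J:J_v]$ is a power of $p$, so the $R[J]$-span of $v$ is a finite-dimensional characteristic $p$ representation of a finite $p$-group, which must have nonzero invariants. Because $\tau$ is admissible, $\dim_R \tau^J<\infty$; hence any descending chain $(\tau_i)$ of nonzero subrepresentations of $\tau$ yields a descending chain $(\tau_i^J)$ of nonzero subspaces of $\tau^J$, which stabilizes. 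It follows that $\bigl(\bigcap_i \tau_i\bigr)^J=\bigcap_i \tau_i^J\neq 0$, so $\bigcap_i \tau_i\neq 0$. Zorn's lemma then furnishes a minimal nonzero subrepresentation of $\tau$, and minimality forces it to be irreducible.

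Applying this fact to $\widetilde W$ produces an irreducible subrepresentation $W'\subseteq \widetilde W$; it is admissible (as a subrepresentation of the admissible $\widetilde W$ over a noetherian ring) and a subquotient of $W$ (being a subrepresentation of the quotient $\widetilde W$ of $W$). The composition $W' \hookrightarrow \widetilde W \hookrightarrow R_P^G V$ is nonzero, so its image under the adjunction bijection is a nonzero $R[G]$-map $\Ind_P^G W' \to V$, which must be surjective since $V$ is irreducible. The main obstacle is precisely the existence of the irreducible subrepresentation $W'$, for admissible smooth representations in this setting are not a priori of finite length and the standard argument fails; rescuing it requires the specific pro-$p$/characteristic $p$ interaction that forces every nonzero smooth subrepresentation to have nonzero $J$-fixed vectors.
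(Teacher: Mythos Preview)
Your proof is correct and follows the same approach as the paper: use adjunction and Theorem~\ref{thm:4.11} to reduce to finding an irreducible subrepresentation inside an admissible $M$-representation, then pass back via adjunction. For that middle step the paper simply cites \cite[Lemma~7.9]{MR3001801}, whereas you supply a direct Zorn argument exploiting that nonzero smooth representations over a field of characteristic $p$ have nonzero pro-$p$ invariants---this is essentially the content of the cited lemma.
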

         The latter corollary was previously known only under the assumption that $W$ admits a central character and $R$ is algebraically closed \cite[Proposition 7.8]{MR3001801}.  Its proof is as follows.
 By assumption, there is a non-zero $M$-equivariant map $f:W\to R^G_PV$. By the theorem $R^G_PV$ is admissible so $f(W)$ contains an irreducible admissible subrepresentation $W'$ because $\charf R=p$ \cite[Lemma 7.9]{MR3001801}. The inclusion of $W'$ into $R^G_PV$ gives a non-zero  $G$-equivariant map $\Ind_P^GW' \to V$, so that $V$ is a quotient of $\Ind_P^GW'$.

\begin{remark}   
When $R$ is a field of characteristic $ \neq p$ and  $R^G_P $ respects admissibility,  then Corollary  \ref{cor:4.12} remains true.
\end{remark}
\begin{proof} It suffices to  modify the proof of Corollary \ref{cor:4.12} as follows.  We  reduce to a finitely generated  $R$-representation $W$ of $G$, by replacing $W$ by the representation of $M$ generated by the values of  an element   of $ \Ind_P^GW$ with non-zero image in $V$. An admissible quotient of $W$ is also finitely generated,  thus is  of finite length \cite[II.5.10]{MR1395151}, and in particular, contains an irreducible admissible subrepresentation $W'$. By the arguments in the proof of Corollary \ref{cor:4.12},  $V$ is a quotient of $\Ind_P^GW'$. 
 \end{proof} 
   
Let $V\in \Mod_R^\infty(G)$. Obviously,  $ \Ord_{\overline P}^G(V) $ given by the formula \eqref{eq:OrdP}depends only on the restriction of $V$ to $\overline P$, and $L_{P}^G V=V_{N}$ depends only on the restriction of $V$ to $P$. We ask:
 
 \begin{question} Does $R_{P}^G V$ depend only on the restriction of $V$ to $\overline P$ ?
 \end{question}

To end  this section we assume that $R$ is noetherian and $p$ is invertible in $R$ and we compare $L_P^G$ and $\Ord_P^G$. In the same situation than  in Proposition  \ref{prop:noeinvert}, we take up the same  notations. For $V\in \Mod_R^a(G)$ we have the $R$-linear map 
\begin{equation}\varphi\mapsto \kappa ( \varphi (1_{N_0})):  \Ord_P^G(V)\xrightarrow{e_V}L_P^G(V)=V_N,
\end{equation}
where $1_{N_0}$ is the characteristic function of $N_0$. Replacing $N_0$ by a compact open subgroup $J_N\subset N$   multiplies $e_V$ by the generalized index $[J_N:N_0]$ which is a power of $p$. Following the action of $m\in M$ which sends $\varphi\in  \Ord_P^G(V)$ to $m\circ \varphi \circ m^{-1}$, 
$$\kappa ((m \varphi)(1_{N_0}))=\kappa (m(  \varphi (1_{m^{-1}N_0 m })))=  [m^{-1}N_0 m:N_0]m ( \kappa (   \varphi (1_{N_0}))),$$
we get that $e_V$ is an $R[M]$-linear map $ \Ord_P^G(V)\to \delta_P^{-1}L_P^G(V)$, and that $V\mapsto e_V$ defines on $\Mod_R^a(G)$ a morphism of functors $e: \Ord_P^G\to \delta_P^{-1}L_P^G$.
Here $\delta_P(m) = [mN_0m^{-1}:N_0]$ for $m\in M$.

 \begin{proposition} Assume  $R$  noetherian and $p$ invertible in $R$. Let $V\in \Mod_R^\infty(G)$ such that for any open compact subgroup $J$ of $G$, the $R$-module $V^J$ has finite length. Then $e_V$ is an isomorphism.
  \end{proposition}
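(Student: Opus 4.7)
The plan is to adapt Casselman's classical argument (complex admissible case) to our noetherian $p$-invertible setting, using the hypothesis that every $V^J$ has finite $R$-length. First, $e_V$ is $M$-equivariant by construction, so it suffices to show it induces an isomorphism on $M_r$-invariants for every $r\geq 0$. Shrinking the filtration $(K_r)$ so that $M_r$ normalises $N_0$ and $M_r N_0$ is an open compact subgroup of $G$, we know that $V^{M_r N_0}$ has finite $R$-length by hypothesis, that $V_N^{M_r}$ has finite length by Proposition~\ref{prop:noeinvert}, and that Remark~\ref{rem:noeinvert} gives a surjection $\kappa\colon V^{M_r N_0}\twoheadrightarrow V_N^{M_r}$.

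Next, introduce, for $z\in Z(M)$ strictly contracting $N_0$, the Hecke-type operator $T_z w := \sum_{n\in N_0/zN_0z^{-1}} nzw$ on $V^{M_r N_0}$ (this preserves $V^{M_r N_0}$ since $z\in Z(M)$ commutes with $M_r$ and $M_r$ normalises both $N_0$ and $zN_0z^{-1}$). Two formulas drive the argument: $\kappa T_z = c\,z\kappa$ with $c = [N_0:zN_0z^{-1}]$ a power of $p$, hence a unit in $R$; and, chasing the $M$-action on $\Ord$, $(z\varphi)(1_{N_0}) = T_z(\varphi(1_{N_0}))$ for $\varphi\in(\Ord_P^G V)^{M_r}$. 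Applying Fitting's lemma to $T_z$ acting on the finite-length $R$-module $V^{M_r N_0}$ yields a decomposition $V^{M_r N_0} = V_{\mathrm{nil}}\oplus V_{\mathrm{inv}}$ into its $T_z$-nilpotent and $T_z$-invertible parts.

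The key identity is $\Ker\kappa\cap V^{M_r N_0} = V_{\mathrm{nil}}$. The inclusion $\supset$ is immediate from $\kappa T_z = cz\kappa$ since $c\in R^\times$ and $z$ acts invertibly on $V_N$ (as an element of $M$). For $\subset$, given $v\in V^{M_r N_0}$ with $\kappa v = 0$, write $v = \sum_i(n_i-1)v_i$ with $n_i\in N$ and $v_i\in V$; pick $r'$ so that each $v_i$ is $K_{r'}$-fixed, and exploit strict contraction to find $k$ with $z^k n_i z^{-k}\in K_{r'}$ for all $i$, whence $z^k(n_i-1)v_i = 0$ and, summing over cosets of $N_0/z^kN_0z^{-k}$, $T_z^k v = 0$. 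Consequently $\kappa|_{V_{\mathrm{inv}}}$ is an $R$-isomorphism $V_{\mathrm{inv}}\xrightarrow{\sim}V_N^{M_r}$.

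Finally, identify $(\Ord_P^G V)^{M_r}$ with $V_{\mathrm{inv}}$ via $\varphi\mapsto\varphi(1_{N_0})$: forward, the $Z(M)$-finiteness of $\varphi$ combined with $(z\varphi)(1_{N_0}) = T_z\varphi(1_{N_0})$ forces $\varphi(1_{N_0})\in V_{\mathrm{inv}}$; backward, for $v\in V_{\mathrm{inv}}$ set $v_k := T_z^{-k}v$ and define $\varphi(1_{z^k N_0 z^{-k}}) := z^k v_k$, the relation $T_z v_k = v_{k-1}$ providing the compatibility needed for $\varphi$ to extend uniquely by $N$-equivariance to an element of $\Hom_{R[N]}(C_c^\infty(N,R),V)$. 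Composing this identification with $\kappa$ recovers $e_V$ up to the $\delta_P^{-1}$ twist (reflecting that replacing $1_{N_0}$ by $1_{zN_0z^{-1}}$ multiplies $e_V$ by $\delta_P(z)$). I expect the main obstacle to be the $Z(M)$-finiteness of the lifted $\varphi$: single-element $\langle z\rangle$-finiteness is automatic from finite length of $V_{\mathrm{inv}}$, but promoting this to full $Z(M)$-finiteness requires showing the Fitting decomposition is $Z(M)$-stable, which I plan to deduce from commutativity of $Z(M)$ and a simultaneous choice of strictly contracting generators of $Z(M)$ modulo its maximal compact subgroup.
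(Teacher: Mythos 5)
Your approach is essentially the same as the paper's: reduce to $M_r$-invariants, introduce the Hecke operator $T_z$ (denoted $h_z$ in the paper) on $W_r=V^{M_rN_0}$, use a Fitting-type decomposition of $W_r$ into its $T_z$-nilpotent and $T_z$-invertible parts, identify $\Ker\kappa$ with the nilpotent part via Casselman's contraction argument, and match the invertible part with $(\Ord_P^GV)^{M_r}$ via evaluation at $1_{N_0}$. The paper's steps 3)--4) are exactly your Fitting decomposition and kernel identity; the difference is that the paper imports from Casselman the kernel computation and from \cite{Vigneras-adjoint} (Proposition 7.5, Theorem 8.1, Proposition 8.2) the realization of $\Ord_P^G$ via $\varphi\mapsto((m\varphi)(1_{N_0}))_m$ and the fact that the $z^{-1}$-finite part $(I_{M^+}^M V^{N_0})^{z^{-1}-f}$ is admissible (hence automatically $Z(M)$-finite), whereas you re-derive these claims directly. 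Your construction of the backward map $v\mapsto\varphi$ via $v_k=T_z^{-k}v$ and $\varphi(1_{z^kN_0z^{-k}}):=z^kv_k$ is correct and does agree, after unwinding, with the paper's identification $(I_{z^{\mathbb N}}^{z^{\mathbb Z}}(V^{N_0M_r}))^{z^{-1}-f}\simeq I_{z^{\mathbb N}}^{z^{\mathbb Z}}(W_r)$ followed by $f\mapsto f(1)$.

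The one point you flag--upgrading $\langle z\rangle$-finiteness of the constructed $\varphi$ to $Z(M)$-finiteness--is genuinely the crux, and you should carry it out rather than leave it as a plan. The cleanest way to close it with your setup is to observe that (a) for all $z'\in Z(M)^+$, the backward map $v\mapsto\varphi$ intertwines the Hecke operator $T_{z'}$ on $V_{\mathrm{inv}}$ with the group action of $z'$ on $\Ord_P^GV$; (b) every $z'\in Z(M)$ factors as $z''z^{-m}$ with $z''\in Z(M)^+$ and $m\geq0$, and a direct computation as in your construction gives $(z^{-m}\varphi)(1_{N_0})=T_z^{-m}v\in V_{\mathrm{inv}}$; hence the entire $Z(M)$-orbit of $\varphi$ lands inside the image of the finite-length module $V_{\mathrm{inv}}$, which furnishes the required finiteness. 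Once that is written out, your proposal is a complete proof along the same lines as the paper, with the citations replaced by direct arguments.
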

\begin{proof} 1) We  recall the Hecke version of the Emerton's functor \cite[\S 7, \S 8]{Vigneras-adjoint} for $V\in \Mod_R^a(G)$.
We fix an open compact subgroup $N_0$ of $N$ as in \cite[\S 3.1.1]{MR2667882}.
The monoid $M^+\subset M$ of $m\in M$ contracting $N_0$ acts on $V^{N_0}$ by the Hecke action: 
$$(m,v)\mapsto h_m(v)= \sum_{n\in N_0/mN_0m^{-1}}nmv: M^+\times V^{N_0}\to V^{N_0}.$$
We write $I_{M^+}^M :\Mod_R(M^+)\to \Mod_R(M)$ for the induction, right adjoint of the restriction $\Res_{M^+}^M :\Mod_R(M)\to \Mod_R(M^+)$.  
Let $z\in Z()M$ strictly contracting $N_0$ (subsection~\ref{subsec:K,Hecke}).
{\sl The map
\begin{equation}\label{eq:PhiV}\varphi \mapsto f(m)= (m\varphi)(1_{N_0}): \Ord_P^G V\xrightarrow{\Phi_V} (I_{M^+}^MV^{N_0})^{z^{-1}-f}
\end{equation}
is an isomorphism in $\Mod_R^a(M)$} (loc. cit. Proposition 7.5  restricted to the smooth and $Z(M)$-finite part, and Theorem 8.1 which says that the right hand side is admissible, hence is smooth and $Z(M)$-finite).    {\sl For any $r\geq 0$, $W_r$  is stable by $h_z$,  the restriction from $M$ to $z^{\mathbb Z}$ gives a $R[z^{\mathbb Z}]$-isomorphism} 
\begin{equation}\label{eq:droite}((I_{M^+}^MV^{N_0})^{z^{-1}-f})^{M_r}\simeq (I_{z^{\mathbb N}}^{z^{\mathbb Z}}(V^{N_0M_r}))^{z^{-1}-f}\end{equation}
 (loc. cit. Remark 7.7 for $z^{-1}$-finite elements, Proposition 8.2), {\sl   the RHS of \eqref{eq:droite} is contained in  $I_{z^{\mathbb N}}^{z^{\mathbb Z}}(W_r)$,  and we have the   isomorphism}  
 $$f\mapsto (f(z^{-n}))_{n\in \mathbb N}: I_{z^{\mathbb N}}^{z^{\mathbb Z}}(W_r)\to  \{ (x_n)_{n\geq 0}, \ x_n\in h_z^\infty(W_r)=\cap_{n\in \mathbb N} h_z^n(W_r),  \ h_z(x_{n+1})=x_n\}  $$ 
  (loc. cit. Proposition 8.2, for the isomorphism Lemma \ref{lemma:4.7}).

2)  The inclusion  above is an equality 
$ (I_{z^{\mathbb N}}^{z^{\mathbb Z}}(V^{N_0M_r}))^{z^{-1}-f}= I_{z^{\mathbb N}}^{z^{\mathbb Z}}(W_r),$ because the  map
 \begin{equation}\label{eq:1}f\to f(1): I_{z^{\mathbb N}}^{z^{\mathbb Z}}(W_r)\to h_z^\infty(W_r) \end{equation} is an isomorphism:   on the finitely generated $R$-module $h_z^\infty(W_r)$, $h_z$ is   bijective as it is surjective  (Lemma \ref{lemma:4.7}), hence 
 any element  $f\in I_{z^{\mathbb N}}^{z^{\mathbb Z}}(W_r)$ is $z^{-1}$-finite as  $(z^{-n}f)(1)=f(z^{-n})$ for  $n\in \mathbb N$  and  a $R$-submodule of $h_z^\infty(W_r)$ is finitely generated.   
  
 Through the isomorphisms \eqref{eq:PhiV}, \eqref{eq:droite}, \eqref{eq:1} the restriction of $e_V$ to  $(\Ord_P(V))^{M_r}$  translates into the restriction $\kappa_r$ of $\kappa$ to  $h_z^\infty(W_r)$
$$ h_z^\infty(W_r) \xrightarrow{\kappa_r}V_N^{M_r} .$$ 
 
3) The sequence $\Ker (h_z^n|_{W_r})$ is increasing hence stationary. Let $n$ the smallest number such that $\Ker (h_z^n|_{W_r})= \Ker (h_z^{n+1}|_{W_r})$. By \cite[III.5.3 Lemma, beginning of the proof of III.5.4 Lemma]{casselman-note},   
$$\Ker (\kappa|_{W_r})=\Ker (h_z^n|_{W_r}), \quad h_z^n (W_r)\cap \Ker (h_z^n|_{W_r})=0.$$

4) If the $R$-module $W_r$ has finite length,  $h_z^\infty(W_r)=h_z^n(W_r)$ and $W_r=h_z^n(W_r)\oplus \Ker (h_z^n|_{W_r})$. Indeed, the sequence $(h_z^m(W_r))_{m\in \mathbb N}$ is decreasing and $\ell(W_r)=\Ker (h_z^{m}|_{W_r})+\ell(h_z^m(W_r))$. Therefore   $\kappa_r$ is injective of image $\kappa(W_r)$. As $\kappa(W_r)= V_N^{M_r}$ (proof of Proposition \ref{prop:noeinvert}, $\kappa_r$  is an isomorphism.

5)  If  the $R$-module $W_r$ has finite length for any $r\geq 0$, then  $\kappa(V^{N_0})=V_N$  (Remark  \ref{rem:noeinvert}) and $e_V$ is an isomorphism.
\end{proof}
\begin{remark}\label{rem:sfini}
The arguments in part 1) show that for $V\in \Mod_R^a(G)$, we have $\Ord_P^G V =  (\Hom_{R[\overline N]}(C_c^\infty(\overline N, R), V))^{z^{-1}-f}$ for any $z\in Z(M) $ strictly contracting $\overline{N}$ (subsection~\ref{subsec:K,Hecke}).
\end{remark}
When $R$ is artinian, any finitely generated $R$-module has finite length, so the  proposition implies:
\begin{corollary}\label{rem:second-adjoint}
Assume $R$ artinian (in particular a field) and $p$ is invertible in $R$.  
On $\Mod_R^a(G)$, the functors $ \Ord_P^G$ and $ \delta_P^{-1}L_P^G$ are isomorphic via $e$.
\end{corollary}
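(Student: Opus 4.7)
The plan is to deduce this corollary as an immediate specialization of the preceding proposition, whose hypothesis is a finiteness condition on $V^J$ rather than mere admissibility.

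First, I would observe that an artinian ring is automatically noetherian, so the standing hypotheses of the proposition (that $R$ is noetherian with $p$ invertible) are satisfied. Given any admissible $V \in \Mod_R^a(G)$ and any open compact subgroup $J$ of $G$, the $R$-module $V^J$ is finitely generated by the definition of admissibility. Since $R$ is artinian, every finitely generated $R$-module has finite length; hence $V^J$ has finite length. This verifies the hypothesis of the proposition for the representation $V$.

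Applying the proposition, I conclude that $e_V \colon \Ord_P^G(V) \to \delta_P^{-1} L_P^G(V)$ is an isomorphism of $R[M]$-modules. Since $e$ was constructed before the proposition as a morphism of functors $\Ord_P^G \to \delta_P^{-1} L_P^G$ on $\Mod_R^a(G)$, and we have just shown that $e_V$ is an isomorphism for every object $V$ in this category, it follows that $e$ is a natural isomorphism of functors on $\Mod_R^a(G)$, as claimed.

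There is essentially no obstacle here: the only verification is the passage from ``finitely generated'' to ``finite length'' over an artinian ring, which is a standard fact (e.g.\ Akizuki's theorem, or a direct descending chain argument). All the real work has already been done in the proposition; this corollary merely observes that its hypothesis is automatic in the artinian setting.
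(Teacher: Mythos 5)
Your proof is correct and follows exactly the paper's own argument: the paper simply notes that over an artinian ring every finitely generated module has finite length, so the hypothesis of the preceding proposition holds for any admissible representation, and the corollary follows. Your additional remark that artinian implies noetherian (needed since the proposition hypothesizes noetherian) is implicit in the paper but worth spelling out as you did.
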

\begin{remark}
We expect the corollary to be true for noetherian $R$ with $p$ invertible in $R$. We even expect that the functors $ R_{\overline P}^G$ and $ \delta_P^{-1}L_P^G$ are isomorphic on $\Mod_R^\infty(G)$ (second adjunction). That is proved by Dat  for the same groups as in Remark \ref{rem:Dat_result}, and for those groups $ R_{\overline P}^G$ preserves admissibility. 
\end{remark}

 \subsection{Admissibility of $I_G(P,\sigma,Q)$}
 \begin{theorem}Assume $R$ noetherian. Let $(P,\sigma,Q)$ be an $R[G]$-triple with $\sigma$ admissible. If $p$ is invertible or nilpotent in $R$, then $I_G(P,\sigma, Q)$ is admissible.
 \end{theorem}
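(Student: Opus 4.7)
The plan is to start from the realization of $I_G(P,\sigma,Q)$ as the quotient of $\Ind_Q^G e_Q(\sigma)$ by $\sum_{Q \subsetneq Q' \subseteq P(\sigma)}\Ind_{Q'}^G e_{Q'}(\sigma)$, which combines Definition~\ref{def:Gtriple} with Lemma~\ref{lemma:useful}. The source $\Ind_Q^G e_Q(\sigma)$ is admissible: $e_Q(\sigma)$ is admissible since $\sigma$ is (as recorded after Proposition~\ref{prop:2.1}), and $\Ind_Q^G$ preserves admissibility over noetherian $R$ by \cite[Corollary~4.7]{Vigneras-adjoint}. When $p$ is invertible in $R$, taking $J$-fixed vectors under any open pro-$p$ subgroup is exact, so quotients of admissible representations are admissible, and this case is settled.

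For the nilpotent case, it suffices, again using that $\Ind_{P(\sigma)}^G$ preserves admissibility, to show that $e(\sigma) \otimes \St_Q^{P(\sigma)}$ is admissible as a smooth $R$-representation of $P(\sigma)$, or equivalently (since the unipotent radical $N_{P(\sigma)}$ acts trivially) as a representation of the Levi $M_{P(\sigma)}$. By Lemma~\ref{lemma:min} and Lemma~\ref{lemma:2.3}, I may replace $(P,\sigma,Q)$ with $(P_{\min,\sigma},\sigma_{\min},Q)$ without changing $I_G$, $P(\sigma)$, or $e(\sigma)$; since the extension $e_{-}(-)$ preserves admissibility in both directions, $\sigma_{\min}$ is admissible, and I may assume $\sigma$ is $e$-minimal. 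Then Lemma~\ref{lemma:2.2} shows that $\Delta_P$ and $\Delta_\sigma$ are orthogonal with union $\Delta_{M_{P(\sigma)}}$, and the structural remark following that lemma, applied inside $M_{P(\sigma)}$, yields an almost-product decomposition $M_{P(\sigma)} = M \cdot M'_\sigma$ with $M$ and $M'_\sigma$ commuting up to a finite central subgroup, and $e(\sigma)|_{M_{P(\sigma)}}$ being $\sigma$ extended trivially on $M'_\sigma$.

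Under this almost-product decomposition, $\St_Q^{P(\sigma)}|_{M_{P(\sigma)}} \simeq \St_{Q \cap M_{P(\sigma)}}^{M_{P(\sigma)}}$ pulls back through the finite central isogeny $M \times M'_\sigma \to M_{P(\sigma)}$ to $\charone_M \boxtimes \St_{Q'}^{M'_\sigma}$, where $Q'$ is the parabolic of $M'_\sigma$ with simple roots $\Delta_Q \cap \Delta_\sigma$, while $e(\sigma)|_{M_{P(\sigma)}}$ pulls back to $\sigma \boxtimes \charone$. Consequently $e(\sigma) \otimes \St_Q^{P(\sigma)}$ identifies with the external tensor product $\sigma \boxtimes \St_{Q'}^{M'_\sigma}$ as an $(M \times M'_\sigma)$-representation. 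External tensor products of admissibles over a noetherian ring are admissible, because $(\sigma \boxtimes \St_{Q'}^{M'_\sigma})^{J_1 \times J_2} = \sigma^{J_1} \otimes_R (\St_{Q'}^{M'_\sigma})^{J_2}$ is a tensor product of finitely generated $R$-modules, and admissibility transfers through the finite central isogeny since every open pro-$p$ subgroup of $M_{P(\sigma)}$ contains the image of some $J_1 \times J_2$ with finite index. The main obstacle, and only remaining input, is admissibility of the Steinberg $\St_{Q'}^{M'_\sigma}$ over the noetherian ring $R$ with $p$ nilpotent; after reducing via Lemma~\ref{lemma:4.4} to the case $pR = 0$, this follows from the Grosse-Kl\"onne--Ly argument recalled in the proof of Proposition~\ref{prop:3.3} by base change from the $\mathbb{F}_p$-coefficient case.
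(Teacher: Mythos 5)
Your route through the almost-product decomposition $M_{P(\sigma)}=M\,M'_\sigma$ and the pullback to $M\times M'_\sigma$ is a genuinely different strategy from the paper's, which instead computes $\mathcal U$-invariants for a pro-$p$ Iwahori $\mathcal U$ with Iwahori decomposition and invokes \cite[Theorem~4.7]{AHenV2} to identify $(e(\sigma)\otimes\St_Q^G)^{\mathcal U}$ with $e(\sigma)^{\mathcal U}\otimes_R(\St_Q^G)^{\mathcal U}$. Your reductions (using Lemmas~\ref{lemma:min}, \ref{lemma:2.3}, \ref{lemma:2.2}, the identification of $e(\sigma)\otimes\St_Q^{P(\sigma)}$ with the external tensor $\sigma\boxtimes\St_{Q'}^{M'_\sigma}$ on $M\times M'_\sigma$, and the finite-index transfer of admissibility through the isogeny) are all correct.

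There is, however, a real gap at the step you treat as routine. You assert as a general principle that ``external tensor products of admissibles over a noetherian ring are admissible'' via the identity $(\sigma\boxtimes W)^{J_1\times J_2}=\sigma^{J_1}\otimes_R W^{J_2}$. That identity is \emph{false} over a noetherian $R$ with $pR=0$ that is not a field, even when $W$ is $R$-free: for instance with $R=\mathbb Z/4$, $W=R^2$, $J_2=\mathbb Z/2$ acting by $(a,b)\mapsto(a+2b,b)$ and $\sigma=R/2R$ with trivial $J_1$-action, the natural map $\sigma\otimes_R W^{J_2}\to(\sigma\otimes_R W)^{J_2}$ is not surjective. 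So the purported justification does not stand. This commutation of invariants with tensor product is precisely the nontrivial content that the paper outsources to \cite[Theorem~4.7]{AHenV2}, and you cannot get it for free from freeness of $\St_{Q'}$ and noetherianity of $R$. You also locate ``the main obstacle'' in the admissibility of $\St_{Q'}^{M'_\sigma}$ over noetherian $R$, but that is already known (the paper cites \cite{MR3263032} and \cite[Remark~5.10]{MR3402357} just before its proof); the true obstacle is the commutation step.

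Your approach can be salvaged, and in fact more or less self-containedly: after reducing to $pR=0$ via Lemma~\ref{lemma:4.4}, use that $\St_{Q'}^{M'_\sigma}(R)\simeq\St_{Q'}^{M'_\sigma}(\mathbb F_p)\otimes_{\mathbb F_p}R$ (this is the content of Lemma~\ref{lemma:free}), so that
\[
\sigma\otimes_R\St_{Q'}^{M'_\sigma}(R)\;\simeq\;\sigma\otimes_{\mathbb F_p}\St_{Q'}^{M'_\sigma}(\mathbb F_p),
\]
and now, since $\mathbb F_p$ is a field and each factor is acted on by only one of $J_1,J_2$, taking $J_1\times J_2$-invariants does commute with $\otimes_{\mathbb F_p}$; one gets $\sigma^{J_1}\otimes_{\mathbb F_p}\St_{Q'}^{M'_\sigma}(\mathbb F_p)^{J_2}$, a finitely generated $R$-module because $\sigma^{J_1}$ is and $\St_{Q'}^{M'_\sigma}(\mathbb F_p)^{J_2}$ is finite-dimensional. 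You gesture at this $\mathbb F_p$-descent in your last sentence, but only in the service of the (already-known) admissibility of $\St_{Q'}$; you should instead deploy it where it is actually needed, namely for the commutation identity. Without that, as written, the argument has a hole exactly at the step where the paper leans on \cite{AHenV2}.
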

  It is already known that $\St_Q^G$ is admissible when $R$ is noetherian (when $G$ is split \cite[Corollary B]{MR3263032}, in general \cite[Remark 5.10]{MR3402357}).
  
\begin{proof}  Since parabolic induction preserves admissibility, we may assume $P(\sigma)=G$. If $p$ is invertible in $R$, the result is easy because $I_G(P,\sigma, Q)$ is a quotient of $\Ind_P^G \sigma$: if $\sigma$ is admissible so are $\Ind_P^G \sigma$ and all its subquotients.  Therefore, it is enough  to prove the theorem when $p$ is nilpotent in $R$ and $P(\sigma)=G$.
Then $I_G(P,\sigma, Q)= e(\sigma)\otimes _R \St_Q^G$.
Let $\mathcal{U}$ be a pro-$p$-Iwahori subgroup which has the Iwahori decomposition $\mathcal{U} = (\mathcal{U}\cap \overline{N})(\mathcal{U}\cap M)(\mathcal{U}\cap N)$.
Using Lemma \ref{lemma:4.4} that is a consequence of \cite[Theorem~4.7]{AHenV2} which shows that  the natural linear map 
$e(\sigma)^\mathcal{U}\otimes _R (\St_Q^G)^\mathcal{U}\to (e(\sigma)\otimes _R \St_Q^G)^\mathcal{U}$  is an isomorphism, hence  $(e(\sigma)\otimes _R \St_Q^G)^\mathcal{U}$ is a finitely generated $R$-module.
\end{proof}

\subsection{$\Ind_P^G$ does not respect finitely generated representations}
We add a few remarks on finiteness: when $R$ is the complex number field, the parabolic induction preserves the finitely generated representations~\cite[Variante~3.11]{MR771671}.
However when $R = C$ (recall that $C$ is an algebraically closed field of characteristic $p$), this does not hold as we see in the following.
\begin{proposition}\label{prop:finiteness_of_parabolic}
Let $P = MN$ be a proper parabolic subgroup, $V_0$ an irreducible $C$-representation of $M\cap \mathcal{K}$.
Set $\sigma = \ind_{M\cap \mathcal{K}}^MV_0$.
Then $\Ind_P^G\sigma$ is not noetherian.
In particular it is not finitely generated.
\end{proposition}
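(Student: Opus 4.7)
The plan is to exhibit $\Ind_P^G\sigma$ as an ascending union of finitely generated $G$-subrepresentations which does not stabilize, using the Hecke-algebra description from the proof of Theorem~\ref{thm:4.11}.

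By the classification of irreducible $C$-representations of $\mathcal{K}$ (\cite[Theorem~3.7]{MR3001801}, \cite[III.10 Lemma]{MR3600042}), I choose an irreducible $C$-representation $\widetilde V_0$ of $\mathcal{K}$ which is $\overline P$-regular and satisfies $(\widetilde V_0)_{N\cap\mathcal{K}}\simeq V_0$. Then \cite[Theorem~1.2]{MR3001801} yields
\[
\Ind_P^G \sigma \;\simeq\; \mathcal{H}_M(M\cap\mathcal{K}, V_0)\otimes_{\mathcal{H}_G(\mathcal{K},\widetilde V_0)} \ind_{\mathcal{K}}^G \widetilde V_0.
\]
Set $\mathcal{H} := \mathcal{H}_G(\mathcal{K},\widetilde V_0)$ and $Y := \ind_{\mathcal{K}}^G \widetilde V_0$. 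By \S\ref{subsec:K,Hecke}, $\mathcal{H}_M(M\cap\mathcal{K}, V_0) = \mathcal{H}[\tau^{-1}]$ is the localization of $\mathcal{H}$ at the central element $\tau$ supported on $(M\cap\mathcal{K})z(M\cap\mathcal{K})$, for some $z\in Z(M)$ strictly contracting $N$. Thus
\[
\Ind_P^G \sigma \;=\; \bigcup_{n\ge 0} Y_n, \qquad Y_n := \tau^{-n}\otimes Y,
\]
where each $Y_n$ is the finitely generated image of the $G$-equivariant map $Y\to \Ind_P^G \sigma$, $y\mapsto \tau^{-n}\otimes y$.

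If $\Ind_P^G\sigma$ were finitely generated as a $G$-representation, its generators would lie in some $Y_N$ by directedness of the chain, forcing $\Ind_P^G\sigma = Y_N$ and hence $Y_n = Y_{n+1}$ for all $n\ge N$. Multiplication by $\tau^n$, a $G$-equivariant bijection of the localization $Y[\tau^{-1}]\simeq \Ind_P^G\sigma$, reduces this to $Y_0 = Y_1$. Using that $Y$ is $\tau$-torsion-free (a standard fact about the Hecke action on a compact induction of an irreducible $\mathcal{K}$-type), $Y$ embeds in $Y[\tau^{-1}]$ and $Y_0 = Y$; so $Y_0 = Y_1$ is equivalent to $\tau$ acting surjectively (and hence, combined with injectivity, bijectively) on $Y$. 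This in turn means exactly that $\tau$ is a unit in $\End_G(Y)=\mathcal{H}^{\mathrm{op}}$.

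The crux of the argument is therefore to show that $\tau$ is \emph{not} a unit in $\mathcal{H}$. Since $\tau$ is central, this reduces to showing $\tau$ is not invertible in $\mathcal{Z}_G(\mathcal{K},\widetilde V_0)$. By Herzig's mod-$p$ Satake isomorphism (\cite[Theorem~1.2]{MR3001801} and its extensions in \cite{MR3600042}), $\mathcal{Z}_G(\mathcal{K},\widetilde V_0)$ is identified with a polynomial-type algebra on the monoid of $\overline P$-dominant cocharacters of $Z$ modulo its maximal compact subgroup; under this identification $\tau$ corresponds to a non-constant monomial because $P$ is proper (so $z$, strictly contracting the non-trivial $N$, has non-trivial image in this monoid), and non-constant monomials in a polynomial algebra are not units. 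This yields the sought contradiction, establishing that $\Ind_P^G\sigma$ is not finitely generated and \emph{a fortiori} not noetherian. The main technical inputs are thus the Satake structure of the center $\mathcal{Z}_G(\mathcal{K},\widetilde V_0)$ and the $\tau$-torsion-freeness of $Y$; both are provided by the general theory of mod-$p$ Hecke algebras, after which the argument above is essentially bookkeeping.
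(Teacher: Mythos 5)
Your proposal is correct and takes essentially the same route as the paper: both exhibit $\Ind_P^G\sigma$ as the increasing union of the finitely generated images $\tau^{-n}\!\cdot\!\ind_{\mathcal K}^G\widetilde V_0$ and then show this chain cannot stabilize by proving that $\tau$ is not a unit in $\mathcal H_G(\mathcal K,\widetilde V_0)$ via the Satake description. The paper phrases the non-invertibility of $\tau$ through the non-surjectivity of $\mathcal S_P^G$ (reducing to $\mathcal S_B^G$), and uses the cited injectivity of the canonical map $I_{\widetilde V_0}\colon\ind_{\mathcal K}^G\widetilde V_0\to\Ind_P^G\sigma$ where you instead invoke ``$\tau$-torsion-freeness'' of the compact induction; these are the same input, and you should cite \cite[Definition~2.1]{MR3001801} for it rather than treating it as folklore.
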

\begin{proof}
Let $V$ be an irreducible $C$-representation of $\mathcal{K}$ such that $V_{N\cap \mathcal{K}}\simeq V_0$ and $V$ is $\overline{P}$-regular (\cite[Theorem~3.7]{MR3001801}, \cite[III.10 Lemma]{MR3600042}).
Let $I_{V}\colon \ind_K^GV\to \Ind_P^G\sigma$ be the injective homomorphism defined in \cite[Definition~2.1]{MR3001801}.
Then by \cite[Theorem~1.2]{MR3001801}, $I_{V}$ induces an isomorphism
\[
\Ind_P^G\sigma\simeq \mathcal{H}_M(M\cap \mathcal{K},V_0)\otimes_{\mathcal{H}_G(\mathcal{K},V)}\ind_{\mathcal{K}}^G V.
\]
Set $X = \Ima I_{V}$.
As $\mathcal{H}_M(M\cap \mathcal{K},V_0)$ is the localization of $\mathcal{H}_G(\mathcal{K},V)$ at $\tau\in \mathcal{Z}_G(\mathcal{K},V)$ (subsection~\ref{subsec:K,Hecke}), we have $\Ind_P^G\sigma = \bigcup_{n\in\mathbb{Z}_{\ge 0}}\tau^{-n}X$.
By the following lemma, $X\ne \Ind_P^G\sigma$ and since $\tau$ is invertible on $\Ind_P^G\sigma$, we have $\tau^{-n}X\ne \Ind_P^G\sigma$.
Hence $\Ind_P^G\sigma$ is not noetherian.
\end{proof}
\begin{lemma}
Assume $R = C$.
If $P\ne G$, then $I_V$ is not surjective for any irreducible representation $V$ of $\mathcal{K}$.
\end{lemma}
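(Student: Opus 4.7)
My plan uses the Hecke algebraic description recalled in Section \ref{subsec:K,Hecke}. The homomorphism $\mathcal{S}_P^G\colon \mathcal{H}_G(\mathcal{K},V)\to\mathcal{H}_M(M\cap\mathcal{K},V_{N\cap\mathcal{K}})$ exhibits the target as the localization of the source at a central element $\tau$ supported on the double coset $(M\cap\mathcal{K})z(M\cap\mathcal{K})$, for some $z\in Z(M)$ strictly contracting $N$. By \cite[Theorem~1.2]{MR3001801} (and, for $V$ not $\overline{P}$-regular, the variant from \cite[Proposition~4.5]{arXiv:1703.04921} noted in the preceding remark) there is a canonical $G$-equivariant isomorphism
\[
\Ind_P^G\sigma \;\simeq\; \mathcal{H}_M(M\cap\mathcal{K},V_{N\cap\mathcal{K}})\otimes_{\mathcal{H}_G(\mathcal{K},V)} \ind_\mathcal{K}^G V,
\]
where $\sigma=\ind_{M\cap\mathcal{K}}^M V_{N\cap\mathcal{K}}$, under which $I_V$ corresponds to the natural localization map $f\mapsto 1\otimes f$. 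Consequently, surjectivity of $I_V$ is equivalent to $\tau$ acting essentially surjectively on $\ind_\mathcal{K}^G V$ (i.e.\ surjectively modulo its $\tau$-torsion).

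To derive a contradiction when $P\ne G$, I would examine $\tau$ as a convolution operator on $\ind_\mathcal{K}^G V$. Let $f_0\in\ind_\mathcal{K}^G V$ be the function supported on the identity coset $\mathcal{K}\subset G$ sending $1$ to some $v\ne 0$. For any $g\in\ind_\mathcal{K}^G V$, the support of $\tau\cdot g$ in $\mathcal{K}\backslash G$ is contained in the translates $\mathcal{K}z\mathcal{K}\cdot\mathrm{supp}(g)$; since $z$ strictly contracts the non-trivial group $N$ we have $z\notin\mathcal{K}$, so none of these translates contains the coset $\mathcal{K}$ itself. Hence the coefficient at the trivial coset vanishes in every $\tau\cdot g$, so $f_0$ is not in the image of $\tau$, and (using injectivity of $\tau$ on $\ind_\mathcal{K}^G V$, which I expect to be routine from the same convolution analysis) it stays out of reach after inverting $\tau$.

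The main obstacle is the last step: making the convolution/support analysis fully rigorous and ruling out any $\tau$-torsion contribution after localization. As a transparent sanity check in the simplest case $V=\charone$, where $V_{N\cap\mathcal{K}}=\charone$ and $\sigma=\ind_{M\cap\mathcal{K}}^M\charone$ admits the $M$-equivariant augmentation $\sigma\twoheadrightarrow\charone$, one induces a surjection $\Ind_P^G\sigma\twoheadrightarrow\Ind_P^G\charone$ whose composite with $I_V$ sends the generator of $\ind_\mathcal{K}^G\charone$ to the constant function $1\in\Ind_P^G\charone$. The $G$-span of a $G$-fixed vector is one-dimensional, whereas $\Ind_P^G\charone$ is infinite-dimensional when $P\ne G$ (as $G/P$ is infinite), so the composite—and a fortiori $I_V$—cannot be surjective.
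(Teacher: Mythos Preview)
Your support argument in the middle paragraph is flawed, not merely incomplete. You claim that for any $g$ the set $\mathcal{K}z\mathcal{K}\cdot\mathrm{supp}(g)$ never contains the coset $\mathcal{K}$. This is false: take $g$ supported on cosets inside $\mathcal{K}z^{-1}\mathcal{K}$; then $\mathcal{K}z\cdot z^{-1}\mathcal{K}=\mathcal{K}$ is in the product. So $\tau$ may well hit $f_0$, and the obstruction to surjectivity of $I_V$ cannot be read off from a naive support count. Your own admission that the torsion analysis is the ``main obstacle'' is therefore optimistic---the gap is earlier.

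The paper's proof sidesteps any explicit convolution analysis by a one-line abstract argument you were close to. Since $I_V$ is always injective (this is in \cite{MR3001801}, not only for $\overline{P}$-regular $V$), surjectivity would make $I_V$ an isomorphism. But $\tau$ acts invertibly on $\Ind_P^G\sigma$ (it is a localization), hence would act invertibly on $\ind_{\mathcal{K}}^GV$, i.e.\ $\tau$ would be a unit in $\End_G(\ind_{\mathcal{K}}^GV)=\mathcal{H}_G(\mathcal{K},V)$. That contradicts the fact that $\mathcal{S}_P^G$ is not surjective for $P\ne G$ (from the explicit description of the image of $\mathcal{S}_B^G$ in \cite{MR3331726}), since localizing at a unit is the identity. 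So the key input you were missing is not a sharper support estimate, but simply the non-surjectivity of the Satake-type map $\mathcal{S}_P^G$, which encodes exactly that $\tau$ is not a unit.

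Your sanity check for $V=\charone$ is correct and pleasant, but it does not generalize: for arbitrary $V$ there is no evident quotient of $\sigma$ on which the image of the generator is trapped in a small subrepresentation.
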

\begin{proof}
Take $\tau\in \mathcal{Z}_G(\mathcal{K},V)$ such that $\mathcal{H}_M(M\cap \mathcal{K},V_{N\cap \mathcal{K}}) = \mathcal{H}_G(\mathcal{K},V)[\tau^{-1}]$.
Since the ring homomorphism $\mathcal{S}^G_P\colon \mathcal{H}_G(\mathcal{K},V)\to \mathcal{H}_M(M\cap \mathcal{K},V_{N\cap \mathcal{K}})$ is not surjective (this follows from the description of the image of $\mathcal{S}^G_B\colon \mathcal{H}_G(\mathcal{K},V)\to \mathcal{H}_Z(Z\cap \mathcal{K},V_{U\cap \mathcal{K}})$~\cite{MR3331726}), $\tau$ is not invertible.
Assume that $I_V$ is surjective.
Since $\tau$ is invertible on $\Ind_P^G(\ind_{M\cap \mathcal{K}}^MV_{N\cap \mathcal{K}})$ and $I_V$ is $\mathcal{H}_G(\mathcal{K},V)$-equivariant, $\tau$ is invertible on $\ind_K^GV$.
Hence $\tau$ is a unit in $\End_G(\ind_K^GV) = \mathcal{H}_G(\mathcal{K},V)$.
This is a contradiction.
\end{proof}
We also have the following.
\begin{proposition}
If $P\ne G$ and $R = C$, then the functor $R_P^G$ does not preserve infinite direct sums.
\end{proposition}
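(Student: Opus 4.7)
The plan is to argue by contradiction using the non-finite-generation of $\pi := \Ind_P^G\sigma$ (with $\sigma = \ind_{M\cap\mathcal{K}}^M V_0$ as in Proposition~\ref{prop:finiteness_of_parabolic}): I will construct a family $(V_n)_{n\ge 0}$ of $G$-representations such that the natural morphism $\bigoplus_n R_P^G V_n \to R_P^G(\bigoplus_n V_n)$ is not an isomorphism.

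First I would extract from the proof of Proposition~\ref{prop:finiteness_of_parabolic} a strictly ascending chain of proper subrepresentations of $\pi$ whose union is $\pi$. Concretely, if $X = \Ima I_V$ and $\tau$ is as in that proof, then $\pi = \bigcup_{n \ge 0}\tau^{-n}X$ with each $\tau^{-n}X$ proper. Because $\tau$ is invertible on $\pi$, an equality $\tau^{-n}X = \tau^{-n-1}X$ would give $\tau X = X$ and, iterating, $\tau^{-n}X = X$ for every $n$, forcing $X = \pi$; this is a contradiction. Thus $X_n := \tau^{-n}X$ forms a strictly ascending chain $X_0 \subsetneq X_1 \subsetneq \cdots$ of proper subrepresentations with union $\pi$.

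Next I would set $V_n = \pi/X_n \ne 0$ and consider the $G$-map $f\colon \pi \to \bigoplus_{n\ge 0}V_n$ whose $n$-th component is the canonical surjection. For any $v \in \pi$ one has $v \in X_N$ for some $N$, so $v + X_n = 0$ for $n \ge N$; hence $f$ actually takes values in the direct sum. Every component of $f$ is a nonzero surjection, so $f$ does not factor through any finite sub-direct-sum $\bigoplus_{n\le N}V_n$.

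Finally, observe that $\sigma$ is generated as an $M$-representation by the finite-dimensional subspace $V_0$, so any $M$-homomorphism from $\sigma$ into a direct sum $\bigoplus_n W_n$ factors through a finite sub-direct-sum. Applying $\Hom_M(\sigma,-)$ to the natural morphism $\alpha\colon \bigoplus_n R_P^G V_n \to R_P^G(\bigoplus_n V_n)$ and using the $(\Ind_P^G, R_P^G)$-adjunction, one obtains on the source and target the canonical identifications $\bigoplus_n\Hom_G(\pi,V_n)$ and $\Hom_G(\pi,\bigoplus_n V_n)$ respectively, and $\Hom_M(\sigma,\alpha)$ becomes the canonical inclusion
\[
\bigoplus_n \Hom_G(\pi, V_n) \;\hookrightarrow\; \Hom_G\bigl(\pi, \textstyle\bigoplus_n V_n\bigr).
\]
If $\alpha$ were an isomorphism, this inclusion would be surjective, forcing every $G$-map $\pi \to \bigoplus_n V_n$ to factor through a finite sub-direct-sum; but the map $f$ from the previous step violates this.

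The sole delicate point is the first step, namely verifying that the chain $(\tau^{-n}X)_n$ is strictly ascending rather than merely non-stationary; once that is in hand, the remainder of the argument is a purely formal consequence of the finite generation of $\sigma$ and of the $(\Ind_P^G, R_P^G)$-adjunction.
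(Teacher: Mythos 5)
Your proof is correct and follows essentially the same route as the paper's: same choice of $\sigma$, same filtration $X_n=\tau^{-n}X$ of $\pi$, same map $\pi\to\bigoplus_n\pi/X_n$, and the same reduction via finite generation of $\sigma$ and the $(\Ind_P^G,R_P^G)$-adjunction. The one place you flag as delicate (strict ascent of the $X_n$) is actually more than is needed — the argument only requires each $X_n$ to be a proper subrepresentation, which the paper's lemma gives directly — so your extra verification is sound but superfluous.
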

\begin{proof}
For an infinite family of representations $\{\pi_n\}$ and a finitely generated representation $\sigma$ of $M$, we have $\Hom_M(\sigma,\bigoplus_n R_P^G(\pi_n)) = \bigoplus_n\Hom(\sigma,R_P^G(\pi_n))\simeq \bigoplus_n\Hom(\Ind_P^G\sigma,\pi_n)$.
Hence it is sufficient to prove
\[
\bigoplus_n\Hom_G(\Ind_P^G\sigma,\pi_n)\ne \Hom_G(\Ind_P^G\sigma,\bigoplus_n\pi_n)
\]
for some $\{\pi_n\}$ and $\sigma$.

We take $\sigma$ as in Proposition~\ref{prop:finiteness_of_parabolic} and use the same notation as in the proof of Proposition~\ref{prop:finiteness_of_parabolic}.
Set $\pi = \Ind_P^G\sigma$ and $X_n = \tau^{-n}X$.
Then we have $\pi \ne X_n$ for all $n\in\mathbb{Z}_{\ge 0}$ and $\bigcup_n X_n = \pi$.
The homomorphism $\Ind_P^G\sigma = \pi \to \bigoplus_n \pi/X_n$ induced by the projections $\pi\to \pi/X_n$ is not in $\bigoplus_n\Hom_G(\Ind_P^G\sigma,\pi/X_n)$.
\end{proof}
\begin{remark}
The functor $R_P^G$ preserves infinite direct sums when $R_P^G = \delta_P L_{\overline{P}}^G$ (the second adjoint theorem) holds true.
It is known when $R$ is the complex number field \cite{Bernstein-second-adjoint}, when $R$ is an algebraically closed field of characteristic different from $p$~\cite[II.3.8 (2)]{MR1395151} and in many cases when $p$ is invertible in $R$~\cite[Th\'eor\`eme~1.5]{MR2485794}.
\end{remark}

\section{Composing $\Ind_P^G  $ with adjoints of  $\Ind_{P_1}^G  $ when $p$ is nilpotent}\label{S:5}
 Let  us keep a general reductive connected group $G$ and a commutative ring $R$. Let $P=MN, P_1=M_1N_1$ be two standard parabolic subgroups of   $G$.  
    
      \subsection{Results}\label{S:5.1}   We start our investigations on the compositions of the functor   $\Ind_P^G  $  with $L_{P_1}^G$ and $R_{P_1}^G$ by some considerations on coinvariants.

\begin{lemma}\label{lemma:6.8} Let $H$ be a group and let $V,W$ be $R[H]$-modules, and assume that $H$ acts trivially on $W $. Then  the $R$-modules  $(V\otimes_R W)_{H}$ and $V_{H}\otimes_R W$ are isomorphic.
\end{lemma}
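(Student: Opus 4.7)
The plan is to give an elementary proof by constructing mutually inverse $R$-linear maps. Recall that for an $R[H]$-module $M$, the coinvariants $M_H$ are defined as the quotient of $M$ by the $R$-submodule generated by elements of the form $hm - m$ for $h \in H$, $m \in M$. On $V \otimes_R W$ the group $H$ acts diagonally, but since $H$ acts trivially on $W$ this is simply $h(v \otimes w) = (hv) \otimes w$.

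First I would define a map $\alpha \colon (V \otimes_R W)_H \to V_H \otimes_R W$. The $R$-bilinear map $V \times W \to V_H \otimes_R W$ sending $(v,w)$ to $\bar v \otimes w$ induces an $R$-linear map $V \otimes_R W \to V_H \otimes_R W$. This map sends $(hv - v) \otimes w$ to $(\overline{hv} - \bar v) \otimes w = 0$, so it factors through $(V \otimes_R W)_H$, giving $\alpha$. In the other direction, one defines $\beta \colon V_H \otimes_R W \to (V \otimes_R W)_H$ from the $R$-bilinear map $V_H \times W \to (V \otimes_R W)_H$ sending $(\bar v, w)$ to $\overline{v \otimes w}$. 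This is well-defined on $V_H$ because if $v = hv' - v'$ then $v \otimes w = h(v' \otimes w) - v' \otimes w$ maps to $0$ in the coinvariants.

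It is then immediate that $\alpha$ and $\beta$ are mutually inverse on elementary tensors, hence on all of their domains. There is no real obstacle here; the only thing to check carefully is the well-definedness of $\beta$, which relies precisely on the triviality of the $H$-action on $W$. (Equivalently, one could quote associativity of tensor products in the form $(V \otimes_R W)_H = R \otimes_{R[H]} (V \otimes_R W) \cong (R \otimes_{R[H]} V) \otimes_R W = V_H \otimes_R W$, viewing $W$ as an $R[H]$-module with trivial action.)
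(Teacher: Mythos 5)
Your proof is correct. You construct explicit mutually inverse $R$-linear maps $\alpha$ and $\beta$, verifying well-definedness of $\beta$ using the triviality of the $H$-action on $W$; the paper instead applies right exactness of $- \otimes_R W$ to the sequence $0 \to V(H) \to V \to V_H \to 0$ and observes that $(V\otimes_R W)(H)$ equals the image of $V(H)\otimes_R W$. These are two presentations of the same underlying observation (namely that $(V\otimes_R W)(H)$ is generated by the elements $(hv-v)\otimes w$), so the result is the same; the paper's exactness phrasing is a bit more structural and shorter, while your explicit-map version is more self-contained and closer to the definitions. Your closing parenthetical via $R\otimes_{R[H]}(V\otimes_R W)\cong(R\otimes_{R[H]}V)\otimes_R W$ is also valid, and is again the same idea packaged as associativity of tensor product, where the hypothesis on $W$ is what lets you view $V\otimes_R W$ as an $(R[H],R)$-bimodule with the $H$-action concentrated on the left factor.
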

\begin{proof} We write as usual $V(H)$ for the $R$-submodule of $V$ generated by the elements $hv-v$ for $h\in H,v\in V$. The exact sequence 
$0\to V(H)\to V\to V_{H}\to 0$ of $R[H]$-modules gives by tensor product over $R$ with $W$ an exact  sequence $$  V(H)\otimes_RW \to V\otimes_RW \to V_{H}\otimes_R W \to 0$$ of $R[H]$-modules. Because $H$ acts trivially on $W$, $(V\otimes_RW)(H)$ is the image of $V(H)\otimes_RW$ in $ V\otimes_RW $, hence the result.
\end{proof}
      
As a consequence of Lemma \ref{lemma:6.8}, if  $V$ is a  $\mathbb Z[H]$-module and $W=R$ with the trivial action of $H$, the $R$-modules       $(V\otimes_{\mathbb Z} R)_{H}$ and $V_{H}\otimes_{\mathbb Z} R$ are isomorphic.

 \bigskip    Let us study now  $C^\infty_c(H,R)_H= C^\infty_c(H,\mathbb Z)_H \otimes_{\mathbb Z} R$. A  right {\bf  Haar measure on $H$ with values in $R$} is a non-zero element of  $\Hom_R(C^\infty_c(H,R)_H,R)$.

 \begin{proposition}\label{prop:5.4} Let  $H$ be a locally pro-$p$ group having an  infinite open pro-$p$ subgroup $J$ and $W$ an $R$-module on which $H$ acts trivially. The 
  $R$-module of $H$-coinvariants $C^\infty_c(H,W)_H$ is  isomorphic to $R[1/p]\otimes_RW$. 
   \end{proposition}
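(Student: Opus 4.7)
The plan is to first reduce to the case $W=R$. The natural isomorphism $C_c^\infty(H,W)\simeq C_c^\infty(H,R)\otimes_R W$ recalled at the start of Section~2.3, combined with Lemma~\ref{lemma:6.8} (since $H$ acts trivially on $W$), gives
\[
C_c^\infty(H,W)_H \simeq C_c^\infty(H,R)_H \otimes_R W,
\]
so it is enough to exhibit an isomorphism $C_c^\infty(H,R)_H \simeq R[1/p]$.

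Put $V=C_c^\infty(H,R)$ with $H$ acting by left translation, and for a compact open subgroup $K\subset H$ let $[K]\in V_H$ denote the class of the characteristic function $1_K$. Left-translation invariance gives $1_{xK}\equiv 1_K$ in $V_H$ for every $x\in H$, so the $[K]$ generate $V_H$ as an $R$-module, and decomposing $K=\bigsqcup_i k_iK'$ for $K'\subset K$ compact open yields the refinement relation $[K]=[K:K']\,[K']$. For an arbitrary compact open $K\subset H$, the subgroup $K\cap J$ has finite index in $K$ and is compact open in the pro-$p$ group $J$, so $[K]=[K:K\cap J]\,[K\cap J]$ reduces the generating set to the $[K]$ with $K\subset J$. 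For any two such $K_1,K_2\subset J$, the indices $[K_i:K_1\cap K_2]$ are powers of $p$, so the relations between $[K_1]$ and $[K_2]$ involve only powers of $p$.

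Because $J$ is infinite pro-$p$, I fix a strictly decreasing sequence $K_0\supset K_1\supset K_2\supset\cdots$ of compact open subgroups of $J$ with $[K_i:K_{i+1}]=p$. The relation $p\,[K_{i+1}]=[K_i]$ assembles the maps $R\to V_H$, $r\mapsto r\,[K_i]$, into a homomorphism
\[
R[1/p]\simeq \varinjlim\bigl(R\xrightarrow{p} R\xrightarrow{p}\cdots\bigr)\longrightarrow V_H,
\]
which is surjective: every $[K]$ with $K\subset J$ compact open satisfies $[K]=[K:K_i]\,[K_i]$ once $K_i\subset K$, and $[K:K_i]$ is a power of $p$.

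For injectivity I would use Haar integration. Normalise the left Haar measure $\mu$ on $H$ so that $\mu(K_0)=1$; for any compact open $K\subset H$, the formula $\mu(K)=[K:K\cap K_0]/[K_0:K\cap K_0]$ has denominator a power of $p$ since $K_0$ is pro-$p$, so $\mu$ takes values in $\mathbb Z[1/p]\subset R[1/p]$. The associated integral is a left-$H$-invariant $R$-linear map $\lambda\colon V\to R[1/p]$ that factors through $V_H$; the composite $R[1/p]\to V_H\xrightarrow{\lambda} R[1/p]$ sends the class of $1$ at level $i$ to $[K_i]$ and then to $\mu(K_i)=p^{-i}$, so it is the identity on generators. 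Hence the map out of $R[1/p]$ is an isomorphism. The key obstacle, resolved precisely by the hypothesis on $J$, is keeping the denominators confined to powers of $p$: an infinite pro-$p$ open subgroup ensures that exactly the powers of $p$---and nothing more---get inverted in the colimit.
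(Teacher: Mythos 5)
Your proof is correct and uses the same central device as the paper: the $\mathbb{Z}[1/p]$-valued Haar measure normalized on an open pro-$p$ subgroup, whose existence is exactly what the infinite pro-$p$ hypothesis on $J$ buys. The paper computes the kernel of the Haar-integral map directly (writing $f=\sum_i a_i h_i\charone_{J'}$ and showing $\sum_i a_i=0$ forces $f\in V(H)$) after first reducing to $R=W=\mathbb Z$, whereas you exhibit a two-sided inverse via the directed-colimit presentation of $R[1/p]$ — but this is essentially the same argument, just packaged differently.
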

 
\begin{proof}    Lemma \ref{lemma:6.8} reduces us to the case $R=W=\mathbb Z$.  We consider the right Haar measure on $H$ with values in $\mathbb Z[1/p]$ sending the characteristic function $\charone_{J}$
of $J$  to $1$. It induces a linear map 
$C^\infty_c(H,\mathbb Z)\to \mathbb Z[1/p]$. This map is surjective because $J$ is infinite hence has open subgroups of  index $p^n$ for $n$ going to infinity. Let $f$ be in its kernel. We write $f$ as a finite sum $\sum_i a_i h_i \charone_{J'}$ where $J'$  is a suitable open subgroup  of $J$, $a_i\in \mathbb Z, h_i\in H$. Then 
$\sum_i a_i [J:J']^{-1}=0$ in $\mathbb Z[1/p]$ hence $\sum_i a_i =0$ and $f=\sum_i a_i ( h_i \charone_{J'}-\charone_{J'})$ belongs to the kernel of the natural map $C^\infty_c(H,\mathbb Z)\to (C^\infty_c(H,\mathbb Z))_H$.
We thus get an isomorphism $C^\infty_c(H,\mathbb Z)_H\simeq \mathbb Z[1/p]$.
Therefore   $C^\infty_c(H,W)_H\simeq R[1/p]\otimes_RW$.  
 \end{proof}

 \begin{corollary}\label{cor:5.4}
    $C^\infty_c(H,R)_H=\{0\}$  if and only if $p$ is nilpotent in $R$, and in general, $C^\infty_c(H,W)_H=\{0\}$
  if and only if $W$ is $p$-torsion.
  
  $\Hom_R(C^\infty_c(H,R)_H, R)=\{0\}$  if and only if $\Hom (\mathbb Z[1/p], R)=\{0\}$ if and only if there is no   Haar measure on $H$ with values in $R$. 
  \end{corollary}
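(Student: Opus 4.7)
The plan is to deduce the corollary mechanically from Proposition~\ref{prop:5.4}, which identifies $C_c^\infty(H,W)_H$ with the localization $R[1/p]\otimes_R W = W[1/p]$ (and in particular $C_c^\infty(H,R)_H \simeq R[1/p]$). So the task reduces to two elementary observations about localization plus one application of tensor-hom adjunction; there is no real obstacle, the work is just a clean translation.

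First I would handle the vanishing statements. By the proposition, $C_c^\infty(H,R)_H=0$ is equivalent to $R[1/p]=0$. Since the kernel of the localization map $R\to R[1/p]$ consists of the elements annihilated by some power of $p$, the target is zero iff $1\in R$ is killed by a power of $p$, i.e.\ iff $p$ is nilpotent in $R$. The same element-by-element argument applied to $W[1/p]$ shows $C_c^\infty(H,W)_H=0$ iff every $w\in W$ is killed by some power of $p$, which is the meaning of ``$W$ is $p$-torsion'' here.

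Next I would treat the Haar measure statement. By Proposition~\ref{prop:5.4}, $\Hom_R(C_c^\infty(H,R)_H,R)\simeq \Hom_R(R[1/p],R)$. Writing $R[1/p]=\mathbb Z[1/p]\otimes_{\mathbb Z} R$ and applying the standard base-change/hom adjunction gives
\[
\Hom_R(\mathbb Z[1/p]\otimes_{\mathbb Z}R,R)\simeq \Hom_{\mathbb Z}(\mathbb Z[1/p],R),
\]
so one side is zero iff the other is. Finally, by the definition of Haar measure recalled just before the proposition (a nonzero element of $\Hom_R(C_c^\infty(H,R)_H,R)$), the vanishing of this Hom is exactly the nonexistence of a Haar measure on $H$ with values in $R$. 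Chaining these three equivalences yields the second part of the corollary.
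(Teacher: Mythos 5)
Your proposal is correct and follows essentially the same route as the paper: both reduce everything to Proposition~\ref{prop:5.4}'s identification $C_c^\infty(H,W)_H\simeq R[1/p]\otimes_R W$, and both read off the vanishing criteria from the basic description of the kernel of localization at $p$ (the paper simply cites Bourbaki for the $R[1/p]=0$ case, where you spell it out). The second paragraph, which the paper leaves unproved, you dispatch correctly with the base-change adjunction $\Hom_R(\mathbb Z[1/p]\otimes_{\mathbb Z}R,R)\simeq\Hom_{\mathbb Z}(\mathbb Z[1/p],R)$ and the stated definition of an $R$-valued Haar measure.
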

\begin{proof} 
  $R[1/p]=\{0\}$ if and only if $p$ is nilpotent  in  $R$ by \cite[II.2 Corollary 2]{MR782296} and $R[1/p]\otimes_RW=\{0\}$ if and only if any element of $W$ is killed by  a power of $p$ ($W$ is called {\bf $p$-torsion}). \end{proof}
   The $p$-ordinary part of an $R$-module  $V$ is 
   \[
V_{p-ord}=\bigcap_{k\geq 0} p^kV.
\]
When 
 $R$ is a field, the  three conditions: $p$ nilpotent,  $R_{p-ord}= \{0\}$,  $\Hom (\mathbb Z[1/p], R)=\{0\}$, are equivalent to  $ \charf (R)= p $. 
  The equivalence of these three conditions is not true for a  general commutative ring, contrary to what is claimed in 
\cite[I (2.3.1)]{MR1395151}, \cite[\S 5]{Vigneras-adjoint}.

\begin{lemma} 1)  $p$ is nilpotent in $R$ if and only if $V_{p-ord}=\{0\}$ for all $R$-modules $V$.

2) $R_{p-ord}= \{0\}$ implies $\Hom (\mathbb Z[1/p], R)=\{0\}$. The converse is true  if $R$ is noetherian.

\end{lemma}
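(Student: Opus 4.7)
The plan is to treat the two parts separately, reducing both to observations already in the paper.

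For (1), the forward direction is immediate: if $p^N=0$ in $R$, then $p^NV=0$ for every $R$-module $V$, so $V_{p\text{-}\mathrm{ord}}\subset p^NV=0$. For the converse I would specialize to $V=R[1/p]$. Multiplication by $p$ is invertible on this module, hence $p^kR[1/p]=R[1/p]$ for every $k\geq 0$, and therefore $R[1/p]_{p\text{-}\mathrm{ord}}=R[1/p]$. The hypothesis forces $R[1/p]=0$, and then $p$ is nilpotent in $R$ by \cite[II.2 Corollary~2]{MR782296} (as already used in the proof of Corollary~\ref{cor:5.4}).

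For (2), the first implication is a direct computation. Given $\phi\in\Hom_{\mathbb Z}(\mathbb Z[1/p],R)$, the identity $1/p^n=p^k\cdot 1/p^{n+k}$ yields
\[
\phi(1/p^n)=p^k\phi(1/p^{n+k})\in p^kR\qquad\text{for all }k\geq 0,
\]
so $\phi(1/p^n)\in R_{p\text{-}\mathrm{ord}}=0$ for every $n$; since the elements $1/p^n$ generate $\mathbb Z[1/p]$, this forces $\phi=0$.

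The converse, when $R$ is noetherian, is a direct application of Lemma~\ref{lemma:4.7}. The key observation is that a $\mathbb Z$-linear map $\phi:\mathbb Z[1/p]\to R$ is exactly a sequence $(r_n)_{n\geq 0}$ of elements of $R$ with $pr_{n+1}=r_n$ (via $r_n=\phi(1/p^n)$), which is also the datum of a $\mathbb Z[T]$-linear map $\mathbb Z[T,T^{-1}]\to R$ when $T$ acts on $R$ by multiplication by $p$ (via $r_n=f(T^{-n})$). Applying Lemma~\ref{lemma:4.7} to the noetherian $R$-module $V=R$ and to $t$ equal to multiplication by $p$ gives a natural isomorphism
\[
\Hom_{\mathbb Z}(\mathbb Z[1/p],R)\simeq \Hom_{\mathbb Z[T]}(\mathbb Z[T,T^{-1}],R)\simeq R_{p\text{-}\mathrm{ord}},
\]
from which the converse is immediate.

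No step is genuinely hard: the only point that requires a little care is the matching of the two Hom groups via the generators $1/p^n$ and $T^{-n}$. The noetherian hypothesis enters only through Lemma~\ref{lemma:4.7}, which is precisely where one needs to lift an element of $R_{p\text{-}\mathrm{ord}}$ to a compatible inverse system—the potential obstruction that the converse could otherwise fail.
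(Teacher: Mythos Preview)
Your proof is correct and for part (2) follows the paper's approach exactly: the paper simply refers to Lemma~\ref{lemma:4.7} for the converse and leaves the forward implication implicit, while you spell out both, but the substance is identical.

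For the converse in part (1) there is a small but pleasant difference. The paper proves the contrapositive by choosing, when $p$ is not nilpotent, a prime ideal $J$ of $R$ not containing $p$ and taking $V$ to be the fraction field of $R/J$, a field of characteristic $\neq p$ on which $p$ acts invertibly. You instead take $V=R[1/p]$ directly and use that $(R[1/p])_{p\text{-}\mathrm{ord}}=R[1/p]$ to force $R[1/p]=0$, then invoke \cite[II.2 Corollary~2]{MR782296}. Your choice is slightly more economical: it avoids the detour through the prime spectrum and ties in neatly with Corollary~\ref{cor:5.4}, which already characterizes nilpotence of $p$ via the vanishing of $R[1/p]$. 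Both arguments amount to exhibiting an $R$-module on which $p$ acts invertibly; the paper's witness is a field, yours is the universal localization.
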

\begin{proof} 1) Let $n\in \mathbb N$ be the characteristic of $R$ ($n\mathbb Z$ is the kernel of the canonical map 
  $\mathbb Z \to R$). Then $p$ is nilpotent in $R$ if and only if $n=p^k$ for some $k\geq 1$.  Clearly $p^k=0$  in $R$ implies $p^kV=0$ for all $R$-modules $V$. Conversely, if $p$ is not nilpotent there exists a prime ideal $J$ of $R$ not containing $p$. The fraction field  of $R/J$    is a field $V$ of characteristic  $\charf (V)\neq  p$.

2)  For the last assertion see Lemma \ref{lemma:4.7}. 
\end{proof}

   For $W\in \Mod_R^\infty(M)$, Frobenius reciprocity gives a natural map $L_P^G \Ind_P^GW\to W$ sending the image of $f\in \Ind_P^GW$ to $f(1)$; that yields a natural transformation $L_P^G \Ind_P^G\to \Id_{ \Mod_R^\infty(M)}$.  When $ p$ is nilpotent in $R$, that natural transformation is an isomorphism of functors \cite[Theorem 5.3]{Vigneras-adjoint} (this uses Proposition \ref{prop:5.4}); by  general nonsense it follows that the natural morphism $\Id_{ \Mod_R^\infty(M)}\to R_P^G \Ind_P^G$ coming from the adjunction property is also an isomorphism of functors. We generalize these statements.
    \begin{theorem}\label{thm:5.4} When $p$ is nilpotent in $R$, the two functors $L_{P_1}^G \Ind_P^G$ and 
    $\Ind_{P\cap M_1}^{M_1}L_{P_1\cap M}^M $ from $\Mod_R^\infty(M)$ to $ \Mod_R^\infty(M_1)$ are isomorphic.
    \end{theorem}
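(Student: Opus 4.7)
\medskip

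The plan is to construct an explicit natural transformation $\beta\colon L_{P_1}^G \Ind_P^G \Rightarrow \Ind_{P \cap M_1}^{M_1} L_{P_1 \cap M}^M$ by restriction-then-projection, and then to prove it is an isomorphism by a Bruhat-stratification argument whose non-minimal cells have vanishing $N_1$-coinvariants by Corollary~\ref{cor:5.4}.

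\medskip

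First I would define $\beta_W$ as follows. For $W \in \Mod_R^\infty(M)$ and $f \in \Ind_P^G W$, set $\beta_W(f)(m_1) = q(f(m_1))$ where $q\colon W \to W_{N_1 \cap M}$ is the canonical quotient. A direct check shows $\beta_W(f) \in \Ind_{P \cap M_1}^{M_1}(W_{N_1 \cap M})$: the left $(M\cap M_1)$-equivariance and the trivial left action of $N \cap M_1$ come from the corresponding properties of $f$ (as $M \cap M_1 \subset M$ and $N \cap M_1 \subset N$), and smoothness is automatic. The nontrivial point is that $\beta_W$ kills the $N_1$-coinvariant relations. For this, observe that $R(N_1) = R(N_1 \cap M) \sqcup R(N_1 \cap N)$ as subsets of $\Phi^+$, and that $R(N_1 \cap M) + R(N_1 \cap N) \subset R(N_1 \cap N)$ whenever the sum is a root; hence $N_1 \cap N$ is normal in $N_1$ and the multiplication map yields a semidirect decomposition $N_1 = (N_1 \cap M)(N_1 \cap N)$ with unique factorization. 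Given $n_1 \in N_1$ and $m_1 \in M_1$, the normalisation of $N_1$ by $M_1$ lets me rewrite $m_1 n_1 = (m_1 n_1 m_1^{-1}) m_1 = a b m_1$ with $a \in N_1 \cap M$ and $b \in N_1 \cap N$. Then $f(m_1 n_1) = a \cdot f(m_1)$ by applying the left $N$-invariance of $f$ to $b$ and its left $M$-equivariance to $a$; applying $q$ and using $a \in N_1 \cap M$ gives $q(f(m_1 n_1)) = q(f(m_1))$, so $\beta_W$ descends to $L_{P_1}^G \Ind_P^G W \to \Ind_{P \cap M_1}^{M_1} L_{P_1 \cap M}^M W$, and is visibly natural in $W$.

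\medskip

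To prove $\beta_W$ is an isomorphism, I would stratify $\Ind_P^G W$ as a $P_1$-module using the Bruhat decomposition $G = \bigsqcup_{w \in \mathbb W_{M_1}\backslash \mathbb W/\mathbb W_M} P_1 w P$, choosing an enumeration refining the closure order with $w = 1$ minimal. Supports of functions (modulo $P$) then give an increasing filtration of $\Ind_P^G W$ whose associated graded pieces are, by a Mackey/sheaf-theoretic argument using Lemma~\ref{lemma:useful}, of the form $\ind_{P_1 \cap wPw^{-1}}^{P_1}({}^w W)$ along the stratum $P_1 w P / P$. Taking $N_1$-coinvariants, the $w = 1$ piece together with the decomposition $P_1 \cap P = (M \cap M_1)(N \cap M_1)(N_1 \cap M)(N_1 \cap N)$ yields exactly $\Ind_{P \cap M_1}^{M_1}(W_{N_1 \cap M})$, and one checks this identification matches the image of $\beta_W$ on this piece. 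For $w \ne 1$, the subgroup $N_1 \cap wPw^{-1}$ is strictly smaller than $N_1$, so the $N_1$-action on the corresponding graded piece factors through an action on a $C^\infty_c(H, V)$-type space where $H$ is an infinite locally pro-$p$ subgroup of $N_1$ (a non-trivial root subgroup in $R(N_1)\setminus R(wPw^{-1})$); its $H$-coinvariants vanish by Proposition~\ref{prop:5.4} and Corollary~\ref{cor:5.4} since $p$ is nilpotent in $R$. Only the $w = 1$ stratum survives, so $\beta_W$ is an isomorphism.

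\medskip

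The hardest part will be carrying out the Mackey-style filtration rigorously in the \emph{smooth} (as opposed to compact) induction setting and identifying, for each $w \ne 1$, an explicit infinite root subgroup of $N_1$ whose action on the relevant graded piece produces the vanishing coinvariants; this amounts to a root-theoretic analysis of $N_1$ versus its $w$-conjugate based on the decomposition $N_1 = (N_1 \cap M)(N_1 \cap N)$. Everything else — the well-definedness of $\beta_W$, the identification of the $w = 1$ contribution, and the vanishing on non-minimal cells — is a direct combination of the adjunction properties, the root-group structure, and Corollary~\ref{cor:5.4}.
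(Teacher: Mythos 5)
Your proposal follows essentially the same route as the paper's proof: define the restriction-and-project map $\beta_W$ explicitly, filter $\Ind_P^G W$ as a $P_1$-module along $(P,P_1)$-double cosets, kill the non-open cells via the vanishing of $H$-coinvariants of $C^\infty_c(H,R)$-modules for infinite locally pro-$p$ $H$ when $p$ is nilpotent (Proposition~\ref{prop:5.4}, Corollary~\ref{cor:5.4}), and identify the contribution of the big cell $PP_1$. Two cautions about the steps you deferred or labelled direct. First, to expose a genuine $C^\infty_c(U_\beta,\cdot)$ tensor factor on a non-open stratum, the paper refines the $(P,P_1)$-filtration further to $(P,B)$-double cosets $PnB$ and then decomposes $(n^{-1}Pn\cap U)\backslash U$ as an ordered product of root groups with $U_\beta$ sitting at the far right; merely locating a root of $N_1$ missing from the $w$-conjugate of $P$ is not by itself enough --- you need the sharper position $w(\beta)\in -\Phi_N$ supplied by Lemma~\ref{lemma:5.5} and the explicit right-most factorization to make the $U_\beta$-action split off. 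Second, you underestimate the open cell: identifying $(\ind_{P\cap P_1}^{P_1}W)_{N_1}$ with the full \emph{smooth} induction $\Ind_{P\cap M_1}^{M_1}W_{M\cap N_1}$ is not an immediate consequence of the decomposition of $P\cap P_1$; taking $N_1$-coinvariants has to convert compact induction along the $M_1$ direction into ordinary smooth induction, and the paper delegates exactly this to the Bernstein--Zelevinsky $\ell$-sheaf argument (case $IV_1$ of their \S5.16), after checking it survives general coefficient rings and the pro-$p$ nature of $N_1$. This is Proposition~\ref{prop:5.3}(i), and it is the genuinely delicate ingredient rather than a formal consequence of adjunction.
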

 Before proving the theorem, we deduce a corollary:
   \begin{corollary} \label{cor:5.5}In the same situation, the two functors $R_{P_1}^G \Ind_P^G$ and 
    $\Ind_{P\cap M_1}^{M_1}R_{P_1\cap M}^M $ from $\Mod_R^\infty(M)$ to $ \Mod_R^\infty(M_1)$ are isomorphic.
   \end{corollary}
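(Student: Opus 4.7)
The plan is to deduce Corollary~\ref{cor:5.5} from Theorem~\ref{thm:5.4} by a purely formal adjunction argument, as already signalled in the introduction.

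First, I would observe that the statement of Theorem~\ref{thm:5.4} is symmetric in the ordered pair $(P,P_1)$: applying it with the roles of $P$ and $P_1$ exchanged gives a functorial isomorphism
\[
L_P^G\, \Ind_{P_1}^G \;\simeq\; \Ind_{P_1\cap M}^{M}\, L_{P\cap M_1}^{M_1}
\]
of functors $\Mod_R^\infty(M_1)\to \Mod_R^\infty(M)$.

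Next, I would invoke the general principle that isomorphic functors admit (canonically) isomorphic right adjoints, together with the composition rule $GF\dashv F'G'$ whenever $F\dashv F'$ and $G\dashv G'$. On the left, the pairs of adjunctions $\Ind_{P_1}^G\dashv R_{P_1}^G$ and $L_P^G\dashv \Ind_P^G$ combine to give $R_{P_1}^G\,\Ind_P^G$ as the right adjoint of $L_P^G\,\Ind_{P_1}^G$. On the right, the pairs $L_{P\cap M_1}^{M_1}\dashv \Ind_{P\cap M_1}^{M_1}$ and $\Ind_{P_1\cap M}^{M}\dashv R_{P_1\cap M}^{M}$ combine to give $\Ind_{P\cap M_1}^{M_1}\,R_{P_1\cap M}^{M}$ as the right adjoint of $\Ind_{P_1\cap M}^{M}\,L_{P\cap M_1}^{M_1}$. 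Equating the two yields the required isomorphism.

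The right adjoints $R_Q^H$ appearing in the argument exist in the necessary generality by \cite[Proposition~4.2]{Vigneras-adjoint}, and the hypothesis that $p$ be nilpotent in $R$ enters only through its use in Theorem~\ref{thm:5.4}. There is essentially no obstacle: the argument is purely diagrammatic, and all the substantive work has already been carried out in the proof of Theorem~\ref{thm:5.4}; the only care required is to keep track of the direction of each adjunction so that the composition rule is applied correctly.
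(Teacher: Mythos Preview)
Your proposal is correct and follows essentially the same adjunction argument as the paper: deduce the corollary from Theorem~\ref{thm:5.4} by passing to right adjoints. You are in fact slightly more careful than the paper's own write-up, since you make explicit the swap of $P$ and $P_1$ needed so that taking right adjoints lands on the statement of Corollary~\ref{cor:5.5} as written.
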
  
   \begin{proof} By Theorem \ref{thm:5.4} the functors $L_{P_1}^G \Ind_P^G$ and 
    $\Ind_{P\cap M_1}^{M_1}L_{P_1\cap M}^M $ are isomorphic, so are their right adjoints  $R_P^G \Ind_{P_1}^G$ and $\Ind_{P\cap M_1}^{M_1}R_{P_1\cap M}^M $.   \end{proof} 

 In fact, our results are more precise than Theorem \ref{thm:5.4} and Corollary \ref{cor:5.5}. See Corollaries \ref{cor:5.8} and \ref{cor:5.9}.
 Our proof of Theorem \ref{thm:5.4} is inspired by the proof of the ``geometric lemma'' in \cite{MR0579172}. But \cite{MR0579172} uses complex coefficients,   also Haar measures on unipotent groups  and normalized parabolic inductions  which are not available $p$ is nilpotent in $R$. In fact,  our result  is simpler than for complex coefficients.  As will be apparent in the proof, the isomorphism comes from the natural maps 
    $L_{P_1}^G \Ind_P^G W\to
    \Ind_{P\cap M_1}^{M_1}L_{P_1\cap M}^M W$ for $W\in \Mod_R^\infty(M)$ sending the class of $f\in \Ind_P^G W$ to the function $m_1\mapsto $ image of $f(m_1)$ in $W_{N\cap M_1}$. 
    To control $L_{P_1}^G \Ind_P^G W $ we  look at $ \Ind_P^G W$ as a representation of $P_1$. The coset space $P\backslash G/P_1$ is finite and we choose a sequence $X_1, \dotsc, X_r$ of $(P,P_1)$-double cosets in $G$ such that $G=X_1\sqcup  \dotsb \sqcup X_r, X_r=PP_1$ and $X_1\sqcup  \dotsb \sqcup X_i$ is open in $G$ for $i=1,\ldots, r$. We let $I_i$ be the space of functions in $\Ind_P^G W$ with support included in $X_1\sqcup  \dots \sqcup X_i$, and put $I_0=\{0\}$.   For $i=1,\ldots, r$, restricting to $X_i$ functions in $I_i$ gives an isomorphism from $I_i/I_{i-1}$ onto the space $J_i=\ind_P^{X_i} W$ of functions $f:X_i\to W$ satisfying $f(mng)=mf(g)$ for $m\in M, n\in N,g\in X_i$,  which are locally constant and of support compact in $P\backslash X_i$. That isomorphism  is obviously compatible with the action of $P_1$ by right translations.   For $i=1,\ldots, r$, we have the exact sequence
$$0\to I_{i-1}\to I_i\to J_i \to 0$$
and by taking $N_1$-coinvariants, an exact sequence 
$$ (I_{i-1})_{N_1}\to (I_i)_{N_1}\to (J_i)_{N_1} \to 0.$$

\begin{proposition}\label{prop:5.3} Let $W\in \Mod_R^\infty(M)$.
\begin{enumerate}
\item The $R$-linear map $\ind_P^{PP_1} W\to \Ind_{P\cap M_1}^{M_1}W_{M\cap N_1}$ sending $f\in \ind_P^{PP_1} W$ to the function $m_1\mapsto$  image of $f(m_1)$ in $W_{M\cap N_1}$, gives an isomorphism of $(\ind_P^{PP_1} W)_{N_1}$ onto $\Ind_{P\cap M_1}^{M_1}W_{M\cap N_1}$ as representations of $M_1$.
\item Assume $W$ is a $p$-torsion $R$-module. The space of $N_1$-coinvariants of $\ind_P^{X_i} W$ is $0$ for $i=1,\ldots, r-1$.
\item Let $V\in \Mod_R^\infty(M_1)$ with $V_{p-ord}=0$. Then the space $\Hom_{M_1}((\ind_P^{X_i} W)_{N_1} , V)$ is $0$ for $i=1,\ldots, r-1$.
\end{enumerate}\end{proposition}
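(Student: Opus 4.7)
The plan is to use Arens' theorem (the remark after Lemma~\ref{lemma:useful}) to convert each $\ind_P^{X_i}W$ into a compact induction from a subgroup of $P_1$, and then analyse $N_1$-coinvariants via the normal subgroup $N_1\trianglelefteq P_1$. Throughout, $W$ is viewed as a $P$-representation with $N$ acting trivially.

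For (i), since $PP_1=P\cdot P_1$, Arens gives an isomorphism of $P_1$-representations $(\ind_P^{PP_1}W)|_{P_1}\cong \ind_{P\cap P_1}^{P_1}(W|_{P\cap P_1})$. Using the standard decomposition
\[
P\cap P_1=(M\cap M_1)(M\cap N_1)(N\cap M_1)(N\cap N_1),
\]
the two factors lying in $N$ act trivially on $W$, so the action of $P\cap P_1$ on $W$ factors through $P\cap P_1\to M\cap P_1=(M\cap M_1)(M\cap N_1)$. Moreover $(P\cap P_1)\cdot N_1=(P\cap M_1)\cdot N_1$ is a standard parabolic of $P_1$ with unipotent radical $(N\cap M_1)N_1$, so passing to $N_1$-coinvariants identifies $(\ind_{P\cap P_1}^{P_1}W)_{N_1}$ with $\Ind_{P\cap M_1}^{M_1}(W_{M\cap N_1})$. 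The map $\varphi$ of the statement exactly realises this chain of identifications (equivariance and factoring through $N_1$-coinvariants is direct), and an inverse is built by lifting along a continuous section of $(P\cap M_1)\backslash M_1\hookrightarrow(P\cap P_1)\backslash PP_1$.

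For (ii), I would fix $i<r$ and a representative $w_i\in X_i$, whereupon Arens yields $(\ind_P^{X_i}W)|_{P_1}\cong \ind_H^{P_1}\tilde W$, with $H=w_i^{-1}Pw_i\cap P_1$ and $\tilde W=W$ carrying the $H$-action $h\mapsto w_ihw_i^{-1}\in P$ composed with $P\to M$. Because $X_i\neq PP_1$, a Bruhat-Tits analysis relative to $N_1$ supplies a root subgroup $U_0\subset N_1$ of positive dimension whose translates act freely on $H\backslash P_1$, i.e.\ $U_0\cap p_1^{-1}Hp_1=\{1\}$ for every $p_1\in P_1$; in particular $U_0$ is a non-trivial locally pro-$p$ group with an infinite open pro-$p$ subgroup. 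A section of the orbit fibration gives a $U_0$-equivariant description of $\ind_H^{P_1}\tilde W$ as a direct sum of copies of $C_c^\infty(U_0,\tilde W)$, with $U_0$ acting only by translation on the first factor and trivially on $\tilde W$. When $W$ is $p$-torsion so is $\tilde W$, and Corollary~\ref{cor:5.4} gives $C_c^\infty(U_0,\tilde W)_{U_0}=0$; hence $(\ind_H^{P_1}\tilde W)_{U_0}=0$, and since $U_0\subset N_1$ this forces $(\ind_P^{X_i}W)_{N_1}=0$.

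For (iii), by Frobenius reciprocity
\[
\Hom_{M_1}((\ind_P^{X_i}W)_{N_1},V)=\Hom_{P_1}(\ind_H^{P_1}\tilde W,V).
\]
Given $\bar\phi$ in this space, take a descending sequence of open pro-$p$ subgroups $U_0^{(n)}\subset U_0^{(0)}$ of $U_0$ with $[U_0^{(0)}:U_0^{(n)}]=p^{b_n}\to\infty$, and for each $\tilde w\in\tilde W$ form $g_n=\charone_{U_0^{(n)}}\otimes\tilde w$ inside the $R[U_0]$-submodule $C_c^\infty(U_0,\tilde W)$ of $\ind_H^{P_1}\tilde W$. Then $g_0=\sum_{u\in U_0^{(0)}/U_0^{(n)}}u\cdot g_n$, and since $U_0\subset N_1$ acts trivially on $V$ we get $\bar\phi(g_0)=p^{b_n}\bar\phi(g_n)\in p^{b_n}V$ for every $n$, hence $\bar\phi(g_0)\in V_{p-ord}=\{0\}$. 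Varying $\tilde w$ and using $P_1$-equivariance together with $P_1$-stability of $V_{p-ord}$, $\bar\phi$ vanishes on a $P_1$-generating set, hence $\bar\phi=0$.

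The main technical obstacle is the construction of $U_0\subset N_1$ for each non-closed cell $X_i$: the Bruhat-Tits content is that $X_i\neq PP_1$ exactly means $w_i^{-1}Pw_i\cap P_1$ misses some root direction inside $N_1$, furnishing the required infinite pro-$p$ subgroup with free orbit action. This is the characteristic-$p$ substitute, via Proposition~\ref{prop:5.4}, for the Haar-measure step in the classical geometric lemma over $\mathbb{C}$.
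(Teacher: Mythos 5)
Your overall strategy — Arens' theorem, the $R[1/p]$ phenomenon of Proposition \ref{prop:5.4}/Corollary \ref{cor:5.4}, and killing coinvariants by exhibiting an infinite pro-$p$ group acting freely in the fibres — is the right one and is indeed what the paper does. But there are two genuine gaps.

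\textbf{Part (i).} After correctly reducing to $\ind_{P\cap P_1}^{P_1}W$ via Arens, you write that ``passing to $N_1$-coinvariants identifies $(\ind_{P\cap P_1}^{P_1}W)_{N_1}$ with $\Ind_{P\cap M_1}^{M_1}(W_{M\cap N_1})$.'' That \emph{is} the assertion to be proved, not an argument for it. The evaluation-then-project map $f\mapsto (m_1\mapsto \overline{f(m_1)})$ is easily seen to be $M_1$-equivariant, $N_1$-invariant and surjective; what needs work is that its kernel is \emph{exactly} the $R$-submodule $\bigl(\ind_{P\cap P_1}^{P_1}W\bigr)(N_1)$ spanned by the differences $n_1g-g$. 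This is not a formal consequence of the group decompositions or of having a continuous section of $(P\cap M_1)\backslash M_1$. Note that (i) carries no $p$-torsion hypothesis, so you cannot invoke Proposition \ref{prop:5.4}; the paper handles this by transposing Bernstein--Zelevinsky's $\ell$-sheaf argument (their case $IV_1$) to the ring $R$, checking along the way that taking $N_1$-coinvariants commutes with the relevant inductive limits because $L_{P_1}^G$ has a right adjoint. Your proposal omits this step entirely.

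\textbf{Parts (ii), (iii).} You work directly on a $(P,P_1)$-double coset $X_i$, write $(\ind_P^{X_i}W)|_{P_1}\cong\ind_H^{P_1}\tilde W$ with $H=w_i^{-1}Pw_i\cap P_1$, and assert the existence of a single root subgroup $U_0\subset N_1$ with $U_0\cap p_1^{-1}Hp_1=\{1\}$ for \emph{every} $p_1\in P_1$. This uniform freeness is in general false. The issue is that $X_i$ is a union of several $(P,B)$-double cosets, and the condition needed, $w(\beta)\in -\Phi_N$ for $\beta\in\Phi_{N_1}$, must hold for \emph{all} $w\in \mathbb W_{0,M}w_i\mathbb W_{0,M_1}$, but the $\beta$ produced by Lemma \ref{lemma:5.5} depends on $w$. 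For a concrete failure, take $G$ of type $A_3$, $P=B$, $P_1=P_{\{\alpha_1\}}$ and $w_i=s_2$: the double coset meets the $(B,B)$-cells of $s_2$ and $s_2s_1$, for which the only viable $\beta$'s in $\Phi_{N_1}$ are $\alpha_2$ and $\alpha_1+\alpha_2$ respectively, so no single $\beta$ works. The paper avoids this by filtering each $X_i$ further by $(P,B)$-double cosets (which is legitimate since $B\supset N_1$), getting an exact sequence of $N_1$-coinvariants along this finer filtration, and choosing a different $\beta$ for each $(P,B)$-cell. Once you adopt that finer filtration, your computation via $C_c^\infty(U',R)\otimes C_c^\infty(U_\beta,R)\otimes W$ and Corollary \ref{cor:5.4} does go through, and for (iii) your direct $p^{b_n}$-divisibility argument is a legitimate alternative to passing through $C_c^\infty(U_\beta,R)_{U_\beta}\simeq R[1/p]$ and $\Hom$-vanishing, but as written it rests on the same unproven orbit claim.
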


The proof of Proposition \ref{prop:5.3} is given in \S \ref{S:5.2}. Composing  the surjective map in Proposition \ref{prop:5.3} (i) with the restriction from  $\Ind_P^G W$ to $\ind_P^{PP_1} W$ we get a surjective functorial $M_1$-equivariant  homomorphism 
\begin{equation}\label{LI}
L_{P_1}^G \Ind_P^G W\to \Ind_{P\cap M_1}^{M_1}L_{P_1\cap M}^M W .
\end{equation}

\begin{corollary} \label{cor:5.8} For any  $W\in \Mod_R^\infty(M)$ which is $p$-torsion, \eqref{LI} is an isomorphism:   $$L_{P_1}^G \Ind_P^G W \simeq \Ind_{P\cap M_1}^{M_1}L_{P_1\cap M}^M W .$$
\end{corollary}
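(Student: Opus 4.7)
The plan is to exploit the filtration $I_0\subset I_1\subset\cdots\subset I_r=\Ind_P^GW$ coming from the double coset decomposition $G=X_1\sqcup\cdots\sqcup X_r$ with $X_r=PP_1$, together with the fact that $N_1$-coinvariants is a right exact functor on smooth $R[N_1]$-modules. Applied to the short exact sequences $0\to I_{i-1}\to I_i\to J_i\to 0$, this yields right-exact sequences
\begin{equation*}
(I_{i-1})_{N_1}\to (I_i)_{N_1}\to (J_i)_{N_1}\to 0.
\end{equation*}
The starting point is $(I_0)_{N_1}=0$.

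Now Proposition \ref{prop:5.3}(ii) says precisely that $(J_i)_{N_1}=(\ind_P^{X_i}W)_{N_1}=0$ for every $i=1,\ldots,r-1$, since $W$ is assumed $p$-torsion. A straightforward induction on $i$ using the displayed right-exact sequences then gives $(I_i)_{N_1}=0$ for $i=1,\ldots,r-1$. Plugging this into the last right-exact sequence (for $i=r$), we obtain an $M_1$-equivariant isomorphism
\begin{equation*}
L_{P_1}^G\Ind_P^GW=(I_r)_{N_1}\xrightarrow{\sim}(J_r)_{N_1}=(\ind_P^{PP_1}W)_{N_1},
\end{equation*}
and Proposition \ref{prop:5.3}(i) identifies the right-hand side with $\Ind_{P\cap M_1}^{M_1}W_{M\cap N_1}=\Ind_{P\cap M_1}^{M_1}L_{P_1\cap M}^MW$.

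The only remaining point, and what I would regard as the subtle step, is to verify that the composite isomorphism just constructed coincides with the natural map \eqref{LI}, rather than differing by some automorphism. This is a matter of chasing the definitions: the map \eqref{LI} is obtained by restricting $f\in\Ind_P^GW$ to the open cell $PP_1$ (which is precisely the quotient map $I_r\twoheadrightarrow J_r$) and then applying the isomorphism of Proposition \ref{prop:5.3}(i). The isomorphism coming from the filtration argument is the inverse of the connecting map $(I_r)_{N_1}\twoheadleftarrow (J_r)_{N_1}$ composed with the same identification, so the two coincide up to sign/identification precisely because the kernel of restriction-to-$PP_1$ has trivial $N_1$-coinvariants. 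The functoriality in $W$ is automatic, since every construction involved (the filtration, restriction to $PP_1$, and the map of Proposition \ref{prop:5.3}(i)) is natural in $W$.
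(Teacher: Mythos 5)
Your proof is correct and follows essentially the same route as the paper: filter $\Ind_P^GW$ by the double coset decomposition, kill the $N_1$-coinvariants of the graded pieces $J_i$ for $i\le r-1$ via Proposition~\ref{prop:5.3}(ii), and then use the resulting isomorphism $(I_r)_{N_1}\simeq(J_r)_{N_1}$ together with Proposition~\ref{prop:5.3}(i), checking along the way that this coincides with the map~\eqref{LI}. (Two small notes: the paper's one-line proof happens to transpose the labels (i) and (ii) of Proposition~\ref{prop:5.3}, which you apply correctly; and $PP_1=X_r$ is the \emph{closed} cell of the filtration, not the open one.)
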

\begin{proof}   Proposition \ref{prop:5.3} (i)  shows by induction on $i$ that $(I_i)_{N_1}=0$ when $i\leq r-1$; when $i=r$ we have   $J_r=\ind_P^{PP_1} W$ and with  Proposition \ref{prop:5.3} (ii), we get   the isomorphism. 
\end{proof}
If $p$ is nilpotent in $R$, every  $W\in \Mod_R^\infty(M)$ is $p$-torsion (and conversely), and  Theorem  \ref{thm:5.4} follows from the corollary.

\bigskip Let $V\in \Mod_R^\infty(M_1)$, and any $W\in \Mod_R^\infty(M)$, the surjective homomorphism \eqref{LI} gives an injection
\begin{equation}\label{LIH}\Hom_{M_1}(\Ind_{P\cap M_1}^{M_1}L_{P_1\cap M}^M W ,V)\to \Hom_{M_1}(L_{P_1}^G \Ind_P^G W, V).
\end{equation}
Taking the right adjoints of the functors we get an injection
\begin{equation}\label{LIH?}\Hom_{M_1}(W , \Ind_{P_1\cap M}^M R_{P\cap M_1}^{M_1} V)\to \Hom_{M_1}(W,R_P^G \Ind_{P_1}^G V)
\end{equation}
which is functorial in $W$. Consequently, we have an $M$-equivariant injective homomorphism
\begin{equation}\label{LIHH}\ \Ind_{P_1\cap M}^M R_{P\cap M_1}^{M_1} V \to  R_P^G \Ind_{P_1}^G V 
\end{equation}

\begin{corollary} \label{cor:5.9} For any  $V\in \Mod_R^\infty(M_1)$ with $V_{p-ord}=0$, \eqref{LIHH} is an isomorphism:   $$\Ind_{P_1\cap M}^M R_{P\cap M_1}^{M_1} V \simeq  R_P^G \Ind_{P_1}^G V .$$
\end{corollary}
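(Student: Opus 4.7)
The plan is to invoke the Yoneda lemma in the abelian category $\Mod_R^\infty(M)$: since \eqref{LIHH} was constructed precisely so that, for every $W\in\Mod_R^\infty(M)$, the induced map
\[
\Hom_M(W,\Ind_{P_1\cap M}^M R_{P\cap M_1}^{M_1} V)\longrightarrow \Hom_M(W,R_P^G\Ind_{P_1}^G V)
\]
is the injection \eqref{LIH} after applying the adjunctions $(L_P^G,\Ind_P^G)$ and $(L^M_{P_1\cap M},\Ind^M_{P_1\cap M})$ and $(\Ind^{M_1}_{P\cap M_1},R^{M_1}_{P\cap M_1})$, it suffices to show that \eqref{LIH} is an isomorphism for every $W$, under the hypothesis $V_{p-\mathrm{ord}}=0$. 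Equivalently, every $M_1$-equivariant map $\phi\colon(\Ind_P^G W)_{N_1}\to V$ factors through the surjection $(\Ind_P^G W)_{N_1}\twoheadrightarrow(\ind_P^{PP_1}W)_{N_1}\simeq \Ind^{M_1}_{P\cap M_1}W_{M\cap N_1}$ provided by Proposition~\ref{prop:5.3}(i).

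To prove this factorisation, I would exploit the filtration $0=I_0\subset I_1\subset\dots\subset I_r=\Ind_P^G W$ with subquotients $J_i=I_i/I_{i-1}\simeq \ind_P^{X_i}W$, where the top stratum $X_r=PP_1$ yields $J_r\simeq \ind_P^{PP_1}W$. Applying the right exact functor $L_{P_1}^G=(-)_{N_1}$ gives the exact sequences
\[
(I_{i-1})_{N_1}\longrightarrow (I_i)_{N_1}\longrightarrow (J_i)_{N_1}\longrightarrow 0
\qquad(1\le i\le r),
\]
and then the left exact contravariant functor $\Hom_{M_1}(-,V)$ gives
\[
0\to \Hom_{M_1}((J_i)_{N_1},V)\to \Hom_{M_1}((I_i)_{N_1},V)\to \Hom_{M_1}((I_{i-1})_{N_1},V).
\]
The crucial input is Proposition~\ref{prop:5.3}(iii), which, precisely under the assumption $V_{p-\mathrm{ord}}=0$, asserts $\Hom_{M_1}((J_i)_{N_1},V)=0$ for $i=1,\dots,r-1$.

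By induction on $i$, starting from $\Hom_{M_1}((I_0)_{N_1},V)=0$, the displayed exact sequences then force $\Hom_{M_1}((I_i)_{N_1},V)=0$ for $i\le r-1$. Feeding this into the case $i=r$ gives
\[
0\to \Hom_{M_1}((J_r)_{N_1},V)\to \Hom_{M_1}((I_r)_{N_1},V)\to 0,
\]
which is exactly the assertion that \eqref{LIH} is an isomorphism. Reversing the adjunctions recovers the desired isomorphism $\Ind_{P_1\cap M}^M R_{P\cap M_1}^{M_1}V\simeq R_P^G\Ind_{P_1}^G V$.

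The only non-trivial ingredient is the vanishing Proposition~\ref{prop:5.3}(iii); once one has that, the rest of the argument is a mechanical dévissage. So the real technical weight of the corollary sits in Proposition~\ref{prop:5.3}(iii)—which, in spirit, replaces the appeal to Haar measures used in the complex-coefficient proof of the geometric lemma by the more delicate condition $V_{p-\mathrm{ord}}=0$ that allows one to ``divide by $p$'' in the representation $V$.
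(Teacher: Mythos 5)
Your proof is correct and follows the same route as the paper: the paper's own argument (a one-liner citing Proposition~\ref{prop:5.3} and the fact that the adjunctions turn \eqref{LIH} into \eqref{LIH?} for all $W$) is exactly the Yoneda-plus-d\'evissage scheme you spell out, with Proposition~\ref{prop:5.3}(iii) providing the vanishing that makes the filtration inductively kill $\Hom_{M_1}((I_i)_{N_1},V)$ for $i\le r-1$ and Proposition~\ref{prop:5.3}(i) identifying the top graded piece. (The paper's citation of ``(ii) and (iii)'' rather than ``(i) and (iii)'' appears to be a slip; as your argument shows, (ii) is not needed here.)
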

\begin{proof}   Proposition \ref{prop:5.3} (ii) and (iii)  shows that (4) is a bijection for any $W\in \Mod_R^\infty(M)$. This means that  \eqref{LIHH} is an isomorphism.
\end{proof}

Now assume that $R$ is noetherian and $V$ is admissible. If for any admissible $W\in \Mod_R^\infty(M)$, $L_{P_1\cap M}^M W$ is admissible, from  \eqref{LIH}  we get by right adjunction an injection
\begin{equation}\label{LIord?}\Hom_{M_1}(W , \Ind_{P_1\cap M}^M \Ord_{\overline P\cap M_1}^{M_1} V)\to \Hom_{M_1}(W,\Ord_{\overline P}^G \Ind_{P_1}^G V)
\end{equation}
which is functorial in admissible $W$. So, we have an $M$-equivariant injective homomorphism
\begin{equation}\label{LIHOrd}\ \Ind_{P_1\cap M}^M \Ord_{\overline P\cap M_1}^{M_1} V \to  \Ord_{\overline P}^G \Ind_{P_1}^G V. 
\end{equation}
As for Corollary \ref{cor:5.9}, we deduce:
\begin{corollary} \label{cor:5.9ord} Assume that $R$ is noetherian. Let  $V\in \Mod_R^\infty(M_1)$  be admissible with $V_{p-ord}=0$. If for any admissible $W\in \Mod_R^\infty(M)$, $L_{P_1\cap M}^M W$ is admissible, then \eqref{LIHOrd} is an isomorphism:   $$\Ind_{P_1\cap M}^M \Ord_{\overline P\cap M_1}^{M_1} V \simeq   \Ord_{\overline P}^G \Ind_{P_1}^G V 
 .$$
\end{corollary}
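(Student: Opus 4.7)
The argument is the admissible analogue of Corollary~\ref{cor:5.9}. The injective $M$-equivariant morphism \eqref{LIHOrd} has already been constructed in the paragraph preceding the statement, so I only need to show it is also surjective.

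Fix an admissible $W\in\Mod_R^a(M)$. By the standing hypothesis, $L_{P_1\cap M}^M W$ is admissible; hence so is $\Ind_{P\cap M_1}^{M_1} L_{P_1\cap M}^M W$, since parabolic induction preserves admissibility over a noetherian ring. Similarly, $\Ind_{P_1}^G V$ is admissible, so Emerton's adjunction between $\Ind^{a}$ and $\Ord$ applies on both the $G$ side and the $M_1$ side. Combining it with the usual $(L,\Ind)$ adjunction yields functorial isomorphisms
$$
\Hom_M\!\bigl(W,\Ind_{P_1\cap M}^M \Ord_{\overline P\cap M_1}^{M_1} V\bigr)
\simeq \Hom_{M\cap M_1}\!\bigl(L_{P_1\cap M}^M W,\Ord_{\overline P\cap M_1}^{M_1} V\bigr)
\simeq \Hom_{M_1}\!\bigl(\Ind_{P\cap M_1}^{M_1} L_{P_1\cap M}^M W,V\bigr)
$$
and
$$
\Hom_M\!\bigl(W,\Ord_{\overline P}^G \Ind_{P_1}^G V\bigr)\simeq\Hom_G\!\bigl(\Ind_P^G W,\Ind_{P_1}^G V\bigr)\simeq\Hom_{M_1}\!\bigl(L_{P_1}^G \Ind_P^G W,V\bigr).
$$
Under these identifications, the map induced by \eqref{LIHOrd} on $\Hom(W,-)$ becomes exactly the injection \eqref{LIH}.

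The proof of Corollary~\ref{cor:5.9} shows that \eqref{LIH} is a bijection for \emph{every} $W\in\Mod_R^\infty(M)$ provided $V_{p-ord}=0$: Proposition~\ref{prop:5.3}(iii) annihilates $\Hom_{M_1}((J_i)_{N_1},V)$ for $i<r$, which by an inductive diagram chase through the filtration $0=I_0\subset I_1\subset\cdots\subset I_r=\Ind_P^G W$ reduces the computation of $\Hom_{M_1}(L_{P_1}^G\Ind_P^G W,V)$ to the contribution of the open cell $J_r=\ind_P^{PP_1}W$, which is Proposition~\ref{prop:5.3}(i). In particular the Hom-map is bijective for every admissible $W$. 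Both sides of \eqref{LIHOrd} are themselves admissible $M$-representations (the ordinary part sends admissibles to admissibles by Emerton's construction), so the Yoneda lemma applied inside the subcategory $\Mod_R^a(M)$ promotes this bijection of Hom-sets into surjectivity of \eqref{LIHOrd}. Combined with the already-established injectivity, \eqref{LIHOrd} is an isomorphism. The only delicate point is the admissibility bookkeeping: each application of the $(\Ind^{a},\Ord)$ adjunction must take place between admissible objects, which is precisely the role of the hypothesis on $L_{P_1\cap M}^M W$.
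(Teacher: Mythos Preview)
Your proof is correct and follows essentially the same approach as the paper, which simply says ``As for Corollary~\ref{cor:5.9}, we deduce'' and leaves the details implicit. You have spelled out precisely what that means: the adjunctions identify the map \eqref{LIord?} with \eqref{LIH}, the latter is bijective for every $W$ by Proposition~\ref{prop:5.3}(i) and (iii), and since both sides of \eqref{LIHOrd} are admissible one may apply Yoneda in $\Mod_R^a(M)$ (taking $W$ equal to the target) to conclude.
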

\begin{remark}  
1)If $P_1\supset P$, $L_{P_1\cap M}^M W=W$ so the hypothesis on $W$ is always satisfied.

2) If $p$ is nilpotent in $R$ then $R_P^G$ respects admissibility and is isomorphic to  $\Ord_{\overline P}^G$. Hence \eqref{LIHH} gives an isomorphism  $$\Ind_{P_1\cap M}^M \Ord_{\overline P\cap M_1}^{M_1} V \simeq   \Ord_{\overline P}^G \Ind_{P_1}^G V 
 .$$
\end{remark}

\subsection{Proofs}\label{S:5.2} To prove Proposition \ref{prop:5.3} (i), we control the action of $N_1$ on $\ind_P^{X_i} W$ for $i=1,\ldots, r-1$. Since $B$ contains $N_1$ we may filter $X_i$ by $(P,B)$ double cosets, exactly as we did in \S \ref{S:5.1}. Reasoning exactly as  in \S \ref{S:5.1}, it is enough to prove the following lemma.

\begin{lemma}\label{lemma:5.4}  Let  $W\in \Mod_R^\infty(M)$ and $V\in \Mod_R^\infty(M_1)$. Let   $X$ be a $(P,B)$  double coset not contained in $PP_1$.
\begin{enumerate}
\item the space of $N_1$-coinvariants of $\ind_P^{X} W$ is $0$ if  $W$ is $p$-torsion.
\item $\Hom_{M_1}((\ind_P^{X} W)_{N_1}, V)=0$ if $V_{p-ord}=0$.
\end{enumerate}\end{lemma}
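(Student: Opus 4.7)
The plan is to represent the double coset as $X=PgB$ for a Weyl element $g$, chosen of minimal length in its class in $W_M\backslash W/W_{M_1}$, and to identify $\ind_P^XW$ with $\ind_H^B({}^gW)$ as a $B$-representation, where $H=B\cap g^{-1}Pg$ and ${}^gW$ denotes $W$ pulled back to $H$ via the composition $H\hookrightarrow g^{-1}Pg\to g^{-1}Mg\to M$. The task then becomes computing $N_1$-coinvariants (and Hom spaces into $V$) of this induced $B$-representation.

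The key combinatorial step produces a root $\alpha\in\Phi^+\setminus\Phi_{M_1}^+$ with $g\alpha\in\Phi^-\setminus\Phi_M^-$. Minimality of $g$ forces $g(\Phi_{M_1}^+)\subset\Phi^+$ and $g^{-1}(\Phi_M^+)\subset\Phi^+$; since $X\not\subset PP_1$ gives $g\notin W_MW_{M_1}$, in particular $g\ne 1$, so there exists $\beta\in\Phi^+$ with $g\beta<0$. The first minimality condition forces $\beta\notin\Phi_{M_1}^+$, and if $g\beta\in\Phi_M^-$ then $\beta\in g^{-1}(\Phi_M^-)\subset\Phi^-$, a contradiction; so $\alpha:=\beta$ does the job. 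The corresponding root subgroup $U_\alpha$ lies inside $N_1$, and the condition $g\alpha\in\Phi^-\setminus\Phi_M^-$ means $gU_\alpha g^{-1}\subset\overline N$, whence $U_\alpha\cap g^{-1}Pg=\{1\}$ (since $\overline N\cap P=\{1\}$).

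For part (i), since $(\ind_P^XW)_{N_1}$ is a quotient of $(\ind_P^XW)_{U_\alpha}$, it suffices to prove the latter vanishes. I would analyse the $U_\alpha$-representation $\ind_H^B({}^gW)$ via the orbits of right $U_\alpha$-multiplication on $H\backslash B$: filtering by supports in successive closed unions of orbits, each locally closed subquotient is isomorphic as a $U_\alpha$-representation to a compact induction $\ind_S^{U_\alpha}V'$, where $S=U_\alpha\cap b_0^{-1}Hb_0$ is the stabilizer at an orbit representative $b_0$ and $V'$ is a smooth $S$-module built from $W$. For the orbit through the identity coset of $H\backslash B$ the stabilizer is $U_\alpha\cap H=\{1\}$, and the corresponding piece is $C_c^\infty(U_\alpha,W)$ with $U_\alpha$ acting by right translation; its $U_\alpha$-coinvariants equal $\mathbb Z[1/p]\otimes_{\mathbb Z}W$ by Proposition \ref{prop:5.4}, hence vanish when $W$ is $p$-torsion. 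Analogous applications of Proposition \ref{prop:5.4} to the pro-$p$ quotients $U_\alpha/S$ dispose of the other orbits. Part (ii) runs in parallel: $\Hom_{M_1}((\ind_P^XW)_{N_1},V)$ embeds into a product of Hom spaces from the same subquotients into $V$; each reduces to a copy of $\Hom_R(\mathbb Z[1/p]\otimes W,V)$, and the hypothesis $V_{p-ord}=0$ forces $\Hom_R(W,V)_{p-ord}=0$ (any infinitely $p$-divisible homomorphism takes values in $V_{p-ord}$), so every such piece vanishes.

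The main obstacle will be the orbit analysis for $U_\alpha$-orbits on $H\backslash B$ other than the free one through the identity coset: one must confirm that every $U_\alpha$-orbit contributes trivially under the stated $p$-torsion, respectively $V_{p-ord}=0$, hypothesis. Since $H$ need not be open in $B$, the topology of $H\backslash B$ is subtle, and the filtration by locally closed orbits together with the extension-by-zero conventions for compact induction require some care.
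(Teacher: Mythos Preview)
Your combinatorial step is correct and matches the paper's Lemma~\ref{lemma:5.5}: choosing $g$ minimal in its $(\mathbb W_M,\mathbb W_{M_1})$-double coset, one produces $\alpha\in\Phi_{N_1}$ with $w(\alpha)\in-\Phi_N$, hence $U_\alpha\subset N_1$ and $U_\alpha\cap g^{-1}Pg=\{1\}$. The reduction from $N_1$-coinvariants to $U_\alpha$-coinvariants is also the same.

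Where you diverge from the paper is in the analysis of $\ind_P^X W$ as a $U_\alpha$-module. You propose an orbit filtration of $H\backslash B$ under right $U_\alpha$-translation and then worry, correctly, that orbits other than the one through the identity might have nontrivial stabilizer. The paper bypasses this entirely. Writing $P\backslash X\simeq(g^{-1}Pg\cap U)\backslash U$ via $u\mapsto Pgu$, one observes that $g^{-1}Pg\cap U=\prod_{\gamma\in\Phi_{red}^+,\,w(\gamma)\in\Phi_M\cup\Phi_N}U_\gamma$, and the complementary roots are exactly those with $w(\gamma)\in-\Phi_N$; your $\alpha$ is one of them. Ordering these as $\alpha_1,\dots,\alpha_{r-1},\alpha_r=\alpha$ and setting $U'=U_{\alpha_1}\cdots U_{\alpha_{r-1}}$, the multiplication map
\[
(g^{-1}Pg\cap U)\times U'\times U_\alpha\longrightarrow U
\]
is a homeomorphism, and since $U_\alpha$ is the rightmost factor it is $U_\alpha$-equivariant for right translation. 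Thus $(g^{-1}Pg\cap U)\backslash U\simeq U'\times U_\alpha$ with $U_\alpha$ acting \emph{freely} by translation on the second factor. Consequently
\[
\ind_P^X W\ \simeq\ C_c^\infty(U',R)\otimes_R C_c^\infty(U_\alpha,R)\otimes_R W
\]
as $R[U_\alpha]$-modules, with $U_\alpha$ acting only on the middle tensorand. Proposition~\ref{prop:5.4} then gives $(\ind_P^X W)_{U_\alpha}\simeq C_c^\infty(U',R)\otimes_R R[1/p]\otimes_R W$, from which both (i) and (ii) follow at once (for (ii), any $R$-map from an $R[1/p]$-module into $V$ lands in $V_{p-ord}=0$).

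So your ``main obstacle'' dissolves: there are no orbits with nontrivial stabilizer, and the root-group factorization of $U$ is precisely the device that shows this. Your orbit-filtration plan is not wrong, but to complete it you would have to prove freeness of the $U_\alpha$-action anyway, which is most cleanly done by the product decomposition above; once you have that, the filtration is superfluous.
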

 
\begin{proof}  By the Bruhat decomposition $G= B \mathcal N B$, we may assume that $X=PnB$ for some $n\in \mathcal N$, and the assumption that $X$  is not contained in $ PP_1$ means the image $w$ of $n$ in $\mathbb W= \mathcal N/Z$ does not belong to $\mathbb W_{0,M}\mathbb W_{0,M_1}$.  The map $u\mapsto Pnu:U\to P\backslash G$ is continuous and induces a bijection from $(n^{-1} P n \cap U)\backslash U$ onto $P\backslash PnB$. By Arens's theorem that bijection is an homeomorphism. The group $n^{-1} P n \cap U$ is $Z$-invariant and is equal to the product (in any order) of subgroups $U_\alpha$ for some reduced roots $\alpha$. More precisely,  
$$n^{-1} P n \cap U= \prod_{\alpha \in \Phi_{red}^+, w(\alpha)\in \Phi_M \cup \Phi_N}U_\alpha,$$
where $\Phi_N=\Phi^+-\Phi^+_M$  and  $\Phi$ is the disjoint union $ \Phi_M \sqcup \Phi_N \sqcup (-\Phi_N)$ (\S\ref{S:2.1}). 
We choose a reduced root $\beta$  such that $w(\beta)$ belongs to $-\Phi_N$ (we check the existence of $\beta$ in Lemma \ref {lemma:5.5}), and an ordering $\alpha_1, \ldots, \alpha_r$ with $\alpha_r=\beta$ of the reduced roots $\alpha \in \Phi_{red}^+$ such that $w(\alpha)\in -\Phi_N$. Let  $U'$ denote the subset $U_{\alpha_1}\times \dots\times U_{\alpha_{r-1}}$ of $U$. Then the product map 
$ (n^{-1} P n \cap U)\times U'\times U_\beta\to U$ is a bijection, indeed a  homeomorphism, so we get a homeomorphism $U'\times U_\beta \to (n^{-1} P n \cap U)\backslash U$, which moreover is $U_\beta$-equivariant for the right translation.
 All taken together we  have an  $U_\beta$-equivariant isomorphism of $R$-modules:
   $$
 f \mapsto (u',u_\beta)\mapsto f(nu'u_\beta): \ind_P^{X}W\to C_{c}^{\infty}( U'\times U_\beta,  W).$$
 Now $C_{c}^{\infty}( U'\times U_\beta,  W)$  is  
 $C_{c}^{\infty}( U',  R) \otimes_RC_{c}^{\infty}( U_\beta, R) \otimes_RW$
 where  $U_\beta$ acts only on the middle factor. By Proposition \ref{prop:5.4},  $C_{c}^{\infty}( U_\beta, R)_{ U_\beta} $ is isomorphic to $R[1/p]$. If $W$ is $p$-torsion, 
 $C_{c}^{\infty}( U_\beta, R)_{ U_\beta}  \otimes_RW =0 $  hence $(\ind_P^{PnB}(W))_{U_\beta}=0$ and a fortiori $(\ind_P^{PnB}(W))_{N_1}=0$ by transitivity of the coinvariants, since $N_1$ contains $U_\beta$.  We get (i).
Similarly, if  $V_{p-ord}=0$, $\Hom_{M_1}(C_{c}^{\infty}( U_\beta, R)_{ U_\beta} , V)=0$ hence 
 we get  (ii).
\end{proof}

\begin{lemma}\label{lemma:5.5} Let $w\in \mathbb W\setminus \mathbb W_{0,M}\mathbb W_{0,M_1}$. Then there exists $\beta \in \Phi_{N_1}$ such that $w(\beta)$ belongs to $-\Phi_N$.
\end{lemma}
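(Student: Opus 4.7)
The plan is to reduce to the case where $w$ is the minimal length representative of its double coset $\mathbb{W}_{0,M} w \mathbb{W}_{0,M_1}$, and then exploit the standard characterisation of such representatives in terms of positivity.

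First I would observe that the statement is invariant under modifying $w$ within its $(\mathbb{W}_{0,M},\mathbb{W}_{0,M_1})$-double coset. Indeed, $v \in \mathbb{W}_{0,M_1}$ preserves $\Phi_{N_1} = \Phi^+ \setminus \Phi^+_{M_1}$ (the coefficient of $v(\gamma)$ along a simple root in $\Delta \setminus \Delta_{M_1}$ equals the corresponding coefficient of $\gamma$), and similarly $u \in \mathbb{W}_{0,M}$ preserves $-\Phi_N$. Thus if we find $\beta' \in \Phi_{N_1}$ with $(u w v)(\beta') \in -\Phi_N$, then $\beta := v(\beta') \in \Phi_{N_1}$ and $w(\beta) = u^{-1}(u w v)(\beta') \in -\Phi_N$. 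So I may replace $w$ by the unique minimal length element $w_0$ of its double coset; the hypothesis $w \notin \mathbb{W}_{0,M}\mathbb{W}_{0,M_1}$ becomes $w_0 \neq 1$, so $\ell(w_0) > 0$.

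Second, I would invoke the standard characterisation: the minimal length double coset representative $w_0$ satisfies $w_0(\Delta_{M_1}) \subset \Phi^+$ and $w_0^{-1}(\Delta_M) \subset \Phi^+$. Since $\ell(w_0) > 0$, some simple root $\alpha \in \Delta$ is sent by $w_0$ to a negative root. The first property forces $\alpha \notin \Delta_{M_1}$, so $\alpha \in \Delta \setminus \Delta_{M_1} \subset \Phi_{N_1}$.

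Finally I would show that $w_0(\alpha) \in -\Phi_N$. Suppose, to the contrary, that $w_0(\alpha) \in \Phi^-_M$, since $\Phi^- = \Phi^-_M \sqcup (-\Phi_N)$. Then $-w_0(\alpha) \in \Phi^+_M$. The relation $w_0^{-1}(\Delta_M) \subset \Phi^+$ extends to $w_0^{-1}(\Phi^+_M) \subset \Phi^+$ (any root that is a non-negative integer combination of positive roots is itself positive), so applying $w_0^{-1}$ yields $-\alpha \in \Phi^+$, contradicting $\alpha \in \Phi^+$. Thus $w_0(\alpha) \in -\Phi_N$, and $\alpha \in \Phi_{N_1}$ is the desired $\beta$.

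The only non-routine point is the verification that $w_0(\alpha) \notin \Phi^-_M$, for which one has to combine both defining properties of the minimal length double coset representative; the rest amounts to standard manipulation of reflections in the Weyl group together with the root decomposition $\Phi = \Phi_M \sqcup \Phi_N \sqcup (-\Phi_N)$ recalled in \S\ref{S:2.1}.
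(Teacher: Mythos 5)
Your proof is correct and follows the same high-level strategy as the paper's: reduce to the minimal-length representative of the double coset using stability of $\Phi_{N_1}$ under $\mathbb W_{0,M_1}$ and of $-\Phi_N$ under $\mathbb W_{0,M}$, and then apply the positivity characterization of minimal double coset representatives. The difference is in the execution: the paper's proof is a contradiction argument (suppose $w(\Phi_{N_1})\subset \Phi_M\cup\Phi_N$, deduce successively $w(\Phi_{N_1})\cap\Phi^-=\emptyset$, then $\Phi^-\cap w(\Phi^+)\subset \Phi^+$, hence $w(\Phi^+)\subset\Phi^+$ and $w=1$), whereas you argue directly: you pick a simple root $\alpha$ made negative by $w_0$ (which exists since $w_0\ne 1$), observe it must lie outside $\Delta_{M_1}$, and show $w_0(\alpha)\notin\Phi^-_M$ using $w_0^{-1}(\Phi^+_M)\subset\Phi^+$. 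Your version is mildly more constructive and yields the slightly stronger conclusion that $\beta$ can be taken to be a simple root in $\Delta\setminus\Delta_{M_1}$, but both proofs rest on the same two defining inequalities of the minimal representative, phrased either globally (the paper's (i), (ii)) or on simple roots (your $w_0(\Delta_{M_1})\subset\Phi^+$, $w_0^{-1}(\Delta_M)\subset\Phi^+$).
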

We can take $\beta$ reduced. If $\beta$ is not reduced, replace it by $\beta/2$.
 
\begin{proof}  The property  in Lemma \ref {lemma:5.5} depends only on the double coset $\mathbb W_{0,M}w\mathbb W_{0,M_1}$ because $\Phi_N$ is stable by $\mathbb W_{0,M}$ and  $\Phi_{N_1}$ is stable by $\mathbb W_{0,M_1}$. We suppose that $w$ is the element of minimal length in $\mathbb W_{0,M}w\mathbb W_{0,M_1}$. This condition translates as:
 \begin{enumerate}
 \item $w^{-1}(\Phi^-)\cap \Phi^+ \subset \Phi_{N_1}$,
 \item $\Phi^-\cap w(\Phi^+) \subset -\Phi_N$.
 \end{enumerate}
Proceeding by contradiction we suppose $w(\Phi_{N_1})\subset \Phi_M \cup \Phi_N$. This implies  
 $w(\Phi_{N_1})\cap \Phi^- \subset \Phi_M^-$   then  (ii) implies $w(\Phi_{N_1})\cap \Phi^- =\emptyset$ so $w(\Phi_{N_1})\subset \Phi^+$. With  (i)  we get $\Phi^- \cap w(\Phi^+)\subset  w(\Phi_{N_1}) \subset  \Phi^+$. Then comparing with (ii), $ w(\Phi^+)\subset \Phi^+$ which implies $w=1$. This is absurd hence Lemma \ref {lemma:5.5} is proved.
  \end{proof}

 This ends the proof of  Proposition \ref{prop:5.3} (ii) and (iii). 
 To prove Proposition \ref{prop:5.3} (i),  we control  $\ind_P^{PP_1} W$ as a representation of $P_1$. As the inclusion of $P_1$ in $PP_1$ induces an homeomorphism $(P\cap P_1)\backslash P_1\to P\backslash PP_1$, we think of  $\ind_P^{PP_1} W$ as the representation $\ind_{P\cap P_1}^{P_1} W$ of $P_1$. To identify  $(\ind_{P\cap P_1}^{P_1} W)_{N_1}$  and $\Ind_{P\cap M_1}^{M_1}W_{M\cap N_1}$ we proceed exactly as in \cite[5.16 case $IV_1$]{MR0579172}; indeed mutatis mutandis we are in that case: their $G=Q$ is our $P_1$, their $M=P$ is our $P\cap P_1$, their $N$ is our $M_1$ and their $V$ our $N_1$. Their reasoning applies to get the desired result: it is enough to realize that the equivalence relation between $\ell$-sheaves on $(P\cap P_1)\backslash P_1$ and smooth representations of $P\cap P_1$ is valid for $R$ as coefficients  \cite[5.10 to 5.14]{MR0579172} and also that although $N_1$ is locally pro-$p$, forming $N_1$-coinvariants is still compatible with inductive limits  \cite[1.9 (9)]{MR0579172}. This latter property   is valid for any functor $\Mod_R^\infty (G)\to \Mod_R^\infty (M_1)$ having a right adjoint, because  $\Mod_R^\infty (G)$  is a Grothendieck  category \cite[Proposition 2.9, lemma 3.2]{Vigneras-adjoint}.

   \section{Applying adjoints of  $\Ind_{P_1}^G  $ to $I_G(P,\sigma,Q)$}\label{S:6}
    Let  us keep a general reductive connected group $G$ and a commutative ring $R$. Let $ P_1=M_1N_1$ be a standard parabolic subgroup  of   $G$  and   $(P=MN,\sigma,Q)$ an  $R[G]$-triple (\ref{S:2.2}).
     
 \subsection{Results and applications } \label{S:6.1} 
 We would like to compute 
   $L_{P_1}^G I_G(P,\sigma, Q)$ when $\sigma$ is $p$-torsion and  $R_{P_1}^G I_G(P,\sigma, Q)$ 
when  $\sigma_{p-ord}=0$. Applying Corollaries \ref{cor:5.8} and  \ref{cor:5.9}  we may reduce to the case where $P(\sigma)=G$, so 
$I_G(P,\sigma, Q) = e(\sigma)\otimes \St_Q^G$.
  But we have no direct construction of $R_{P_1}^G$. When $R$ is noetherian and $p$ is nilpotent in $R$, then for admissible $V\in \Mod_R^\infty(G)$, $R_{P_1}^GV\simeq  \Ord_{\overline P_1}^G V$ (Corollary \ref{cor:4.10}). Consequently, in the following     Theorem  \ref{thm:6.1}, Part (ii)  we may replace $ \Ord_{\overline P_1}^G$ by $R_{P_1}^G$ and $  \Ord_{M\cap \overline P_1}^M$ by $  R_{M\cap \overline P_1}^M$ when $p$ is nilpotent in $R$. 

\begin{theorem} \label{thm:6.1} Assume $P(\sigma)=G$. We have:
\begin{enumerate}
\item Assume that $\sigma$ is $p$-torsion. Then $L_{P_1}^G(e(\sigma)\otimes \St_Q^G)$ is isomorphic to $e_{M_1}(L_{M\cap P_1}^M(\sigma))\otimes \St_{M_1\cap Q}^{M_1}$ if  $ \langle Q, P _1\rangle =G$, and is $0$ otherwise.
\item Assume   $R$  noetherian,  $\sigma$  admissible, and $\sigma_{p-ord}=0$. Then 
$\Ord_{\overline P_1}^G(e(\sigma)\otimes \St_Q^G)$ is isomorphic to  $e_{M_1}(\Ord_{M\cap \overline P_1}^M(\sigma))\otimes \St_{M_1\cap Q}^{M_1}$ if $ \langle  P,P_1\rangle \supset Q$, and is $0$ otherwise.
\end{enumerate}\end{theorem}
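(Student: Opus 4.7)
My plan is to base both parts on the presentation
$$\bigoplus_{Q\subsetneq Q'\subset G}\Ind_{Q'}^G e_{Q'}(\sigma)\longrightarrow\Ind_Q^G e_Q(\sigma)\longrightarrow e(\sigma)\otimes\St_Q^G\to 0,$$
which is valid via the identification $e(\sigma)\otimes\Ind_{Q'}^G\charone\simeq\Ind_{Q'}^G e_{Q'}(\sigma)$ (Lemma~\ref{lemma:useful}) and the remark following Definition~\ref{def:Gtriple}. I will first reduce via Lemma~\ref{lemma:2.3} to the case where $\sigma$ is $e$-minimal, so that Lemma~\ref{lemma:2.2} together with $P(\sigma)=G$ gives an orthogonal partition $\Delta=\Delta_P\sqcup\Delta_\sigma$. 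Then for each $Q'$ with $P\subset Q'\subset G$ one has $\Delta_{Q'}=\Delta_P\sqcup I_{Q'}$ with $I_{Q'}\subset\Delta_\sigma$, the Levi factors almost as a direct product $M_{Q'}=M\cdot M_{I_{Q'}}$, and $e_{Q'}(\sigma)$ splits across this as $\sigma$ on the $M$-side and trivial on the $M_{I_{Q'}}$-side; the groups $N_1\cap M_{Q'}$, $\overline{N}_1\cap M_{Q'}$, and $M_1\cap M_{Q'}$ split in parallel fashion, and this is the geometric input that unlocks both computations.

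\textbf{Part (i).}
For (i) I will apply the right-exact functor $L_{P_1}^G$ term-by-term. Corollary~\ref{cor:5.8} applies (since $\sigma$, whence each $e_{Q'}(\sigma)$, is $p$-torsion) and yields
$$L_{P_1}^G\Ind_{Q'}^G e_{Q'}(\sigma)\simeq\Ind_{Q'\cap M_1}^{M_1}L_{P_1\cap M_{Q'}}^{M_{Q'}}e_{Q'}(\sigma).$$
Because $N_1\cap M_{Q'}=(N_1\cap M)\cdot(N_1\cap M_{I_{Q'}})$ and the second factor acts trivially on $e_{Q'}(\sigma)$, the inner Jacquet functor collapses to $L_{M\cap P_1}^M\sigma$ extended trivially to $M_{Q'}\cap M_1$, which Lemma~\ref{lemma:useful} rewrites as $e_{M_1}(L_{M\cap P_1}^M\sigma)\otimes\Ind_{Q'\cap M_1}^{M_1}\charone$. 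Pulling the common factor out of the cokernel will give
$$L_{P_1}^G(e(\sigma)\otimes\St_Q^G)\simeq e_{M_1}(L_{M\cap P_1}^M\sigma)\otimes\Bigl(\Ind_{Q\cap M_1}^{M_1}\charone\Big/\sum_{Q'\supsetneq Q}\Ind_{Q'\cap M_1}^{M_1}\charone\Bigr).$$
Finally I will argue combinatorially that the parenthesized quotient equals $\St_{M_1\cap Q}^{M_1}$ exactly when $\Delta\setminus\Delta_Q\subset\Delta_{M_1}$, i.e.\ $\langle Q,P_1\rangle=G$; otherwise one produces $Q'\supsetneq Q$ with $Q'\cap M_1=Q\cap M_1$ (take $\Delta_{Q'}=\Delta_Q\cup\{\alpha\}$ for $\alpha\in\Delta\setminus(\Delta_Q\cup\Delta_{M_1})$), so the denominator contains the numerator and the cokernel vanishes.

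\textbf{Part (ii), term-by-term.}
For (ii) I will use Corollary~\ref{cor:5.9ord} (its hypothesis $V_{p-ord}=0$ transports from $\sigma$ to $e_{Q'}(\sigma)$ since they share the underlying $R$-module, and its admissibility premise on $L^M$ is trivially met in the orthogonal subcase at hand):
$$\Ord_{\overline{P}_1}^G\Ind_{Q'}^G e_{Q'}(\sigma)\simeq\Ind_{Q'\cap M_1}^{M_1}\Ord_{\overline{P}_1\cap M_{Q'}}^{M_{Q'}}e_{Q'}(\sigma).$$
The orthogonal factorization of $M_{Q'}$ splits the inner functor as the external tensor of $\Ord_{M\cap\overline{P}_1}^M\sigma$ with $\Ord_{M_{I_{Q'}}\cap\overline{P}_1}^{M_{I_{Q'}}}\charone$; by the explicit formula \eqref{eq:OrdP} together with Corollary~\ref{cor:5.4} applied to the locally pro-$p$ group $\overline{N}\cap M_{I_{Q'}}$, the second factor equals $\charone$ if $M_{I_{Q'}}\subset M_1$ and vanishes as soon as $M_{I_{Q'}}\cap P_1$ is a proper parabolic of $M_{I_{Q'}}$. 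Via Lemma~\ref{lemma:useful}, each surviving piece rewrites as $e_{M_1}(\Ord_{M\cap\overline{P}_1}^M\sigma)\otimes\Ind_{Q'\cap M_1}^{M_1}\charone$.

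\textbf{The hard part.}
The main obstacle is that, unlike $L_{P_1}^G$, the functor $\Ord_{\overline{P}_1}^G$ is only left-exact, so its value on the cokernel presentation of $e(\sigma)\otimes\St_Q^G$ is not automatically the cokernel of its values on the individual pieces. My plan to circumvent this is to lift the orthogonal partition to a product structure $G=M\cdot G_\sigma$ in which $e(\sigma)\otimes\St_Q^G$ is a genuine external tensor of $\sigma$ with $\St_{Q_\sigma}^{G_\sigma}$, where $Q_\sigma$ is the parabolic of $G_\sigma$ cut out by $I_Q=\Delta_Q\setminus\Delta_P$; the parabolic $\overline{P}_1$ decomposes accordingly, and $\Ord_{\overline{P}_1}^G$ factors as the external tensor of $\Ord_{M\cap\overline{P}_1}^M$ with $\Ord_{G_\sigma\cap\overline{P}_1}^{G_\sigma}$. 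This will reduce the problem to computing $\Ord_{G_\sigma\cap\overline{P}_1}^{G_\sigma}\St_{Q_\sigma}^{G_\sigma}$ on the $G_\sigma$-factor, a trivial-$\sigma$ problem where only the second box factor from the previous paragraph contributes. There the term-by-term answer is visibly compatible with the Steinberg quotient and yields $\St_{M_1\cap Q_\sigma}^{M_1\cap G_\sigma}$ precisely when $G_\sigma\cap P_1\supset Q_\sigma$, equivalently $\Delta_{P_1}\supset I_Q$, i.e.\ $\langle P,P_1\rangle\supset Q$, and $0$ otherwise. Reassembling via the product then gives the stated formula $e_{M_1}(\Ord_{M\cap\overline{P}_1}^M\sigma)\otimes\St_{M_1\cap Q}^{M_1}$.
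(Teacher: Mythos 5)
Your treatment of Part (i) is essentially sound and tracks the paper's argument (case (6.2.1) of \S\ref{S:6.2}) in a slightly more unified form: you compute $L_{P_1\cap M_{Q'}}^{M_{Q'}}e_{Q'}(\sigma)$ directly from the orthogonal root decomposition rather than splitting into the subcases $P_1\supset P$ and $\langle P,P_1\rangle=G$ and then composing. You should, as the paper does at the end of \S\ref{S:6.2}, check that $e_{M_1}(L^{M_{\min}}_{M_{\min}\cap P_1}\sigma_{\min})$ really agrees with $e_{M_1}(L^M_{M\cap P_1}\sigma)$ after the $e$-minimal reduction, and that $L^M_{M\cap P_1}\sigma$ actually extends to $M_1$ (which uses $P(\sigma)=G$); but those are short verifications.

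For Part (ii) there are genuine gaps. First, $G$ is only a ``quasi-product'' $G=M M'_\sigma$ (the factors have finite central intersection and both contain $Z$), so $e(\sigma)\otimes\St_Q^G$ is not a genuine external tensor on a direct product group, and you need an argument that $\Ord_{\overline P_1}^G$ splits. The only Künneth-type statement in the paper is Proposition~\ref{prop:7.5}, and it is one-sided: it splits off a factor only when that factor is $R$-\emph{free} with trivial $H$- and $t$-action. This applies with $W=\St_Q^G$ (free by Lemma~\ref{lemma:free}), which is how the paper handles the case $\langle P,P_1\rangle=G$; it does \emph{not} apply when you would need to pull out $e(\sigma)$, which is an admissible module of no particular freeness. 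Second, and more to the point, after your proposed reduction to the $G_\sigma$-factor you still have to compute $\Ord_{G_\sigma\cap\overline P_1}^{G_\sigma}\St_{Q_\sigma}^{G_\sigma}$, and your claim that the term-by-term answer is ``visibly compatible with the Steinberg quotient'' is precisely the left-exactness obstruction you correctly identified one paragraph earlier --- it has not been removed, only relocated. The paper needs the entirety of \S\ref{S:7.2} (filtrations by $(Q,\overline B)$-cosets using Lemmas~\ref{lemma:overline1}--\ref{lemma:stein}) and the Bernstein-localization argument of Proposition~\ref{prop:6.12} to handle exactly this point. Third, your asserted vanishing $\Ord_{M_{I_{Q'}}\cap\overline P_1}^{M_{I_{Q'}}}\charone=0$ does not follow from the hypothesis $\sigma_{p-\mathrm{ord}}=0$: by Corollary~\ref{cor:5.4} that vanishing is equivalent to $\Hom(\mathbb Z[1/p],R)=0$, a condition on the \emph{ring} $R$ which is not implied by $\sigma_{p-\mathrm{ord}}=0$. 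The vanishing the paper proves (Proposition~\ref{prop:outside}, via Corollary~\ref{cor:6.13}) is for the \emph{tensor} $e(\sigma)\otimes\ind_Q^{Qw\overline B}\charone$, whose $p$-ordinary part is zero precisely because $\sigma_{p-\mathrm{ord}}=0$ and the other factor is free; splitting off the $\sigma$-factor, as your plan requires, destroys the only hypothesis that makes the vanishing work.
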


 In part (i), the statement includes that $L_{M\cap P_1}^M(\sigma)$ extends to $M_1$ and similarly  in part (ii) for $\Ord_{M\cap \overline P_1}^M(\sigma)$. 
Before the proof  of the theorem  (\S \ref{S:6.2}, \S \ref{S:7}) we derive consequences.

Without any assumption on $P(\sigma)$, we get:

\begin{corollary} \label{cor:6.2} 
\begin{enumerate}
\item Assume that $\sigma$ is $p$-torsion. Then  $L_{P_1}^G I_G(P,\sigma, Q)$ is isomorphic to
\begin{equation}\label{tard} \Ind_{P (\sigma)\cap M_1}^{M_1}(  e_{M_1\cap M(\sigma)}(L_ {M\cap P(\sigma)}^M (\sigma))\otimes \St_{Q\cap M_1}^{M_1\cap M(\sigma) })
\end{equation} when $\langle P_1\cap P(\sigma), Q\rangle = P(\sigma)$, and  is $0$ otherwise.  
\item Assume   $R$  noetherian,  $\sigma$  admissible, and $p$ nilpotent in $R$. Then 
$\Ord_{\overline P_1}^G I_G(P,\sigma, Q)$ is isomorphic to 
\begin{equation}\label{tarde}  \Ind_{P (\sigma)\cap M_1}^{M_1}(  e_{M_1\cap M(\sigma)}(\Ord_ {M\cap \overline P(\sigma)}^M (\sigma))\otimes \St_{Q\cap M_1}^{M_1\cap M(\sigma) })
\end{equation} if $ \langle  P,P_1\cap P(\sigma)\rangle \supset Q$, and is $0$ otherwise.
\end{enumerate}\end{corollary}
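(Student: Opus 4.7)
The plan is to derive Corollary~\ref{cor:6.2} from Theorem~\ref{thm:6.1} by transferring the computation inside the standard Levi subgroup $M(\sigma)$ of $P(\sigma)$. First I would rewrite $I_G(P,\sigma,Q)=\Ind_{P(\sigma)}^G W$ with $W=e(\sigma)\otimes\St_Q^{P(\sigma)}$. Since $P\subset Q\subset P(\sigma)$ forces $N(\sigma)\subset N_Q\subset Q$, the normal subgroup $N(\sigma)$ of $P(\sigma)$ acts trivially on both tensor factors, so $W$ descends to a smooth $R$-representation of $M(\sigma)$, and is $p$-torsion as soon as $\sigma$ is. Inside $M(\sigma)$ one has the triple $(P\cap M(\sigma),\sigma,Q\cap M(\sigma))$, for which the analogue of ``$P(\sigma)$'' is the full group $M(\sigma)$; hence Theorem~\ref{thm:6.1} applies with $G$ replaced by $M(\sigma)$.

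For Part~(i), the fact that $W$ is $p$-torsion lets Corollary~\ref{cor:5.8} reduce the computation to
\[
L_{P_1}^G I_G(P,\sigma,Q)\simeq \Ind_{P(\sigma)\cap M_1}^{M_1}\,L_{P_1\cap M(\sigma)}^{M(\sigma)}\,W.
\]
Applying Theorem~\ref{thm:6.1}(i) inside $M(\sigma)$, with $P_1\cap M(\sigma)$ (whose Levi is $M_1\cap M(\sigma)$) playing the role of ``$P_1$'', identifies the inner Jacquet functor with $e_{M_1\cap M(\sigma)}(L_{M\cap P_1}^M\sigma)\otimes\St_{Q\cap M_1}^{M_1\cap M(\sigma)}$ under its non-vanishing criterion, and with $0$ otherwise; substitution yields \eqref{tard}. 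The criterion reads $\langle Q\cap M(\sigma),P_1\cap M(\sigma)\rangle=M(\sigma)$, and passing to sets of simple roots this is $\Delta_Q\cup(\Delta_{P_1}\cap\Delta_{P(\sigma)})=\Delta_{P(\sigma)}$, i.e.\ $\langle P_1\cap P(\sigma),Q\rangle=P(\sigma)$.

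Part~(ii) follows the same pattern with $R_{P_1}^G$ in place of $L_{P_1}^G$. Because $R$ is noetherian, $p$ is nilpotent in $R$, $\sigma$ is admissible, and hence $I_G(P,\sigma,Q)$ is admissible (from the admissibility theorem of Section~\ref{S:4}), Theorem~\ref{thm:4.11} and Corollary~\ref{cor:4.10} identify $R_{P_1}^G$ with $\Ord_{\overline{P}_1}^G$ on admissible representations of $G$, and the analogous identification holds in $M(\sigma)$. Corollary~\ref{cor:5.5} gives
\[
R_{P_1}^G\Ind_{P(\sigma)}^G W\simeq \Ind_{P(\sigma)\cap M_1}^{M_1}\,R_{P_1\cap M(\sigma)}^{M(\sigma)}\,W,
\]
and after replacing both $R$-functors by their $\Ord$-counterparts, Theorem~\ref{thm:6.1}(ii) inside $M(\sigma)$ produces \eqref{tarde} under the condition $\langle P\cap M(\sigma),P_1\cap M(\sigma)\rangle\supset Q\cap M(\sigma)$; by the same root-subset argument (using $\Delta_P,\Delta_Q\subset\Delta_{P(\sigma)}$) this is equivalent to $\langle P,P_1\cap P(\sigma)\rangle\supset Q$.

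The main obstacle is careful bookkeeping: I must verify that each of $P\cap M(\sigma)$, $Q\cap M(\sigma)$, $P_1\cap M(\sigma)$ is a standard parabolic subgroup of $M(\sigma)$ with the expected Levi factor, that $P_{M(\sigma)}(\sigma)=M(\sigma)$ so that Theorem~\ref{thm:6.1} really does apply inside $M(\sigma)$, and that the resulting non-vanishing conditions translate correctly from $M(\sigma)$ back to $G$. In Part~(ii) there is the further subtlety of invoking the $R$-versus-$\Ord$ identification in both $G$ and $M(\sigma)$, which rests on the admissibility of $W$ and $I_G(P,\sigma,Q)$ and on $p$ being nilpotent in $R$.
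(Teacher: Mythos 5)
Your proof is correct and follows essentially the same route as the paper: rewrite $I_G(P,\sigma,Q)=\Ind_{P(\sigma)}^G\bigl(e(\sigma)\otimes\St_Q^{P(\sigma)}\bigr)$, commute $L_{P_1}^G$ (resp.\ $\Ord_{\overline{P}_1}^G$) past $\Ind_{P(\sigma)}^G$ via Corollary~\ref{cor:5.8} (resp.\ Corollary~\ref{cor:5.9} and the $R\leftrightarrow\Ord$ identification valid for $R$ noetherian with $p$ nilpotent), and then apply Theorem~\ref{thm:6.1} inside $M(\sigma)$, translating the non-vanishing conditions through subsets of $\Delta_{P(\sigma)}$. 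Note that you obtain $L_{M\cap P_1}^M(\sigma)$ (resp.\ $\Ord_{M\cap\overline{P}_1}^M(\sigma)$) in formulas \eqref{tard} and \eqref{tarde}, which is what the derivation actually yields; the occurrences of $P(\sigma)$ in place of $P_1$ inside those subscripts in the printed statement appear to be typographical slips, since $M\cap P(\sigma)=M$ would make that factor the identity.
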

In the corollary,  $L_ {M\cap P_1}^M (\sigma)$ might extend to a parabolic subgroup of $M_1$ bigger than $M_1\cap P(\sigma)$. So we cannot write \eqref{tard} as $I_{M_1}(P\cap M_1,L_ {M\cap P_1}^M (\sigma)    , Q\cap M_1)$. A similar remark applies to \eqref{tarde}.

\begin{proof} (i) $L_{P_1}^G I_G(P,\sigma, Q)=
L_{P_1}^G \Ind_{P(\sigma)}^G (e_{M(\sigma)}(\sigma)\otimes \St^{M(\sigma)}_{Q\cap M(\sigma)})$ is isomorphic  to (Corollary \ref{cor:5.8})
$ 
\Ind_{P (\sigma)\cap M_1}^{M_1} L_{P_1\cap M(\sigma)}^{M(\sigma)}e_{M(\sigma)}(\sigma)\otimes \St^{M(\sigma)}_{Q\cap M(\sigma)}$. Applying Theorem \ref{thm:6.1}, we get (i).

(ii) Similarly,  $\Ord_{\overline P_1}^G I_G(P,\sigma, Q)\simeq  \Ind_{P (\sigma)\cap M_1}^{M_1} \Ord_{M\cap \overline P_1}^{M(\sigma)}(e_{M(\sigma)}(\sigma)\otimes \St^{M(\sigma)}_{Q\cap M(\sigma)})$ by Corollary \ref{cor:5.9}. Applying Theorem \ref{thm:6.1}, we get (ii). \end{proof}

\begin{definition} A smooth $R$-representation $V$ of $G$ is called left cuspidal if $L^G_PV=0$ for all proper parabolic subgroups $P$ of $G$, and right cuspidal if $R^G_PV=0$ for all proper parabolic subgroups $P$ of $G$.
\end{definition}
We may restrict to proper standard parabolic subgroups in this definition,  since any parabolic subgroup of $G$ is conjugate to a standard one.

\begin{proposition} \label{prop:6.3} Assume that $R$ is a field of characteristic $p$. Then a supercuspidal representation is right-cuspidal.
\end{proposition}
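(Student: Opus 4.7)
The plan is to argue by contradiction, using the $(\Ind_P^G,R_P^G)$ adjunction together with Corollary~\ref{cor:4.12}, which is exactly tailored to fields of characteristic $p$.

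Suppose $\sigma$ is supercuspidal and, for contradiction, that $W:=R_P^G\sigma\neq 0$ for some proper standard parabolic $P=MN$ of $G$. Under the natural isomorphism $\Hom_M(W,R_P^G\sigma)\simeq \Hom_G(\Ind_P^G W,\sigma)$, the identity of $W$ corresponds to the counit $\Ind_P^G W\to \sigma$, which is therefore nonzero; since $\sigma$ is irreducible, the counit is surjective, so $\sigma$ is a quotient of $\Ind_P^G W$ for the smooth $M$-representation $W$.

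Now apply Corollary~\ref{cor:4.12}: because $R$ is a field of characteristic $p$ and $\sigma$ is irreducible admissible, there exists an irreducible admissible subquotient $W'$ of $W$ such that $\sigma$ is a quotient of $\Ind_P^G W'$. But $M$ is a proper Levi subgroup of $G$ and $W'$ is an irreducible admissible $R$-representation of $M$, so $\sigma$ occurs as a subquotient of $\Ind_P^G W'$, directly contradicting the definition of supercuspidality recalled in \S\ref{S:2.2}. Thus $R_P^G\sigma=0$ for every proper parabolic subgroup $P$, i.e.\ $\sigma$ is right cuspidal.

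I foresee no genuine obstacle: the argument is essentially formal once Corollary~\ref{cor:4.12} is granted, and all the substance has already been absorbed into that corollary (admissibility of $R_P^G\sigma$ via Theorem~\ref{thm:4.11}, and the existence of an irreducible admissible subquotient of $W$ over a field of characteristic $p$). The only point worth double-checking is the nonvanishing of the counit, which follows immediately from naturality of the adjunction applied to $\mathrm{id}_W$.
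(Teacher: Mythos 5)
Your proof is correct and follows exactly the argument in the paper: realize $\sigma$ as a quotient of $\Ind_P^G R_P^G\sigma$ via the (nonzero, hence surjective) counit, then invoke Corollary~\ref{cor:4.12} to replace $R_P^G\sigma$ by an irreducible admissible subquotient, and conclude by the definition of supercuspidality. The only stylistic difference is that you phrase it as a proof by contradiction, while the paper argues directly.
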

\begin{proof} An irreducible admissible $R$-representation $V$ of $G$ such that $R_P^GV\neq 0$ is a quotient of $\Ind_P^G R^G_P V$ and by Corollary \ref{cor:4.12} is a quotient of $\Ind_P^G W$ for some irreducible admissible $R$-representation $W$ of $M$ because the characteristic of $R$ is $p$ (Corollary \ref{cor:4.12}). If $V$ is supercuspidal, then $P=G$, so $V$ is right cuspidal.
\end{proof} 
\begin{corollary}\label{cor:6.4}  Assume that $R$ is a field of characteristic $p$  and  $(P,\sigma,Q)$ is an  $R[G]$-triple with $\sigma$ supercuspidal. Then $R_{P_1}^G I_G(P,\sigma, Q)$ is isomorphic to $I_{M_1}(P\cap M_1, \sigma, Q\cap M_1)$ if $ P_1\supset Q$, and is $0$ otherwise.
\end{corollary}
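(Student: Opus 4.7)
The approach is to reduce everything to Corollary~\ref{cor:6.2}(ii) and then exploit the right-cuspidality of supercuspidal representations provided by Proposition~\ref{prop:6.3}.

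First I would check that $R_{P_1}^G$ can be replaced by $\Ord_{\overline{P_1}}^G$ on the object of interest. Since $R$ is a field of characteristic $p$, $p$ is nilpotent in $R$; since $\sigma$ is supercuspidal, it is admissible, and therefore $I_G(P,\sigma,Q)$ is admissible by Theorem~\ref{thm:3.1}. Corollary~\ref{cor:4.10} then yields $R_{P_1}^G I_G(P,\sigma,Q)\simeq \Ord_{\overline{P_1}}^G I_G(P,\sigma,Q)$, and Corollary~\ref{cor:6.2}(ii) applies, reducing the problem to evaluating the local ingredient $\Ord_{M\cap \overline{P_1}}^M\sigma\simeq R_{M\cap P_1}^M\sigma$ (Corollary~\ref{cor:4.10} again) together with the combinatorial condition $\langle P,P_1\cap P(\sigma)\rangle\supset Q$.

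Next I would plug in right-cuspidality: by Proposition~\ref{prop:6.3}, $R_{M\cap P_1}^M\sigma=0$ whenever $M\cap P_1$ is a proper parabolic of $M$. Using that $M\cap P_1$ is the standard parabolic of $M$ with $\Delta$-set $\Delta_P\cap \Delta_{P_1}$, properness fails precisely when $\Delta_P\subset \Delta_{P_1}$, i.e.\ when $P\subset P_1$; in that case $M\subset M_1$, so $M\cap \overline{P_1}=M$ and $\Ord_{M\cap \overline{P_1}}^M\sigma=\sigma$. Assuming $P\subset P_1$, a short $\Delta$-set manipulation (using $\Delta_P\subset \Delta_{P_1}\cap \Delta_{P(\sigma)}$ and $\Delta_Q\subset \Delta_{P(\sigma)}$) turns the condition $\langle P,P_1\cap P(\sigma)\rangle \supset Q$ into $\Delta_Q\subset \Delta_{P_1}$, namely $Q\subset P_1$; and since $P\subset Q$, the two conditions collapse to the single requirement $Q\subset P_1$.

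Plugging $\sigma$ back into Corollary~\ref{cor:6.2}(ii), one obtains, under $Q\subset P_1$,
\[
R_{P_1}^G I_G(P,\sigma,Q)\;\simeq\; \Ind_{P(\sigma)\cap M_1}^{M_1}\bigl(e_{M(\sigma)\cap M_1}(\sigma)\otimes \St_{Q\cap M_1}^{M(\sigma)\cap M_1}\bigr),
\]
and $0$ otherwise. It remains to recognize the right-hand side as $I_{M_1}(P\cap M_1,\sigma,Q\cap M_1)$; this is an application of Proposition~\ref{prop:2.1} exactly as in the opening of \S\ref{S:3.1}, since the largest parabolic of $M_1$ containing $P\cap M_1$ to which $\sigma$ extends is $P(\sigma)\cap M_1$, with Levi component $M(\sigma)\cap M_1$. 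I do not foresee a real obstacle: the analytic substance sits in Corollary~\ref{cor:6.2}(ii), and the remainder is a purely combinatorial bookkeeping on $\Delta$-sets powered by the single input of right-cuspidality from Proposition~\ref{prop:6.3}.
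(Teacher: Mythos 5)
Your proposal is correct and follows essentially the same route as the paper: both hinge on replacing $R_{P_1}^G$ by $\Ord_{\overline{P_1}}^G$ via admissibility and Corollary~\ref{cor:4.10}, reducing to Theorem~\ref{thm:6.1}(ii)/Corollary~\ref{cor:6.2}(ii), and then killing the off-case via right-cuspidality of supercuspidals (Proposition~\ref{prop:6.3}), followed by the same $\Delta$-set bookkeeping that collapses $\langle P,P_1\cap P(\sigma)\rangle\supset Q$ and $P\subset P_1$ into $Q\subset P_1$. The only difference is organizational---the paper first treats $P(\sigma)=G$ and then reduces, whereas you invoke the already-packaged Corollary~\ref{cor:6.2}(ii) directly.
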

This corollary implies Theorem \ref{thm:1.1} (ii).
\begin{proof} (i) Assume first $P(\sigma)=G$. As a supercuspidal representation   is $e$-minimal,
 we may apply Theorem \ref{thm:6.1} Part  (ii). Thus $R_{P_1}^G I_G(P,\sigma, Q)=0$ unless  $\langle P, P_1\rangle\supset Q$ in which case it is isomorphic to  $e_{M_1}(R_{M\cap  P_1}^M(\sigma))\otimes \St_{M_1\cap Q}^{M_1}$.

 If  $P_1$ does not contain $P$,   then $P_1\cap M$ is a proper parabolic subgroup of $M$ and by Proposition \ref{prop:6.3}, $R_{P_1\cap M}^M \sigma=0$.

If $P_1 \supset P$,  then $M\cap P_1=M$ and $R_{P_1\cap M}^M \sigma=\sigma$. Moreover,   $\langle P, P_1\rangle\supset Q$ if and only if $P_1\supset Q$. This gives the result when $P(\sigma)=G$.

(ii) Without hypothesis on $P(\sigma)$,  we proceed as in the proof of Corollary \ref{cor:6.2}.
\end{proof}
 We now turn to consequences where $R=C$.    
 
 \bigskip We have the  supersingular $C$-representations of $G$ -  we recall their definition.
Recall the homomorphism $\mathcal{S}_P^G$ in \S\ref{subsec:K,Hecke}.
A homomorphism $\chi: \mathcal Z_G(\mathcal K,V)\to C$ is  supersingular if it does not factor through $\mathcal{S}_P^G$ when $P\neq G$. 
\begin{definition}\label{def:supersingularG}A $C$-representation $\pi$ of $G$ is  called supersingular  if it is irreducible admissible and for all irreducible  smooth $C$-representations $V$ of $\mathcal K$, the eigenvalues of $\mathcal Z_G(\mathcal K,V)$ in 
$\Hom_G(\ind_K^G V,\pi)$ are supersingular. 
\end{definition}A $C$-representation $\pi$ of $G$ is  supersingular if and only if it is supercuspidal \cite[I.5 Theorem 5]{MR3600042}.
 
\begin{proposition} \label{prop:6.4}  A supersingular $C$-representation  of $G$ is left-cuspidal.
\end{proposition}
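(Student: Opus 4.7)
The plan is to argue by contradiction via a Hecke-algebra computation. Assume there is a proper standard parabolic subgroup $P=MN$ of $G$ with $L_P^G\pi=\pi_N\neq 0$. The adjunction $\Hom_M(\pi_N,\pi_N)\simeq \Hom_G(\pi,\Ind_P^G\pi_N)$ sends the identity to a non-zero $G$-equivariant map $\pi\to\Ind_P^G\pi_N$, which is injective since $\pi$ is irreducible.

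Because $\pi\neq 0$ is smooth, some $\pi^{\mathcal{K}'}$ for an open normal subgroup $\mathcal{K}'\subset\mathcal{K}$ is a non-zero finite-dimensional representation of the finite group $\mathcal{K}/\mathcal{K}'$, hence contains an irreducible subrepresentation; inflated to $\mathcal{K}$, this produces an irreducible smooth $C$-representation $V$ of $\mathcal{K}$ with $\Hom_G(\ind_\mathcal{K}^G V,\pi)\neq 0$, and this $C$-vector space is finite-dimensional by admissibility of $\pi$. Applying $\Hom_G(\ind_\mathcal{K}^G V,-)$ to the inclusion $\pi\hookrightarrow\Ind_P^G\pi_N$ and using the $\mathcal{S}_P^G$-equivariant Frobenius reciprocity recalled in \S\ref{subsec:K,Hecke} yields an $\mathcal{S}_P^G$-equivariant embedding
\[
\Hom_G(\ind_\mathcal{K}^G V,\pi)\hookrightarrow \Hom_M(\ind_{M\cap\mathcal{K}}^M V_{N\cap\mathcal{K}},\pi_N)
\]
of right $\mathcal{H}_G(\mathcal{K},V)$-modules.

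Fix $z\in Z(M)$ strictly contracting $N$ and let $\tau\in\mathcal{Z}_M(M\cap\mathcal{K},V_{N\cap\mathcal{K}})$ be the associated non-zero central element supported on $(M\cap\mathcal{K})z(M\cap\mathcal{K})$. By \S\ref{subsec:K,Hecke}, $\tau$ lies in the image of the injective map $\mathcal{S}_P^G$, and $\mathcal{H}_M$ is the localization of $\mathcal{H}_G$ at $\tau$. Thus $\tau$ acts invertibly on the right hand side of the embedding; writing $\tau'\in\mathcal{Z}_G(\mathcal{K},V)$ for its unique preimage, equivariance forces $\tau'$ to act injectively, hence bijectively by finite-dimensionality, on the left hand side.

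On the other hand, supersingularity of $\pi$ says that every eigencharacter $\chi\colon\mathcal{Z}_G(\mathcal{K},V)\to C$ occurring on $\Hom_G(\ind_\mathcal{K}^G V,\pi)$ fails to factor through $\mathcal{S}_P^G$. Because $\mathcal{Z}_M$ is the localization of $\mathcal{S}_P^G(\mathcal{Z}_G)$ at $\tau$, such failure is equivalent to $\chi(\tau')=0$, so $\tau'$ has only zero as an eigenvalue and is therefore nilpotent on the finite-dimensional $C$-module $\Hom_G(\ind_\mathcal{K}^G V,\pi)$, contradicting the bijectivity just established. The delicate point in the plan is precisely this equivalence ``$\chi$ factors through $\mathcal{S}_P^G$ iff $\chi(\tau')\neq 0$'', which is a direct but careful unpacking of the localization description of $\mathcal{Z}_M$; once it is granted, the rest of the argument is formal Hecke-algebra manipulation.
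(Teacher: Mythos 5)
Your proof is correct and follows essentially the same route as the paper: embed $\pi$ into $\Ind_P^G L_P^G\pi$ via adjunction, pick an irreducible $V$ of $\mathcal K$ with $\Hom_G(\ind_{\mathcal K}^GV,\pi)\neq 0$ finite-dimensional, and exploit the $\mathcal S_P^G$-equivariance of Frobenius reciprocity together with the localization of $\mathcal H_G(\mathcal K,V)$ at $\tau$. The paper simply remarks that the eigenvalue obtained must factor through $\mathcal S_P^G$, whereas you make the same point by observing that $\tau'$ would be simultaneously bijective (invertibility of $\tau$ on the $\mathcal H_M$-module) and nilpotent (supersingularity forces $\chi(\tau')=0$ on each generalized eigenspace); this is a valid and slightly more explicit packaging of the same contradiction.
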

\begin{proof} 
Let $\pi$ be an  admissible $C$-representation  of $G$  and $P=MN$ be a standard parabolic subgroup of $G$  such that $L_P^G \sigma \neq 0$. Putting $W=L_P^G \pi$, adjunction gives a $G$-equivariant map $\pi\to \Ind_P^GW$.  Choose an irreducible smooth $C$-representation of the special parahoric subgroup $\mathcal K$ of $G$ such that the space $\Hom_G(\ind_{\mathcal K}^G V, \pi)$ (isomorphic to $\Hom_{\mathcal K}( V, \pi)$ and finite dimensional) is not zero.  The commutative algebra $\mathcal Z({\mathcal K}, V)$ posseses an eigenvalue on this space; that eigenvalue is also an eigenvalue 
of $\mathcal Z({\mathcal K}, V)$ on $\Hom_G(\ind_{\mathcal K}^G V, \Ind_P^GW)$  which necessarily factorizes through $\mathcal{S}_P^G$ (\S \ref{S:6.1}). 
If  $\pi$ is supersingular (in particular irreducible), $P=G$ hence $\pi$ is left cuspidal.
\end{proof}
The classification theorem \ref{thm:3.1}, Propositions \ref{prop:6.3} and  \ref{prop:6.4} imply:

\begin{corollary}\label{cor:6.5}  Assume that  $(P,\sigma,Q)$ is a  $C[G]$-triple with $\sigma$ supercuspidal. In that situation $L_{P_1}^G I_G(P,\sigma, Q)$ is isomorphic to $I_{M_1}(P\cap M_1, \sigma, Q\cap M_1)$ if $P_1\supset P$ and $\langle  P_1,Q\rangle \supset P(\sigma)$, and is $0$ otherwise.
\end{corollary}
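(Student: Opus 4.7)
The plan is to mirror the proof of Corollary~\ref{cor:6.4}, substituting the $L$-side machinery for the $R$-/Ord-side. First I would invoke Corollary~\ref{cor:6.2}(i); its hypothesis that $\sigma$ be $p$-torsion is automatic since $R=C$ has characteristic $p$. This immediately gives that $L_{P_1}^G I_G(P,\sigma,Q)$ vanishes unless $\langle P_1\cap P(\sigma),Q\rangle = P(\sigma)$, in which case it is isomorphic to
$$\Ind_{P(\sigma)\cap M_1}^{M_1}\bigl(e_{M_1\cap M(\sigma)}(L_{M\cap P_1}^M(\sigma))\otimes \St_{Q\cap M_1}^{M_1\cap M(\sigma)}\bigr).$$

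The next step exploits that over $C$ supercuspidality coincides with supersingularity, so Proposition~\ref{prop:6.4} applies and $\sigma$ is left cuspidal. Consequently $L_{M\cap P_1}^M(\sigma)=0$ whenever $M\cap P_1$ is a proper parabolic of $M$, i.e.\ whenever $\Delta_P\not\subset\Delta_{P_1}$, which is exactly the condition $P\not\subset P_1$. Under $P\subset P_1$ we have $M\cap P_1=M$ and $L_{M\cap P_1}^M(\sigma)=\sigma$. Combined with Step~1 this already shows vanishing outside the locus $\{P\subset P_1 \text{ and } \langle P_1\cap P(\sigma),Q\rangle = P(\sigma)\}$.

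I would then carry out two combinatorial/bookkeeping verifications. For the support condition: since $\Delta_Q\subset\Delta_{P(\sigma)}$, the condition $\langle P_1\cap P(\sigma),Q\rangle = P(\sigma)$ says $(\Delta_{P_1}\cap\Delta_{P(\sigma)})\cup\Delta_Q = \Delta_{P(\sigma)}$, which is equivalent to $\Delta_{P(\sigma)}\subset\Delta_{P_1}\cup\Delta_Q$, that is, $\langle P_1,Q\rangle\supset P(\sigma)$. For the identification with $I_{M_1}(P\cap M_1,\sigma,Q\cap M_1)$ when $P\subset P_1$, I would apply Proposition~\ref{prop:2.1} inside $M_1$: the largest standard parabolic of $M_1$ containing $P\cap M_1$ to which $\sigma$ extends has $\Delta$-set $(\Delta_P\cap\Delta_{M_1})\cup(\Delta_\sigma\cap\Delta_{M_1})=\Delta_{P(\sigma)}\cap\Delta_{M_1}$, which is precisely $\Delta_{P(\sigma)\cap M_1}$. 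Hence $(P\cap M_1)(\sigma)=P(\sigma)\cap M_1$ and $e_{(P\cap M_1)(\sigma)}(\sigma)=e_{M_1\cap M(\sigma)}(\sigma)$; the induction and Steinberg data then match those in the definition of $I_{M_1}(P\cap M_1,\sigma,Q\cap M_1)$.

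Since the hard analytic content (Theorem~\ref{thm:5.4} and its Corollary~\ref{cor:5.8}, packaged into Corollary~\ref{cor:6.2}(i), together with Proposition~\ref{prop:6.4}) is already available, no serious obstacle remains. The only delicate point is the parallel combinatorial translation of the support conditions and of $(P\cap M_1)(\sigma)$, which is routine once one remembers that $\Delta$-sets respect intersections and generation of standard parabolics.
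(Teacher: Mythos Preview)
Your proposal is correct and follows essentially the same approach as the paper: use left cuspidality of the supercuspidal $\sigma$ (via Proposition~\ref{prop:6.4} and the supercuspidal $=$ supersingular identification) to kill the $P_1\not\supset P$ case, and then unpack the formula from the $L$-side of Theorem~\ref{thm:6.1}/Corollary~\ref{cor:6.2}(i). The only cosmetic difference is that the paper first handles $P(\sigma)=G$ via Theorem~\ref{thm:6.1}(i) and then reduces via Theorem~\ref{thm:5.4}, whereas you invoke the already-packaged Corollary~\ref{cor:6.2}(i) and spell out the combinatorics of the support condition and the identification $(P\cap M_1)(\sigma)=P(\sigma)\cap M_1$ that the paper leaves implicit.
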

This corollary is Theorem \ref{thm:1.1} (ii).
 \begin{proof} We proceed as for the proof of Corollary \ref{cor:6.4}. With the  same reasoning   we get $L^M_{P_1\cap M} \sigma=0$ if $P_1$ does not contain $P$ and $L^M_{P_1\cap M} \sigma=\sigma$ if $P_1\supset P$. Therefore, 
 Theorem \ref{thm:6.1} Part  (i) implies the result when $P(\sigma)=G$.
Otherwise, we use Theorem \ref{thm:5.4} to reduce to the case $P(\sigma)=G$.
   \end{proof}
 From Corollary \ref{cor:6.4} and \ref{cor:6.5} we deduce immediately:
  \begin{corollary}  An irreducible admissible $C$-representation of $G$ is  left and right cuspidal if and only if it is supercuspidal.
\end{corollary}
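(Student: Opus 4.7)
The proof has two directions, both flowing directly from the two immediately preceding corollaries together with the classification theorem \ref{thm:3.1}.

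For the ``supercuspidal $\Rightarrow$ left and right cuspidal'' direction, let $\pi$ be a supercuspidal $C$-representation of $G$. Right cuspidality is exactly Proposition \ref{prop:6.3}. For left cuspidality, I would invoke the equivalence between supercuspidal and supersingular for irreducible admissible $C$-representations of $G$ (Theorem~I.5.5 of \cite{MR3600042}, which is used to define the notion in Definition~\ref{def:supersingularG}), so that Proposition \ref{prop:6.4} applies and gives $L_{P_1}^G\pi = 0$ for every proper standard parabolic $P_1$.

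For the converse, suppose $\pi$ is an irreducible admissible $C$-representation of $G$ that is both left and right cuspidal. By the classification Theorem \ref{thm:3.1}, write $\pi\simeq I_G(P,\sigma,Q)$ for a uniquely determined $C[G]$-triple with $\sigma$ supercuspidal. My plan is to pin down $P$ and $Q$ by feeding the vanishing of $L_{P_1}^G\pi$ and $R_{P_1}^G\pi$ into Corollaries \ref{cor:6.5} and \ref{cor:6.4}. Right cuspidality, combined with Corollary \ref{cor:6.4}, forces $P_1\not\supset Q$ for every proper standard parabolic $P_1$, which means $Q=G$; in particular $P(\sigma)=G$ as well, since $Q\subset P(\sigma)$. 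Left cuspidality then, combined with Corollary \ref{cor:6.5}, forces that for every proper standard parabolic $P_1$ we do not simultaneously have $P_1\supset P$ and $\langle P_1,Q\rangle\supset P(\sigma)$; but with $Q=P(\sigma)=G$ the second condition is automatic, so no proper $P_1$ contains $P$, whence $P=G$.

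Consequently $P=Q=G$ and $\pi\simeq I_G(G,\sigma,G)=\sigma$ is supercuspidal. There is no real obstacle here: the entire content of the corollary is bookkeeping organized around the two dichotomy statements of Corollaries \ref{cor:6.4} and \ref{cor:6.5}, applied in the optimal order (right cuspidality first, to dispose of $Q$ and $P(\sigma)$; then left cuspidality to dispose of $P$).
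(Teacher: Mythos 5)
Your proof is correct and follows essentially the same route as the paper: the paper states that the corollary follows immediately from Corollaries \ref{cor:6.4} and \ref{cor:6.5}, and your converse direction is precisely that deduction (right cuspidality forces $Q=G$, hence $P(\sigma)=G$; left cuspidality then forces $P=G$). For the forward direction the paper would apply these same two corollaries to the tautological triple $(G,\pi,G)$, whereas you instead invoke Propositions \ref{prop:6.3} and \ref{prop:6.4} together with the supercuspidal--supersingular equivalence; since those two propositions are exactly what feeds into Corollaries \ref{cor:6.4} and \ref{cor:6.5}, the content is the same.
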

 
 Now  it is easy to describe the left or right cuspidal  irreducible admissible $C$-representations of $G$.
 \begin{corollary}\label{cor:6.8} Let $(P,\sigma, Q)$ be a  $C[G]$-triple with $\sigma$ supercuspidal. Then  $I_G(P,\sigma, Q)$ is  
 \begin{enumerate}
 \item left cuspidal if and only if $Q=P$ and $ P(\sigma)=G$, so $I_G(P,\sigma, Q)=e(\sigma)\otimes \St_P^G$;
 \item right cuspidal if and only if  $Q= P(\sigma)=G$, so $I_G(P,\sigma, Q)=e(\sigma)$.
  \end{enumerate}
 \end{corollary}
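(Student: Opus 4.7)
The plan is to apply the two preceding corollaries directly. Namely, Corollary~\ref{cor:6.5} computes $L_{P_1}^G I_G(P,\sigma,Q)$ and Corollary~\ref{cor:6.4} computes $R_{P_1}^G I_G(P,\sigma,Q)$; since whenever the formulas are non-vanishing they yield an irreducible representation $I_{M_1}(P\cap M_1,\sigma,Q\cap M_1)$ (which is nonzero by the classification Theorem~\ref{thm:3.1} applied inside $M_1$, as the triple $(P\cap M_1,\sigma,Q\cap M_1)$ is valid by Proposition~\ref{prop:2.1}), both cuspidality conditions reduce to a combinatorial check on $\Delta$-sets of standard parabolics. Throughout, it suffices to vary $P_1$ over proper standard parabolic subgroups, since every parabolic is conjugate to a standard one and the functors $L^G$, $R^G$ are compatible with such conjugation.

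For part (ii), Corollary~\ref{cor:6.4} says $R_{P_1}^G I_G(P,\sigma,Q)$ vanishes exactly when $P_1\not\supset Q$. Thus $I_G(P,\sigma,Q)$ is right cuspidal iff no proper standard parabolic contains $Q$, i.e.\ iff $Q=G$. Since $Q\subset P(\sigma)\subset G$, this forces also $P(\sigma)=G$, and then the formula of Definition~\ref{def:Gtriple} gives $I_G(P,\sigma,Q)=\Ind_G^G(e(\sigma)\otimes \St_G^G)=e(\sigma)$. The converse (if $Q=G$ then no proper $P_1$ contains $Q$) is immediate.

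For part (i), Corollary~\ref{cor:6.5} says $L_{P_1}^G I_G(P,\sigma,Q)$ vanishes unless $P_1\supset P$ and $\langle P_1,Q\rangle \supset P(\sigma)$, i.e.\ unless $\Delta_{P_1}\supset \Delta_P$ and $\Delta_{P_1}\cup \Delta_Q\supset \Delta_{P(\sigma)}$. If $Q=P$ and $P(\sigma)=G$, then $\Delta_Q=\Delta_P$ and the second condition reads $\Delta_{P_1}\supset \Delta$, so forces $P_1=G$; hence $I_G(P,\sigma,Q)=e(\sigma)\otimes \St_P^G$ is left cuspidal. Conversely, I will produce a proper standard parabolic $P_1$ witnessing non-vanishing whenever $Q\supsetneq P$ or $P(\sigma)\subsetneq G$. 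If $Q\supsetneq P$, pick $\alpha\in \Delta_Q\setminus \Delta_P$ and set $\Delta_{P_1}=\Delta\setminus\{\alpha\}$: then $\Delta_{P_1}\supset \Delta_P$ and $\Delta_{P_1}\cup \Delta_Q=\Delta \supset \Delta_{P(\sigma)}$. If instead $P(\sigma)\subsetneq G$, pick $\alpha\in \Delta\setminus \Delta_{P(\sigma)}$ and set $\Delta_{P_1}=\Delta\setminus\{\alpha\}$: then $\alpha\notin \Delta_P$ and $\alpha\notin \Delta_{P(\sigma)}\supset \Delta_Q$, giving $\Delta_{P_1}\supset \Delta_P$ and $\Delta_{P_1}\cup \Delta_Q=\Delta_{P_1}\supset \Delta_{P(\sigma)}$. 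In both cases $L_{P_1}^G I_G(P,\sigma,Q)$ is the nonzero irreducible $I_{M_1}(P\cap M_1,\sigma,Q\cap M_1)$, contradicting left cuspidality.

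The main obstacle is only notational bookkeeping: all the nontrivial work has been done in Corollaries~\ref{cor:6.4} and~\ref{cor:6.5}, and one merely has to verify the combinatorial equivalences on $\Delta$-sets and confirm that the output of the adjoint functor formulas is nonzero (an immediate consequence of the classification Theorem~\ref{thm:3.1}).
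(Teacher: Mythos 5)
Your proof is correct and follows essentially the same route as the paper: apply Corollaries~\ref{cor:6.4} and~\ref{cor:6.5} (equivalently Theorem~\ref{thm:1.1}), reduce to a combinatorial statement about $\Delta$-sets, and note that the non-vanishing output of the adjoint functors is irreducible hence nonzero. The only cosmetic difference is that the paper compresses the part~(i) combinatorics into the single equality $\Delta_\sigma\setminus(\Delta_Q\cap\Delta_\sigma)=\Delta\setminus\Delta_P$, whereas you argue the converse direction by exhibiting an explicit proper $P_1$ witnessing failure of left cuspidality.
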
  
  \begin{proof} (i)   By Theorem \ref{thm:1.1} Part (i), $I_G(P,\sigma, Q)$ is   left cuspidal if and only if 
  $$  \text{$\Delta_{P_1}\supset \Delta_P$  and $ \Delta_{P_1} \cup \Delta_Q \supset  \Delta_{P(\sigma)}$ implies $\Delta_{P_1}=\Delta$.}
$$
    This displayed  property    is equivalent to  $\Delta_\sigma\setminus (\Delta_Q\cap \Delta_\sigma) = \Delta \setminus \Delta_P$,  and this  is equivalent to $Q=P$ and $ P(\sigma)=G$.

(ii)   By Theorem \ref{thm:1.1} Part (ii),  $I_G(P,\sigma, Q)$ is   right  cuspidal if and only if $P_1 \supset Q$ implies $P_1=G$.
    This  latter property    is equivalent to $Q=G$. But $Q\subset P(\sigma)$ hence  $I_G(P,\sigma, Q)$ is   right  cuspidal if and only if $Q=P(\sigma)=G$.
  \end{proof}

\begin{remark}
We compare with the case where $R$ is   a field of characteristic $ \neq p$.  Then,    $L_P^G$  is exact, a subquotient of  a left cuspidal smooth $R$-representation of $G$ is also left cuspidal. 
For a representation $\pi$ of  $G$ satisfying the second adjointness property $R^G_P \pi= \delta_P L_{\overline P}^G \pi$ for all parabolic subgroups $P$ of $G$ (see \S \ref{S:4.3}), then left cuspidal is equivalent to right cuspidal. For an irreducible smooth $R$-representation (hence admissible), supercuspidal  implies obviously left and right cuspidal. The converse is true   when  $R$ is an algebraically closed field of characteristic $0$ or  banal \cite[II.3.9]{MR1395151}. When $G=GL(2,\mathbb Q_p)$ and  the characteristic $\ell$ of $C$  divides $p+1$, the smooth $C$-representation $\Ind_B^G \charone$ of $G$ admits a left and right cuspidal irreducible subquotient \cite{MR1026328}, which is not supercuspidal.
\end{remark}

 \subsection{The case of $N_1$-coinvariants}\label{S:6.2}
 We proceed to the proof of Theorem  \ref{thm:6.1}, Part (i). First we assume that 
   $\Delta_M$ is orthogonal to $ \Delta\setminus \Delta_M$. Put $M_2=M_{\Delta\setminus\Delta_M}$. Then $e(\sigma)$ is obtained by extending $\sigma$ from $M$ to $G=MM'_2$ trivially on $M'_2$. 

 \bigskip (6.2.1) Assume   $P_1 \supset P$, so that $N_1$ acts trivially on $e(\sigma)$ because $N_1 \subset M'_2$.
  We start from the exact sequence defining $\St_Q^G$ and we tensor it by $e(\sigma)$
 \begin{equation}\label{eq:st}\bigoplus_{Q' \in \mathcal Q}e(\sigma)\otimes  \Ind_{Q'}^G  \charone\to e(\sigma) \otimes  \Ind_{Q}^G\charone\to e(\sigma) \otimes  \St_Q^G \to 0,
 \end{equation}
 where $\mathcal Q$ is the set of parabolic subgroups of $G$ containing strictly $Q$. Applying the right exact functor $L_{P_1}^G$ gives an exact sequence. As $\sigma$ is $p$-torsion, Corollary \ref{cor:5.8} gives a natural isomorphism $L_{P_1}^G (e(\sigma) \otimes\Ind_{Q}^G\charone ) \simeq e_{M_1}(\sigma) \otimes \Ind_{M_1\cap Q}^{M_1}\charone$ and similarly for $Q'\in \mathcal Q$, so we get the exact sequence 
  \[\bigoplus_{Q' \in \mathcal Q}e_{M_1}(\sigma) \otimes  \Ind_{M_1\cap Q'}^{M_1}\charone \to e_{M_1}(\sigma) \otimes \Ind_{M_1\cap Q}^{M_1}\charone \to L_{P_1}^G (e(\sigma) \otimes\St_Q^G) \to 0.\]
The map on the left is given by the natural inclusion for each summand. If for some $Q' \in \mathcal Q$ we have $M_1\cap Q'=M\cap Q'$ then that map is surjective and $ L_{P_1}^G (e(\sigma) \otimes\St_Q^G)=0$. Otherwise  $ \langle Q, P _1\rangle =G$ (see the lemma below) and from the exact sequence we have an isomorphism  $L_{P_1}^G (e(\sigma) \otimes\St_Q^G)\simeq e_{M_1}(\sigma) \otimes \St_{M_1\cap Q}^{M_1}$. 

\begin{lemma} $ \langle Q, P _1\rangle =G$ if and only if $M_1\cap Q'\neq M\cap Q'$ for all $Q' \in \mathcal Q$. In this case, the map $Q'\mapsto M_1\cap Q'$ is a bijection from  $\mathcal Q$ to the set of parabolic subgroups of $M_1$ containing strictly $Q\cap M_1$.
\end{lemma}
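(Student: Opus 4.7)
The plan is to translate the entire statement into the combinatorics of subsets of $\Delta$, exploiting the bijection $R\mapsto\Delta_R$ between standard parabolic subgroups of $G$ and subsets of $\Delta$, together with the two identities $\Delta_{\langle R_1,R_2\rangle}=\Delta_{R_1}\cup\Delta_{R_2}$ and $\Delta_{M_1\cap R}=\Delta_{M_1}\cap\Delta_R$ recalled in \S\ref{S:2.1}. Under this dictionary, $\mathcal{Q}$ becomes the collection of subsets of $\Delta$ strictly containing $\Delta_Q$, the standard parabolic subgroups of $M_1$ strictly containing $M_1\cap Q$ become the subsets of $\Delta_{M_1}$ strictly containing $\Delta_{M_1}\cap\Delta_Q$, and the hypothesis $\langle Q,P_1\rangle=G$ becomes $\Delta_Q\cup\Delta_{M_1}=\Delta$. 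The condition ``$M_1\cap Q'\neq M\cap Q'$'' amounts, in view of $\Delta_M\subset\Delta_Q\subset\Delta_{Q'}$ and $\Delta_M\subset\Delta_{M_1}$, to $\Delta_{M_1}\cap\Delta_{Q'}$ strictly enlarging $\Delta_{M_1}\cap\Delta_Q$ as $Q'$ ranges over $\mathcal{Q}$.

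For the biconditional, the forward direction is direct: from $\Delta_Q\cup\Delta_{M_1}=\Delta$ and $\Delta_{Q'}\supsetneq\Delta_Q$, any $\alpha\in\Delta_{Q'}\setminus\Delta_Q$ must lie in $\Delta_{M_1}$, producing an element of $(\Delta_{M_1}\cap\Delta_{Q'})\setminus(\Delta_{M_1}\cap\Delta_Q)$. The contrapositive of the converse is equally concrete: given $\alpha\in\Delta\setminus(\Delta_Q\cup\Delta_{M_1})$, the choice $\Delta_{Q'}=\Delta_Q\cup\{\alpha\}$ exhibits a $Q'\in\mathcal{Q}$ for which $\Delta_{M_1}\cap\Delta_{Q'}=\Delta_{M_1}\cap\Delta_Q$, simply because $\alpha\notin\Delta_{M_1}$.

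For the bijection, assume $\Delta_Q\cup\Delta_{M_1}=\Delta$, so that $\Delta\setminus\Delta_Q\subset\Delta_{M_1}$. Injectivity follows because this inclusion forces $\Delta_{Q'}\setminus\Delta_Q\subset\Delta_{M_1}\cap\Delta_{Q'}$ for every $Q'\in\mathcal{Q}$, whence $\Delta_{Q'}=\Delta_Q\cup(\Delta_{M_1}\cap\Delta_{Q'})$ and thus $Q'$ is recovered from $M_1\cap Q'$. Surjectivity is witnessed by the explicit inverse: given a standard parabolic $R$ of $M_1$ with $\Delta_R\supsetneq\Delta_{M_1}\cap\Delta_Q$, set $\Delta_{Q'}:=\Delta_Q\cup\Delta_R$; any $\alpha\in\Delta_R\setminus(\Delta_{M_1}\cap\Delta_Q)$ lies in $\Delta_{M_1}\setminus\Delta_Q$ and so witnesses $\Delta_{Q'}\supsetneq\Delta_Q$, while $\Delta_{M_1}\cap\Delta_{Q'}=(\Delta_{M_1}\cap\Delta_Q)\cup\Delta_R=\Delta_R$ gives $M_1\cap Q'=R$.

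There is no real obstacle: after the translation to subsets of $\Delta$, the lemma is an elementary fact about the pair $(\Delta_Q,\Delta_{M_1})$ of subsets of $\Delta$ both containing $\Delta_P$. In particular, the orthogonality assumption $\Delta_M\perp\Delta\setminus\Delta_M$ of \S\ref{S:6.2} is not used anywhere in the proof.
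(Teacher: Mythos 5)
Your proof is correct and takes exactly the route the paper has in mind: the paper's own proof is the single sentence ``The proof is immediate after translation in terms of subsets of $\Delta$,'' and you simply carry out that translation in full, using the dictionary $\Delta_{\langle R_1,R_2\rangle}=\Delta_{R_1}\cup\Delta_{R_2}$, $\Delta_{M_1\cap R}=\Delta_{M_1}\cap\Delta_R$ from \S\ref{S:2.1}.

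One point worth flagging explicitly, which you correct silently rather than name: the condition in the statement is printed as ``$M_1\cap Q'\neq M\cap Q'$,'' but since $M\subset P\subset Q\subset Q'$ one has $M\cap Q'=M$ for every $Q'\in\mathcal{Q}$, and with that literal reading the ``if and only if'' fails — e.g.\ $\Delta=\{\alpha_1,\alpha_2,\alpha_3\}$, $\Delta_M=\{\alpha_1\}$, $\Delta_Q=\Delta_{M_1}=\{\alpha_1,\alpha_2\}$ gives $\Delta_{M_1}\cap\Delta_{Q'}=\{\alpha_1,\alpha_2\}\neq\Delta_M$ for the unique $Q'$, yet $\langle Q,P_1\rangle\neq G$. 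The intended condition is clearly $M_1\cap Q'\neq M_1\cap Q$ (this is precisely what is needed for the surjectivity of the left-hand map in the exact sequence the lemma is serving, and matches the bijection in the second assertion). You translate the condition to ``$\Delta_{M_1}\cap\Delta_{Q'}$ strictly enlarges $\Delta_{M_1}\cap\Delta_Q$,'' which is the corrected reading; the step ``amounts \ldots\ to'' as you phrase it is not an equivalence with the printed $M\cap Q'$ (that would need $\Delta_M=\Delta_{M_1}\cap\Delta_Q$, which is not given), so you should say outright that you are reading $M\cap Q'$ as $M_1\cap Q$. With that reading, your forward/contrapositive argument for the equivalence and your injectivity/surjectivity check for the bijection are all correct, and your observation that the orthogonality assumption of \S\ref{S:6.2} is irrelevant to this purely combinatorial lemma is also right.
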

\begin{proof} The proof is immediate after translation in terms of subsets of $\Delta$.
\end{proof}

 (6.2.2) Assume  $\langle P, P_1\rangle =G$. Then $P_1\supset P_2$,    $N_1$ is contained in $M'$ and acts trivially on $\St_Q^G$ because $\Delta_M$ and $\Delta\setminus \Delta_M$ are orthogonal. By Lemma \ref{lemma:6.8} we find that $L_{P_1}^G(e(\sigma)\otimes \St_Q^G)\simeq L_{P_1}^G e(\sigma)\otimes \St_Q^G|_{M_1}  $.  Decomposing $P_1=(P_1\cap M)M'_2=(M_1\cap M)N_1 M'_2 $ and $M_1=(M_1\cap M) M'_2 $ we see that the $R[P_1]$-module $L_{P_1}^G e(\sigma)$ is $L^M_{M\cap P_1}\sigma=\sigma_{N_1}$ trivially extended to $M'_2 $. That is $L_{P_1}^G e(\sigma)=e_{M_1}(L^M_{M\cap P_1}\sigma)$. On the other hand, because $Q\supset  M$ and $M_1\supset M_2$ we have $G=MM_2=Q M_1$ and the inclusion of $M_1$ in $G$ induces an homeomorphism $(Q\cap M_1)\backslash M_1 \simeq Q\backslash G$. So, $(\Ind_{Q}^G\charone)|_{M_1}$ identifies with $\Ind_{M_1\cap Q}^{M_1}\charone$, this also applies to the $Q'\in \mathcal Q$ containing $Q$, thus $\St_{Q}^G |_{M_1} \simeq \St_{M_1\cap Q}^{M_1}$. We get  $L_{P_1}^G(e(\sigma)\otimes \St_Q^G)\simeq e_{M_1}(L^M_{M\cap P_1}\sigma) \otimes \St_{M_1\cap Q}^{M_1}$ proving what we want when $P_1\supset M_2$, since 
$\Delta_Q \cup \Delta_{M_1}=\Delta$. Note that    the assumption that $\sigma$ is $p$-torsion was not used.

\bigskip (6.2.3) The case where $P_1$ is arbitrary can finally be obtained in two stages, using the transitivity property of the coinvariant functors: first apply $L^G_{ P_3}$ where $P_3=MP_1$ contains $P$ then apply $L^{M_3}_{M_3\cap P_1}$ where $M_3\cap P_1$ contains $M_3 \cap M_2$.  Applying  (6.2.2), $L^G_{ P_3}(e(\sigma)\otimes \St_Q^G)=0$ unless $\Delta_{P_3}\cup \Delta_Q=\Delta$ in wich case  $L_{P_3}^G\St_Q^G \simeq e_{M_3}(\sigma)\otimes \St_{M_3\cap Q}^{M_1}$. Applying (6.2.3), $L^{M_3}_{M_3\cap P_1}(e_{M_3}(\sigma)\otimes \St_{M_3\cap Q}^{M_3}) \simeq (e_{M_1}( L^M_{M\cap P_1}\sigma)\otimes \St_{M_1\cap Q}^{M_1})$.

\bigskip  This ends the proof of Theorem  \ref{thm:6.1}  (i) when  $\Delta_M$ is orthogonal to $ \Delta\setminus \Delta_M$. 

\bigskip In general, we introduce $P_{\min}=M_{\min}N_{\min}$ and an $e$-minimal representation $\sigma_{\min}$ of $M_{\min}$ as in Lemma \ref{lemma:min}, such that $ \sigma =e_P(\sigma_{\min})$.  Then $\Delta_{M_{\min}}= \Delta_{\min}$ is orthogonal to $ \Delta\setminus \Delta_{\min}$ (Lemma \ref{lemma:2.2}), and $\sigma$ is $p$-torsion so is $\sigma_{\min}$ so we can apply Theorem  \ref{thm:6.1}  (i) to $\sigma_{\min}$. As $e(\sigma)=e(\sigma_{\min})$ we get:

 $L_{P_1}^G(e(\sigma)\otimes \St_Q^G)$ is isomorphic to $e_{M_1}(L_{M_{\min}\cap P_1}^{M_{\min}}(\sigma_{\min}))\otimes \St_{M_1\cap Q}^{M_1}$ if  $ \langle Q, P _1\rangle =G$, and is $0$ otherwise.

We prove now $e_{M_1}(L_{M_{\min}\cap P_1}^{M_{\min}}(\sigma_{\min}))=  e_{M_1}(L_{M\cap P_1}^{M}(\sigma))$.   Write $J=\Delta_M \setminus \Delta_{\min}$ and $\Delta_{M_1}=\Delta_{ 1}$. 
The orthogonal decomposition $\Delta_M \cap \Delta_{ 1}= (\Delta_{\min } \cap \Delta_{ 1})\perp (J \cap \Delta_{ 1})$ implies  $M\cap M_1 = (M_{\min}\cap M_1)(M_J\cap M_1)'$. But  $(M_J\cap M_1)' \subset M'_J$   acts trivially on $\sigma$ (\S \ref{S:2.2}), so we deduce that
 $\sigma _{M\cap N_1}$ extends  $(\sigma_{\min})_{M_{\min}\cap N_1}$ and  $e_{M_1}(L_{M_{\min}\cap P_1}^{M_{\min}}(\sigma_{\min}))=e_{M_1}(L_{M\cap P_1}^{M}(\sigma))$. This ends the proof of  Theorem  \ref{thm:6.1}  (i).

  \section{Ordinary functor $\Ord_{\overline P_1}^G$}\label{S:7}
   Let  us keep a general reductive connected group $G$ and a commutative ring $R$. Let $ P_1=M_1N_1$ be a standard parabolic subgroup  of   $G$  and   $(P=MN,\sigma,Q)$    an   $R[G]$-triple  with $P(\sigma)=G$.

In this section  \S\ref{S:7},  we prove  Theorem  \ref{thm:6.1}, Part (ii) after establishing some general  results in \S\ref{S:7.1} and \S \ref{S:7.2}, with varying assumptions on $R$. As in \S \ref{S:6} for the coinvariant functor $L_P^G$, first we assume that $\sigma$ is $e$-minimal, so that
   $\Delta_M$ is orthogonal to $ \Delta\setminus \Delta_M$;  it suffices to consider two special  cases  $P_1\supset P$ (\S\ref{S:7.3}) and  $ \langle P_1, P\rangle =G$ (\S\ref{S:7.4}) and the general case is   obtained in two stages, introducing the parabolic subgroup  $ \langle P_1, P\rangle = MP_1$. When $\sigma$ is no longer assumed to be $e$-minimal, we proceed as above, using $\sigma_{\min}$.

\subsection{Haar measure and $t$-finite elements} \label{S:7.1} 

Let $H$ be a locally profinite group acting on a locally profinite topological space $X$ and on itself by left translation. 
For $x\in X$, we denote by $H_x$ the $H$-stabilizer  of $x$. The group $H$ acts on $C^\infty_c(X,R)$ by $(hf)(x)=f(h^{-1}x)$ for $h\in H, f\in C^\infty_c(X,R),x\in X$.

\begin{proposition}\label{prop:6.12} Assume that $R$ is a field and that there is a non-zero $R[H]$-linear map
$C^\infty_c(H,R)\to C^\infty_c(X,R)$. Then for some $x\in X$ there is an $R$-valued left Haar measure on  $H_x$.
\end{proposition}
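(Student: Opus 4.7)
The plan is to compose $\phi$ with evaluation at a suitable point of $X$ to produce an $H_x$-invariant $R$-linear functional on $C_c^\infty(H,R)$, and then transfer it to $C_c^\infty(H_x,R)$ via a continuous section of $H\to H_x\backslash H$.

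First, choose $f_0\in C_c^\infty(H,R)$ and $x\in X$ with $\phi(f_0)(x)\neq 0$; these exist because $\phi\neq 0$. Set $\mu=\mathrm{ev}_x\circ\phi\colon C_c^\infty(H,R)\to R$, where $\mathrm{ev}_x$ denotes evaluation at $x$. Then $\mu\neq 0$, and for $j\in H_x$ the $H$-equivariance of $\phi$ together with $(j\cdot g)(x)=g(j^{-1}x)=g(x)$ for $g\in C_c^\infty(X,R)$ yields $\mu(j\cdot f)=\mu(f)$; thus $\mu$ is $H_x$-invariant for the left translation action.

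Since $H_x$ is closed in $H$, the quotient $H\to H_x\backslash H$ admits a continuous section $s$, as shown in the section lemma preceding Lemma~\ref{lemma:usef}. The resulting homeomorphism $H_x\times(H_x\backslash H)\to H$, $(j,\bar h)\mapsto j\,s(\bar h)$, combined with the identification $C_c^\infty(Y_1\times Y_2,R)\simeq C_c^\infty(Y_1,R)\otimes_R C_c^\infty(Y_2,R)$ for locally profinite spaces $Y_1,Y_2$, produces an $R[H_x]$-equivariant isomorphism
\[
C_c^\infty(H,R)\;\simeq\;C_c^\infty(H_x,R)\otimes_R C_c^\infty(H_x\backslash H,R),
\]
in which $H_x$ acts by left translation on the first factor and trivially on the second, because left multiplication by $j_0\in H_x$ sends $(j,\bar h)$ to $(j_0 j,\bar h)$.

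Taking $H_x$-coinvariants and applying Lemma~\ref{lemma:6.8} yields
\[
C_c^\infty(H,R)_{H_x}\;\simeq\;C_c^\infty(H_x,R)_{H_x}\otimes_R C_c^\infty(H_x\backslash H,R).
\]
Since $\mu$ is $H_x$-invariant it descends to a non-zero $R$-linear functional $\bar\mu$ on the right-hand side. As $R$ is a field, $\bar\mu$ must be non-zero on some pure tensor $a\otimes b$, and then $a'\mapsto\bar\mu(a'\otimes b)$ is a non-zero element of $\Hom_R(C_c^\infty(H_x,R)_{H_x},R)$, i.e., a left Haar measure on $H_x$. The main point to verify carefully is the $R[H_x]$-equivariance of the tensor-product decomposition of $C_c^\infty(H,R)$ above; everything else is formal.
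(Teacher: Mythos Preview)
Your proof is correct and takes a genuinely different, more elementary route than the paper's. The paper invokes Bernstein's localization principle twice: first it builds an integration-along-fibres map $\Phi\colon C_c^\infty(H\times X,R)\to C_c^\infty(X,R)$, uses localization on the $C_c^\infty(X,R)$-module structure of ${}^t\Phi$ to find a functional supported on a single fibre $H\times\{x\}$, and then checks $H_x$-invariance by a separate computation; second, it applies localization again to the map $H\to H_x\backslash H$ to pass from an $H_x$-invariant functional on $C_c^\infty(H,R)$ to a Haar measure on $H_x$. You bypass both uses. For the first, you observe that for \emph{any} $x$ where $\phi(f_0)(x)\neq 0$, the composite $\mathrm{ev}_x\circ\phi$ is automatically $H_x$-invariant by equivariance --- no localization needed. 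For the second, your tensor decomposition via the continuous section (available from the lemma before Lemma~\ref{lemma:usef}) and the coinvariant computation via Lemma~\ref{lemma:6.8} replace the second localization with an explicit splitting. Your argument is shorter and uses only tools already developed in the paper; the paper's approach, while correct, brings in heavier machinery than the statement requires.
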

\begin{proof} We show that the proposition follows from Bernstein's localization principle \cite[1.4]{MR748505}  which, we remark, is valid for an arbitrary field $R$.  

Let $C_c^\infty(H,R)\xrightarrow{\varphi} C_c^\infty(X,R)$ be a non-zero linear map. We show that there exists $x\in X$ such that $\Hom_R(C_c^\infty(H\times \{x \},R), R)\neq 0$. We view $\varphi$ as providing an integration along the fibres of the projection map $H\times X \to X $, that is,   a  non-zero linear map $C_c^\infty(H\times X,R)  \xrightarrow{\Phi}C_c^\infty(X,R)$ defined by
$$\Phi(f)(x) = \varphi(f_x)(x)$$ for $x\in X, f\in C_c^\infty(H\times X,R) $, where $f_x\in C_c^\infty(H,R)$ sends $h\in H$ to $f(h,x)$.  The dual of $\Phi$  is a non-zero linear map 
$$\Hom_R(C_c^\infty(X,R),R)  \xrightarrow{{}^t\Phi}\Hom_R(C_c^\infty(H\times X,R),R)$$ of image the space of linear functionals on $C_c^\infty(H\times X,R) $ vanishing on the kernel of $ \Phi$.

But $C_c^\infty(X,R)$ is also  an $R$-algebra for the multiplication $\psi_1\psi_2 (x)=\psi_1 (x)\psi_2 (x)$ if $\psi_1,\psi_2 \in C_c^\infty(X,R)$ and $x\in X$.
Then, $C_c^\infty(H\times X,R)  $  is naturally a $C_c^\infty(X,R)$-module: for  $\psi\in C_c^\infty(X,R)$ and  $f\in C_c^\infty(H\times X,R)  $, then  $\psi f\in C_c^\infty(H\times X,R)$ is the function $(h,x)\mapsto (\psi f)(h,x)=\psi(x)f(h,x)$. The map $\Phi$ is $C_c^\infty(X,R)$-linear:  $(\psi f)_x =\psi (x) f_x$ and $\Phi(\psi f)(x)=\varphi((\psi f)_x)(x)=\psi (x)\varphi( f_x)(x)= \psi(x)\Phi(f)(x)$.     The image    of  ${}^t\Phi$  is a  $C_c^\infty(X,R)$-submodule: for    $\psi\in C_c^\infty(X,R)$ and $L\in \Hom_R(C_c^\infty(H\times X,R),R)$ vanishing on $\Ker \Phi$, $(\psi L)(f)=L(\psi f)$.
 
 By   Bernstein's localization principle, $\Ima ({}^t\Phi)$ is the closure of the span of those functionals in  $\Ima ({}^t\Phi)$ which are supported on $H\times \{x\}$ for some $x\in X$. Consequently, as $\Ima ({}^t\Phi)\neq 0$, there exists $x\in X$ and a non-zero $L\in \Hom_R(C_c^\infty(H\times X,R),R)$ vanishing on $\Ker \Phi$ which factors through the restriction map  $C_c^\infty(H\times X,R) \xrightarrow{\res} C_c^\infty(H\times \{x\},R)$. There is a non-zero element $\mu\in \Hom_R(C_c^\infty(H\times \{x\},R), R)$ such that 
 $L=\mu \circ \res$.
 
Now assume that $\varphi$ is $H$-equivariant. We show that  $\mu $ is $H_x$-invariant.  Indeed, 
 denote by $\chi$  the characteristic function of a small open neighborhood $V$ of $x_0$. Let $f\in C_c^\infty(H,R)$. Take $ f \otimes \chi$ in $C_c^\infty(H\times X,R)$. Then $\Phi (f\otimes \chi) =\varphi(f)\chi$ whereas  $\Phi (hf \otimes \chi) =\varphi(hf)\chi= (h \varphi(f))\chi$ for  $h\in H_{x}$. We can certainly take $V$ small enough for $\varphi(f)$ and $h\varphi(f)$ to be constant  on $V$; as $hx=x$, they are equal at $x$ hence on all $V$. In particular $L (f\otimes \chi)=L (hf\otimes \chi)$ which implies that  $\mu $ is $H_{x}$-invariant.  
 
  Now, for $x\in X$,  applying Bernstein's localization principle to the natural map $H\to H_{x}\backslash H$,  the existence of  a non-zero $H_{x}$-invariant element of $\Hom_R(C_c^\infty(H\times \{x \},R), R)$ implies the existence of a $R$-valued left Haar measure on $H_{x}$.

 \end{proof}

There is a variant of Proposition \ref{prop:6.12} where   $R$ is replaced by an $R$-module $V$ with zero $p$-ordinary part.
\begin{corollary}\label{cor:6.13}   Assume that  $V$ is an $R$-module with  $\bigcap_{k\geq 0}\, p^kV=\{0\}$ and that there is a non-zero $R[H]$-linear map
$\varphi:C^\infty_c(H,R)\to C^\infty_c(X,V)$. Then for some $x\in X$ there is a $\mathbb F_p$-valued left Haar measure on  $H_x$.
\end{corollary}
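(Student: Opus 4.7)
The strategy is to reduce the statement to Proposition~\ref{prop:6.12} applied with $R$ replaced by $\mathbb{F}_p$. The hypothesis $\bigcap_{k\geq 0} p^k V=\{0\}$ is precisely what allows me to trade the $R$-module $V$ for an $\mathbb{F}_p$-vector space, and once that is done the reduction to $\mathbb{F}_p$-coefficients on both sides is formal.

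First, I would observe that the set of integers $k\geq 0$ with $\varphi(C^\infty_c(H,R))\subset C^\infty_c(X,p^kV)$ is bounded above: otherwise every value $\varphi(f)(x)$ would lie in $\bigcap_k p^kV=\{0\}$, forcing $\varphi=0$. Let $k_0$ be the maximum such $k$; by maximality there exist $f_0\in C^\infty_c(H,R)$ and $x_0\in X$ with $\varphi(f_0)(x_0)\in p^{k_0}V\setminus p^{k_0+1}V$. Composing $\varphi$ with the natural surjection $p^{k_0}V\to W:=p^{k_0}V/p^{k_0+1}V$ then produces a non-zero $R[H]$-linear map $\tilde\varphi\colon C^\infty_c(H,R)\to C^\infty_c(X,W)$, where $W$ is killed by $p$ and is therefore naturally an $\mathbb{F}_p$-vector space.

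Next, I would restrict $\tilde\varphi$ along the natural map $C^\infty_c(H,\mathbb{Z})\to C^\infty_c(H,R)$. This restriction remains non-zero: its image contains the characteristic functions $1_U$ of compact open $U\subset H$, which generate $C^\infty_c(H,R)$ as an $R$-module, so a vanishing restriction would force $\tilde\varphi=0$. Since $pW=0$, the restriction kills $p\cdot C^\infty_c(H,\mathbb{Z})$ and therefore factors through the quotient $C^\infty_c(H,\mathbb{Z})/pC^\infty_c(H,\mathbb{Z})\simeq C^\infty_c(H,\mathbb{F}_p)$, giving a non-zero $\mathbb{F}_p[H]$-linear map $\hat\varphi\colon C^\infty_c(H,\mathbb{F}_p)\to C^\infty_c(X,W)$.

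Finally, I would pick $f_1\in C^\infty_c(H,\mathbb{F}_p)$ and $x_1\in X$ with $w_1:=\hat\varphi(f_1)(x_1)\neq 0$ in $W$, choose an $\mathbb{F}_p$-linear functional $\ell\colon W\to \mathbb{F}_p$ with $\ell(w_1)\neq 0$, and post-compose $\hat\varphi$ pointwise with $\ell$ to obtain a non-zero $\mathbb{F}_p[H]$-linear map $C^\infty_c(H,\mathbb{F}_p)\to C^\infty_c(X,\mathbb{F}_p)$. Proposition~\ref{prop:6.12} applied with $\mathbb{F}_p$ in place of $R$ then yields the desired $x\in X$ carrying an $\mathbb{F}_p$-valued left Haar measure on $H_x$. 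The only piece of real content is the $p$-filtration trick producing $W$; the remainder is a routine coefficient passage from $R$ through $\mathbb{Z}$ to $\mathbb{F}_p$, and I do not anticipate any serious obstacle.
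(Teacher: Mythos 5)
Your proof is correct and follows essentially the same route as the paper: identify the largest $k$ with image contained in $C^\infty_c(X,p^kV)$, reduce to the quotient $p^kV/p^{k+1}V$ (annihilated by $p$), pass to $\mathbb{F}_p$-coefficients on the source, compose with a linear form to get target coefficients in $\mathbb{F}_p$, and invoke Proposition~\ref{prop:6.12} over $\mathbb{F}_p$. The only cosmetic difference is that you restrict along $C^\infty_c(H,\mathbb{Z})\to C^\infty_c(H,R)$ and factor through $\mathbb{F}_p$, while the paper first reduces coefficients to $R/pR$ and then restricts to its prime subfield; the two are interchangeable.
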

\begin{proof} As $\cap_{k\geq 0}\, p^kV=\{0\}$, there exists a largest integer $k$ such that the image of $\varphi$ is contained in $p^k V$ but not in $p^{k+1}V$. The map $\varphi$ induces a non-zero $(R/pR)[H]$-linear map $ C^\infty_c(H,R/pR)\to C^\infty_c(X,p^kV/p^{k+1}V)$. By $R/pR$-linearity, it restricts to a non-zero 
$\mathbb F_p[H]$-linear map  $\varphi_p:C^\infty_c(H,\mathbb F_p)\to C^\infty_c(X,p^kV/p^{k+1}V)$. The values of the functions  in the image of $\varphi_p$ is a non-zero $\mathbb F_p$-subspace $V_p$ of $p^kV/p^{k+1}V$ and composing with a $\mathbb F_p$-linear form on $V_p$, we get a non-zero $\mathbb F_p[H]$-linear map $C^\infty_c(H,\mathbb F_p)\to C^\infty_c(X, \mathbb F_p)$. Applying Proposition \ref{prop:6.12} to $R=\mathbb F_p$, we get the desired result.
\end{proof}

In the special case $X=H$ acting on itself by left translation, all stabilizers $H_x$ are trivial, and there are non-zero $R[H]$-endomorphisms of $C^\infty_c(H,R)$, for example those given by right translations by elements of $H$. 

Consider the special situation, which appears later in the proof of the theorem, where there is an automorphism $t$ of $H$ and an open compact subgroup $H^0$ of $H$ such that  $t^k (H^0)\subset t^{k+1} (H^0)$ for $k\in \mathbb Z$, $H=\bigcup_{k\in \mathbb Z}t^k (H^0)$ and $\{0\}= \bigcap_{k\in \mathbb Z}t^k (H^0)$. Let moreover $W$ be an $R$-module with a {\bf trivial} action of $H$ and an action of $t$ via an automorphism.
Then we have a natural action of $t$ on $C^\infty_c(H,W)$ - that we identify with $C^\infty_c(H,R)\otimes W$ -   and on $\Hom_{R[H]}(C^\infty_c(H,R),C^\infty_c(H,W))$ by 
$$tf(h)=t(f(t^{-1}h)), \quad (t\varphi)(f)=t(\varphi(t^{-1}f)) ,
$$
for $h\in H, f\in C^\infty_c(H,W), \varphi\in \Hom_{R[H]}(C^\infty_c(H,R),C^\infty_c(H,W))$.

We recall that, for a monoid $A$   and  an $R[A]$-module $V$, an element $v\in V$ is {\bf $A$-finite} if the $R$-module generated by the $A$-translates  of $v$  is finitely generated. 

We say that $V$ is {\bf $A$-locally finite} if every element of $V$ is $A$-finite, If $A$ is generated by an element $t$, we say {\bf $t$-finite} instead of $A$-finite. When $R$ is noetherian,  the set $V^{A-f}$  of $A$-finite vectors in $V$  is a submodule of $V$.

If $w\in W$ is $t$-finite, then  $f\mapsto f\otimes w$ in $\Hom_{R[H]}(C^\infty_c(H,R),C^\infty_c(H,W))$ is obviously  $t$-finite. Conversely:
\begin{proposition}\label{prop:7.4} When $R$ is noetherian, any  $t$-finite element of  
$$\Hom_{R[H]}(C^\infty_c(H,R),C^\infty_c(H,W))$$ has the form $f\mapsto f\otimes w$ for some $t$-finite vector  $w\in W$.
\end{proposition}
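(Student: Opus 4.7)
The plan is as follows. The natural map $\iota \colon W \to \Hom_{R[H]}(C^\infty_c(H,R), C^\infty_c(H,W))$, $w \mapsto (f \mapsto f \otimes w)$, is injective (evaluate $\iota(w)(\charone_{H^0})$ at $1 \in H^0$) and $t$-equivariant (a direct computation using the semidirect relation $t \cdot h = t(h) \cdot t$ yields $t(f \otimes w) = f \otimes tw$). It therefore suffices to show that every $t$-finite $\varphi$ in the Hom space lies in the image of $\iota$; the $t$-finiteness of the resulting $w \in W$ then follows automatically from $\iota$ being $t$-equivariant and injective.

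Let $\varphi$ be $t$-finite; since $R$ is noetherian, a determinantal argument produces a monic polynomial $P(T) = T^N - \sum_{i<N}c_i T^i \in R[T]$ with $P(t)\varphi = 0$. I use the Frobenius reciprocity identification
\[
\Hom_{R[H]}(C^\infty_c(H,R), C^\infty_c(H,W)) \;\simeq\; \varprojlim_K C^\infty_c(K\backslash H, W),
\]
over open compact subgroups $K$ with transitions $v \mapsto \sum_{h \in K/K'} hv$ for $K' \subset K$, specialised to the cofinal sequence $K_n := t^{-n}(H^0)$ for $n \geq 0$. Setting $\psi_n := \varphi(\charone_{K_n}) \in C^\infty_c(H,W)^{K_n}$, compatibility reads $\psi_n = \sum_{h \in K_n/K_{n+1}} h\psi_{n+1}$, and $P(t)\varphi = 0$ applied at $\charone_{K_n}$ specialises to
\[
\sum_{i=0}^{N} p_i\, t^i\psi_{n+i} \;=\; 0 \qquad (n \geq 0)
\]
in $C^\infty_c(H,W)$, with $p_N = 1$.

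The heart of the argument is to show that for every $n$, the function $\psi_n$ is supported on the identity coset $K_n \in K_n \backslash H$ --- equivalently, $\psi_n(g) = 0$ for every $g \in H \setminus K_n$. Evaluating the displayed identity at such a $g$ yields a linear recurrence in $W$ among the values $\psi_{n+i}(t^{-i}g)$ for $i = 0, \ldots, N$. The key geometric input is that $t^{-1}$ contracts $H$ towards $1$: for $g \in t^m(H^0)\setminus t^{m-1}(H^0)$ the point $t^{-i}g$ lies in the shrinking shell $t^{m-i}(H^0)\setminus t^{m-i-1}(H^0)$ whenever $i \geq m$. Combined with the hypothesis that $t$ acts as an automorphism of $W$ (so each $t^i$ is injective, ruling out spurious torsion cancellations), iterating the recurrence in $n$ forces $\psi_n(g) = 0$. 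This is the step I expect to be the main obstacle; the rigorous route I anticipate is a contradiction, showing that a non-zero $\psi_n(g)$ for $g \notin K_n$ would generate, via the $t$-orbit, an $R$-linearly independent infinite family inside the finitely generated $R$-submodule $\sum_{i \geq 0} R \cdot t^i \varphi$ of the Hom space, which is absurd.

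Once the support property is in hand, the transition relation specialises to
\[
\charone_{K_n}\otimes w_n \;=\; \psi_n \;=\; \sum_{h \in K_n/K_{n+1}} \charone_{hK_{n+1}}\otimes w_{n+1} \;=\; \charone_{K_n}\otimes w_{n+1}
\]
(using $K_n = \bigsqcup_h hK_{n+1}$), so $w_n = w_{n+1}$ for all $n$. Writing $w$ for this common value, $\psi_n = \charone_{K_n}\otimes w$ for every $n$, which identifies $\varphi$ with $\iota(w)$ and completes the proof.
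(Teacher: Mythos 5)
You set up the same framework as the paper: identify the Hom space with the inverse limit of the modules $C_c^\infty(K_n\backslash H, W)$ along the cofinal chain $K_n = t^{-n}(H^0)$, reduce to the claim that $\psi_n := \varphi(\charone_{K_n})$ is supported on the single coset $K_n$, and observe that constancy of the $w_n$ and $t$-finiteness of the limit $w$ then come for free. (In the paper's notation $\psi_n = \varphi(f_{-n})$ with $f_r = \charone_{t^r(H^0)}$.) But the support claim \emph{is} the proposition, and you do not prove it: you yourself flag it as "the main obstacle," and the route you sketch — a contradiction via linear independence inside the finitely generated submodule $\sum_{i\geq 0}R\,t^i\varphi$ — has no clear way to close. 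A linear dependence among the $t^i\varphi$ is a relation in the Hom space; there is no morphism compatible with the $t$-action that carries it to a relation among the values $\psi_n(g)$ at a fixed $g\in H$, so the "linearly independent infinite family" you want to produce does not obviously arise. The remark about shells is also misstated ($t^{-i}g\in t^{m-i}(H^0)\setminus t^{m-i-1}(H^0)$ holds for \emph{all} $i$, not only $i\geq m$), though since you never use it this is minor.

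The mechanism that actually closes this step in the paper exploits an asymmetry your uniform-in-$n$ attack discards. Because $\varphi(f_0)$ is smooth of compact support, its support lies in $t^{r_0}(H^0)$ for some $r_0\geq 0$; for every $r'\geq r_0$ the equivariance identity $\varphi(f_{r'}) = \sum_{h\in t^{r'}(H^0)/H^0}h\varphi(f_0)$ then forces $\varphi(f_{r'})$ to be $t^{r'}(H^0)$-invariant \emph{and} supported in $t^{r'}(H^0)$, hence equal to $f_{r'}\otimes w$. That cheap large-index case is the base of a \emph{descending} induction: applying the monic relation $P(t)\varphi=0$ to $f_{N+k-1}$ (with $N=\deg P$), every term of degree $<N$ is covered by the inductive hypothesis, so the top term $t^N(\varphi(f_{k-1}))$ is determined; since $P$ is monic and $t^N$ is invertible on $C_c^\infty(H,W)$ (as $t$ acts by automorphisms on both $H$ and $W$), one solves $\varphi(f_{k-1}) = f_{k-1}\otimes w$. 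Descending $k$ through $0$ and into the negatives covers all of your $\psi_n$. Monicity of $P$ and the invertibility of $t$ are precisely what make the step close; your sketch names both but never puts them to work, and that is the gap.
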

\begin{proof} For $r\in \mathbb Z$ let $f_r\in C^\infty_c(H,R)$ be the characteristic function of $t^r(H^0)$ so that $t^k f_r=f_{k+r}$ for  $ k\in \mathbb Z$,  $hf_r$ is the characteristic function of $ht^r(H^0)$ for $h\in H$, and for $r'\geq r$, $f_{r'}=\sum _{h\in t^{r'}(H^0)/t^r(H^0)} h f_r$. Any $f\in C_c^\infty(H,R)$ is a linear combination of $H$-translates of $f_r$, $r \in \mathbb Z$.

Let $\varphi \in \Hom_{R[H]}(C^\infty_c(H,R),C^\infty_c(H,W))$. The support of $\varphi (f_0)\in C^\infty_c(H,W)$ is contained in $t^r (H^0)$ for some integer $r\geq 0$. For $r'\geq 0$, the $H$-equivariance of $\varphi$ implies that $\varphi (f_{r'})= \sum _{h\in t^{r'}(H^0)/ H^0}h \varphi (f_0)$; in particular,  $\varphi (f_r)$ has support contained in $t^r (H^0)$ and since $\varphi (f_r)$ is $t^r (H^0)$-invariant, it has the form $f_r\otimes w$ for some $w\in W$. 
For $r'\geq r$, we have similarly $\varphi (f_{r'})= \sum _{h\in t^{r'}(H^0)/ t^rH^0}h \varphi (f_r)=f_{r'}\otimes w$.
For $k\geq 0$, we compute 
\begin{equation}\label{eq:kr}(t^k\varphi )(f_{r'+k})=t^k(\varphi (t^{-k}f_{r'+k}))=t^k(\varphi  ( f_{\ell' }))=t^k(f_{r'}\otimes w)=f_{r'+k}\otimes t^kw.
\end{equation}
Assume now that $\varphi$ is $t$-finite. Then there is an integer $n\geq 1$ such that the $t^k \varphi$, $0\leq k \leq n-1$, generate the $R$-submodule $V_{\varphi}$ generated by the $t^k \varphi$, $h\in \mathbb N$, and there is a relation 
\begin{equation}\label{eq:na}t^n\varphi=a_1t^{n-1}\varphi+ \dots  +a_{n-1} t \varphi+ a_n \varphi,
\end{equation}
with $a_1, \ldots,  a_n \in R$. Applying \eqref{eq:na} to $f_{n+r}$ and using $(t^k\varphi )(f_{n+r})=f_{n+r}\otimes t^kw $  for $ 0 \leq k \leq n$
 by \eqref{eq:kr}, we get 
$$
f_{n+r}\otimes t^nw=   f_{n+r}\otimes (a_1t^{n-1}w + \dots  +a_{n-1}  t w+ a_n  w).$$
So that  $t^nw=   a_1t^{n-1}w + \dots  +a_{n-1}  t w+ a_n  w$ and $w$ is $t$-finite. 

We have already seen that $\varphi(f_{r'})=f_{r'}\otimes w$ for $r'\geq r$. Let $k\geq 1$ and assume that $\varphi(f_{r'})=f_{r'}\otimes w$ for $r'\geq k$.  Noting that $(t^i\varphi )(f_{n+k-1})=f_{n+k-1}\otimes t^iw $  for $ 0 \leq i \leq n-1$ because $n+k-1-i \geq k$, we apply \eqref{eq:na} to $f_{n+k-1}$ and we deduce
$$(t^n\varphi)(f_{n+k-1})=f_{n+k-1} \otimes (a_1t^{n-1}w+ \dots  +a_{n-1} t w+ a_n w)=f_{n+k-1} \otimes t^nw,$$
so that $t^n(\varphi(f_{k-1}))= t^n(f_{k-1} \otimes w) $ and finally $\varphi(f_{k-1})=  f_{k-1} \otimes w$.
This proves the proposition by descending induction on $k$.
\end{proof}

 We suppose now that $W$ is a {\bf free} $R$-module  with a {\bf trivial} action of $H$ and of $t$. Let $V$ be an $R[H]$-module with a compatible action of $t$. As above, we have a natural action of $t$ on $\Hom_{R[H]}(C^\infty_c(H,R),V)$ and on  $\Hom_{R[H]}(C^\infty_c(H,R),V\otimes W)$.
 
\begin{proposition}\label{prop:7.5}  When $R$ is noetherian, the natural map $\Hom_{R[H]}(C^\infty_c(H,R),V)\otimes W\to \Hom_{R[H]}(C^\infty_c(H,R),V\otimes W)$ induces an isomorphism between the submodules of $t$-finite elements.

 \end{proposition}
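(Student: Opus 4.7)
The plan is to parallel the proof of Proposition~\ref{prop:7.4} while handling the added complication that $V$ is no longer $C^\infty_c(H,R)$. I fix an $R$-basis $(e_i)_{i\in I}$ of $W$ and decompose any element $\Psi$ of the right-hand side as a possibly infinite family $\Psi=\sum_i\psi_i\otimes e_i$, where $\psi_i\in\Hom_{R[H]}(C^\infty_c(H,R),V)$ is the $i$-th component obtained by projecting along $V\otimes Re_i$. Then $\Psi$ lies in the image of the natural map precisely when all but finitely many $\psi_i$ vanish. Injectivity of the natural map and the forward direction for $t$-finiteness are formal: the map is injective by the direct-sum decomposition $V\otimes W=\bigoplus_i V\otimes Re_i$, and since $t$ acts trivially on $W$ we have $t^k(\varphi\otimes w)=(t^k\varphi)\otimes w$, so a polynomial annihilator of $\varphi$ is also one for $\varphi\otimes w$. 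The substance is the converse.

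Let $\Psi$ be $t$-finite with relation $t^n\Psi=\sum_{k=1}^n a_kt^{n-k}\Psi$. With $f_r=\charone_{t^r(H^0)}$, write $w_r=\Psi(f_r)=\sum_i v_i^{(r)}\otimes e_i$ with $v_i^{(r)}\in V$, and let $S_r=\{i\in I:v_i^{(r)}\ne 0\}$, a finite set since $w_r\in V\otimes W$. The $R[H]$-linearity of $\Psi$ gives $v_i^{(r+1)}=\sum_{h\in t^{r+1}H^0/t^rH^0}hv_i^{(r)}$, so $v_i^{(r)}=0$ implies $v_i^{(r+1)}=0$, whence $S_{r+1}\subseteq S_r$. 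In particular $S_r\subseteq S_0$ for all $r\geq 0$, and $\{i:\psi_i\ne 0\}=\bigcup_{r\in\mathbb Z}S_r$. The polynomial relation evaluated at $f_{r+n}$ yields $t^nw_r=\sum_{k=1}^n a_kt^{n-k}w_{r+k}$ for all $r\in\mathbb Z$, and componentwise $t^nv_i^{(r)}=\sum_k a_kt^{n-k}v_i^{(r+k)}$.

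I set $J=\bigcup_{k=0}^{n-1}S_{-k}$, which is finite, and write $W_J=\bigoplus_{i\in J}Re_i$. The key claim is that $S_r\subseteq J$ for every $r\in\mathbb Z$. For $r\geq 0$ this holds via $S_r\subseteq S_0\subseteq J$, and for $r\in\{-(n-1),\ldots,0\}$ by the definition of $J$. For $r\leq -n$ I proceed by downward induction on $r$: the relation together with the inductive hypothesis on $w_{r+1},\dots,w_{r+n}$ shows that $t^nw_r\in V\otimes W_J$, since $t$ acts trivially on $W$. The main obstacle is then to conclude that $w_r$ itself lies in $V\otimes W_J$, which does not follow merely from $t^nw_r\in V\otimes W_J$ because $t^n$ need not be injective on $V$.

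To overcome this obstacle I intend to exploit the following refinement: if $i\in S_r$ satisfies $v_i^{(r+1)}=\cdots=v_i^{(r+n)}=0$, then the componentwise relation forces $t^nv_i^{(r)}=0$, and such indices accumulate only in a very constrained manner compatible with the averaging recurrence. Combining this observation with the hypothesis that $R[t]\Psi$ is a finitely generated $R$-module over noetherian $R$ (so that its image under evaluation at each $f_r$ is a finitely generated submodule of $V\otimes W$ with finite basis-support), I expect to descend on $r$ and show that any new index $i\in S_r\setminus S_{r+1}$ that arises at level $r\leq-n$ must already be forced into $J$; at worst, applying Proposition~\ref{prop:7.4} to an auxiliary $C^\infty_c(H,R)$-valued construction built from the finitely generated module $R[t]\Psi$ will control the descent and yield $S_r\subseteq J$ uniformly, thereby forcing $\psi_i=0$ for $i\notin J$.
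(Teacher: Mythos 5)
Your setup mirrors the paper's: decompose against a basis of $W$, note that the forward direction and injectivity of the natural map are formal, and attack the converse by a downward induction on $r$ using the evaluation $\Psi(f_r)$ and the polynomial relation for $t$-finiteness. Where the proposal diverges is precisely at the step you yourself single out: from $t^n w_r \in V\otimes W_J$ deduce $w_r\in V\otimes W_J$. The paper's proof simply passes from $t^n v_i(f_{k-1})=0$ to $v_i(f_{k-1})=0$, which is correct because in the setting of \S \ref{S:7.1} the ``compatible action of $t$'' on $V$ is by an automorphism (and indeed in every application, e.g.\ Proposition \ref{prop:StOrd} and Proposition \ref{prop:7.19}, $t$ is an element $z^{\pm1}$ of $Z(M_1)$ acting on a $G$-representation, hence acts bijectively). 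Since $t$ acts trivially on $W$, $t^n$ is then injective on $V\otimes W$, and the downward induction you already set up closes immediately.

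Your proposed workaround for the injectivity issue is not a proof. The claims ``such indices accumulate only in a very constrained manner,'' ``I expect to descend on $r$,'' and ``at worst, applying Proposition~\ref{prop:7.4} to an auxiliary $C^\infty_c(H,R)$-valued construction built from $R[t]\Psi$ will control the descent'' are speculation without a stated construction or an argument that it accomplishes what you need. In particular, invoking Proposition \ref{prop:7.4} requires landing in $\Hom_{R[H]}(C^\infty_c(H,R),C^\infty_c(H,W))$, and you do not explain how $R[t]\Psi$ produces such a map or why its $t$-finite elements would control the supports $S_r$. If you instead note (as the hypotheses allow and as is forced in the intended application) that $t$ acts invertibly on $V$, the difficulty vanishes and the rest of your argument, including the $t$-finiteness of each $\psi_i$ via the componentwise polynomial relation, goes through.
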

\begin{proof} The natural map sends $\varphi \otimes w$ to $f\mapsto \varphi(f)\otimes w$. It is an embedding because $W$ is $R$-free. It sends a $t$-finite element to a $t$-finite element because $t$ acts trivially on $W$.
Let $\varphi\in \Hom_{R[H]}(C^\infty_c(H,R),V\otimes W)$ and let $(w_i)_{i\in I}$ be an $R$-basis of $W$. For 
$f\in C^\infty_c(H,R)$ we write uniquely $\varphi(f)=\sum _{i\in I}v_i(f)\otimes w_i$ for $v_i(f)\in V$ vanishing outside some finite subset $I(f)$ of $I$. For each $i\in I$, the map $f\mapsto v_i(f)$ is $R[H]$-linear 
but it is not clear if the map vanishes outside  a finite subset of $I$. Now assume that $\varphi$ is $t$-finite. As in \eqref{eq:na}, there exists $n\geq 1$ and $a_1, \ldots, a_n \in R$ such that for each $i\in I$, 
\begin{equation}\label{eq:nai}
t^nv_i(t^{-n}f)=a_1t^{n-1}v_i(t^{-n+1}f)+ \dots  +a_{n-1} t v_i(t^{-1}f)+ a_n v_i( f).
\end{equation}
Let $I_0= I(f_0) $ be a finite subset of $I$ such that $v_i(f_0)=0$  for $i\in I\setminus I_0$. For $r\geq 0$, $v_i(f_r)=0$  for $i\in I\setminus I_0$ because $f_r$ is a sum of $H$-translates  of $f_0$. Let $k\in \mathbb Z$ and assume that for $r\geq k$, $v_i(f_r)=0$  for $i\in I\setminus I_0$.
Apply \eqref{eq:nai} to $f=f_{n+k-1}$ for $i\in I\setminus  I_0$. This gives $t^n v_i (f_{k-1})=0$ hence  $v_i (f_{k-1})=0$. As any $f\in C_c^\infty(H,R)$ is a linear combination of $H$-translates of $f_k$, $k \in \mathbb Z$, we have $v_i(f)=0$  for $i\in I\setminus I_0$ and $\varphi(f)=\sum _{i\in I_0}v_i(f)\otimes w_i$ does belong to $\Hom_{R[H]}(C^\infty_c(H,R),V)\otimes W$; each of the $v_i\in \Hom_{R[H]}(C^\infty_c(H,R),V)$  for $i\in I_0$ is $t$-finite (because $\varphi$ is $t$-finite), and that proves the proposition.
\end{proof}

   \subsection{Filtrations} \label{S:7.2}We analyze the   sequence  \eqref{eq:st} defining $\St_Q^G$,   by   filtering  $\Ind_Q^G\charone$ by subspaces of functions with support in a union of $(Q, \overline B)$ double cosets.
An important fact is that the  $(Q, \overline B)$-cosets outside  $Q\overline P_1$ do not contribute.

For convenience of references to \cite{MR3600042}, we first  consider $(Q,  B)$ double cosets - we shall switch to  $(Q, \overline B)$-cosets later.
  A $(Q,  B)$-double coset has the form $QnB$ for some $n\in \mathfrak N$; if $w$ is the image of $n$ in the finite Weyl group $\mathbb W=\mathfrak N/Z$ we write, as is customary, $QwB$ instead of $QnB$. The coset $\mathbb W_{Q}w$  is uniquely determined by $QwB$ and contains a single element of minimal length. We write ${}^Q\mathbb W$ for the set of $w\in \mathbb W$ with minimal length in $ \mathbb W_{Q} w$; they are characterized by the condition $w^{-1}(\alpha)>0$ for  $\alpha \in \Delta_Q$ \cite[2.3.3]{MR794307}. We have the disjoint union
  $$G=\bigsqcup_{w\in {}^Q\mathbb W} Qw B.$$
  By standard knowledge, for $w,w'\in {}^Q\mathbb W$, the closure of $QwB$ contains $Qw'B$ is and only if $w\geq w'$ in the Bruhat order of $W$.
As in \cite[V.7]{MR3600042}, we let  $A\subset {}^Q\mathbb W$ be a non-empty upper subset (if $a\leq w, a\in A, w\in  {}^Q\mathbb W$, then $w\in A$)    so that $QAB$ is open in $G$, and we  choose $w_A\in A$ minimal for the Bruhat order; letting $A'=A-\{w_A\}$, $QA'B$ is open in $G$ too.  Let $  \ind_Q^{QAB} \charone \subset \Ind_Q^G\charone$ be the subspace of functions with support in $QAB$,
 $$ \ind_Q^{QAB} \charone \simeq C_c^\infty (Q\backslash {QAB}, R).$$
 For  a parabolic subgroup  $Q_1$  of $G$  containing $Q $, we have $\Ind_{Q_1}^G1 \subset \Ind_{Q}^G1 $ and we let  
 $$I_{Q_1}^{QAB}=\Ind_{Q_1}^G\charone   \cap \ind_Q^{QAB} \charone .
 $$ It is the subspace of   functions with support in the union of the cosets $Q_1 x$ contained in $QAB$. We have    $I_{Q_1}^{QA'B}\subset I_{Q_1}^{QAB}$.
We also use an abbreviation $I_{Q_1,A} = I_{Q_1}^{QAB}$.
 
\begin{lemma}\label{1} For $Q_1 \supset Q$, the injective natural map $I_{Q_1}^{QAB}/I_{Q_1}^{QA'B}\to  \ind_Q^{QAB} \charone / \ind_Q^{QA'B} \charone $  is an isomorphism if $w_A \in  {}^{Q_1}\mathbb W$, and $I_{Q_1}^{QAB}=I_{Q_1}^{QA'B}$  otherwise.
\end{lemma}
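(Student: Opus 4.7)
The proof naturally splits along the dichotomy $w_A \in {}^{Q_1}\mathbb{W}$ or not, and in both cases the key is to understand how the $(Q,B)$-double coset decomposition interacts with $Q_1$-invariance.

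\textbf{Case 1: $w_A \notin {}^{Q_1}\mathbb{W}$.} I will show $I_{Q_1}^{QAB} = I_{Q_1}^{QA'B}$ by exhibiting a Bruhat-smaller $(Q,B)$-representative in $Q_1 w_A B$ that falls outside $A$. Let $w'$ be the (length- and Bruhat-) minimal element of $\mathbb{W}_{Q_1} w_A$; since $w_A \notin {}^{Q_1}\mathbb{W}$, $w' < w_A$ strictly. Let $w'' \in {}^Q\mathbb{W}$ be the minimal-length representative of $\mathbb{W}_Q w'$, so $w'' \leq w' < w_A$ and $w'' \in \mathbb{W}_Q\mathbb{W}_{Q_1}w_A = \mathbb{W}_{Q_1}w_A$ (using $\mathbb{W}_Q\subset\mathbb{W}_{Q_1}$). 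Minimality of $w_A$ in $A$ forces $w''\notin A$, so $Qw''B\cap QAB=\emptyset$. Given $f \in I_{Q_1}^{QAB}$ with $f$ non-vanishing at some $w_A b$ ($b\in B$), the equality $Q_1 w'' = Q_1 w_A$ yields $q_1 \in Q_1$ with $w''b = q_1 w_A b$, hence $f(w''b) = f(w_A b) \neq 0$, forcing $w''b \in Qw''B \cap QAB$, a contradiction. So $f$ vanishes on $Qw_AB$.

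\textbf{Case 2: $w_A \in {}^{Q_1}\mathbb{W}$.} First I would show $Q_1 w_A B \subset QAB$. The standard length-additivity $\ell(uw_A)=\ell(u)+\ell(w_A)$ for $u\in\mathbb{W}_{Q_1}$ (a characterization of $w_A \in {}^{Q_1}\mathbb{W}$) gives a reduced factorization in which a reduced expression for $w_A$ appears as a final subword of one for $uw_A$, hence $uw_A \geq w_A$ in Bruhat. The ${}^Q$-representative $w^\ast$ of $\mathbb{W}_Q uw_A$ is Bruhat-below $uw_A$ but also lies in $\mathbb{W}_{Q_1}w_A$, so by the same argument $w^\ast \geq w_A$, placing $w^\ast \in A$ by upperness of $A$. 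Thus $\ind_{Q_1}^{Q_1 w_A B}\charone \subset I_{Q_1}^{QAB}$, and restriction to $Qw_A B$ provides a map into $\ind_Q^{QAB}\charone/\ind_Q^{QA'B}\charone$ whose surjectivity I would establish.

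For that surjectivity, I would identify both spaces with $C_c^\infty$ on a homogeneous $B$-space: $Q\backslash Qw_AB \simeq B/(w_A^{-1}Qw_A \cap B)$ and $Q_1\backslash Q_1 w_A B \simeq B/(w_A^{-1}Q_1 w_A\cap B)$, via $b\mapsto Qw_Ab$, resp.\ $b\mapsto Q_1 w_A b$. The restriction map becomes pullback along the natural projection, which is a homeomorphism precisely when the two stabilizers coincide. Since $Q\subset Q_1$, only one inclusion needs verification: an element $b=zu\in B$ satisfies $w_A b w_A^{-1}\in Q_1\setminus Q$ only if some root-subgroup component $u_\alpha$ ($\alpha>0$) has $w_A\alpha\in -(\Phi^+_{M_1}\setminus\Phi^+_M)$, i.e.\ $-w_A\alpha\in\Phi^+_{M_1}\setminus\Phi^+_M$. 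Setting $\gamma=-w_A\alpha$, one needs $\gamma\in\Phi^+_{M_1}$ with $w_A^{-1}\gamma<0$. But $w_A\in {}^{Q_1}\mathbb{W}$ means $w_A^{-1}(\Delta_{M_1})\subset\Phi^+$, which extends to $w_A^{-1}(\Phi^+_{M_1})\subset\Phi^+$ (a non-negative integer combination of positive roots that is a root is itself positive). So no such $\gamma$ exists, and the stabilizers agree.

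The main obstacle I anticipate is the root-theoretic comparison of stabilizers in Case 2: one must carefully track how the Bruhat-minimality condition $w_A\in {}^{Q_1}\mathbb{W}$ translates into a statement about which conjugates of root subgroups lie in $Q$ versus $Q_1$. The Bruhat-versus-length subtlety in Case 1 (min-length implying min-Bruhat in a parabolic coset, and the chain $w''\le w'<w_A$) is a minor but essential ingredient, pinning down exactly which $(Q,B)$-representative in $Q_1 w_A B$ obstructs extendability.
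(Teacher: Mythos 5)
Your Case~1 is correct and follows the paper's argument, with a small redundancy: since $Q\subset Q_1$ gives ${}^{Q_1}\mathbb W\subset{}^{Q}\mathbb W$, your ${}^{Q_1}$-representative $w'$ already lies in ${}^{Q}\mathbb W$, so passing to a further ${}^{Q}$-representative $w''$ adds nothing ($w''=w'$); the paper derives the contradiction directly with $w'$.

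In Case~2 there is a genuine gap, at exactly the step you leave as a sketch: producing a preimage in $I_{Q_1}^{QAB}$. You correctly show $Q_1 w_A B\subset QAB$ and you correctly compare the stabilizers to obtain $Q\backslash Q w_A B\simeq Q_1\backslash Q_1 w_A B$. But you then introduce a space $\ind_{Q_1}^{Q_1 w_A B}\charone$ and assert it sits inside $I_{Q_1}^{QAB}$. If, in analogy with $I_{Q_1}^{QAB}=\Ind_{Q_1}^{G}\charone\cap\ind_Q^{QAB}\charone$, that symbol means the functions in $\Ind_{Q_1}^{G}\charone$ with support in $Q_1 w_A B$, then that space is \emph{zero} whenever $w_A$ is not Bruhat-maximal in ${}^{Q_1}\mathbb W$: any $f\in\Ind_{Q_1}^{G}\charone$ is locally constant, so its nonvanishing locus is open in $G$, and a nonempty open set cannot lie inside the locally closed set $Q_1 w_A B$, whose interior in $G$ is empty. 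In particular this space cannot surject onto $\ind_Q^{QAB}\charone/\ind_Q^{QA'B}\charone\simeq C_c^\infty(Q\backslash Q w_A B,R)$, which is not zero. If instead you intend $\ind_{Q_1}^{Q_1 w_A B}\charone$ to mean compactly supported $Q_1$-invariant locally constant functions on $Q_1 w_A B$ itself, then the claimed inclusion into $I_{Q_1}^{QAB}$ is exactly the unproved step.

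What is missing is the extension argument: given a $Q_1$-invariant, locally constant, compactly supported function $\psi$ on the locally closed set $Q_1 w_A B$, one must extend it to some $\tilde\psi\in\Ind_{Q_1}^{G}\charone$ with support in $QAB$. The paper does this by introducing the open set $Q_1 A_{1,\geq w_A}B$, where $A_{1,\geq w_A}$ is the upper subset of ${}^{Q_1}\mathbb W$ above $w_A$, checking $Q_1 A_{1,\geq w_A}B\subset QAB$ (the very Bruhat argument you gave for $Q_1 w_A B$, applied to every stratum), noting that $Q_1 w_A B$ is \emph{closed} in the open set $Q_1 A_{1,\geq w_A}B$, extending $\psi$ across that closed-in-open pair, and then extending by zero to $G$. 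Without this intermediate open set there is nothing for your pullback to land in, and Case~2 does not close.
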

\begin{proof} We write $w=w_A$. Assume first that $w\not\in  {}^{Q_1}\mathbb W$. Write $w= v w'$ with $v\in W_{Q_1,0}-\{1\} , w'\in  {}^{Q_1}\mathbb W$. We have $w'<w$  and $w$ is minimal in $A$ hence $w'\not\in A$. Let   $\varphi \in I_{Q_1,A}$.  If the support of $\varphi$ meets $ Qw B$, it meets  $  w' B$ and this is impossible because $w'\not\in A$. Thus  $\varphi \in I_{Q_1,A'}$ and $I_{Q_1,A}=I_{Q_1,A'}$ as desired.

Assume now that $w \in  {}^{Q_1}\mathbb W$ and let  $\varphi \in I_{Q,A}$. As $w\in  {}^{Q_1}\mathbb W$, the natural map $U\mapsto  Q_1\backslash Q_1w B$ induces a homeomorphism
$(w^{-1} \overline U w\cap U)\backslash U \xrightarrow{\simeq} Q_1\backslash Q_1w B$; as $w\in  {}^{Q}\mathbb W$,   the natural map $U\mapsto  Q \backslash Q w B$ induces  also a homeomorphism
$(w^{-1} \overline U w\cap U)\backslash U \xrightarrow{\simeq} Q \backslash Q w B$ \cite[V.7]{MR3600042}.  Consequently,   there is a function $\psi$ on $Q_1 wB$    left invariant under $Q_1$ and locally constant with compact support modulo $Q_1$ which has the same restriction as $\varphi$ to $Qw B$.
Set $A_{1,\geq w}\subset  {}^{Q_1}\mathbb W$ to be the upper subset of $u$ with $u\geq w $.  The  set $Q_1A_{1,\geq w} B $ is open in $G$ and  $ Q_1wB $ is closed in $Q_1A_{1,\geq w} B $.  There exists a function $\tilde \psi$ on $Q_1A_{1,\geq w} B $    left invariant under $Q_1$ and locally constant with compact support modulo $Q_1$ which is equal to  $\psi$ on $Q_1w B$. For $u\in A_{1,\geq w}$ the double coset $Q_1uB$ is the union of double cosets $Q tu B$  for $t\in W_{Q_1,0}$ with $tu\in {}^{Q}\mathbb W$; as $tu\geq u  \geq w$ we have $ tu\in A$ hence  $Q_1u B \subset QAB$ and naturally  $Q_1A_{1,\geq w} B \subset QAB $.
 Now,  we have  $\tilde \psi \in I_{Q_1, A}$,   $\tilde \psi$ and $\varphi$ have the same restriction to $QwB$, hence the same image in $I_{Q,A}/I_{Q,A'}$, and the map of the lemma is surjective.
\end{proof}

\begin{lemma}  \label{2}If  $\mathcal P$ is a set of parabolic subgroups of $G$ containing $Q$, then 
\[
\left(\sum _{Q_1\in \mathcal P} \Ind_{Q_1}^G\charone \right)\cap \ind_Q^{QAB}\charone =\sum _{Q_1\in \mathcal P} \Ind_{Q_1}^{QAB}\charone.
\]
\end{lemma}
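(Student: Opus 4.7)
The inclusion $\supset$ is immediate, since for each $Q_1\in\mathcal P$ the space $I_{Q_1}^{QAB}$ sits inside $\Ind_{Q_1}^G\charone\cap\ind_Q^{QAB}\charone$. My plan for $\subset$ is to induct on the cardinality of $A$. The case $A=\emptyset$ is trivial, and for the inductive step I choose $w=w_A$ minimal in $A$ and set $A'=A\setminus\{w\}$, so that $QwB$ is closed in $QAB$ and we have the short exact sequence
\[
0\to\ind_Q^{QA'B}\charone\to\ind_Q^{QAB}\charone\to C_c^\infty(Q\backslash QwB,R)\to 0.
\]

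Given $\varphi$ in the left-hand side, write $\varphi=\sum_i\varphi_i$ with $\varphi_i\in\Ind_{Q_i}^G\charone$ and let $\bar\varphi$ denote its image in $C_c^\infty(Q\backslash QwB,R)$. If some $Q_*\in\mathcal P$ satisfies $w\in{}^{Q_*}\mathbb W$, Lemma~\ref{1} supplies a lift $\tilde\varphi\in I_{Q_*}^{QAB}$ of $\bar\varphi$; since $\tilde\varphi$ belongs to $\sum_i\Ind_{Q_i}^G\charone$, the difference $\varphi-\tilde\varphi$ still lies in $\sum_i\Ind_{Q_i}^G\charone$ and now has support in $QA'B$, so by the inductive hypothesis it belongs to $\sum_i I_{Q_i}^{QA'B}\subset\sum_i I_{Q_i}^{QAB}$, whence $\varphi\in\sum_i I_{Q_i}^{QAB}$. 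If on the contrary no $Q_i\in\mathcal P$ has $w\in{}^{Q_i}\mathbb W$, then by the second assertion of Lemma~\ref{1} each $I_{Q_i}^{QAB}$ coincides with $I_{Q_i}^{QA'B}$; so it suffices to establish the vanishing $\bar\varphi=0$, after which induction applied to $\varphi\in\ind_Q^{QA'B}\charone$ finishes the argument.

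The essential technical content, and the main obstacle, is precisely this vanishing $\bar\varphi=0$ in the second case. The key geometric input is that when $w\notin{}^{Q_i}\mathbb W$, the $Q_i$-saturation $Q_iwB$ of $QwB$ strictly contains some $(Q,B)$-cell $QvB$ with $v<w$ (write $w=uv$ with $u\in\mathbb W_{Q_i}$ nontrivial and $v\in{}^{Q_i}\mathbb W$, then take the ${}^Q\mathbb W$-representative), and by minimality of $w$ in $A$ this cell lies outside $QAB$; this is exactly the content of the first case in the proof of Lemma~\ref{1}. The plan is to combine the $Q_i$-invariance of each $\varphi_i$ with the identity $\sum_i\varphi_i\equiv 0$ on the complement of $QAB$, propagating the resulting linear relations across all $Q_i$-orbits meeting $QwB$ to force $\sum_i\varphi_i(x)=0$ for every $x\in QwB$. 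The toy case $G=GL_3$, $Q=B$, $\mathcal P=\{P_{\alpha_1},P_{\alpha_2}\}$, $A=\{w_0\}$ already exhibits the mechanism: the two parabolic invariances pair up the six $(B,B)$-cells as $\{1,s_1\},\{s_2,s_1s_2\},\{s_2s_1,w_0\}$ under $P_{\alpha_1}$ and $\{1,s_2\},\{s_1,s_2s_1\},\{s_1s_2,w_0\}$ under $P_{\alpha_2}$, and combined with the vanishing of $\varphi$ on the five cells different from $Bw_0B$ they successively pin down $(\varphi_1+\varphi_2)|_{Bw_0B}=0$. Making this propagation work in full generality, via Bruhat-style identities for iterated $Q_i$-translations, is the core technical point.
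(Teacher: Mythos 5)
Your $\supset$ inclusion and the first case of the $\subset$ inclusion (some $Q_*\in\mathcal P$ has $w_A\in{}^{Q_*}\mathbb W$) are fine, and indeed that branch is structurally close to the paper's argument. The problem is the second case, and you correctly identify it as the crux: you have \emph{not} proved that $\bar\varphi=0$ when no $Q_i\in\mathcal P$ has $w_A\in{}^{Q_i}\mathbb W$. You only offer a ``plan'' to propagate linear relations across the $Q_i$-orbits, illustrated by a single $GL_3$ toy example. This is not a proof, and it is not obvious how to make the propagation work uniformly: in general the $Q_i$-orbit structure on $Q\backslash G$ is not a simple matching, orbit closures interlock in complicated ways for nonminuscule Weyl groups, and the bookkeeping of which cells lie in $QAB$ versus its complement is exactly the kind of combinatorics that your approach pushes all the difficulty onto. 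Note also that your ascending induction is structurally committed to this: when you reduce from $A$ to $A'=A\setminus\{w_A\}$, the inductive hypothesis is about the \emph{smaller} set $A'$, which gives you no leverage over the value of $\varphi$ on $Qw_AB$.

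The paper proves the lemma by \emph{descending} induction on $|A|$, starting from $A={}^Q\mathbb W$ (where the claim is a tautology because $QAB=G$) and removing one minimal element $w_A$ at a time. Given $\varphi=\sum f_{Q_1}\in I_{Q,A'}$ with $f_{Q_1}\in I_{Q_1,A}$ (from the inductive hypothesis for the larger set $A$), those $Q_1$ with $w_A\notin{}^{Q_1}\mathbb W$ already have $f_{Q_1}\in I_{Q_1,A'}$ by Lemma~\ref{1}, and for the remaining $Q_1$ one replaces $f_{Q_1}$ by $f_{Q_1}-f_{Q_2}\in I_{Q_1,A'}$ where $Q_2$ is the parabolic corresponding to $\{\alpha:w_A^{-1}(\alpha)>0\}$; the aggregated correction $\sum f_{Q_2}$ is then forced into $I_{Q_2,A'}$ because the total sum lies in $I_{Q,A'}$. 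In particular, when \emph{no} $Q_1$ has $w_A\in{}^{Q_1}\mathbb W$, the conclusion is immediate with no separate vanishing claim to establish. That redistribution-to-$Q_2$ step, enabled by working top-down, is precisely what your approach is missing. To fix your write-up you should reverse the induction direction and adopt this redistribution, rather than trying to prove the vanishing $\bar\varphi=0$ directly.
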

\begin{proof} The left hand side obviously contains the right hand side.  
The  reverse inclusion is  proved as in \cite[V.16 Lemma 9]{MR3600042} by descending induction on the order of $A$. The case where $A={}^{Q}\mathbb W$ being a tautology, we assume the result for $A$ and we prove it for $A'=A-\{w_A\}$.  As  $(\sum _{Q_1\in \mathcal P} \Ind_{Q_1}^G1)\cap I_{Q,A'} $ is nothing else than $(\sum _{Q_1\in \mathcal P} I_{Q_1,A} )  \cap I_{Q,A'}$, we pick $f_{Q_1}\in  I_{Q_1,A}$ for $Q_1\in \mathcal P$ and assume that 
$\sum_{Q_1\in \mathcal P} f_{Q_1}\in I_{Q,A'}$; we want to prove that $\sum_{Q_1\in \mathcal P} f_{Q_1}\in \sum _{Q_1\in \mathcal P} I_{Q_1,A'}$.

  If  $w_A  \not\in  {}^{Q_1}\mathbb W$, $ f_{Q_1}\in  I_{Q_1,A'}$ by Lemma \ref{1}.  We are done if $w_A  \not\in  {}^{Q_1}\mathbb W$ for all $Q_1\in \mathcal P$.
 
Otherwise, $Q_1\in \mathcal P$ such that   $w_A  \in  {}^{Q_1}\mathbb W$   is contained in the  parabolic subgroup  $Q_2$ associated to $\Delta_2=\{\alpha \in \Delta,  w^{-1}(\alpha)>0\}$ and  $w_A\in {}^{Q_2}\mathbb W$; we  choose $f_{Q_2}\in I_{Q_2,A}$ such that $ f_{Q_1}- f_{Q_2}\in I_{Q_1,A'}$, that is possible by Lemma \ref{1}.   We write $\sum_{Q_1\in \mathcal P} f_{Q_1}$ as 
$$\sum_{Q_1\in \mathcal P} f_{Q_1}= \sum_{Q_1\in \mathcal P, w_A \not\in  {}^{Q_1}W} f_{Q_1}+ \sum_{Q_1\in \mathcal P,w_A \in  {}^{Q_1}W} (f_{Q_1}- f_{Q_2})+ \sum_{Q_1\in \mathcal P,w_A \in  {}^{Q_1}W} f_{Q_2}.
$$
The last term on the right  belongs also to $I_{Q,A'}$  because the other terms do, and even to $I_{Q_2,A'}$.   We have   $I_{Q_2,A'}\subset I_{Q_1,A'}$, and the last term belongs to $I_{Q_1,A'}$ for any $Q_1\in \mathcal P$  such that $w \in  {}^{Q_1}\mathbb W$. This ends the proof of the lemma.
 \end{proof}
 To express Lemmas  \ref{1}, \ref{2} in terms of $(Q, \overline B)$-double cosets we apply the remark that $QwBw_0=Qww_0 \overline B$ if $w_0$ is the longest element in $\mathbb W$, so translating by  $w_0^{-1}$ a function with support in $QAB$ gives a function with support in $QAw_0\overline B$. For a parabolic subgroup   $Q_1\subset Q$,
 $$I_{Q_1}^{QAw_0\overline B}= \Ind_{Q_1}^G\charone   \cap \ind_Q^{QAw_0\overline B} \charone$$ is the set of functions obtained in this way from $I_{Q_1 }^{QAB}$. We have $w\leq w'$ if and only if $w'w_0\geq ww_0$ for $w,w'\in \mathbb W$ \cite[Proposition~2.5.4]{MR2133266},   ${}^Q\mathbb Ww_0$ is the set of $w\in \mathbb W$ with maximal length in $ \mathbb W_{Q} w$, $Aw_0$ is a non-empty lower subset of ${}^Q\mathbb Ww_0$ and $w_A w_0$ is a maximal element of  $Aw_0$ for the Bruhat order. We  get:
 
 \begin{lemma} \label{lemma:overline1}For   $Q_1\supset Q $, the  natural map $$I_{Q_1}^{QAw_0\overline B}/I_{Q_1}^{QA'w_0\overline B}\to \ind_Q^{QAw_0\overline B} \charone/\ind_Q^{QA'w_0\overline B} \charone$$  is an isomorphism  if $w_A\in  {}^{Q_1}\mathbb W$, and  $I_{Q_1}^{QAw_0\overline B}=I_{Q_1}^{QA'w_0\overline B}$
otherwise.
\end{lemma}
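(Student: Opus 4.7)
The plan is to deduce this lemma directly from Lemma \ref{1} by transporting the situation through right multiplication by $w_0$. Since $w_0 B w_0^{-1} = \overline B$, we have $QwB\cdot w_0 = Qww_0\overline B$ for every $w\in \mathbb W$, and therefore $QAB\cdot w_0 = QAw_0\overline B$ and $QA'B\cdot w_0 = QA'w_0\overline B$. Right translation by $w_0$, namely $\rho(w_0)\colon f\mapsto (g\mapsto f(gw_0))$, is then an $R$-module isomorphism
\[
\ind_Q^{QAB}\charone \xrightarrow{\;\sim\;} \ind_Q^{QAw_0\overline B}\charone
\]
which restricts to an isomorphism $\ind_Q^{QA'B}\charone \simeq \ind_Q^{QA'w_0\overline B}\charone$.

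Next I would verify that this isomorphism intertwines the $I_{Q_1}^{\bullet}$ subspaces. Since right translation commutes with left translation, $\rho(w_0)$ preserves the left-$Q_1$-invariant subspace $\Ind_{Q_1}^G\charone$ inside $\Ind_Q^G \charone$. Combined with the support identities above, this gives
\[
\rho(w_0)\bigl(I_{Q_1}^{QAB}\bigr) = I_{Q_1}^{QAw_0\overline B}, \qquad \rho(w_0)\bigl(I_{Q_1}^{QA'B}\bigr) = I_{Q_1}^{QA'w_0\overline B}.
\]
Consequently the natural map
\[
I_{Q_1}^{QAw_0\overline B}/I_{Q_1}^{QA'w_0\overline B} \longrightarrow \ind_Q^{QAw_0\overline B}\charone / \ind_Q^{QA'w_0\overline B}\charone
\]
is obtained from the corresponding natural map for the $(Q,B)$-cosets by conjugation by $\rho(w_0)$; in particular the two maps have the same kernel and cokernel.

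The conclusion then follows by citing Lemma \ref{1}: the dichotomy there is governed by whether $w_A \in {}^{Q_1}\mathbb W$, which is exactly the condition appearing in the present statement (the condition depends on $w_A$ itself, not on $w_Aw_0$, so no translation of the hypothesis is needed). This yields both the claimed isomorphism when $w_A\in {}^{Q_1}\mathbb W$ and the equality $I_{Q_1}^{QAw_0\overline B}=I_{Q_1}^{QA'w_0\overline B}$ otherwise. There is no substantial obstacle in this argument; the only point that requires a moment of care is keeping track of the reversal of the Bruhat order under right multiplication by $w_0$, which the authors have already addressed in the paragraph preceding the statement (so that the ``upper subset'' condition on $A$ translates correctly into the ``lower subset'' condition on $Aw_0$, and minimality of $w_A$ in $A$ translates into maximality of $w_Aw_0$ in $Aw_0$).
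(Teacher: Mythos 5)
Your proof is correct and follows essentially the same route as the paper: the authors also derive the $(Q,\overline B)$-version of the statement by transporting Lemma \ref{1} under right translation by $w_0$, noting that $QwBw_0=Qww_0\overline B$, that this operation preserves left-$Q_1$-invariance, and that the Bruhat order is reversed. Your write-up merely spells out in more detail what the paper leaves as a one-line remark before stating the lemma.
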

\begin{lemma}  \label{overline2}If  $\mathcal P$ is a set of parabolic subgroups of $G$ containing $Q$, then  
\[
\left(\sum _{Q_1\in \mathcal P} \Ind_{Q_1}^G\charone\right)\cap  \ind_{Q }^{Q Aw_0 \overline B}\charone=\sum _{Q_1\in \mathcal P} \Ind_{Q_1} ^{Q Aw_0 \overline B} \charone.
\]
\end{lemma}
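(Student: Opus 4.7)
The plan is to deduce Lemma \ref{overline2} directly from Lemma \ref{2} via the $R$-linear automorphism of $\Ind_Q^G \charone$ given by right translation by $w_0^{-1}$. Since this is just the action of the group element $w_0^{-1} \in G$, it is an $R$-module automorphism of $\Ind_Q^G\charone$, and it preserves every $G$-subrepresentation; in particular, for each $Q_1 \in \mathcal P$, it stabilises $\Ind_{Q_1}^G\charone$, and therefore also the sum $\sum_{Q_1 \in \mathcal P} \Ind_{Q_1}^G \charone$.

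Next I would check its effect on supports. If $f \in \Ind_Q^G\charone$ has support in $QAB$, then $(f \cdot w_0^{-1})(g) = f(g w_0^{-1})$ is nonzero iff $g \in QABw_0 = QAw_0 \overline B$ (using $w_0^{-1} B w_0 = \overline B$). Hence right translation by $w_0^{-1}$ restricts to an $R$-module isomorphism $\ind_Q^{QAB}\charone \xrightarrow{\sim} \ind_Q^{QAw_0 \overline B}\charone$, and likewise for the sub-coset $A'$. Intersecting with $\Ind_{Q_1}^G\charone$, which is preserved, it sends $I_{Q_1}^{QAB}$ isomorphically onto $I_{Q_1}^{QAw_0 \overline B}$.

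Combining these observations, applying the automorphism $f \mapsto f \cdot w_0^{-1}$ to the equality of Lemma \ref{2} transports the left-hand side $\bigl(\sum_{Q_1 \in \mathcal P} \Ind_{Q_1}^G\charone\bigr) \cap \ind_Q^{QAB}\charone$ onto $\bigl(\sum_{Q_1 \in \mathcal P} \Ind_{Q_1}^G\charone\bigr) \cap \ind_Q^{QAw_0\overline B}\charone$ and the right-hand side $\sum_{Q_1 \in \mathcal P} I_{Q_1}^{QAB}$ onto $\sum_{Q_1 \in \mathcal P} I_{Q_1}^{QAw_0 \overline B}$, yielding Lemma \ref{overline2}. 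There is no real obstacle here beyond bookkeeping; the lemma is a purely formal consequence of Lemma \ref{2} and the fact that $w_0$ conjugates $B$ to $\overline B$.
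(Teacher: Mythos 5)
Your argument is exactly the paper's: the authors likewise pass from Lemma \ref{2} to Lemma \ref{overline2} by right translation by $w_0^{-1}$, using that $QwBw_0 = Qww_0\overline B$ and that this $G$-action preserves every $G$-subrepresentation $\Ind_{Q_1}^G\charone$. The bookkeeping you do on supports and on the rewriting of $A$ as the lower set $Aw_0 \subset {}^Q\mathbb W w_0$ matches the paper's treatment, so the proof is correct and coincides with the intended one.
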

Note that $$ \ind_Q^{QAw_0\overline B} \charone/\ind_Q^{QA'w_0\overline B} \charone \simeq  \ind_Q^{Q w_A w_0 \overline B} \charone $$  as  representations of $\overline B$. 
 The image of $  \Ind_Q^{QAw_0\overline B}$ in $\St_Q^G$ is denoted by   $ \St_Q^{QAw_0\overline B}.$ 
 \begin{lemma} \label{lemma:free} The $R$-modules $ \ind_Q^{QAw_0\overline B} \charone$ and  $ \St_Q^{QAw_0\overline B}$ are free.
 \end{lemma}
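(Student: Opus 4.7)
The plan is to treat the two claims separately. The first, that $\ind_Q^{QAw_0\overline{B}}\charone$ is a free $R$-module, is immediate: via restriction through a continuous section (Lemma~\ref{lemma:usef}), this space is identified with $C_c^\infty(Q\backslash QAw_0\overline{B},R)$, and the freeness lemma of Section~\ref{S:2} applies to any locally profinite space with countable basis.

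For $\St_Q^{QAw_0\overline{B}}$, I would proceed by induction on $|A|$, the empty case being trivial. For the inductive step, take $w_A$ a minimal element of $A$ and set $A'=A\setminus\{w_A\}$ (still an upper subset of ${}^Q\mathbb{W}$). Lemma~\ref{overline2} yields
\[
\St_Q^{QAw_0\overline{B}}\simeq \ind_Q^{QAw_0\overline{B}}\charone\bigm/\sum_{Q_1\supsetneq Q}I_{Q_1}^{QAw_0\overline{B}},
\]
and the analogous formula for $A'$, so the graded quotient $\St_Q^{QAw_0\overline{B}}/\St_Q^{QA'w_0\overline{B}}$ identifies with the cokernel of
\[
\sum_{Q_1\supsetneq Q}\bigl(I_{Q_1}^{QAw_0\overline{B}}/I_{Q_1}^{QA'w_0\overline{B}}\bigr)\longrightarrow \ind_Q^{QAw_0\overline{B}}\charone/\ind_Q^{QA'w_0\overline{B}}\charone\simeq \ind_Q^{Qw_Aw_0\overline{B}}\charone.
\]
By the dichotomy in Lemma~\ref{lemma:overline1}, each summand on the left is either an isomorphism onto the target (if some $Q_1\supsetneq Q$ satisfies $w_A\in{}^{Q_1}\mathbb{W}$) or zero. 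Consequently the cokernel is either $0$ or isomorphic to $\ind_Q^{Qw_Aw_0\overline{B}}\charone$, and in both cases it is a free $R$-module by the first assertion applied to the singleton $\{w_A\}$.

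Since the graded piece is free and thus projective, the short exact sequence $0\to \St_Q^{QA'w_0\overline{B}}\to \St_Q^{QAw_0\overline{B}}\to (\text{graded piece})\to 0$ splits as $R$-modules, so the freeness of $\St_Q^{QA'w_0\overline{B}}$ provided by the induction hypothesis transports to $\St_Q^{QAw_0\overline{B}}$, closing the induction. I do not expect any genuine obstacle in this argument; the only slightly delicate bookkeeping is combining Lemmas~\ref{overline2} and~\ref{lemma:overline1} to see that the graded piece of the filtration on $\St$ matches cleanly the corresponding graded piece on $\ind$.
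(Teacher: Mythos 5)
Your proof is correct, but it follows a genuinely different route from the one in the paper. The paper argues by base change: it cites Ly's result that $C_c^\infty(Q\backslash QAw_0\overline B,\mathbb Z)$ and $\St_Q^G(\mathbb Z)$ are free $\mathbb Z$-modules, observes that the submodule $\St_Q^A(\mathbb Z)\subset\St_Q^G(\mathbb Z)$ is therefore also free, and then deduces the statement over a general $R$ by showing that the short exact sequence $0\to K(\mathbb Z)\to\ind_Q^A(\mathbb Z)\to\St_Q^A(\mathbb Z)\to 0$ consists of free $\mathbb Z$-modules and hence stays exact after $\otimes_{\mathbb Z}R$, giving $\St_Q^A(R)\simeq\St_Q^A(\mathbb Z)\otimes_{\mathbb Z}R$. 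You argue instead entirely over $R$, by induction on $|A|$: combining Lemma~\ref{overline2} (to identify the kernel of $\ind_Q^A\to\St_Q^A$) with the dichotomy of Lemma~\ref{lemma:overline1} you show that each graded piece $\St_Q^A/\St_Q^{A'}$ is either zero or isomorphic to $\ind_Q^{Qw_Aw_0\overline B}\charone\simeq C_c^\infty(Q\backslash Qw_Aw_0\overline B,R)$, which is free, so the filtration has projective graded pieces and splits step by step. In effect you re-derive, as an intermediate step, precisely the content of the paper's subsequent Lemma~\ref{lemma:stein}.

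What each approach buys: the paper's argument is shorter and exhibits the $R$-Steinberg as a base change of the $\mathbb Z$-Steinberg (a fact that is independently useful), but it leans on the external input that $\St_Q^G(\mathbb Z)$ is $\mathbb Z$-free and leaves the compatibility-with-base-change verification somewhat implicit. Your argument is more self-contained --- it uses only the filtration lemmas already established in this section and needs no input over $\mathbb Z$ --- at the cost of more explicit bookkeeping with the $I_{Q_1}^{QAw_0\overline B}$, bookkeeping the paper in any case performs immediately afterwards when proving Lemma~\ref{lemma:stein}. Both routes are sound.
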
 
    \begin{proof} We denote $\St_Q^G=\St_Q^G(R)$ or $\St_Q^A=\St_Q^A(R)$ to indicate the coefficient ring $R$. The module $C_c^\infty (Q\backslash {QAw_0\overline B}, \mathbb Z) $ and $\St_Q^G (\mathbb Z)$ are free \cite{MR3402357} and a submodule of the  free $\mathbb Z$-module  $\St_Q^G (\mathbb Z)$ is free, hence   $\St_Q^A( \mathbb Z)$ is also free.   
The exact sequence of free modules defining $\St_Q^G(\mathbb Z)$ or $\St_Q^A(\mathbb Z)$  remains exact when we tensor by $R$. As $C_c^\infty (Q\backslash {QAw_0\overline B}, R)=   C_c^\infty (Q\backslash {QAw_0\overline B},\mathbb Z) \otimes_{\mathbb Z}  R$,  we have also $\St_Q^G \otimes_{\mathbb Z}R= \St_Q^G(R)$ and $\St_Q^A \otimes_{\mathbb Z}R= \St_Q^A(R)$. Thus, the lemma.
\end{proof}

 \begin{lemma} \label{lemma:stein} $ \St_Q^{QAw_0\overline B} = \St_Q^{QA'w_0\overline B} $ if $w_A\in {}^{Q_1}\mathbb W$ for some $Q_1\in \mathcal Q$ (notation of (6.2.1)). Otherwise the map  $ \ind_Q^{QAw_0\overline B}\charone \to \St_Q^{QAw_0\overline B}$ induces an isomorphism $$\ind_Q^{QAw_0\overline B}\charone/\ind_Q^{QA'w_0\overline B}\charone\simeq \St_Q^{QAw_0\overline B}/\St_Q^{QA'w_0\overline B}.$$
 \end{lemma}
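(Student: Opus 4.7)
The plan is to rewrite the definition of $\St_Q^{QAw_0\overline B}$ with the help of Lemma \ref{overline2} and then carry out a short diagram chase in each of the two cases, appealing to Lemma \ref{lemma:overline1}.

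First I would set $K_A := \sum_{Q_1\in\mathcal Q} I_{Q_1}^{QAw_0\overline B}$ and $K_{A'} := \sum_{Q_1\in\mathcal Q} I_{Q_1}^{QA'w_0\overline B}$. Since $\St_Q^G$ is by definition the cokernel of the natural map $\bigoplus_{Q_1\in\mathcal Q}\Ind_{Q_1}^G\charone \to \Ind_Q^G\charone$, Lemma \ref{overline2} applied both to $A$ and to $A'$ yields the identifications
$$\St_Q^{QAw_0\overline B} \;=\; \ind_Q^{QAw_0\overline B}\charone / K_A, \qquad \St_Q^{QA'w_0\overline B} \;=\; \ind_Q^{QA'w_0\overline B}\charone / K_{A'}.$$
In particular, the natural surjection
$$\ind_Q^{QAw_0\overline B}\charone / \ind_Q^{QA'w_0\overline B}\charone \;\twoheadrightarrow\; \St_Q^{QAw_0\overline B}/\St_Q^{QA'w_0\overline B}$$
has as kernel the image of $K_A$ in $\ind_Q^{QAw_0\overline B}\charone/\ind_Q^{QA'w_0\overline B}\charone$, and the whole problem is reduced to computing this image.

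Now I would split according to whether or not some $Q_1\in\mathcal Q$ satisfies $w_A\in{}^{Q_1}\mathbb W$. In the first case, Lemma \ref{lemma:overline1} gives for such a $Q_1$ an isomorphism $I_{Q_1}^{QAw_0\overline B}/I_{Q_1}^{QA'w_0\overline B}\xrightarrow{\sim}\ind_Q^{QAw_0\overline B}\charone/\ind_Q^{QA'w_0\overline B}\charone$, so the image of $K_A\supset I_{Q_1}^{QAw_0\overline B}$ already fills up $\ind_Q^{QAw_0\overline B}\charone/\ind_Q^{QA'w_0\overline B}\charone$, forcing $\St_Q^{QAw_0\overline B}=\St_Q^{QA'w_0\overline B}$. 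In the second case, the other alternative of Lemma \ref{lemma:overline1} gives $I_{Q_1}^{QAw_0\overline B}=I_{Q_1}^{QA'w_0\overline B}\subset\ind_Q^{QA'w_0\overline B}\charone$ for every $Q_1\in\mathcal Q$; hence $K_A\subset\ind_Q^{QA'w_0\overline B}\charone$, the kernel of the surjection is trivial, and it becomes the desired isomorphism.

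The main obstacle in this argument is really only keeping the identifications straight: checking that intersecting the defining exact sequence of $\St_Q^G$ with $\ind_Q^{QAw_0\overline B}\charone$ does produce the quotient $\ind_Q^{QAw_0\overline B}\charone/K_A$ (this is where Lemma \ref{overline2} is essential, since it interchanges intersection with a sum of $\Ind$'s), and that the kernel of the induced map on successive subquotients is correctly the image of $K_A$. Once these bookkeeping points are in place, the two clauses of the lemma correspond exactly to the two alternatives of Lemma \ref{lemma:overline1}, and no further work is needed.
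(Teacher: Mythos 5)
Your proof is correct and follows essentially the same route as the paper: both use Lemma \ref{overline2} to identify $\St_Q^{QAw_0\overline B}$ with the quotient of $\ind_Q^{QAw_0\overline B}\charone$ by $\sum_{Q_1\in\mathcal Q}I_{Q_1}^{QAw_0\overline B}$, and then apply the two alternatives of Lemma \ref{lemma:overline1} to decide whether this kernel fills up the successive subquotient or falls inside $\ind_Q^{QA'w_0\overline B}\charone$. Your write-up is slightly more explicit about the diagram chase, but the ingredients and their use are identical.
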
 
    \begin{proof}Set $\overline{I}_{Q_1,A} = I_{Q_1}^{QAw_0\overline{B}}$.
    If $w_A\in {}^{Q_1}w_0$ for some $Q_1\in \mathcal Q$, then by Lemma \ref{lemma:overline1}, 
    $\overline I_{Q,A}= \overline I_{Q_1,A}+ \overline I_{Q,A'}$ and taking images in $\St_Q^G$ we get $\St_Q^{A'}=\St_Q^{A}$. Otherwise, $\overline I_{Q_1,A}=\overline I_{Q_1,A'}$ for all  $Q_1\in \mathcal Q$  by Lemma \ref{lemma:overline1}. The kernel of the map $\overline I_{Q,A}\to \St_Q^A$ is $\sum_{Q_1\in \mathcal Q} \overline I_{Q_1,A}$ by Lemma \ref{overline2} and similarly for $A'$.  Hence the kernels of the maps $\overline I_{Q,A}\to \St_Q^A$ and $\overline I_{Q,A'}\to \St_Q^{A'}$ are the same, and we get the last assertion.
\end{proof}

  \begin{proposition} Assume  that $P_1$ and $Q_1$ contain $Q$   but that $P_1$ does not contain $Q_1$.  Then $\Ind_{Q_1}^G\charone  \cap \ind_Q^{Q\overline P_1}\charone =0$.
 \end{proposition}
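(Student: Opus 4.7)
The plan is to turn the statement into a geometric condition on $\mathrm{supp}(f)$ and derive a contradiction from the Bruhat decomposition. Suppose for contradiction that $f \in \Ind_{Q_1}^G\charone \cap \ind_Q^{Q\overline P_1}\charone$ is nonzero, and pick $g_0 \in \mathrm{supp}(f)$. Left $Q_1$-invariance forces $Q_1 g_0 \subset \mathrm{supp}(f) \subset Q\overline P_1$. Writing $g_0 = q\overline p$ with $q \in Q \subset Q_1$ and $\overline p \in \overline P_1$, we have $Q_1 q = Q_1$, and multiplying on the right by $\overline p^{-1} \in \overline P_1$ collapses the inclusion to $Q_1 \subset Q\overline P_1$.

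Next, since $Q_1 \not\subset P_1$ there exists $\alpha \in \Delta_{Q_1} \setminus \Delta_{P_1}$; and because $Q \subset Q_1 \cap P_1$ one has $\Delta_Q \subset \Delta_{P_1}$, so $\alpha \notin \Delta_Q$. A lift $n_\alpha \in \mathcal N \cap Q_1$ of $s_\alpha$ lies in $Q\overline P_1$. Since $\overline B \subset Q \cap \overline P_1$, the subset $Q\overline P_1$ is $(\overline B,\overline B)$-bistable, hence a union of Bruhat cells, so $\overline B s_\alpha \overline B \subset Q\overline P_1$. Applying the Bruhat decompositions $Q = \bigsqcup_{v \in \mathbb W_Q}\overline B v \overline B$, $\overline P_1 = \bigsqcup_{w \in \mathbb W_{P_1}}\overline B w \overline B$ together with the standard product inclusion $\overline B v \overline B \cdot \overline B w \overline B \subset \bigcup_{u \leq vw}\overline B u \overline B$, we find $s_\alpha \leq vw$ for some $v \in \mathbb W_Q$, $w \in \mathbb W_{P_1}$; by the subword property of the Bruhat order, $\alpha$ must belong to $\Delta_Q \cup \Delta_{P_1}$, contradicting the choice of $\alpha$.

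The main obstacle is the very first reduction from ``$Q_1 g_0 \subset Q\overline P_1$ for some $g_0$'' to the clean group-theoretic statement $Q_1 \subset Q\overline P_1$: one might hope instead to fit $Q\overline P_1$ into the filtration formalism $QAw_0\overline B$ of Lemmas \ref{lemma:overline1}--\ref{overline2} and iterate a vanishing argument cell-by-cell, but $Q\overline P_1$ is typically not of that form, which is why the direct Bruhat-theoretic route above seems preferable.
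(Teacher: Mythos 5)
Your opening reduction --- from ``$Q_1 g_0\subset Q\overline P_1$ for some $g_0$'' to ``$Q_1\subset Q\overline P_1$'' --- is correct and matches what the paper does. But the Bruhat-theoretic argument that follows has a genuine error, stemming from the claim ``$\overline B\subset Q\cap\overline P_1$.'' Since $Q$ is a \emph{standard} parabolic it contains $B$, not $\overline B$ (unless $Q=G$); in particular $Q$ is not $(\overline B,\overline B)$-bistable, so the ``Bruhat decomposition'' $Q=\bigsqcup_{v\in\mathbb W_Q}\overline Bv\overline B$ is false, and with it the deduction that every cell $\overline Bu\overline B$ inside $Q\overline P_1$ has $u\le vw$ with $v\in\mathbb W_Q$, $w\in\mathbb W_{P_1}$. (The set $Q\overline P_1=P_1\overline P_1$ \emph{is} $(\overline B,\overline B)$-bistable, but for a different reason, namely $P_1\overline P_1=\overline P_1P_1$; this does not give the cell count you want.)

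A concrete counterexample to the cell claim: take $G=SL_3$, $Q=B$, $P_1=P_{\alpha_2}$, $Q_1=P_{\alpha_1}$. Then $\mathbb W_Q=\{1\}$, $\mathbb W_{P_1}=\{1,s_{\alpha_2}\}$, so your criterion would exclude $\overline Bs_{\alpha_1}\overline B$ from $Q\overline P_1$. Yet for any $t\ne 0$ the element $\begin{pmatrix}1&t&0\\0&1&0\\0&0&1\end{pmatrix}\in N_{\alpha_2}\subset Q\overline P_1$ lies in $\overline Bs_{\alpha_1}\overline B$ (reduce by elementary row/column operations from $\overline B$ on both sides). So the cell $\overline Bs_{\alpha_1}\overline B$ does sit inside $Q\overline P_1$, and the subword argument at the end never gets off the ground.

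The proposition is still true in this example --- a lift of $s_{\alpha_1}$ genuinely fails to lie in $N_1M_1\overline N_1$, because the relevant $2\times 2$ block is singular --- it is just that detecting this requires a finer tool than $(\overline B,\overline B)$-cells. The paper does it by observing that $Q_1\subset P_1\overline P_1$ forces $M_{Q_1}\subset P_1\overline P_1$, and then conjugating a putative decomposition $y=n_1m_1\overline n_1$ by central elements of $M_{Q_1}$ to produce an Iwahori-type factorization $M_{Q_1}=(M_{Q_1}\cap N_1)(M_{Q_1}\cap M_1)(M_{Q_1}\cap\overline N_1)$, which is impossible when $M_{Q_1}\cap P_1$ is a proper parabolic of $M_{Q_1}$. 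If you want to keep a Weyl-group flavour, the fact you actually need is that a Weyl group representative $n_w$ lies in the big cell $N_1M_1\overline N_1$ if and only if $w\in\mathbb W_{M_1}$; this follows from the $(P_1,\overline P_1)$-Bruhat decomposition, not from products of $\overline B$-cells.
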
 
 \begin{proof} 
We prove that  the assumptions of the proposition imply that $Q\overline P_1$ does not contain any coset $Q_1 x$. We note that  $P_1\supset Q$ implies 
\begin{equation}\label{eq:QP1}Q\overline P_1= P_1\overline P_1= N_1 M_1 \overline N_1.
\end{equation} The inclusion $P_1\overline P_1\supset Q\overline P_1$ is obvious, and the inverse inclusion   (and the second equality) follows from $N_1\subset N_Q$ and $P_1\overline P_1=N_1\overline P_1, Q\overline P_1=N_Q\overline P_1$.
If  $Q\overline P_1$ contains a coset $Q_1 x$, we can suppose that $x= \overline p_1$ with $ \overline p_1\in \overline P_1$. We have $N_1\subset N_Q \subset  Q_1$ and  $Q_1 \overline p_1 \subset P_1\overline P_1$ implies $Q_1\subset P_1\overline P_1$, in particular $M_{Q_1}\subset P_1\overline P_1$. By that  latter inclusion, for $y\in M_{Q_1}$  there exist unique  $n_1\in N_1, m_1\in M_1, \overline n_1\in \overline N_1$ with $y=n_1m_1  \overline n_1$. For any central element $z $ of $M_{Q_1}$, we have $zyz^{-1}=y$ and by uniqueness $zn_1z^{-1}=n_1,\ zm_1z^{-1}=m_1,  \ z\overline{n}_1z^{-1}=\overline{n}_1$. But then,  $n_1\in N_{Q_1}, m_1\in M_{Q_1}, \overline{n}_1\in \overline N_{Q_1}$ and we deduce  $M_{Q_1}=(M_{Q_1}\cap N_1)(M_{Q_1}\cap M_1)(M_{Q_1}\cap \overline N_1)$; this contradicts the fact that $M_{Q_1}\cap P_1$ is a proper parabolic subgroup of $M_{Q_1}$ when $P_1 $ does not contain $Q_1$.
  \end{proof}
  \begin{corollary} \label{cor:7.15} For $P_1\supset Q$, the exact sequence \eqref{eq:st} induces   an exact sequence of $\overline P_1$-modules
  $$0\to \sum_{Q \subsetneq Q_1\subset P_1}(\ind_{Q_1}^G\charone \cap \ind_Q^{Q\overline P_1}\charone) \to \ind_Q^{Q \overline P_1}\charone  \to  \St_Q^{Q \overline P_1}\to 0.
  $$  
    \end{corollary}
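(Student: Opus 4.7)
My plan is to obtain the claimed exact sequence by restricting to $Q\overline{P}_1$ the tautological sequence
\[
\textstyle\sum_{Q_1 \in \mathcal{Q}} \Ind_{Q_1}^G \charone \to \Ind_Q^G \charone \to \St_Q^G \to 0
\]
that defines $\St_Q^G$, where $\mathcal{Q}$ denotes the set of parabolic subgroups of $G$ strictly containing $Q$. The subset $Q\overline{P}_1$ is $(Q,\overline{B})$-stable and, containing the big cell $Q\overline{B}$, is open in $G$; it therefore has the form $QAw_0\overline{B}$ for some non-empty upper subset $A$ of ${}^Q\mathbb{W}$, which puts Lemma~\ref{overline2} at our disposal. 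Since $\overline{P}_1$ acts on $Q\overline{P}_1$ by right translation, the subspace $\ind_Q^{Q\overline{P}_1}\charone \subset \Ind_Q^G\charone$ is a sub-$\overline{P}_1$-module.

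I would then argue exactness in three short steps. On the right, $\St_Q^{Q\overline{P}_1}$ is by definition the image of $\ind_Q^{Q\overline{P}_1}\charone$ in $\St_Q^G$, so surjectivity is automatic. The kernel of that surjection is $\ind_Q^{Q\overline{P}_1}\charone \cap \sum_{Q_1 \in \mathcal{Q}} \Ind_{Q_1}^G\charone$; Lemma~\ref{overline2} applied to $\mathcal{P} = \mathcal{Q}$ rewrites it as $\sum_{Q_1 \in \mathcal{Q}}(\Ind_{Q_1}^G\charone \cap \ind_Q^{Q\overline{P}_1}\charone)$. Finally, the preceding proposition guarantees that $\Ind_{Q_1}^G\charone \cap \ind_Q^{Q\overline{P}_1}\charone = 0$ whenever $Q_1 \not\subset P_1$, so only summands with $Q \subsetneq Q_1 \subset P_1$ survive, yielding precisely the submodule appearing in the corollary. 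Injectivity of the leftmost map is tautological (the sum sits inside $\ind_Q^{Q\overline{P}_1}\charone$).

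There is no substantive obstacle: the two essential inputs, Lemma~\ref{overline2} and the preceding proposition, have already been established, and the remaining task is the formal book-keeping just described, together with the observation that every map involved is $\overline{P}_1$-equivariant because $Q\overline{P}_1$ is stable under right translation by $\overline{P}_1$.
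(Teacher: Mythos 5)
Your proof is correct and is essentially the argument the paper intends: the corollary is stated with no written proof because it follows directly from the proposition that precedes it together with Lemma~\ref{overline2}. Your three observations---(a) $Q\overline P_1$ is an open union of $(Q,\overline B)$-double cosets, hence of the form $QAw_0\overline B$ for a non-empty upper subset $A$ of ${}^Q\mathbb W$, so Lemma~\ref{overline2} applies; (b) by definition the kernel of $\ind_Q^{Q\overline P_1}\charone\to\St_Q^{Q\overline P_1}$ is the intersection of $\ind_Q^{Q\overline P_1}\charone$ with $\sum_{Q_1\in\mathcal Q}\Ind_{Q_1}^G\charone$, which Lemma~\ref{overline2} identifies with $\sum_{Q_1\in\mathcal Q}\bigl(\Ind_{Q_1}^G\charone\cap\ind_Q^{Q\overline P_1}\charone\bigr)$; and (c) the preceding proposition kills the summands with $Q_1\not\subset P_1$---are exactly the intended chain, and the $\overline P_1$-equivariance you note is transparent since all the spaces involved are defined by restriction to a right $\overline P_1$-stable open subset.
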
 

 \subsection{Case $P_1 \supset P$}\label{S:7.3}
 Assume  that $\sigma$ is $e$-minimal, hence
   $\Delta_M$ is orthogonal to $ \Delta\setminus \Delta_M$, and that  $P_1 \supset P$ in this whole section \S \ref{S:7.3}. We start the proof of the theorem \ref{thm:6.1} (ii).
 
  \begin{proposition} \label{prop:outside} Assume  $\sigma_{p-ord}=\{0\}$. When
  $w\in  \mathbb W\setminus \mathbb W_{Q} \mathbb W_{M_1}$, 
 $$\Hom_{\overline N_1}(C_c^\infty(\overline N _1,R), e(\sigma) \otimes \ind_Q^{Qw \overline B}\charone)=0$$   \end{proposition}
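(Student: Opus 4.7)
The strategy is to apply Corollary~\ref{cor:6.13} after rewriting the representation as a space of $V$-valued functions on which $\overline N_1$ acts trivially on $V$. First, I fix $n\in \mathcal N$ representing $w$; the orbit map $b\mapsto Qnb$ induces an $\overline N_1$-equivariant homeomorphism $H_n\backslash\overline B \xrightarrow{\sim} Q\backslash Qw\overline B$, where $H_n=n^{-1}Qn\cap\overline B = Z\prod_{\alpha\in\Phi^-_{red},\,w(\alpha)\in\Phi_Q}U_\alpha$. This gives the $\overline N_1$-equivariant identification $\ind_Q^{Qw\overline B}\charone \simeq C_c^\infty(H_n\backslash\overline B,R)$ with $\overline N_1$ acting by right translation. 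Second, since $\sigma$ is $e$-minimal with $P(\sigma)=G$, Lemma~\ref{lemma:2.2} yields the orthogonal decomposition $\Delta=\Delta_P\sqcup\Delta_\sigma$, whence $\Phi$ splits as the disjoint union of root systems $\Phi(M)\sqcup\Phi(M_\sigma)$. Since $P\subset P_1$, every $\alpha\in\Phi^-\setminus\Phi^-_{M_1}$ has support meeting $\Delta_\sigma$ and, by connectedness of root supports in each irreducible component, lies entirely in $\Phi(M_\sigma)$; hence $\overline N_1\subset M'_\sigma$ and $e(\sigma)$ is trivial on $\overline N_1$. This yields, as $\overline N_1$-modules,
\[
e(\sigma)\otimes\ind_Q^{Qw\overline B}\charone \simeq C_c^\infty(H_n\backslash\overline B,\, e(\sigma)),
\]
with $V=e(\sigma)$ carrying the trivial $\overline N_1$-action and satisfying $\bigcap_k p^kV=0$ by the hypothesis $\sigma_{p-ord}=0$.

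I would then apply Corollary~\ref{cor:6.13} with $H=\overline N_1$, $X=H_n\backslash\overline B$ and $V=e(\sigma)$. If the Hom were non-zero, some point stabilizer $\overline N_{1,x}$ would admit an $\mathbb F_p$-valued left Haar measure; but every infinite closed subgroup of the pro-$p$ group $\overline N_1$ lacks such a measure by Corollary~\ref{cor:5.4}, so such a stabilizer would have to be trivial. The stabilizer of $H_n b$ equals $\overline N_1\cap b^{-1}H_n b$, and using the factorization $\overline B=(M_1\cap\overline B)\overline N_1$ together with the fact that both factors normalize $\overline N_1$, one computes $\overline N_1\cap b^{-1}H_n b = b^{-1}(\overline N_1\cap H_n)b$ for every $b\in\overline B$. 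Thus all stabilizers are $\overline B$-conjugate to $\overline N_1\cap H_n$, which is the product of the root subgroups $U_\alpha$ for $\alpha\in\Phi^-\setminus\Phi^-_{M_1}$ with $w(\alpha)\in\Phi_Q$, so it is either trivial or contains a whole root subgroup and is therefore infinite.

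It remains to establish the combinatorial claim: $\overline N_1\cap H_n\neq\{1\}$ whenever $w\notin\mathbb W_Q\mathbb W_{M_1}$, equivalently $w(\Phi_{N_1})\not\subset\Phi_{N_Q}$. This condition depends only on the double coset $\mathbb W_Q w\mathbb W_{M_1}$ by the $\mathbb W_Q$-stability of $\Phi_{N_Q}$ and the $\mathbb W_{M_1}$-stability of $\Phi_{N_1}$ (the latter being a standard consequence of the simple-root coefficient argument used in the proof of Lemma~\ref{lemma:5.5}), so I may replace $w$ by the minimal-length representative of its double coset. That representative satisfies $w^{-1}(\Phi^+_{M_Q})\subset\Phi^+$ and $w(\Phi^+_{M_1})\subset\Phi^+$; if moreover $w(\Phi_{N_1})\subset\Phi_{N_Q}\subset\Phi^+$, then $w(\Phi^+)\subset\Phi^+$, forcing $w=1\in\mathbb W_Q\mathbb W_{M_1}$, contrary to hypothesis. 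The contrapositive concludes the proof. The main obstacle I anticipate lies in this last step, specifically in verifying the $\mathbb W_Q$- and $\mathbb W_{M_1}$-stability that makes the condition well-defined on double cosets, and in the passage to the minimal-length representative, both of which follow the pattern of Lemma~\ref{lemma:5.5}.
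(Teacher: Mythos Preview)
Your proof is correct and follows essentially the same route as the paper: apply Corollary~\ref{cor:6.13} with $H=\overline N_1$ and $X=Q\backslash Qw\overline B$ (which you identify with $H_n\backslash\overline B$), reduce to showing all $\overline N_1$-stabilizers are infinite, and observe these are conjugates of $\overline N_1\cap n^{-1}Qn$, which contains a root subgroup by a coset argument on $\mathbb W_Q\backslash\mathbb W/\mathbb W_{M_1}$. The only cosmetic difference is that the paper simply cites Lemma~\ref{lemma:5.5} (applied with $P$ replaced by $Q$) to produce $\beta\in\Phi_{\overline N_1}$ with $w(\beta)\in\Phi_{N_Q}$, whereas you re-run that minimal-length-representative argument inline and obtain the slightly weaker (but sufficient) conclusion $w(\Phi_{N_1})\not\subset\Phi_{N_Q}$; both establish that the relevant stabilizer is nontrivial, hence infinite.
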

 Note that $w\in  \mathbb W\setminus \mathbb W_{Q} \mathbb W_{M_1}$ is equivalent to $Qw \overline B\not\subset Q\overline P_1$ and that   $\overline N_1$ acts trivially on $e(\sigma)$ because $P_1 \supset P$ as in (6.2.1).
 \begin{proof}   As $\sigma_{p-ord}=0$, Corollary \ref{cor:6.13} applied to  $H=\overline N_1$, $X=Q\backslash Qw \overline B, V $ the space of $\sigma$,  implies 
$$\Hom_{\overline N_1}(C_c^\infty(\overline N_1,R), e(\sigma) \otimes \ind_Q^{Qw \overline B} \charone )=\Hom_{\overline N_1}(C_c^\infty(\overline N_1,R), e(\sigma) \otimes C_c^\infty(Q\backslash {Qw \overline B},R )=0, $$
if the $\overline N_1$-fixator of any coset  $Qx$ contained in $Qw \overline B$ is infinite (the  infinite closed subgroups of   $\overline N_1$ being  locally pro-$p$-groups  do not admit an $\mathbb F_p$-valued   Haar measure). 
This latter property is equivalent to $Q\cap w \overline N_1 w^{-1}$ infinite, because $\overline N_1$ is normalized by  $\overline{P}_1\supset \overline{U}$. 
Indeed, $Qw\overline B= Qw\overline U$ and $Qx=Qw\overline u$ with $\overline u\in \overline U$.  For $\overline n_1 \in \overline N_1$, $Qw\overline u\overline n_1=Qw\overline u$ if and only if $\overline u\overline n_1 \overline u^{-1}$ fixes $Qw$ if and only if $\overline u\overline n_1 \overline u^{-1} \in w^{-1} Q w\cap  \overline N_1$.

When $w\in  \mathbb W\setminus \mathbb W_{Q} \mathbb W_{M_1}$, there exists  $\beta\in -\Phi_{N_1}=\Phi_{\overline N_1}$ with $w(\beta)\in \Phi_{N_Q}$  by    Lemma \ref{lemma:5.5}. The group $Q\cap w \overline N_1 w^{-1}$ is infinite because it contains  $U_{w(\beta)}$.  We get the proposition.
 \end{proof}

    \begin{corollary} \label{cor:Ord} When  $\sigma_{p-ord}=\{0\}$, we have
 \begin{align*}\Hom_{\overline N_1}(C_c^\infty(\overline N_1,R), e(\sigma) \otimes  \Ind_Q^G\charone )  = \Hom_{\overline N_1}(C_c^\infty(\overline N_1,R), e(\sigma) \otimes   \ind_Q^{Q\overline P_1}\charone ), \\\Hom_{\overline N_1}(C_c^\infty(\overline N_1,R), e(\sigma) \otimes \St_Q^G) =\Hom_{\overline N_1}(C_c^\infty(\overline N_1,R), e(\sigma) \otimes  \St_Q^{Q\overline P_1}) .
 \end{align*}
  \end{corollary}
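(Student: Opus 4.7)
The plan is to reach both equalities by filtering $\Ind_Q^G\charone$ (respectively $\St_Q^G$) along a chain of open subsets coming from $(Q,\overline B)$-double cosets, taking the piece coming from $Q\overline P_1$ as the base of the chain, and then showing via Proposition~\ref{prop:outside} that every successive quotient beyond $Q\overline P_1$ contributes nothing to the Hom on the left.

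First, $\Ind_Q^G\charone = \ind_Q^G\charone$ since $Q\backslash G$ is compact. The open subset $Q\overline P_1$ is a union of $(Q,\overline B)$-double cosets $Qv\overline B$, whose indices form a lower subset $A_0 w_0 = (\mathbb W_Q\mathbb W_{M_1}\cap {}^Q\mathbb W)w_0 \subset {}^Q\mathbb W w_0$ (this uses $\overline P_1 = \overline B\mathbb W_{M_1}\overline B$ together with the openness of $Q\overline P_1$). I will then extend $A_0 w_0$ to a maximal chain $A_0 w_0 \subsetneq A_1 w_0 \subsetneq \cdots \subsetneq A_N w_0 = {}^Q\mathbb W w_0$ of lower subsets obtained by adjoining one maximal element $w_{A_{i+1}}w_0$ at each stage; for $i\geq 0$, the added $w_{A_{i+1}}$ lies in $\mathbb W\setminus\mathbb W_Q\mathbb W_{M_1}$ by construction.

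At each step I will use the short exact sequence
\[
0 \to \ind_Q^{QA_i w_0\overline B}\charone \to \ind_Q^{QA_{i+1}w_0\overline B}\charone \to \ind_Q^{Qw_{A_{i+1}}w_0\overline B}\charone \to 0,
\]
together with its Steinberg analog provided by Lemma~\ref{lemma:stein}, whose right-hand term is either zero or again $\ind_Q^{Qw_{A_{i+1}}w_0\overline B}\charone$. That right-hand term is $R$-free (Lemma~\ref{lemma:free}), so tensoring over $R$ with $e(\sigma)$ preserves exactness. Under the running hypothesis $P_1\supset P$ combined with the orthogonality $\Delta_M \perp \Delta\setminus\Delta_M$ coming from $e$-minimality of $\sigma$, one has $\overline N_1 \subset M'_\sigma$, hence $\overline N_1$ acts trivially on $e(\sigma)$, and Proposition~\ref{prop:outside} annihilates the Hom of the right-hand term. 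Applying the left-exact functor $\Hom_{\overline N_1}(C_c^\infty(\overline N_1, R), e(\sigma)\otimes -)$, each inclusion (and its Steinberg analog) yields an equality of Hom-spaces; iterating from $i=0$ to $i=N-1$ gives both claimed equalities at once. The only mildly delicate bookkeeping is the combinatorial identification of $Q\overline P_1$ as the base of the filtration; everything else is a formal exactness argument built on the machinery of \S\ref{S:7.2}.
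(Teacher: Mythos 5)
Your proposal follows the paper's own proof almost line for line: filter $\Ind_Q^G\charone$ and $\St_Q^G$ by open unions of $(Q,\overline B)$-double cosets using the machinery of \S\ref{S:7.2}, use $R$-freeness (Lemma~\ref{lemma:free}) so that tensoring with $e(\sigma)$ preserves exactness, use Lemma~\ref{lemma:stein} for the Steinberg step, and annihilate every successive quotient beyond $Q\overline P_1$ via Proposition~\ref{prop:outside} applied under the left-exact $\Hom$-functor. Insisting that the chain refine the Bruhat order from the very beginning, instead of only for the Steinberg half as the paper does, is a harmless packaging choice.

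There is, however, a $w_0$-bookkeeping error that breaks the argument as written. The $(Q,\overline B)$-double cosets contained in $Q\overline P_1$ are indexed, using the maximal-length representatives in ${}^Q\mathbb W w_0$, by $\mathbb W_Q\mathbb W_{M_1}\cap{}^Q\mathbb W w_0$; equivalently the corresponding upper set in ${}^Q\mathbb W$ is $A_0={}^Q\mathbb W\cap(\mathbb W_Q\mathbb W_{M_1}w_0)$. You instead take $A_0w_0=(\mathbb W_Q\mathbb W_{M_1}\cap{}^Q\mathbb W)w_0$, i.e.\ $A_0={}^Q\mathbb W\cap\mathbb W_Q\mathbb W_{M_1}$, and these differ whenever $w_0\notin\mathbb W_Q\mathbb W_{M_1}$. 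For $G=SL_3$, $Q=P_\alpha$, $P_1=P_\beta$, the correct set of maximal-length representatives is $\{s_\alpha,\,s_\alpha s_\beta\}$, while your recipe gives $\{w_0,\,s_\alpha s_\beta\}$: one cell inside $Q\overline P_1$ is dropped and the unique cell outside it is included. The error propagates: the cell you adjoin at step $i+1$ is $Qw_{A_{i+1}}w_0\overline B$, so what Proposition~\ref{prop:outside} requires is $w_{A_{i+1}}w_0\notin\mathbb W_Q\mathbb W_{M_1}$ (the condition on a representative of that coset), not $w_{A_{i+1}}\notin\mathbb W_Q\mathbb W_{M_1}$ as you assert ``by construction''; these are genuinely different conditions since they concern different $\mathbb W_Q$-cosets. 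Once $A_0$ is fixed as above, the condition $w_{A_{i+1}}w_0\notin\mathbb W_Q\mathbb W_{M_1}$ does hold automatically (it is exactly the statement that the adjoined cell is outside $Q\overline P_1$), and then the rest of your argument goes through unchanged.
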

 
\begin{proof}$Q\overline P_1$ is open in $G$ (a union of $Q$-translates of $N_1\overline P_1$) and there is a sequence of double cosets $Qw_i\overline B, \ w_i\in \mathbb W, \  i=1,\ldots, r$, disjoint form each other and not contained in $Q\overline P_1$ such that
\[X_i=Q\overline P_1 \sqcup \left(\bigsqcup_{j\leq i} Qw_j\overline B\right)\]
is open in $G$  and  $G= X_r$.
  We reason by descending induction on $i\leq r$. Consider the exact sequence of free $R$-modules (Lemma \ref{lemma:free})
 $$0\to\ind_{Q}^{X_{i-1}}\charone \to  \ind_{Q}^{X_{i}}\charone \to \ind_{Q}^{Qw_i \overline B} \charone \to 0.$$
  Tensoring by $e(\sigma)$ keeps an exact sequence, 
  and  applying $\Hom_{\overline N_1}(C_c^\infty(\overline N_1,R), - )$ we  obtain an  isomorphism    (Proposition \ref{prop:outside} and the latter functor is left exact)  
  $$\Hom_{\overline N_1}(C_c^\infty(\overline N_1,R), e(\sigma) \otimes \ind_{Q}^{X_{i-1}}\charone )\xrightarrow{\simeq}\Hom_{\overline N_1}(C_c^\infty(\overline N_1,R),e(\sigma) \otimes \ind_{Q}^{X_{i}}\charone).$$
  Composing these isomorphisms we get the first equality of the corollary.
    For the second equality, we suppose that each  $w_i$ has maximal length in the coset $\mathbb W_Qw_i$ and is maximal in $\{w_1,\dots,w_i\}$ for the Bruhat order. This is possible because
  $Q\overline P_1 = \bigcup_{w\in \mathbb W_{Q}\mathbb W_{M_1}} Qw \overline P_1 $ and $\mathbb W_{Q}\mathbb W_{M_1}$ is a lower set for the Bruhat order hence there are no $w,w' \in \mathbb W$ of maximal length in their cosets $\mathbb W_Q w, \mathbb W_Qw'$ with 
    $w\geq w'$ and $Qw\subset Q\overline P_1$  but $Qw'\not\subset Q\overline P_1$. Now, we have the exact sequence of free $R$-modules (Lemma \ref{lemma:free}), 
 $$0 \to     \St_{Q}^{X_{i-1}}\to  \St_{Q}^{X_{i}}\to Y_i\to 0$$ where $Y_i$ is either $0$ or $\ind_{Q}^{Qw_i \overline B} \charone $ by lemma \ref{lemma:stein}. Tnen proceeding  as above   for the first equality,  we get the second equality of the corollary.
 \end{proof}

     \begin{proposition}\label{prop:StOrd} Assume $R$  noetherian, $\sigma$ admissible, $\sigma_{p-ord}=0$ and $P_1\supset Q$. 
     Then
 $\Ord_{\overline P_1}^G (e(\sigma)\otimes \Ind_Q^G\charone )$ and  $\Ord_{\overline P_1}^G  (e(\sigma)\otimes \St_Q^G )$ are naturally isomorphic to   $e_{M_1}(\sigma) \otimes\Ind_{Q\cap M_1}^{M_1}\charone $ and  $e_{M_1}(\sigma) \otimes \St_{Q\cap M_1}^{M_1} $.
\end{proposition}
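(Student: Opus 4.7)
The plan is to first localize to the open subset $Q\overline P_1\subset G$ via Corollary~\ref{cor:Ord}, then produce an $\overline P_1$-equivariant identification of $e(\sigma)\otimes \ind_Q^{Q\overline P_1}\charone$ and $e(\sigma)\otimes \St_Q^{Q\overline P_1}$ with compact inductions $\ind_{M_1}^{\overline P_1}V$ from $M_1$, and finally compute $\Ord_{\overline P_1}^G$ on such objects via Proposition~\ref{prop:7.4}.

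Taking $Z(M_1)$-finite parts in Corollary~\ref{cor:Ord}, whose hypotheses hold since $P_1\supset P$ and $\sigma_{p-\mathrm{ord}}=0$, reduces both computations to the representations supported on $Q\overline P_1$. Since $P_1\supset Q$ one has $Q\cap \overline P_1 = M_1\cap Q$, so the Arens homeomorphism in the remark after Lemma~\ref{lemma:useful} gives an $\overline P_1$-isomorphism $\ind_Q^{Q\overline P_1}\charone \simeq \ind_{M_1\cap Q}^{\overline P_1}\charone$. A first application of Lemma~\ref{lemma:useful} transports $e(\sigma)$ inside as $\ind_{M_1\cap Q}^{\overline P_1}(e(\sigma)|_{M_1\cap Q})$; the restriction $e(\sigma)|_{M_1\cap Q}$ is a pullback from $M_Q$ because every root of $M_1\cap N_Q$ lies in $\Phi_{M_1}\setminus\Phi_{M_Q}\subset \Phi_{M_2}$ by the orthogonality $\Delta_M\perp \Delta\setminus \Delta_M$, and hence in $M_2'$ where $e(\sigma)$ is trivial. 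Transitivity of compact induction and a second application of Lemma~\ref{lemma:useful} inside $M_1$ then combine to give
\[
e(\sigma)\otimes \ind_Q^{Q\overline P_1}\charone \;\simeq\; \ind_{M_1}^{\overline P_1}\bigl(e_{M_1}(\sigma)\otimes \Ind_{Q\cap M_1}^{M_1}\charone\bigr).
\]
Running the same chain through the defining exact sequence of $\St_Q^{Q\overline P_1}$ from Corollary~\ref{cor:7.15} term by term, and using right exactness of $e(\sigma)\otimes_R -$ together with exactness of $\ind_{M_1}^{\overline P_1}$, produces the analogous identification for $e(\sigma)\otimes \St_Q^{Q\overline P_1}$ with $\St_{Q\cap M_1}^{M_1}$ replacing $\Ind_{Q\cap M_1}^{M_1}\charone$.

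It remains to show, for $V$ either of the two $M_1$-modules $e_{M_1}(\sigma)\otimes \Ind_{Q\cap M_1}^{M_1}\charone$ or $e_{M_1}(\sigma)\otimes \St_{Q\cap M_1}^{M_1}$, that the natural $M_1$-equivariant map $\iota\colon V \to \Ord_{\overline P_1}^G(\ind_{M_1}^{\overline P_1}V)$, $v\mapsto (f\mapsto f\otimes v)$, is an isomorphism. As an $\overline N_1$-module, $\ind_{M_1}^{\overline P_1}V\simeq C_c^\infty(\overline N_1, V)$ with trivial $\overline N_1$-action on $V$; picking $z\in Z(M_1)$ that strictly contracts $\overline N_1$ places $t = \mathrm{conj}(z^{-1})$ in the setting of \S\ref{S:7.1} with its $W$ taken to be our $V$, and Proposition~\ref{prop:7.4} identifies the $z^{-1}$-finite part of $\Hom_{R[\overline N_1]}(C_c^\infty(\overline N_1, R), C_c^\infty(\overline N_1, V))$ precisely with $\iota(V^{z^{-1}-f})$. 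Admissibility of $\sigma$ together with noetherianity of $R$ force $V^{z^{-1}-f} = V$: for $w\in e_{M_1}(\sigma)$ one has $w\in \sigma^J$ for some compact open $J\subset M$, and since $Z(M_1)\subset Z\subset M$ commutes with $M$, the submodule $R[z^{-1}]\cdot w$ sits inside the finitely generated $\sigma^J$; moreover $z\in Z(M_1)\subset M_Q\subset M_1\cap Q$ acts trivially on $\Ind_{Q\cap M_1}^{M_1}\charone$ and $\St_{Q\cap M_1}^{M_1}$ by left $(M_1\cap Q)$-invariance. The same admissibility argument shows $\iota(V)$ is contained in the $Z(M_1)$-finite part, so the chain $\iota(V)\subset Z(M_1)\text{-finite part}\subset z^{-1}\text{-finite part} = \iota(V)$ collapses and yields $\Ord_{\overline P_1}^G(\ind_{M_1}^{\overline P_1}V) = V$.

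The main obstacle is this final step: Proposition~\ref{prop:7.4} only controls the $z^{-1}$-finite part, whereas $\Ord_{\overline P_1}^G$ is defined as the $Z(M_1)$-finite part, and the admissibility-driven local finiteness argument just sketched is what bridges the two notions. The hypothesis $\sigma_{p-\mathrm{ord}}=0$ is used only at the outset to invoke Corollary~\ref{cor:Ord}.
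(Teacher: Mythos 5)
Your proposal follows the same route as the paper's proof: localize to the open cell $Q\overline P_1$ via Corollary~\ref{cor:Ord}, identify $e(\sigma)\otimes\ind_Q^{Q\overline P_1}\charone$ and its Steinberg analogue $\overline P_1$-equivariantly with $\ind_{M_1}^{\overline P_1}V$ (the paper writes this as $W\otimes C_c^\infty(\overline N_1,R)$, which is the same thing), apply Proposition~\ref{prop:7.4} to pin down the $z^{-1}$-finite part, and use admissibility of $\sigma$ over noetherian $R$ to show the $z^{-1}$-finite and $Z(M_1)$-finite parts both coincide with $\iota(V)$. One justification should be corrected: ``$Z(M_1)\subset Z\subset M$ commutes with $M$'' is false as stated, since $Z$ is not central in $M$; what you actually need, and what is true, is that $Z(M_1)$ is central in $M_1\supset M$, so it normalizes any compact open $J\subset M$ and stabilizes $\sigma^J$ (and likewise its triviality on $\Ind_{Q\cap M_1}^{M_1}\charone$ follows from centrality in $M_1$, not from left-invariance alone).
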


\begin{proof} Noting that  $Q \overline P_1=P_1\overline N_1$ because $P_1\supset Q$ and  $N_1 \subset N_Q$, the  $\overline P_1$-module $\Ind_Q^{Q\overline P_1}\charone $ identifies with
$$\ind_{Q\cap M_1}^{M_1 }\charone  \otimes C_c^\infty(\overline N_1,R)
$$
where $\overline N_1$ acts by right translation on $C_c^\infty(\overline N_1,R)$ and trivially on $\ind_{Q\cap M_1}^{M_1 }1$, whereas $M_1$ acts by conjugation on $\overline N_1$ on the second factor and right translation on the first.   If $\sigma_{p-ord}=0$, it suffices  to  recall Corollary \ref{cor:Ord} to   identify 
$\Ord_{\overline P_1}^G (e(\sigma)\otimes \Ind_Q^{G}\charone)= \Ord_{\overline P_1}^G (e(\sigma)\otimes \ind_Q^{Q \overline P_1}\charone)$ with the subspace of $Z(M_1)$-finite vectors in 
\begin{equation}\label{marre}\Hom_{R[\overline N_1]}(C_c^\infty(\overline N_1,R), e(\sigma)\otimes  \Ind_ {Q\cap M_1}^ {M_1 }\charone \otimes C_c^\infty(\overline N_1,R)).
\end{equation} 
By Remark \ref{rem:sfini} we may even take only $t$-finite vectors where $t=z^{-1}$ and $z\in Z(M)$ contracts strictly $N$ (subsection~\ref{subsec:K,Hecke}).
Put $W=e_{M_1}(\sigma)\otimes  \Ind_{M_1\cap Q}^{M_1}\charone$ and then $W\otimes \Id$ for the subspace of  \eqref{marre} made of the maps  $\varphi \mapsto f\otimes\varphi$ for $f\in W$. If $R$ is noetherian, 
$W\otimes \Id$ is $Z(M_1)$-locally finite because  $W$ is an admissible $R$-representation of $M_1$  (a vector $w\in W$ is fixed by an open compact subgroup $J$ of $M_1$ and $W^J$ is a finitely generated $R$-module, invariant by $Z(M_1)$). Hence $\Ord_{\overline P_1}^G (e(\sigma)\otimes \ind_Q^{G}\charone)$ contains $W\otimes \Id$. 
  Applying Proposition \ref{prop:7.4}  with $H=\overline N_1 $ and some suitable $t\in Z(M_1)$ we find that $W\otimes \Id$ is the space of $t$-finite vectors in \eqref{marre}.  
This provides  an isomorphism  
$$\Ord_{\overline P_1}^G (e(\sigma)\otimes \Ind_Q^G\charone  ) \simeq e_{M_1}(\sigma)\otimes \Ind_ {Q\cap M_1}^ {M_1 }\charone .$$   Similarly, for $Q \subset Q_1\subset P_1$,  
$  \Ind_{Q_1}^{Q_1\overline P_1}\charone \simeq \Ind_{Q_1\cap M_1}^{M_1 }\charone  \otimes C_c^\infty(\overline N_1,R),$ 
as $R[\overline P_1]$-modules. 

The exact sequence in Corollary \ref{cor:7.15} is made of free $R$-modules (Lemma \ref{lemma:free}) hence remains exact under tensorisation by $e(\sigma)$, we get a   $R[\overline P_1]$-isomorphism
$$e_{M_1 }(\sigma)\otimes \St_Q^{Q\overline P_1} \simeq e_{M_1 }(\sigma)\otimes  \St_{Q\cap M_1}^{M_1} \otimes C_c^\infty(\overline N_1,R)$$
As $R$ is noetherian and  $\sigma_{p-ord}=0$, $\Ord^G_{\overline P_1}(\St_Q^G)=\Ord^G_{\overline P_1}(\St_Q^{Q \overline P_1})$ identifies (Corollary \ref{cor:Ord}) with the subspace of $Z(M_1)$-finite vectors in 
$$\Hom_{R[\overline N_1]}(C_c^\infty(\overline N_1,R), e_{M_1}(\sigma)\otimes  \St_ {Q\cap M_1}^ {M_1 }  \otimes C_c^\infty(\overline N_1,R)), $$ which is made out of the maps $\varphi \mapsto f\otimes\varphi$ for $f\in \St^{M_1}_{Q\cap M_1}$  by the same reasoning as above, thus  providing an isomorphism 
$$\Ord^G_{\overline P_1}(e(\sigma)\otimes \St_Q^G)\simeq e_{M_1}(\sigma)\otimes \St^{M_1}_{Q\cap M_1}.$$
This ends the proof of the proposition. \end{proof}

  \begin{proposition} \label{prop:P_1Q} When $P_1\not\supset Q$  and  $\sigma_{p-ord}=\{0\}$,  then   \begin{align*}\Hom_{\overline N_1}(C_c^\infty(\overline N_1,R), e(\sigma) \otimes  \Ind_Q^G\charone )  =  \Hom_{\overline N_1}(C_c^\infty(\overline N_1,R), e(\sigma) \otimes \St_Q^G) =0 .
 \end{align*}
\end{proposition}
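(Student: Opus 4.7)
The plan is to reduce the two vanishing statements to a cell-by-cell vanishing on the $(Q,\overline B)$-double cosets contained in $Q\overline P_1$, and then invoke Corollary~\ref{cor:6.13} after computing the $\overline N_1$-stabilizers.

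First (reduction to $Q\overline P_1$), since $\sigma_{p-ord}=0$, Corollary~\ref{cor:Ord} identifies the two Homs in the statement with
\[
\Hom_{\overline N_1}(C_c^\infty(\overline N_1,R),\, e(\sigma)\otimes \ind_Q^{Q\overline P_1}\charone) \quad \text{and} \quad \Hom_{\overline N_1}(C_c^\infty(\overline N_1,R),\, e(\sigma)\otimes \St_Q^{Q\overline P_1}).
\]
Next (cell filtration), I filter $\ind_Q^{Q\overline P_1}\charone$ by the subspaces $\ind_Q^{QAw_0\overline B}\charone$ for upper subsets $A$ of those $w\in {}^Q\mathbb{W}$ with $Qw\overline B\subset Q\overline P_1$, equivalently $w\in \mathbb{W}_Q\mathbb{W}_{M_1}$, as in \S\ref{S:7.2}; the successive quotients are $\ind_Q^{Qw\overline B}\charone$. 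By Lemma~\ref{lemma:stein}, $\St_Q^{Q\overline P_1}$ carries an analogous finite filtration whose non-zero successive quotients are again of that form. Both filtrations are finite, so left-exactness of $\Hom_{\overline N_1}(C_c^\infty(\overline N_1,R),-)$ reduces the proposition to the cell-by-cell assertion that, for every such $w$,
\[
\Hom_{\overline N_1}(C_c^\infty(\overline N_1,R),\, e(\sigma)\otimes \ind_Q^{Qw\overline B}\charone)=0.
\]

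Fix such a $w$ and replace it by a representative in $\mathbb{W}_{M_1}$ of its double coset. Since $P_1\supset P$ and $\Delta_M\perp \Delta_\sigma$, $\overline N_1\subset M'_\sigma$ acts trivially on $e(\sigma)$, so the tensor product identifies with $C_c^\infty(Q\backslash Qw\overline B,\, e(\sigma))$ on which $\overline N_1$ acts only by right translation on the base. As $e(\sigma)_{p-ord}=\sigma_{p-ord}=0$, Corollary~\ref{cor:6.13} yields the vanishing provided that no $\overline N_1$-stabilizer $(\overline N_1)_x$, for $x\in Q\backslash Qw\overline B$, admits an $\mathbb F_p$-valued left Haar measure. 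Writing $x=Qw\overline u$ with $\overline u\in \overline U$, the inclusion $\overline U\subset \overline P_1$ (and normality of $\overline N_1$ in $\overline P_1$) together with $w\in \mathbb{W}_{M_1}$ normalizing $\overline N_1$ gives $(\overline N_1)_x = (w\overline u)^{-1}(Q\cap \overline N_1)(w\overline u)$, a topological group isomorphic to $Q\cap \overline N_1$.

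Using the hypothesis $P_1\not\supset Q$, pick $\alpha\in \Delta_Q\setminus \Delta_{P_1}$; then $-\alpha\in \Phi_{M_Q}^-\setminus \Phi_{M_1}^-\subset \Phi_Q\cap \Phi_{\overline N_1}$, so $U_{-\alpha}\simeq F\subset Q\cap \overline N_1$, whence every $(\overline N_1)_x$ contains an infinite open pro-$p$ subgroup (a conjugate of $U_{-\alpha}(O_F)$). Such a locally pro-$p$ group cannot carry an $\mathbb F_p$-valued left Haar measure: every compact open $K$ sits above open subgroups of arbitrarily large $p$-power index, forcing $\mu(\charone_K)$ to be divisible by every power of $p$, hence zero in $\mathbb F_p$. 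This gives the required cell-wise vanishing and completes the argument.

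The main obstacle is the uniform stabilizer computation, showing that the $\overline N_1$-stabilizer of \emph{every} coset in $Q\backslash Qw\overline B$ (for every admissible $w$) is topologically isomorphic to $Q\cap \overline N_1$; once this is in place, the hypothesis $P_1\not\supset Q$ immediately produces the infinite pro-$p$ subgroup that rules out $\mathbb F_p$-Haar measures and, via Corollary~\ref{cor:6.13}, yields the desired vanishing on both filtrations.
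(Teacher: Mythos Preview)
Your proof is correct and follows essentially the same approach as the paper's: reduce to $Q\overline P_1$ via Corollary~\ref{cor:Ord}, filter by $(Q,\overline B)$-cosets $Qw\overline B$ with $w\in\mathbb{W}_{M_1}$, and kill each cell by Corollary~\ref{cor:6.13} after checking that the $\overline N_1$-stabilizer contains $U_{-\alpha}$ for some $\alpha\in\Delta_Q\setminus\Delta_{P_1}$. Your stabilizer computation and the explanation of why infinite locally pro-$p$ groups carry no $\mathbb F_p$-Haar measure are more explicit than the paper's terse argument, but the route is the same.
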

\begin{proof}As allowed by Corollary \ref{cor:Ord}, we work with 
  \begin{align*}\Hom_{\overline N_1}(C_c^\infty(\overline N_1,R), e(\sigma) \otimes   \ind_Q^{Q\overline P_1}\charone ),  \quad \Hom_{\overline N_1}(C_c^\infty(\overline N_1,R), e(\sigma) \otimes  \St_Q^{Q\overline P_1}) .
 \end{align*}
 We filter $Q\overline P_1$ by double cosets $Qw\overline B$, $w\in \mathbb W_{M_1}$, as above. We simply need the following lemma.
\end{proof}

\begin{lemma}   When $P_1\not\supset Q$  and $w\in \mathbb W_{M_1}$, then $$\Hom_{R[\overline N_1 ]}(C_c^\infty(\overline N_1,R),e(\sigma)\otimes  \ind_Q^{Qw\overline B}\charone )=0.$$
\end{lemma}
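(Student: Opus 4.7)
The plan is to mimic the proof of Proposition \ref{prop:outside}, applying Corollary \ref{cor:6.13}. Because $P_1 \supset P$ and $\sigma$ is $e$-minimal (so $\Delta_M$ is orthogonal to $\Delta \setminus \Delta_M$), the group $\overline N_1$ acts trivially on $e(\sigma)$ by the argument recalled in (6.2.1). Thus, as $R[\overline N_1]$-modules,
\[
e(\sigma)\otimes \ind_Q^{Qw\overline B}\charone \;\simeq\; V \otimes_R C_c^\infty(Q\backslash Qw\overline B,R),
\]
where $V$ is the underlying $R$-module of $\sigma$, which satisfies $V_{p-ord}=0$. Corollary \ref{cor:6.13} applied to $H=\overline N_1$ and $X=Q\backslash Qw\overline B$ then reduces the statement to showing that the $\overline N_1$-stabilizer of every point of $X$ is infinite, since an infinite closed subgroup of the locally pro-$p$ group $\overline N_1$ admits no $\mathbb F_p$-valued Haar measure.

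Next I would compute the stabilizers. Since $P_1 \supset B$, we have $\overline P_1 \supset \overline B \supset \overline U$, so $\overline U$ normalizes $\overline N_1$. Writing a point of $X$ as $Qw\overline u$ with $\overline u \in \overline U$, the equality $Qw\overline u\overline n_1 = Qw\overline u$ translates, using that $\overline u\overline n_1\overline u^{-1} \in \overline N_1$, into $w(\overline u\overline n_1\overline u^{-1})w^{-1} \in Q$. Because $w \in \mathbb W_{M_1}$ normalizes $\overline N_1$, the left-hand side lies in $\overline N_1$, so the condition becomes $w(\overline u\overline n_1\overline u^{-1})w^{-1} \in Q \cap \overline N_1$. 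Hence the stabilizer is $\overline u^{-1}\bigl(w^{-1}(Q\cap \overline N_1)w\bigr)\overline u$, which has the same cardinality as $Q \cap \overline N_1$. It remains to prove that $Q \cap \overline N_1$ is infinite.

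For this last step, note that $\overline N_1 \subset \overline U$ and $Q \cap \overline U = M_Q \cap \overline U$, so $Q \cap \overline N_1 = M_Q \cap \overline N_1$ is generated by the $U_{-\alpha}$ for $\alpha$ ranging over $\Phi_{M_Q}^+ \setminus \Phi_{M_1}^+$. The assumption $P_1 \not\supset Q$ is equivalent to $\Delta_Q \not\subset \Delta_{M_1}$, which produces an $\alpha \in \Delta_Q \setminus \Delta_{M_1} \subset \Phi_{M_Q}^+ \setminus \Phi_{M_1}^+$. Then $U_{-\alpha}$ is an infinite subgroup of $Q \cap \overline N_1$, finishing the proof.

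The only mildly delicate point is the stabilizer computation: unlike in Proposition \ref{prop:outside}, one cannot simply invoke Lemma \ref{lemma:5.5} but must exploit the hypothesis $w \in \mathbb W_{M_1}$ to keep the conjugate of $\overline N_1$ by $w$ inside $\overline N_1$ itself. Once this is set up, the rest is a straightforward root-subgroup calculation driven by the hypothesis $P_1 \not\supset Q$.
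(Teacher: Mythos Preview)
Your proof is correct and follows essentially the same approach as the paper's: apply Corollary \ref{cor:6.13} with $H=\overline N_1$ and $X=Q\backslash Qw\overline B$, reduce to showing the stabilizers are infinite, use that $w\in\mathbb W_{M_1}$ normalizes $\overline N_1$ so that the relevant group is $Q\cap\overline N_1$, and exhibit $U_{-\alpha}\subset Q\cap\overline N_1$ for some $\alpha\in\Delta_Q\setminus\Delta_{P_1}$. Your stabilizer computation is slightly more detailed than the paper's, but the argument is the same.
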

\begin{proof}  As in Proposition \ref{prop:outside}, assuming  $\sigma_{p-ord}=0$ that follows from Corollary \ref{cor:6.13} applied to  $H=\overline N_1$  and $X=Q\backslash Qw \overline B, V=e(\sigma)$ if $Q\cap w \overline N_1 w^{-1}$ is not trivial. When $w\in \mathbb W_{M_1}$, we have $\overline N_1=w \overline N_1 w^{-1}$  and  the hypothesis that $P_1$ does not contains $Q$ implies that there is $\alpha \in \Delta_Q$  not contained in $\Delta_{P_1}$. The group  $Q\cap w \overline N_1 w^{-1}= Q\cap \overline N_1 $ is not trivial because it contains $U_{-\alpha}$. We get the lemma.\end{proof}

  \begin{corollary} \label{cor:P_1Q} Assume $R$ noetherian, $\sigma$ admissible,  $\sigma_{p-ord}=\{0\}$, and $P_1\not\supset Q$. Then   
 $ \Ord_{\overline P_1}^G(e(\sigma) \otimes  \Ind_Q^G\charone )=\Ord_{\overline P_1}^G(e(\sigma) \otimes  \St_Q^G )=0$.
 \end{corollary}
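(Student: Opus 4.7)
The plan is to deduce the corollary essentially for free from Proposition \ref{prop:P_1Q}, since the ordinary part functor is by construction a subspace of the Hom module that has just been shown to vanish.

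More precisely, by the formula \eqref{eq:OrdP}, for any smooth $R$-representation $V$ of $G$ the module $\Ord_{\overline P_1}^G V$ is defined as the submodule of $Z(M_1)$-finite vectors inside $\Hom_{R[\overline N_1]}(C_c^\infty(\overline N_1,R),V)$, with its natural $M_1$-action. In particular there is a canonical inclusion
\[
\Ord_{\overline P_1}^G V \hookrightarrow \Hom_{R[\overline N_1]}(C_c^\infty(\overline N_1,R),V),
\]
so if the right-hand side vanishes, then so does the left-hand side.

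Applying this remark to $V = e(\sigma)\otimes \Ind_Q^G\charone$ and to $V = e(\sigma)\otimes \St_Q^G$, and invoking Proposition \ref{prop:P_1Q} under the hypothesis $P_1\not\supset Q$ and $\sigma_{p-ord}=\{0\}$, both Hom modules are zero, and hence so are the two ordinary parts. The noetherian hypothesis on $R$ and the admissibility of $\sigma$ are carried through only to ensure that the formula \eqref{eq:OrdP} applies and that the preceding Hom computations (based on Corollary \ref{cor:Ord}, which already absorbed those hypotheses) are valid in the present setting.

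There is no real obstacle to overcome; the corollary is a formal consequence of the proposition and the definition of $\Ord_{\overline P_1}^G$. The only minor point to check is simply that the formula \eqref{eq:OrdP} does give an \emph{inclusion} of $\Ord_{\overline P_1}^G V$ into the ambient Hom module — which is immediate from its definition as the $Z(M_1)$-finite part. Hence the proof reduces to a single line citing Proposition \ref{prop:P_1Q}.
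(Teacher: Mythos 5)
Your proposal is correct and takes essentially the same route the paper intends: the paper provides no separate proof of this corollary, precisely because it is the immediate consequence of Proposition \ref{prop:P_1Q} once one recalls (via \eqref{eq:OrdP}) that $\Ord_{\overline P_1}^G V$ is by construction the submodule of $Z(M_1)$-finite vectors inside $\Hom_{R[\overline N_1]}(C_c^\infty(\overline N_1,R),V)$, hence vanishes as soon as the ambient $\Hom$ does.
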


\subsection{Case  $\langle P, P_1\rangle=G$} \label{S:7.4}
Assume  that $\sigma$ is $e$-minimal and that  $\langle P, P_1\rangle=G$.
\begin{proposition} \label{prop:7.19} Assume   $R$  noetherian, $\sigma$ admissible. For $X_Q^G$ equal to $ \Ind_Q^G \charone$ or  $\St_Q^G$, we have 
\begin{align*} 
\Ord_{\overline P_1}^G(e(\sigma)\otimes X_Q^G) &\simeq e_{M_1}(\Ord_{M\cap \overline P_1}^M(\sigma))\otimes X_{M_1\cap Q}^{M_1}.
\end{align*}
\end{proposition}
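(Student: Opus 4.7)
The plan is to restrict $e(\sigma)\otimes X_Q^G$ to $\overline{P}_1$, identify it with $\sigma\otimes X_{Q\cap M_1}^{M_1}$ in a suitable $\overline{N}_1$-sense, and then apply Proposition~\ref{prop:7.5} to pull the $X_{Q\cap M_1}^{M_1}$-factor out of the $\Hom$-space defining $\Ord_{\overline{P}_1}^G$. Since $\sigma$ is $e$-minimal with $P(\sigma)=G$, Lemma~\ref{lemma:2.2} gives $\Delta=\Delta_M\sqcup(\Delta\setminus\Delta_M)$ orthogonal; so $G=MM_2'$ with $M_2'=M'_{\Delta\setminus\Delta_M}$ commuting with $M$, and $e(\sigma)$ is $\sigma$ extended trivially on $M_2'$. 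The hypothesis $\langle P,P_1\rangle=G$ yields $\Delta\setminus\Delta_M\subset\Delta_{M_1}$, whence $M_1=(M\cap M_1)M_2'$ and, since the roots of $\overline{N}_1$ lie in $\Delta\setminus\Delta_{M_1}\subset\Delta_M$, the inclusion $\overline{N}_1\subset M\subset M_Q\subset Q$.

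First I would verify that $\overline{N}_1$ acts trivially on $X_Q^G$. From $G=QM_1$ and the normalization of $\overline{N}_1$ by $M_1$, for any $m_1\in M_1$ and $\overline{n}_1\in\overline{N}_1$ one has $m_1\overline{n}_1m_1^{-1}\in\overline{N}_1\subset Q$, so $Qm_1\overline{n}_1=Qm_1$; hence $\overline{N}_1$ acts trivially on $Q\backslash G$, on $\Ind_Q^G\charone$, and on $\St_Q^G$. Next, $X_Q^G|_{M_1}\simeq X_{Q\cap M_1}^{M_1}$ follows from the homeomorphism $(Q\cap M_1)\backslash M_1\simeq Q\backslash G$ and from the bijection $Q'\mapsto Q'\cap M_1$ between proper parabolic oversets of $Q$ in $G$ and those of $Q\cap M_1$ in $M_1$ (valid because $\Delta_{Q'}\setminus\Delta_Q\subset\Delta\setminus\Delta_M\subset\Delta_{M_1}$). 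Combined, as $R[\overline{P}_1]$-modules $e(\sigma)\otimes X_Q^G\simeq\sigma\otimes X_{Q\cap M_1}^{M_1}$, with $\overline{N}_1\subset M$ acting via $\sigma$ on the first factor and trivially on the second, $M\cap M_1$ diagonally, and $M_2'$ only on the second factor.

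Next, pick $z\in Z(M_1)$ strictly contracting $\overline{N}_1$. The admissibility of $e(\sigma)\otimes X_Q^G$ (using Lemma~\ref{lemma:useful} to rewrite $e(\sigma)\otimes\Ind_Q^G\charone\simeq\Ind_Q^G(e(\sigma)|_Q)$, and the theorem of \S\ref{S:4} for $X_Q^G=\St_Q^G$) combined with Remark~\ref{rem:sfini} yields
\[
\Ord_{\overline{P}_1}^G\bigl(e(\sigma)\otimes X_Q^G\bigr)=\Hom_{R[\overline{N}_1]}\bigl(C_c^\infty(\overline{N}_1,R),\sigma\otimes X_{Q\cap M_1}^{M_1}\bigr)^{z^{-1}-f}.
\]
Now $W=X_{Q\cap M_1}^{M_1}$ is a free $R$-module (Lemma~\ref{lemma:free}), carries trivial $\overline{N}_1$-action, and trivial $z$-action: indeed $Z(M_1)\subset\mathbf{T}\subset B\cap M_1\subset Q\cap M_1$, and for $z$ central in $M_1$, $(zf)(m_1)=f(m_1z)=f(zm_1)=f(m_1)$. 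Proposition~\ref{prop:7.5} applied with $H=\overline{N}_1$, $V=\sigma$, and $t=z^{-1}$ acting via $\sigma(z^{-1})$ yields
\[
\Hom_{R[\overline{N}_1]}\bigl(C_c^\infty(\overline{N}_1,R),\sigma\otimes W\bigr)^{z^{-1}-f}\simeq\Hom_{R[\overline{N}_1]}\bigl(C_c^\infty(\overline{N}_1,R),\sigma\bigr)^{z^{-1}-f}\otimes W,
\]
and a second application of Remark~\ref{rem:sfini} to $M$ with $z\in Z(M_1)\subset Z(M\cap M_1)$ identifies the first factor on the right with $\Ord_{M\cap\overline{P}_1}^M(\sigma)$.

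The last step is to check the $M_1$-equivariance. For $m\in M\cap M_1$, conjugation preserves $\overline{N}_1\subset M$ and the action on both factors is natural, producing a diagonal action. For $m_2\in M_2'$, conjugation by $m_2$ is trivial on $\overline{N}_1\subset M$ (as $M_2'$ commutes with $M$) and $m_2$ acts trivially on $e(\sigma)$, so $m_2$ only touches $W$. Since $\Delta_{M_1}\setminus\Delta_{M\cap M_1}\subset\Delta\setminus\Delta_M=\Delta_\sigma$, the triviality of $\sigma$ on $Z\cap M'_\alpha$ for $\alpha\in\Delta_\sigma$ (together with the fact that such $\alpha$ are orthogonal to $\Delta_M$, so $Z\cap M'_\alpha$ centralizes $\overline{N}_1$) transfers to $\Ord_{M\cap\overline{P}_1}^M(\sigma)$, making $e_{M_1}$ applicable; the right-hand side then matches $e_{M_1}\bigl(\Ord_{M\cap\overline{P}_1}^M(\sigma)\bigr)\otimes X_{Q\cap M_1}^{M_1}$. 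The main technical hurdle is verifying all hypotheses of Proposition~\ref{prop:7.5} simultaneously ($R$-freeness of $W$, triviality of both the $\overline{N}_1$- and $z$-actions on $W$, compatible $t$-action on $V$); this is made possible by the orthogonality $\Delta_M\perp(\Delta\setminus\Delta_M)$ together with the containment $Z(M_1)\subset Q\cap M_1$.
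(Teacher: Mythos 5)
Your proof follows essentially the same strategy as the paper's: show $\overline N_1$ and $Z(M_1)$ act trivially on $X_Q^G$, identify $X_Q^G|_{M_1}$ with $X_{Q\cap M_1}^{M_1}$, and apply Proposition~\ref{prop:7.5} to pull the free $R$-module factor out of the $\Hom$-space. One genuine streamlining in your argument is how you match $\Ord_{\overline P_1}^G(e(\sigma))$ with $e_{M_1}(\Ord^M_{M\cap\overline P_1}(\sigma))$: by invoking Remark~\ref{rem:sfini} twice with a common $z\in Z(M_1)\subset Z(M\cap M_1)$ you identify both with the same module of $z^{-1}$-finite vectors, which sidesteps the paper's Lemma~\ref{lemma:7.20} (the finite-index statement about $Z(M_1)Z(M_1\cap M)^0(Z(M_1\cap M)\cap M'_\sigma)$ inside $Z(M\cap M_1)$).

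There is, however, a gap in your application of Remark~\ref{rem:sfini} to the $G$-representation $e(\sigma)\otimes X_Q^G$: that remark is stated only for admissible representations, and for $X_Q^G=\St_Q^G$ you justify admissibility of $e(\sigma)\otimes\St_Q^G=I_G(P,\sigma,Q)$ via the theorem of~\S\ref{S:4}, which requires $p$ invertible or nilpotent in $R$. Proposition~\ref{prop:7.19} assumes only $R$ noetherian and $\sigma$ admissible, so your proof as written tacitly adds a hypothesis. The paper avoids this by a different bookkeeping: it applies Proposition~\ref{prop:7.5} to $V=e(\sigma)$ (so only $\sigma$-admissibility is needed via Remark~\ref{rem:sfini}) and then uses the triviality of the $Z(M_1)$-action on the $R$-free module $X_Q^G$ to transport $Z(M_1)$-finiteness through the tensor, rather than needing admissibility of the tensor itself. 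You can repair your argument by rearranging accordingly: apply Remark~\ref{rem:sfini} only to $\sigma$, use Proposition~\ref{prop:7.5} to identify $z^{-1}$-finite elements of the two $\Hom$-spaces, and observe that both inclusions between $z^{-1}$-finite and $Z(M_1)$-finite parts are then forced.
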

\begin{proof} We have $P_1\supset P_\sigma$, or equivalently  $M_1\supset M_\sigma$ and $N_1\subset N_\sigma$. As $N_1\subset M'$, $N_1$ acts trivially on $\Ind_Q^G \charone$ (hence on its quotient $\St_Q^G$) because $G=M'M_\sigma$ acts on $\Ind_Q^G \charone$ trivially on $M'$ ($\Delta_M$ and $\Delta_{\sigma}$ are orthogonal of union $\Delta$).  As $M_1\supset M_\sigma$,  $Z(M_1) $ commutes with $M_\sigma$ and acts trivially on $\St_Q^G$. We can apply Proposition \ref{prop:7.5} to $H=\overline N_1, V=e(\sigma), W=X_Q^G$ and   $t\in Z(M_1)$ strictly contracting $N_1$ (subsection~\ref{subsec:K,Hecke}), to get  isomorphisms
$$ \Ord_{\overline P_1}^G (e(\sigma)\otimes X_Q^G ) \simeq \Ord_{\overline P_1}^G (e(\sigma))\otimes X_ {Q }^G, $$
as representations of $M_1$. As  $M_1\supset M_\sigma$, the restriction to $M_1$ of $X_Q^G$  is  $X_ {Q\cap M_1}^ {M_1 } $. To prove the desired result, we need to identify $\Ord_{\overline P_1}^G (e(\sigma))$ and $e_{M_1}(\Ord_{M\cap \overline P_1}^M(\sigma))$. Put  $Y=\Hom_{R[\overline N_1 ]}(C_c^\infty(\overline N_1,R),V)$. Then   $\Ord_{\overline P_1}^G (e(\sigma))=Y^{Z(M_1)-f}$ and  $\Ord_{M\cap \overline P_1}^M(\sigma)= Y^{Z(M_1\cap M)-f}$. As $Z(M_1\cap M)\supset Z(M_1)$, a  $Z(M_1\cap M)$-finite vector is also $Z(M_1)$-finite. On the other hand,  $Z(M_1\cap M)\cap M'_\sigma$ acts trivially on $\overline N_1$ and $V$ hence on $Y$. The maximal compact subgroup $Z(M_1\cap M)^0$ of $Z(M_1\cap M)$ acts smoothly on $Y$, hence all vectors in $Y$ are $Z(M_1\cap M)^0$-finite.

\begin{lemma} \label{lemma:7.20} $Z(M_1)Z(M_1\cap M)^0(Z(M_1\cap M)\cap M'_\sigma)$ has finite index in $Z(M_1\cap M)$.
\end{lemma}
Granted that lemma, the inclusion $X^{Z(M_1)-f}\subset X^{Z(M_1\cap M)-f}$ which is obviously $M_1\cap M$-equivariant is an isomorphism. As $X^{Z(M_1)-f}$ is a representation of $M_1$ it is $e_{M_1}(X^{Z(M_1\cap M)-f})$, which is what we want to prove.

  We have $Z(M_1\cap M)^0=Z(M_1\cap M)\cap T^0$.  It suffices to prove that the image of $Z(M_1)(Z(M_1\cap M)\cap M'_\sigma)$ in $X_*(\mathbf T)$  via the map $v:Z\to X_*(T)\otimes_{\mathbb Z}  \mathbb Q$ defined in \S \ref{S:2.1},  has finite index in the image of $Z(M_1\cap M) $.  The orthogonal of  $Z(M_1\cap M)$  in $X^*(\mathbf T)\otimes_{\mathbb Z} \mathbb Q$   is contained in the orthogonal of $Z(M_1)(Z(M_1\cap M)\cap M'_\sigma)$. It suffices to show the inverse inclusion. The orthogonal of $Z(M_1)$ in $X^*(\mathbf T)\otimes_{\mathbb Z} \mathbb Q$  is  generated  by 
 $\Delta_{M_1} $. The image by $v$ of $Z(M_1\cap M)\cap M'_\sigma$  in $X_*(\mathbf T)$ containing the coroots of $\Delta_\sigma$, its orthogonal is contained in  $\Delta_{M} $. We see that the orthogonal for  $Z(M_1)(Z(M_1\cap M)\cap M'_\sigma)$   in $X^*(\mathbf T)\otimes_{\mathbb Z} \mathbb Q$ is contained in $\Delta_{M_1}\cap \Delta_{M}$. As $\Delta_{M_1\cap M}=\Delta_{M_1}\cap \Delta_{M}$  is the orthogonal of   $Z(M_1\cap M)$  in $X^*(\mathbf T)\otimes_{\mathbb Z} \mathbb Q$, the lemma is proved.
\end{proof}
This ends the proof of Proposition \ref{prop:7.19}.

\subsection{General case}

1) First we assume  that $\sigma$ is $e$-minimal. We prove  Theorem \ref{thm:6.1} (ii)   in stages, introducing the standard parabolic subgroup $P_2= \langle P_1, P\rangle$ and taking successively $\Ord^G_{\overline P_2}$ and $\Ord^{M_2}_{M_2\cap \overline P_1}$ using the transitivity of $\Ord_{\overline P_1}^G$. For $X_Q^G$ equal to $ \Ind_Q^G \charone$ or  $\St_Q^G$, we have 
\begin{align*}\Ord_{\overline P_1}^G(e(\sigma)\otimes X_Q^G )=\Ord^{M_2}_{M_2\cap \overline P_1} (e_{M_2}(\Ord ^M_{M\cap \overline P_2}(\sigma))\otimes X_Q^G )\\
= \begin{cases} e_{M_1}(\Ord^{M }_{M \cap \overline P_1}\sigma) \otimes X_{Q\cap M_1}^{M_1} &\text{if} \langle P_1, P\rangle \supset Q, \\ 
0 &\text{if} \langle P_1, P\rangle  \not\supset Q.
\end{cases}
\end{align*}

The first equality follows from Proposition \ref{prop:7.19}, and the second one from Proposition \ref{prop:StOrd}  for the first case noting that $M\subset \overline P_2$, and Corollary \ref{cor:P_1Q} for the second case. This ends the proof of 
Theorem  \ref{thm:6.1}, Part (ii) when
   $\Delta_M$ is orthogonal to $ \Delta\setminus \Delta_M$.

2) General case.   As at the end of \S \ref{S:6.2}, we introduce $P_{\min}=M_{\min}N_{\min}$ and an $e$-minimal representation $\sigma_{\min}$ of $M_{\min}$. The   case 1)  gives
\begin{align}\label{ordp}\Ord_{\overline P_1}^G(e(\sigma_{\min})\otimes X_Q^G ) 
= \begin{cases} e_{M_1}(\Ord^{M_{\min} }_{M_{\min}  \cap \overline P_1}\sigma_{\min} ) \otimes X_{Q\cap M_1}^{M_1} &\text{if} \langle P_1, P_{\min} \rangle \supset Q, \\ 
0 &\text{if} \langle P_1, P_{\min} \rangle  \not\supset Q.
\end{cases}
\end{align}
We have  $e(\sigma)=e(\sigma_{\min})$. So  we can suppress $min$ on the left hand side. We show that we can also  suppress $min$ on the right hand side.

If  $\langle P_1, P  \rangle  \not\supset Q$ then $\langle P_1, P_{\min}   \rangle   \not \supset Q$ as  $P_{\min}\subset P$, hence $\Ord_{\overline P_1}^G(e(\sigma)\otimes X_Q^G )=0$. 

If   $\langle P_1, P  \rangle  \supset Q$ but  $\langle P_1, P_{\min}   \rangle  \not\supset Q$, then  $\Ord_{\overline P_1}^G(e(\sigma)\otimes X_Q^G )=0$ and  we now prove   $\Ord^{M  }_{M   \cap \overline P_1}\sigma  =0$. Our hypothesis implies that there exists a root $\alpha \in \Delta_P$ which does not belong to 
$\Delta_{ 1}\cup \Delta_{ min }$. The root subgroup $U_{-\alpha}$ is contained in $M\cap \overline N_1$ and acts trivially on $\sigma$. Reasoning as in the proof of Proposition \ref{prop:outside}, 
$\Hom_{M\cap \overline N_1}(C_c^\infty(M\cap \overline N_1,R), \sigma)=0$ hence $\Ord^{M  }_{M   \cap \overline P_1}\sigma  =0$.

If  $\langle P_1, P_{\min}   \rangle   \supset Q$  then    $J \subset \Delta_1=\Delta_{P_1}$ where $J=\Delta_M\setminus \Delta_{\min}$. The extensions to $M_1$ of
$$\Ord^{M }_{M  \cap \overline P_1}\sigma  = (\Hom_{R[M\cap \overline N_1]}(C_c^\infty(M\cap \overline N_1, R), \sigma ))^{Z(M \cap M_1)-f}$$
(see \eqref{eq:OrdP}) and of  $\Ord^{M_{\min} }_{M_{\min}  \cap \overline P_1}\sigma_{\min}$ are equal as we show now:   

The group $M \cap \overline N_1$ is generated by the root subgroups $U_\alpha$ for $\alpha$ in $\Phi_M^+$  not in $\Phi_{ 1}$. 
Noting that  $\Phi_M \setminus \Phi_{\min}=\Phi_J$ is disjoint from $\Phi_{\min}$ and contained in  $\Phi_1=\Phi_{M_1}$,  a root $\alpha$ in $\Phi_M^+$  not in $\Phi_{ 1}$ belongs to $\Phi_{\min}^+$;   hence $ M \cap \overline N_1=  M_{\min} \cap \overline N_1$. 

The  group $ Z(M \cap M_1) $ is contained in $ Z(M_{\min} \cap M_1) $. Moreover $T\cap M'_J$ acts trivially on $\sigma$ and on $M\cap \overline N_1$ and, reasoning
as in \ref{lemma:7.20},  $ Z(M\cap M_1)( Z(M_{\min} \cap M_1)\cap M'_J )$ has finite index in $ Z(M_{\min} \cap M_1) $. Consequently taking $ Z(M_{\min} \cap M_1) $-finite
vectors or $ Z(M \cap M_1) $-finite vectors in $\Hom_{R[M\cap \overline N_1]}(C_c^\infty(M\cap \overline N_1, R), \sigma )$ gives the same answer. This finishes the proof 
of Theorem \ref{thm:6.1} (ii) .

\end{document}